\newcommand{\bbZ}{{\Bbb Z}}
\newcommand{\bbR}{{\Bbb R}}
\newcommand{\bbN}{{\Bbb N}}
\newcommand{\bbC}{{\Bbb C}}
\newcommand{\bbP}{{\Bbb P}}
\newcommand{\bbE}{{\Bbb E}}
\newcommand{\E}{{\Bbb E}}
\newcommand{\bbS}{{\Bbb S}}
\DeclareMathOperator*{\plim}{\mathit{p}-lim}
\newcommand{\imag}{{\mathbf i}}
\newcommand{\vecoper}{\textnormal{vec}}
\newcommand{\W}{\mathbf{W}}
\newcommand{\B}{\mathbf{B}}
\newcommand{\p}{\mathbf{p}}
\renewcommand{\cite}{\citeyear}
\begin{document}

\title{On high-dimensional wavelet eigenanalysis
\thanks{H.W.~was partially supported by ANR-18-CE45-0007 MUTATION, France. G.D.'s long term visits to ENS de Lyon were supported by the school, the CNRS, the Carol Lavin Bernick faculty grant and the Simons Foundation collaboration grant $\#714014$. The authors would like to thank Alice Guionnet for her comments and suggestions in the initial stages of this work.}
\thanks{{\em AMS Subject classification}. Primary: 60G18, 60B20, 42C40. Secondary: 62H25.}
\thanks{{\em Keywords and phrases}: wavelets, operator self-similarity, random matrices.}}

\author{Patrice Abry \\  Laboratoire de Physique\\\ ENS de Lyon,  CNRS
\and   B.\ Cooper Boniece \\  Department of Mathematics \\ Drexel University
\and   Gustavo Didier \\ Mathematics Department\\ Tulane University
\and   Herwig Wendt\\ IRIT\\\ Universit\'{e} de Toulouse, CNRS}

\maketitle

\begin{abstract}
In this paper, we characterize the asymptotic and large scale behavior of the eigenvalues of wavelet random matrices in high dimensions. We assume that possibly non-Gaussian, finite-variance $p$-variate measurements are made of a low-dimensional $r$-variate ($r \ll p$) fractional stochastic process with non-canonical scaling coordinates and in the presence of additive high-dimensional noise. The measurements are correlated both time-wise and between rows. We show that the $r$ largest eigenvalues of the wavelet random matrices, when appropriately rescaled, converge in probability to scale-invariant functions in the high-dimensional limit. By contrast, the remaining $p-r$ eigenvalues remain bounded in probability.  Under additional assumptions, we show that the $r$ largest log-eigenvalues of wavelet random matrices exhibit asymptotically Gaussian distributions. The results have direct consequences for statistical inference.
\end{abstract}

\section{Introduction}

A \textit{wavelet} is an oscillatory function in $L^2(\bbR)$ with unit norm (see \eqref{e:N_psi}). For a fixed (octave) $j \in \bbN \cup \{0\}$, the \textit{wavelet transform} of a $p$-variate stochastic process $Y = \{Y(t)\}_{t \in \bbZ}$ at the dyadic scale $a2^j$ and shift $k \in \bbZ$ is defined by the entry-wise convolution
\begin{equation}\label{e:D(a(n)2^j,k)_intro}
\bbR^p \ni D(a2^j,k) = \sum_{\ell\in\bbZ} Y(\ell)\hspace{0.5mm}\widetilde{h}_{a2^j k-\ell}.
\end{equation}
In \eqref{e:D(a(n)2^j,k)_intro}, the terms $\widetilde{h}_{\cdot}$ are real-valued coefficients that depend on the scale and on the underlying wavelet function. The entries of $\{D(a2^j,k)\}_{k \in \bbZ}$ are generally correlated. A \textit{fractal} is an object or phenomenon that displays the property of \textit{self-similarity}, in some sense, across a range of scales (Mandelbrot \cite{mandelbrot:1982}). Due to its intrinsic multiscale character and fine-tuned mathematical properties, the wavelet transform \eqref{e:D(a(n)2^j,k)_intro} has been widely used in the study and characterization of fractals (e.g., Wornell \cite{wornell:1996}, Doukhan et al.\ \cite{doukhan:2003}, Massopust \cite{massopust:2014}).

For any $p \in \bbN$, a $p\times p$ \textit{wavelet random matrix} is given by
\begin{equation}\label{e:W(a2^j)_intro}
 {\mathbf W}(a2^j) = \frac{1}{n_{a,j}}\sum^{n_{a,j}}_{k=1}D(a2^j,k)\hspace{0.5mm} D(a2^j,k)^*.
\end{equation}
In \eqref{e:W(a2^j)_intro}, $^*$ denotes transposition, and $n_{a,j}$ is the (dyadic) number of wavelet-domain observations (we defer to Section \ref{s:wavelet_analysis} for a precise description of $n_{a,j}$). The so-named \textit{wavelet eigenanalysis} methodology consists in using the behavior across scales of the eigenvalues of wavelet random matrices to study the fractality of stochastic systems (Abry and Didier \cite{abry:didier:2018:n-variate,abry:didier:2018:dim2}). In this paper, we characterize the asymptotic and large-scale behavior of the eigenvalues of wavelet random matrices in high dimensions. Observations of a  (possibly non-Gaussian) underlying stochastic process $Y$ in \eqref{e:D(a(n)2^j,k)_intro} are assumed to have the form
\begin{equation}\label{e:Y(t)}
Y(t) = {\mathbf P} X(t) + Z(t). %
\end{equation}
In \eqref{e:Y(t)}, both $Y$ and the noise term $Z = \{Z(t)\}_{t\in\bbZ}$ are (high-dimensional) $p$-variate processes, ${\mathbf P}$ is a rectangular, deterministic coordinates matrix and, for fixed $r \in \bbN$, $X = \{X(t)\}_{t \in\bbZ}$ is a (low-dimensional) $r$-variate fractional process. One can assume $X$ and $Z$ are second order, uncorrelated and zero-mean stochastic processes. In particular, the measurements $Y$ are \textit{correlated} both time-wise and between rows. Assuming the observations $Y(1),Y(2),\ldots, Y(n)$ of \eqref{e:Y(t)} are available, we show that, as $n\to\infty$, if $a=a(n)\to\infty$ and $p(n)\hspace{0.5mm}a(n)/n=O(1)$ (with $r$ fixed and possibly $p(n)\to\infty$) then, under a suitable normalization based on \textit{scaling exponents}, the $r$ largest eigenvalues of ${\mathbf W}(a(n)2^j)$ converge in probability to scale-invariant functions. By contrast, the remaining $p(n)-r$ eigenvalues remain bounded in probability. In addition, we show that the $r$ largest log-eigenvalues of ${\mathbf W}(a(n)2^j)$ exhibit asymptotically Gaussian distributions. The results bear direct consequences for statistical inference starting from high-dimensional measurements of the form of a signal-plus-noise system \eqref{e:Y(t)}, where $X$ is a latent process containing fractal (scaling) information and ${\mathbf P}$ (as well as $Z$) is unknown.\\

In this paper, we combine two mathematical frameworks that are rarely considered jointly: $(i)$ high-dimensional probability theory; $(ii)$ fractal analysis. This is done by bringing together the study of large random matrices and scaling analysis in the wavelet domain.

Since the 1950s, the spectral behavior of large-dimensional random matrices has attracted considerable attention from the mathematical research community. In quantum mechanics, for example, random matrices are of great interest as statistical mechanical models of infinite-dimensional and possibly unknown Hamiltonian operators (e.g., Mehta and Gaudin \cite{mehta:gaudin:1960}, Dyson \cite{dyson:1962}, Ben Arous and Guionnet \cite{benarous:guionnet:1997}, Soshnikov \cite{soshnikov:1999}, Mehta \cite{mehta:2004}, Deift \cite{deift:2007}, Anderson et al.\ \cite{anderson:guionnet:zeitouni:2010}, Tao and Vu \cite{tao:vu:2011}, Erd\H{o}s et al.\ \cite{erdos:yau:yin:2012}). Random matrices have also naturally emerged as one essential mathematical framework for the modern era of ``Big Data" (Briody \cite{briody:2011}). When hundreds to several tens of thousands of time series get recorded and stored on a daily basis, one is often interested in understanding the behavior of random constructs such as the spectral distribution of sample covariance matrices for which the dimension $p$ is comparable to the sample size $n$ (e.g., Tao and Vu \cite{tao:vu:2012}, Xia et al.\ \cite{xia:qin:bai:2013}, Paul and Aue \cite{paul:aue:2014}, Yao et al.\ \cite{yao:zheng:bai:2015}). The literature on random matrices under dependence as well as on high-dimensional stochastic processes has been expanding at a fast pace (e.g., Basu and Michailidis \cite{basu:michailidis:2015}, Chakrabarty et al.\ \cite{chakrabarty:hazra:sarkat:2016}, Merlev\`{e}de and Peligrad \cite{merlevede:peligrad:2016}, Che \cite{che:2017}, Steland and von Sachs \cite{steland:vonsachs:2017}, Taylor and Salhi \cite{taylor:salhi:2017}, Wang et al.\ \cite{wang:aue:paul:2017}, Zhang and Wu \cite{zhang:wu:2017}, Erd\H{o}s et al.\ \cite{erdos:kruger:schroder:2019}, Merlev\`{e}de et al.\ \cite{merlevede:najim:tian:2019}, Bourguin et al.~\cite{bourguin:diez:tudor:2021}, Shen et al.\ \cite{shen:stoev:hsing:2022}).

In turn, recall that the emergence of a fractal is typically the signature of a physical mechanism that generates \textit{scale invariance} (e.g., Peitgen et al.\ \cite{peitgen:jurgens:saupe:feigenbaum:2004}, West et al.\ \cite{west:brown:enquist:1999}, Zheng et al.\ \cite{zheng:shen:wang:li:dunphy:hasan:brinker:su:2017}, He \cite{he:2018}). Unlike traditional statistical mechanical systems (e.g., Reif \cite{reif:2009}), a scale-invariant system does not display a characteristic scale, namely, one that dominates its statistical behavior. Instead, the behavior of the system across scales is determined by specific parameters called \textit{scaling exponents}.  Scale invariance manifests itself in a wide range of natural and social phenomena such as in criticality (Sornette \cite{sornette:2006}), turbulence (Kolmogorov~\cite{Kolmogorovturbulence}), climate studies (Isotta et al.\ \cite{isotta:etal:2014}), dendrochronology (Bai and Taqqu~\cite{bai:taqqu:2018}) and hydrology (Benson et al.\ \cite{benson:baeumer:scheffler:2006}). Mathematically, it is a topic of central importance in Markovian settings (e.g., diffusion, lattice models, universality classes) as well as in non-Markovian ones (e.g., anomalous diffusion, long-range dependence, non-central limit theorems).

In the univariate context $p = 1$, wavelets have proven to be powerful tools for the multiscale analysis of broad classes of stochastic processes. Among many reasons, this is so because, in applications, the computational complexity of \eqref{e:D(a(n)2^j,k)_intro} is very low, sometimes even surpassing that of the fast Fourier transform (e.g., Daubechies \cite{daubechies:1992}, Mallat \cite{mallat:2009}). On the other hand, in theoretical research, the multiscale sequence \eqref{e:D(a(n)2^j,k)_intro} often displays improved stochastic properties by comparison to the original measurements. In particular, wavelets provide a natural analytical arena for non-stationary or fractional processes, due to the usual stationarity and rapidly decaying correlation structure of \eqref{e:D(a(n)2^j,k)_intro} for fixed $j$ (e.g., Meyer et al.\ \cite{meyer:sellan:taqqu:1999}, Moulines \cite{moulines:roueff:taqqu:2007:JTSA}). For $p = 1$, there is now a vast literature on the use of \eqref{e:W(a2^j)_intro} in the characterization of the scaling behavior -- as parametrized by scaling exponents -- of univariate fractional processes (e.g., Flandrin \cite{flandrin:1992}, Wornell and Oppenheim \cite{wornell:oppenheim:1992}, Clausel et al.\ \cite{clausel:roueff:taqqu:tudor:2014:waveletestimation}; see also the initial discussion in Section \ref{s:consequences_for_statistics} of this paper).

In a multidimensional framework, scaling behavior does not always appear along standard coordinate axes, and often involves multiple scaling relations. A $\bbR^r$-valued stochastic process $X$ is called \textit{operator self-similar} (o.s.s.; Laha and Rohatgi \cite{laha:rohatgi:1981}, Hudson and Mason \cite{hudson:mason:1982}) if it exhibits the scaling property
\begin{equation}\label{e:def_ss}
\{X(ct)\}_{t\in\bbR} \stackrel {\textnormal{f.d.d.}}{=} \{c^{\mathbf H} X(t)\}_{t\in\bbR}, \quad c>0.
\end{equation}
In \eqref{e:def_ss}, ${\mathbf H}$ is some (Hurst) matrix whose eigenvalues have real parts lying in the interval $(0,1]$ and $c^{\mathbf H} := \exp\{\log(c) {\mathbf H}\} = \sum^{\infty}_{k=0} \frac{(\log(c) {\mathbf H})^k}{k!}$. A canonical model for multivariate fractional systems is operator fractional Brownian motion (ofBm), namely, a Gaussian, o.s.s., stationary-increment stochastic process (Maejima and Mason \cite{maejima:mason:1994}, Mason and Xiao \cite{mason:xiao:2002}, Didier and Pipiras \cite{didier:pipiras:2012}). In particular, ofBm is the natural multivariate generalization of the classical fractional Brownian motion (fBm; Embrechts and Maejima \cite{embrechts:maejima:2002}).

The importance of the role of multiple scaling laws in applications is now well established. For example, in econometrics, the detection of
distinct scaling laws in multivariate fractional time series is indicative of the key property of (fractional) \textit{cointegration} -- namely,
the existence of meaningful and statistically useful long-run relationships among the individual series (e.g.,
Engle and Granger \cite{engle:granger:1987}, NobelPrize.org \cite{nobelprize:2003}, Hualde and Robinson \cite{hualde:robinson:2010}, Shimotsu \cite{shimotsu:2012}). From a different perspective, it has been shown that ignoring the presence of multiple scaling laws in statistical inference may lead to severe biases (see Section \ref{s:consequences_for_statistics}).

The model \eqref{e:Y(t)} provides a natural formulation of a fractal, or scaling system, in high dimensions. Besides being very general -- in particular, the measurements are possibly non-Gaussian --, it subsumes the fundamental idea behind the modeling of high-dimensional stochastic systems. In other words, a low-dimensional component, containing all the relevant physical information, is embedded in high-dimensional noise (e.g., Giraud \cite{giraud:2015}, Wainwright \cite{wainwright:2019}). In fact, \eqref{e:Y(t)} and closely related models appear in numerous applications such as, for example, in neuroscience and fMRI imaging (Ciuciu et al.\ \cite{ciuciu:varoquaux:abry:sadaghiani:kleinschmidt:2012}, Liu et al.\ \cite{liu:aue:paul:2015}, Ting et al.\ \cite{ting:ombao:samdin:salleh:2017}, Li et al.\ \cite{li:pluta:shahbaba:fortin:ombao:baldi:2019}, Gotts et al.\ \cite{gotts:gilmore:martin:2020}; cf.\ Chauduri et al.\ \cite{chaudhuri:gercek:pandey:peyrache:fiete:2019}, Stringer et al.\ \cite{stringer:pachitariu:steinmetz:carandini:harris:2019}), in factor modeling (Bai \cite{bai:2003}, Cheung \cite{cheung:2022}, Ergemen and Rodr\'{i}guez-Caballero \cite{ergemen:rodriguez-caballero:2023}) and in econometrics (Brown \cite{brown:1989}, Stock and Watson \cite{stock:watson:2011}, Lam and Yao \cite{lam:yao:2012}, Chan et al.\ \cite{chan:lu:yau:2017}, to name a few).

In the characterization of scaling properties, the use of eigenanalysis was first proposed in Meerschaert and Scheffler \cite{meerschaert:scheffler:1999,meerschaert:scheffler:2003} and Becker-Kern and Pap \cite{becker-kern:pap:2008}. It has also been applied in the cointegration literature (e.g., Phillips and Ouliaris \cite{phillips:ouliaris:1988}, Li et al.\ \cite{li:pan:yao:2009}, Zhang et al.\ \cite{zhang:robinson:yao:2018}). In Abry and Didier \cite{abry:didier:2018:n-variate,abry:didier:2018:dim2}, \textit{wavelet eigenanalysis} is put forward in the construction of a general methodology for the statistical identification of the scaling (Hurst) structure of ofBm in low dimensions.

In Abry et al.\ \cite{abry:wendt:didier:2018:detecting_highdim} and Boniece et al.\ \cite{boniece:wendt:didier:abry:2019}, presented without proofs, wavelet random matrices were first used in the modeling of high-dimensional systems. In this paper, we construct the mathematical foundations of wavelet eigenanalysis in high dimensions by investigating the properties of the eigenvalues $\lambda_{1}\big({\mathbf W}(a(n)2^j)\big) \leq \hdots \leq \lambda_{p(n)}\big({\mathbf W}(a(n)2^j)\big)$ of large wavelet random matrices ${\mathbf W}(a(n)2^j)$. We assume measurements given by \eqref{e:Y(t)}, where the fractional behavior of $X$ is characterized by a scaling matrix of the Jordan form
\begin{equation}\label{e:H=PHdiag(h1,...,hn)P^(-1)H}
{\mathbf H} = {\mathbf P}_H \hspace{0.5mm}\textnormal{diag}(h_1,\hdots,h_r)\hspace{0.5mm}{\mathbf P}^{-1}_{H}, \quad h_1 \leq \hdots \leq h_r.
\end{equation}
The term $Z$ can be generally thought of as high-dimensional colored noise, displaying a weak dependence structure \textit{by comparison} to $X$. The measurements $Y$ display correlation time-wise and between rows. We consider the three-way limit as the sample size ($n$),  scale ($a(n)$) and, possibly, the dimension ($p(n)$) go to infinity  simultaneously ($n,p(n),a(n)\to\infty$) and satisfying the condition%
\begin{equation}\label{e:three-fold_lim}
\frac{p(n) \hspace{0.5mm}a(n)}{n}=O(1).
\end{equation}
It is by considering the three-way limit \eqref{e:three-fold_lim}, which includes a \textit{scaling limit}, that large (wavelet) random matrices may be used in the characterization of low-frequency behavior in a high-dimensional framework. In fact, in this paper we show that the $r$ largest eigenvalues of wavelet random matrices display fractal -- or scaling -- properties determined by \eqref{e:H=PHdiag(h1,...,hn)P^(-1)H} as well as, under additional assumptions, asymptotically Gaussian fluctuations. In particular, such eigenvalues are explosive. By contrast, the remaining $p(n)-r$ eigenvalues do not exhibit fractality and remain bounded.

To be more precise, under very general assumptions, we establish that, for positive functions $\xi_q(\cdot)$,
\begin{equation}\label{e:eigenvalue_scaling_intro}
\lambda_{p(n)-r+q}\Big(\frac{{\mathbf W}(a(n)2^j)}{a(n)^{2h_{q}+1}} \Big) \stackrel{\bbP}\rightarrow \xi_{q}(2^j), \quad q = 1,\hdots,r,
\end{equation}
whereas $\lambda_{\ell}\big({\mathbf W}(a(n)2^j)\big)$, $\ell = 1,\hdots,p(n)-r$, are bounded in probability (see Theorem \ref{t:lim_n_a_times_lambda/a^(2h+1)}; see also Figure \ref{fig:logeig} for an illustration). Moreover, under slightly stronger conditions, we show that the random vector
\begin{equation}\label{e:log-eigenvalue_normality_intro}
\sqrt{\frac{n}{a(n)2^j}}\Big(\hspace{1mm}\log \lambda_{p(n)-r+q}\Big(\frac{{\mathbf W}(a(n)2^j)}{a(n)^{2h_{q}+1}} \Big) - \log \lambda_{p(n)-r+q}\Big(\frac{\bbE{\mathbf W}(a(n)2^j)}{a(n)^{2h_{q}+1}} \Big) \hspace{1mm}\Big), \quad q = 1,\hdots, r,
\end{equation}
is asymptotically Gaussian (see Theorem \ref{t:asympt_normality_lambdap-r+q}; see also Figure \ref{fig:logeig_fluctuations} for an illustration). In particular, the convergence rate in \eqref{e:log-eigenvalue_normality_intro} also involves the scaling limit. Note that this stands in sharp contrast with traditional high-dimensional analysis of sample covariance matrices, in which one considers the ratio $\lim_{n \rightarrow \infty} p(n)/n$ and the largest eigenvalue often exhibits universality in the form of Tracy--Widom fluctuations (e.g., Bai and Silverstein \cite{bai:silverstein:2010}, Lee and Schnelli \cite{lee:schnelli:2016}; on a comparison of \eqref{e:log-eigenvalue_normality_intro} with the potentially Gaussian fluctuations of the largest eigenvalues in spiked covariance models, see Remark \ref{r:asympt_rescaled_eigenvalues}, $(vi)$).

From the standpoint of probability theory, to the best of our knowledge this paper provides the first mathematical study of the high-dimensional properties of wavelet random matrices. It is also the first time, again to the best of our knowledge, that the role of scaling -- or low-frequency behavior -- is given special attention in the context of large random matrices, i.e., in the form of the three-way limit \eqref{e:three-fold_lim}.

From the standpoint of fractal analysis, this paper takes a decisive step in the expansion, to the high-dimensional context, of the study of scale-invariant and non-Markovian phenomena started by Kolmogorov \cite{kolmogorov:1940} and Mandelbrot and Van Ness \cite{mandelbrot:vanness:1968}, and later taken up by the likes of Flandrin \cite{flandrin:1992}, Wornell and Oppenheim \cite{wornell:oppenheim:1992}, Meyer et al.\ \cite{meyer:sellan:taqqu:1999}, among many others (see Pipiras and Taqqu \cite{pipiras:taqqu:2017}).

The expressions for the top $r$ wavelet eigenvalues involve discrepant scaling rates, leading to the presence of potentially explosive terms. For this reason, establishing \eqref{e:eigenvalue_scaling_intro} requires constructing a squeeze-type argument based on lower and upper bounds where such terms have been replaced by finite and convergent sequences. In turn, proving \eqref{e:log-eigenvalue_normality_intro} involves handling Taylor expansions of wavelet log-eigenvalues both in the high-dimensional limit and in the presence of potentially explosive terms. High-level discussions of the main technical issues involved in showing \eqref{e:eigenvalue_scaling_intro} and \eqref{e:log-eigenvalue_normality_intro} are provided at the beginning of Sections \ref{s:proof_of_Prop_conv_subseq_of_rescaled_eigenvalues} and \ref{s:proving_theorem_3.2}, respectively. The proofs of both Theorems \ref{t:lim_n_a_times_lambda/a^(2h+1)} and \ref{t:asympt_normality_lambdap-r+q} are original and involve nontrivial extensions and enhancements of the techniques first developed in Abry and Didier \cite{abry:didier:2018:n-variate,abry:didier:2018:dim2} for handling eigenvalues of fixed-dimensional wavelet random matrices.

For the sake of clarity and mathematical generality, our assumptions are stated directly in the wavelet domain, namely, in terms of properties of wavelet random matrices (see Section \ref{s:framework}). Our results have direct consequences for the empirical identification and description of fractality in high-dimensional systems, as briefly discussed in Section \ref{s:consequences_for_statistics} (see also Abry et al.\ \cite{abry:boniece:didier:wendt:2023:regression} on a multiscale regression-type methodology based on the theory of wavelet random matrices constructed in this paper). In Section \ref{s:examples}, we further provide encompassing classes of examples covered by the assumptions used in Section \ref{s:framework}. This includes the cases where $X$ is an ofBm, and also where, for each $p$, the $p$-variate noise term $Z$ is a classical, ARMA-type Gaussian linear process. We illustrate the flexibility of the framework provided by Theorems \ref{t:lim_n_a_times_lambda/a^(2h+1)} and \ref{t:asympt_normality_lambdap-r+q} by applying them to a class of (Gaussian) factor models. We also discuss some simple finite-variance and non-Gaussian instances $X$ of interest, hence illustrating the broad scope of the assumptions (see Section \ref{s:examples}). Detailed proofs for Section \ref{s:examples} can be found in the extended version of this paper (Abry et al.\ \cite{abry:boniece:didier:wendt:2023:extended}), which is self-contained and available online. See also Remark \ref{r:asympt_rescaled_eigenvalues} on the use of assumptions in Theorems \ref{t:lim_n_a_times_lambda/a^(2h+1)} and \ref{t:asympt_normality_lambdap-r+q}.

This paper is organized as follows. In Section \ref{s:framework}, we provide the basic wavelet framework, definitions and wavelet-domain assumptions used throughout the paper. In Section \ref{s:main}, we state and discuss the main results on the asymptotic and large-scale behavior of wavelet eigenvalues in high dimensions. In Section \ref{s:examples}, we provide Gaussian and non-Gaussian examples.  In Section \ref{s:proof_main_results}, we prove the main results, stated in Section \ref{s:main}. In Section \ref{s:conclusion}, we lay out conclusions and discuss several open problems that this work leads to. This includes new aspects of the theory of wavelet random matrices, as well as consequences for statistical inference and modeling. The appendix contains the statements and proofs of auxiliary results.

\begin{center}
\begin{figure}[ht!]
\centering
\begin{minipage}{.45\linewidth}
\includegraphics[width=\linewidth]{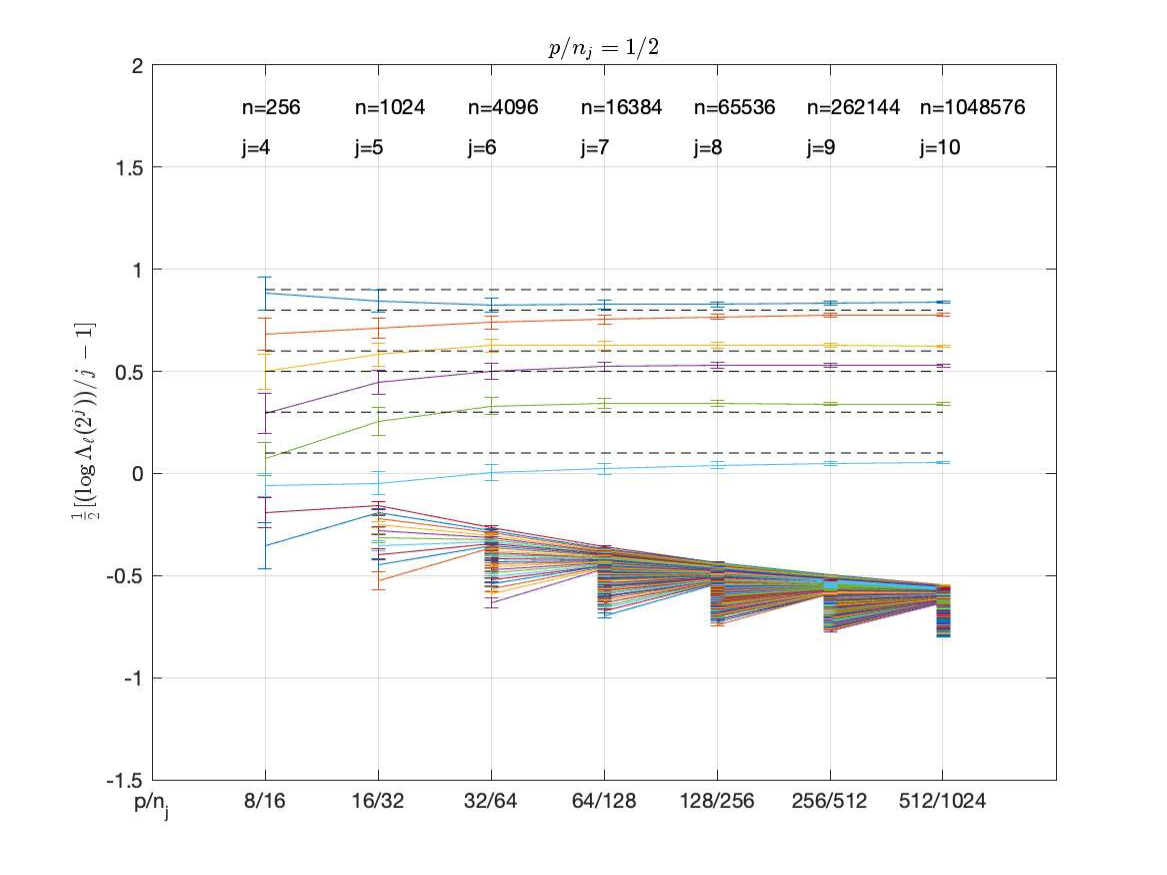}
\end{minipage}
\begin{minipage}{.45\linewidth}
\includegraphics[width=\linewidth]{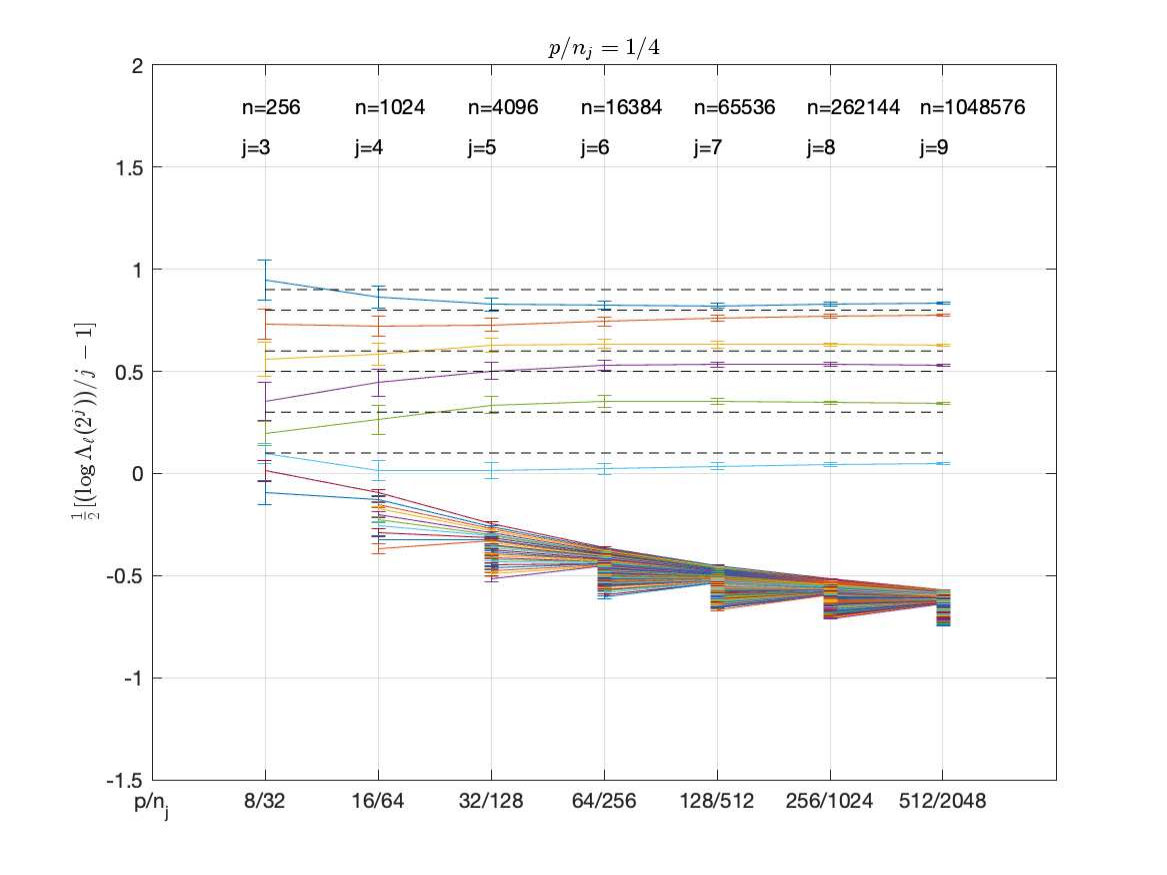}
\end{minipage}
\begin{minipage}{.45\linewidth}
\includegraphics[width=\linewidth]{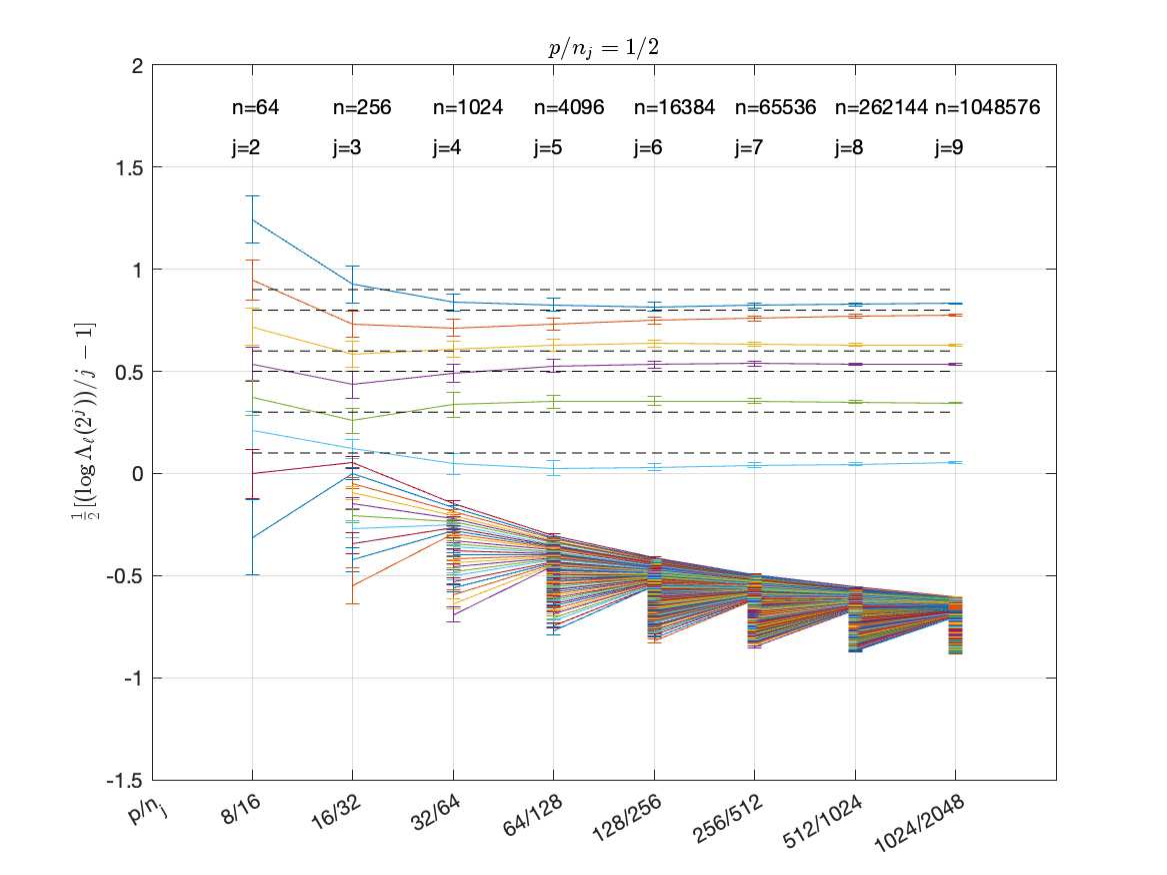}
\end{minipage}
\begin{minipage}{.45\linewidth}
\includegraphics[width=\linewidth]{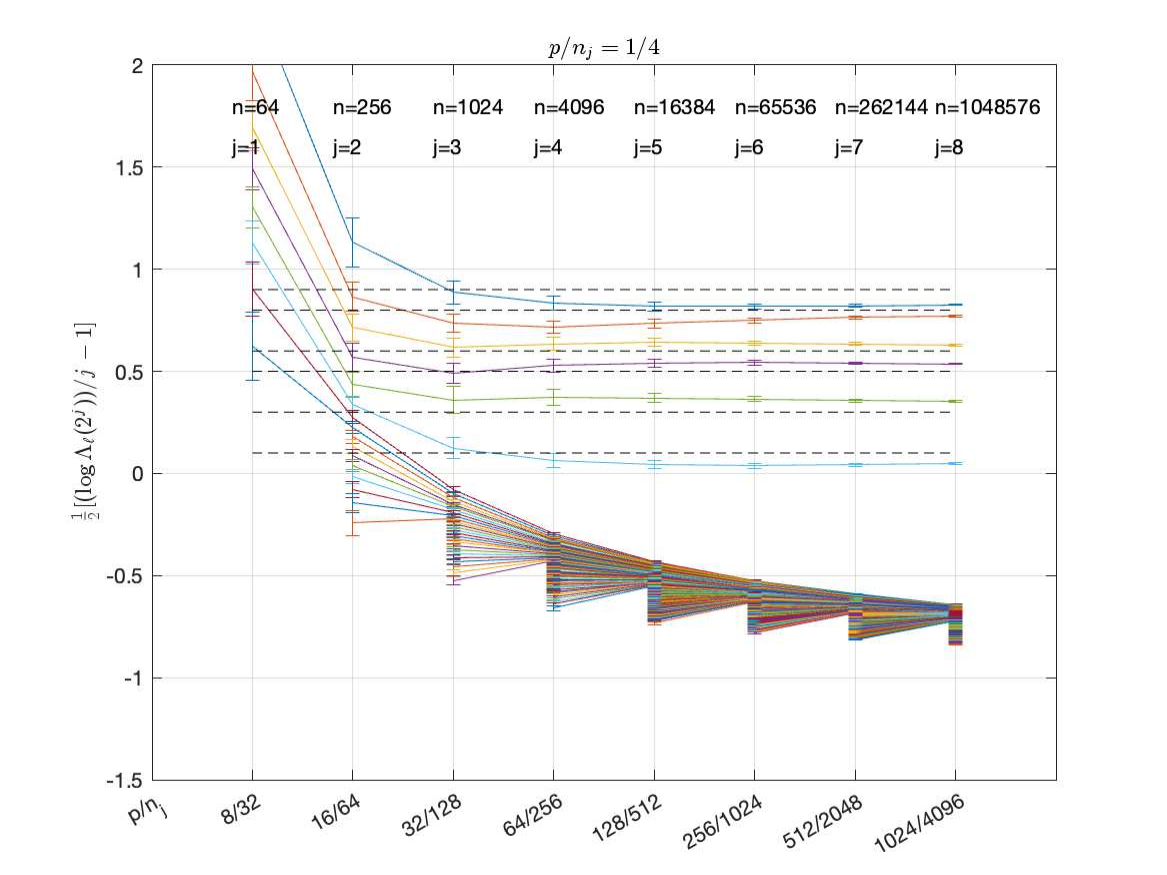}
\end{minipage}
\caption[The high-dimensional asymptotic behavior of log-eigenvalues]{\label{fig:logeig}\textbf{The convergence of the rescaled wavelet log-eigenvalues in the three-way limit $\frac{p\hspace{0.2mm}2^j}{n} \rightarrow c \in [0,\infty)$.} In this simulation exercise, $X$ is an ofBm and $Z$ is a vector of Gaussian white noise processes (see Section \ref{s:examples} for a discussion of these models). $X$ and $Z$ are generated independently. For each $n$, the coordinates matrix ${\mathbf P} {\mathbf P}_H$ is randomly drawn based on i.i.d.\ standard Gaussian entries, and then normalized to have unit-norm columns. For notational simplicity, we reexpressed the scaling factor as $a(n) = 2^j$, where $j = j_n \rightarrow \infty$. For $r=6$ and $h_q \in \{0.1,0.3, 0.5,0.6,0.8,0.9\}$, $q = 1,\hdots,r$, the plots display the asymptotic behavior of $\frac{1}{2}[(\log \Lambda_{\ell}(2^j))/j-1]$, where $\Lambda_{\ell}(2^j):=\lambda_{\ell}(\mathbf{W}(2^j))$, $\ell = 1,\hdots,p$. The six dashed lines correspond to the values $\{0.1,0.3, 0.5,0.6,0.8,0.9\}$. In all plots, $p$, $n$ and $j$ increase while their ratio remains fixed at $p \hspace{0.2mm} 2^j/n =: p/n_j= c = 1/2$ (left column) and $c=1/4$ (right column). In light of Theorem \ref{t:lim_n_a_times_lambda/a^(2h+1)}, for $j = j_n \rightarrow \infty$, $(1/j)\log \Lambda_{p-r+q}(2^j) \stackrel{\bbP}\rightarrow 2h_q+1$, $q=1,\hdots,6$, and $(1/j)\log \Lambda_{p-r}(2^j) \stackrel{\bbP}\rightarrow 0$ in the three-way limit. As expected, the $r=6$ largest $\frac{1}{2}[(\log \Lambda_{\ell}(2^j))/j-1]$ approach $h_1,\hdots,h_6$ in the plots as $n$, $p$ and $j$ grow. By contrast, the remaining $\frac{1}{2}[(\log \Lambda_{\ell}(2^j))/j-1]$ tend toward zero as $j$ increases in all instances. For a fixed $c$ (column), going from the top to the bottom row, the magnitudes of $p$ and $2^j$ are larger and \textnormal{smaller}, respectively, by a factor of 2 for each value of $n$. As a result, we observe near-convergence at \textnormal{smaller} octaves $j$ (\textbf{n.b.}: axes have been shifted to align the curves). Similarly, going from the left to the right column (i.e., as $c$ decreases), the near-convergence also occurs at \textnormal{smaller} $j$, which is reflective of a less extreme high-dimensional regime $c$.}
\end{figure}
\end{center}

\begin{center}
\begin{figure}[ht!]
\centering
\setlength{\tabcolsep}{2pt}\footnotesize
\noindent\begin{tabular}{ccccc}
\includegraphics[height=0.22\linewidth, trim=0 0 15 0, clip]{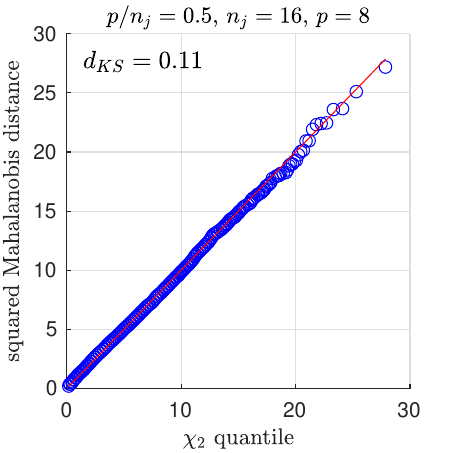}
&\includegraphics[height=0.22\linewidth, trim=15 0 15 0, clip]{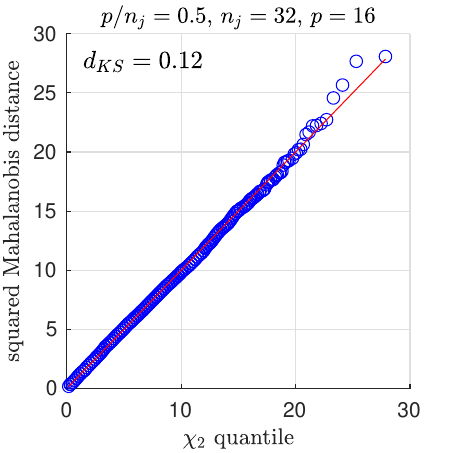}\vspace{1ex}&
&\includegraphics[height=0.22\linewidth, trim=15 0 15 0, clip]{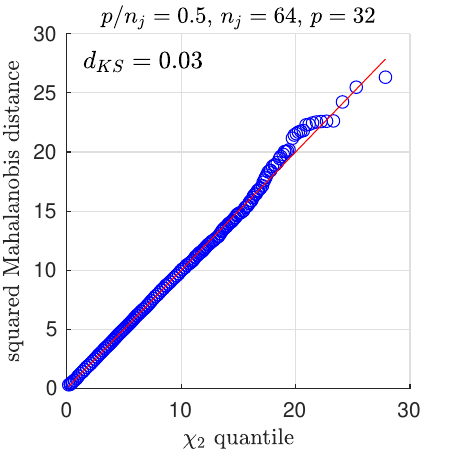}&
\includegraphics[height=0.22\linewidth, trim=15 0 15 0, clip]{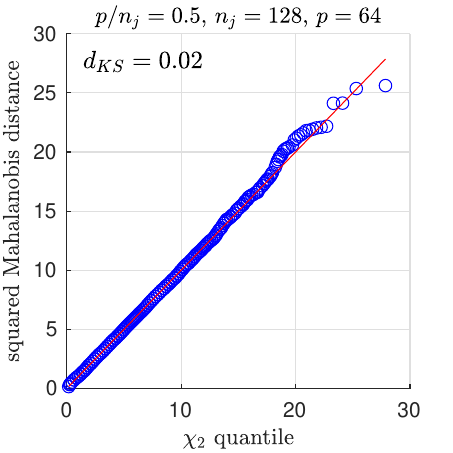}
\end{tabular}\vspace{-1ex}
\caption[Fluctuations]{\label{fig:logeig_fluctuations}\textbf{The fluctuations of wavelet log-eigenvalues in the three-way limit $\frac{p\hspace{0.2mm}2^j}{n} \rightarrow c=1/2$}. In the same simulation framework as for Figure \ref{fig:logeig}, Theorem \ref{t:asympt_normality_lambdap-r+q} predicts that the joint distribution of $\{\log \Lambda_{p-r+q}(2^j))\}_{q=1,\hdots,6}$ is asymptotically Gaussian in the three-way limit, after centering and rescaling. In fact, this convergence can be seen in the so-called Gamma plots displayed above, which are expected to look close to a straight line under joint Gaussianity. The plots show the empirical quantiles of the squared Mahalanobis distance statistic vs.\ the theoretical quantiles of a $\chi^2_6$ distribution based on 5000 realizations (e.g., Johnson and Wichern \cite{johnson:wichern:2002}). The (effective) sample size $n_j=n/2^j$ increases from left-to-right and $p/n_j$ is set equal to $1/2.$ The plots also display Kolmogorov-Smirnov distance statistics $d_{KS}$, which tend to shrink for larger values of $n_j$.}
\end{figure}
\end{center}

\section{Framework}\label{s:framework}

\subsection{Notation} For $m \in \bbN$, let ${\mathcal M}(m,\bbR)$ and ${\mathcal M}(m,\bbC)$ be the spaces of $m \times m$ real- and complex-valued matrices, respectively. Also, let ${\mathcal M}(m_1,m_2,\bbR)$ be the space of $m_1 \times m_2$ real-valued matrices. Let ${\mathcal S}(m,\bbR)$ and ${\mathcal S}(m,\bbC)$ be the spaces of $m \times m$ symmetric and Hermitian symmetric matrices, respectively. We use the notation ${\mathcal S}_{\geq 0}(m,\bbR)$ and ${\mathcal S}_{> 0}(m,\bbR)$ to denote the sets of symmetric positive semidefinite and symmetric positive definite matrices, respectively. The groups of real- or complex-valued invertible matrices are denoted by $GL(m,\bbR)$ and $GL(m,\bbC)$, respectively. The symbol $\mathbf I_m$ denotes the identity matrix in ${\mathcal M}(m,\bbR)$. For convenience, we may write $\mathbf I$ when the dimension is unambiguous. The notation $\bbS^{m-1}=\{\mathbf u\in \bbR^m: \mathbf u^*\mathbf u =1\}$ represents the $m-1$ dimensional sphere. Throughout the manuscript, $\|{\mathbf M}\|$ denotes the spectral norm of a matrix ${\mathbf M} \in {\mathcal M}(p,\bbR)$ in arbitrary dimension $p$, i.e.,
$\|{\mathbf M}\| = \sqrt{\sup_{{\mathbf u} \in \bbS^{p-1}} {\mathbf u}^* {\mathbf M} {\mathbf M}^* {\mathbf u}} = \sqrt{\sup_{{\mathbf u} \in \bbS^{p-1}} {\mathbf u}^* {\mathbf M}^* {\mathbf M} {\mathbf u}}$. Also, the norm $\|{\mathbf M}\|$ is analogously defined when ${\mathbf M}$ is rectangular. For any ${\mathbf M} \in {\mathcal S}(p,\bbR)$,
\begin{equation}\label{e:lambda1(M)=<...=<lambdap(M)}
- \infty < \lambda_1({\mathbf M}) \leq \hdots \leq \lambda_q({\mathbf M}) \leq \hdots \leq \lambda_p({\mathbf M}) < \infty
\end{equation}
denotes the set of ordered eigenvalues of the matrix ${\mathbf M}$. For ${\mathbf M} \in {\mathcal M}(r_1,r_2,\mathbb{R})$ and for any $i_1 \in \{1,\hdots,r_1\}$ and $i_2 \in \{1,\hdots,r_2\}$,
\begin{equation}\label{e:pi_i1,i2}
\pi_{i_1,i_2}({\mathbf M}) = m_{i_1,i_2}
\end{equation}
denotes entry $(i_1,i_2)$ of the matrix ${\mathbf M}$. Also,
\begin{equation}\label{e:vec_non-symm_def}
\textnormal{vec}({\mathbf M})=(m_{11},m_{21},\dots,m_{r_1 1},m_{12},m_{22},\dots,m_{r_1 2},\dots,m_{r_1 r_2}).
\end{equation}
For ${\mathbf S} = (s_{i_1,i_2})_{i_1,i_2=1,\dots,p}\in {\mathcal S}(p,\mathbb{R})$, we define the operator
\begin{equation}\label{e:vec_definitions}
\textnormal{vec}_{{\mathcal S}}({\mathbf S})=(s_{11},s_{21},\dots,s_{p1},s_{22},s_{32},\dots,s_{p2},\dots,s_{pp}),
\end{equation}
which gives the free entries of ${\mathbf S}$. Further recall that any matrix ${\mathbf M} \in {\mathcal M}(p,r,\bbR)$ admits a $QR$ decomposition
\begin{equation}\label{e:M=QR}
{\mathbf M} = {\mathbf Q}{\mathbf R},
\end{equation}
where ${\mathbf Q}\in {\mathcal M}(p,r,\bbR)$ has orthonormal columns and ${\mathbf R}\in GL(r,\bbR)$ (e.g., Horn and Johnson \cite{horn:johnson:2013}, Theorem 2.1.14, (a)). Given any matrix $\mathbf M \in {\mathcal M}(r,s,\bbR)$, $r,s \in \bbN$, for simplicity we write
\begin{equation}\label{e:A^perp}
\mathbf M^\perp = \{\mathbf v\in\bbR^r : \mathbf M^* \mathbf v=\mathbf 0\} = \text{nullspace}\{ \mathbf M^*\}.
\end{equation}
For any collection of vectors ${\mathbf v}_1,\hdots,{\mathbf v}_m$, $\textnormal{span}\{{\mathbf v}_1,\hdots,{\mathbf v}_m\} =
\textnormal{span}_{\ell=1,\hdots,m}\{{\mathbf v}_\ell\}$ denotes the linear space generated by these vectors. Likewise, for any collection of matrices $\mathbf M_i\in {\mathcal M}(r,s_i,\bbR)$, $i=1,\ldots, m$,
\begin{equation}\label{e:span(M1,...,Mm)}
\text{span}\{\mathbf M_1,\ldots, \mathbf M_m\}
\end{equation}
denotes the column space of the matrix $(\mathbf M_1\ldots \mathbf M_m)$.  We use the asymptotic notation
\begin{equation}\label{e:OP(1),OmegaP(1)}
o_\bbP(1), ~ O_\bbP(1), ~ o(1) ~\textnormal{ and }~ O(1)
\end{equation}
to describe sequences of matrices (or vectors) whose \textit{spectral norms} vanish or are bounded above, respectively, in probability or deterministically, as both $n,p\to\infty$ in accordance with \eqref{e:three-fold_lim}.

\subsection{Measurements}\label{s:measurements} Throughout the paper, we assume observations stem from the model \eqref{e:Y(t)}. The ``signal" $X = \{X(t)\}_{t =1,\ldots,n}$ and the ``noise" component $Z = \{Z(t)\}_{t =1,\ldots,n}$ are $\bbR^r$-valued and $\bbR^p$-valued stochastic processes, respectively, where $r$ is fixed and $p = p(n)$. Though not explicitly assumed, one can think that
\begin{equation}\label{e:X_and_Z_are_independent}
\textnormal{$X$ and $Z$ are second order, uncorrelated and zero-mean stochastic processes}
\end{equation}
(see also Remark \ref{r:asympt_rescaled_eigenvalues}, $(iv)$). The deterministic matrix ${\mathbf P} = {\mathbf P}(n)$ can be expressed as
\begin{equation}\label{e:P(n)}
{\mathcal M}(p,r,\bbR) \ni {\mathbf P}(n) = \Big( \mathbf{p}_{ 1}(n),\hdots,\mathbf{p}_{ r}(n) \Big), \quad \|\mathbf{p}_{ q}(n)\| = 1, \hspace{3mm} q = 1,\hdots,r.
\end{equation}
For the sake of clarity and mathematical generality, in Section \ref{s:WRMs_and_assumptions} we state directly in the wavelet domain the conditions for the convergence in probability as well as for the asymptotic normality of wavelet log-eigenvalues. Before doing so, in Section \ref{s:wavelet_analysis} we recap the basic framework of wavelet multiresolution analysis.

\subsection{Wavelet analysis}\label{s:wavelet_analysis}

Recall that a \textit{wavelet} $\psi$ is a unit $L^2(\bbR)$-norm function that annihilates polynomials (see \eqref{e:N_psi}). Throughout the paper, we make use of a wavelet multiresolution analysis (MRA; see Mallat \cite{mallat:1999}, chapter 7), which decomposes $L^2(\mathbb{R})$ into a sequence of \textit{approximation} (low-frequency) and \textit{detail} (high-frequency) subspaces $V_j$ and $W_j$, respectively, associated with different scales of analysis $2^{j}$, $j \in \bbZ$. In particular, given a wavelet $\psi$, there is a related \textit{scaling function} $\phi \in L^2(\bbR)$. Appropriate rescalings and shifts of $\phi$ and $\psi$ form bases for the subspaces $V_j$ and $W_j$, respectively (see Mallat \cite{mallat:1999}, Theorems 7.1 and 7.3).

In almost all mathematical statements, we make assumptions ($W1-W3$) on the underlying wavelet MRA. Such assumptions are standard in the wavelet literature and are accurately described in Section \ref{s:assumptions_on_the_MRA}. In particular, we make use of a compactly supported wavelet basis.

So, let $\phi$ and $\psi$ be the scaling and wavelet functions, respectively, associated with the wavelet MRA. We further suppose the wavelet coefficients stem from Mallat's pyramidal algorithm (Mallat \cite{mallat:1999}, chapter 7). For expositional simplicity, in our description of the algorithm we use the $\bbR^p$-valued process $Y$ in \eqref{e:Y(t)}, though analogous developments also hold for both $X$ and $Z$. Initially, suppose an infinite sequence of (generally dependent) random vectors
\begin{equation}\label{e:infinite_sample}
\{Y(\ell)\}_{\ell \in \bbZ},
\end{equation}
associated with the starting scale $2^j = 1$ (or octave $j = 0$), is available. Then, we can apply Mallat's algorithm to extract the so-named \textit{approximation} $(A(2^{j+1},\cdot))$ and \textit{detail} $(D(2^{j+1},\cdot))$ coefficients at coarser scales $2^{j+1}$ by means of an iterative procedure. In fact, as commonly done in the wavelet literature, we initialize the algorithm with the process
\begin{equation}\label{e:Btilde}
\bbR^p \ni \widetilde{Y}(t) := \sum_{k \in \bbZ} Y(k)\phi(t-k), \quad  t\in \bbR.
\end{equation}
By the orthogonality of the shifted scaling functions $\{\phi(\cdot - k)\}_{k \in \bbZ}$,
\begin{equation}\label{e:a(0,k)}
\bbR^p \ni  A(2^0,k)= \int_\bbR \widetilde{Y}(t)\phi(t-k)dt= Y(k), \quad k \in \bbZ
\end{equation}
(see Stoev et al.\ \cite{stoev:pipiras:taqqu:2002}, proof of Lemma 6.1, or Moulines et al.\ \cite{moulines:roueff:taqqu:2007:JTSA}, p.\ 160; cf.\ Abry and Flandrin \cite{abry:flandrin:1994}, p.\ 33). In other words, the initial sequence, at octave $j = 0$, of approximation coefficients is given by the original sequence of random vectors. To obtain approximation and detail coefficients at coarser scales, we use Mallat's iterative procedure
\begin{equation}\label{e:Mallat}
A(2^{j+1},k) = \sum_{k'\in \mathbb{Z}} u_{k'-2k}A(2^j,k'),\quad D(2^{j+1},k) =\sum_{k'\in \mathbb{Z}}v_{k'-2k} A(2^j,k'),  \quad k \in \mathbb{Z},
\end{equation}
for each $ j \in \mathbb{N} \cup \{0\}$. In \eqref{e:Mallat}, the (scalar) filter sequences $\{ u_k:=2^{-1/2}\int_\bbR \phi(t/2)\phi(t-k)dt \}_{k\in\bbZ}$ and $\{v_k:=2^{-1/2}\int_\bbR\psi(t/2)\phi(t-k)dt\}_{k\in\bbZ}$ are called low- and high-pass MRA filters, respectively. Due to the assumed compactness of the supports of $\psi$ and of the associated scaling function $\phi$ (see condition \eqref{e:supp_psi=compact}), only a finite number of filter terms is nonzero, which is convenient for computational purposes (Daubechies \cite{daubechies:1992}). {Hereinafter, we assume without loss of generality that $\text{supp}(\phi) = \text{supp}(\psi)=[0,T]$ (cf.\ Moulines et al.\ \cite{moulines:roueff:taqqu:2007:JTSA}, p.\ 160).} Moreover, the wavelet (detail) coefficients $D(2^j,k)$ of $Y$ can be expressed as
\begin{equation}\label{e:disc2}
\bbR^p \ni D(2^j,k) = \sum_{\ell\in\bbZ} Y(\ell)h_{j,2^jk-\ell},
\end{equation}
where the filter terms are defined as $ h_{j,\ell} =2^{-j/2}\int_\bbR\phi(t+\ell)\psi(2^{-j}t)dt$
(in the notation of \eqref{e:D(a(n)2^j,k)_intro}, $\widetilde{h}_{2^jk-\ell} = h_{j,2^jk-\ell}$). If we replace \eqref{e:infinite_sample} with the realistic assumption that only a finite length series
\begin{equation}\label{e:finite_sample}
\{Y(\ell)\}_{\ell=1,\hdots,n}
\end{equation}
is available, writing $\widetilde{Y}^{(n)}(t) := \sum_{\ell=1}^{n}Y(\ell)\phi(t-\ell)$, we have $\widetilde{Y}^{(n)}(t) = \widetilde{Y}(t)$ for all $t\in (T,n+1)$ (cf.\ Moulines et al.\ \cite{moulines:roueff:taqqu:2007:JTSA}). Noting that $D(2^j,k) = \int_\bbR \widetilde Y(t) 2^{-j/2}\psi(2^{-j}t - k)dt$ and $D^{(n)}(2^j,k) = \int_\bbR \widetilde Y^{(n)}(t)2^{-j/2}\psi(2^{-j}t - k)dt$, it follows that the finite-sample wavelet coefficients $D^{(n)}(2^j,k)$ of $\widetilde{Y}^{(n)}(t) $ are equal to $D(2^j,k)$ whenever $\textnormal{supp } \psi(2^{-j} \cdot - k) = (2^jk, 2^j (k+T)) \subseteq (T,n+1)$. In other words,
\begin{equation}\label{e:d^n=d}
D^{(n)}(2^j,k) = D(2^j,k),\qquad \forall (j,k)\in\{(j,k): 2^{-j}T\leq k\leq 2^{-j}(n+1)-T\}.
\end{equation}
Equivalently, such subset of finite-sample wavelet coefficients is not affected by the so-named \textit{border effect} (cf.\ Craigmile et al.\ \cite{craigmile:guttorp:Percival:2005}, Percival and Walden \cite{percival:walden:2006}). Moreover, by \eqref{e:d^n=d} the number of such coefficients at octave $j$ is given by $n_j = \lfloor 2^{-j}(n+1-T)-T\rfloor$. Hence, $n_j \sim 2^{-j}n$ for large $n$. Thus, for notational simplicity we suppose
 \begin{equation}\label{e:nj=n/2^j}
 n_{j} = \frac{n}{2^j}
 \end{equation}
 holds exactly and only work with wavelet coefficients unaffected by the border effect.

\subsection{Wavelet random matrices and assumptions}\label{s:WRMs_and_assumptions} For $j \in \bbN \cup \{0\}$, $k \in \bbZ$ and a dyadic sequence $\{a(n)\}_{n \in \bbN}$, the random vectors
\begin{equation}\label{e:wavelet_transform_Y_X_Z}
D(a(n)2^j,k)  \in \bbR^p, \quad  D_X(a(n)2^j,k)   \in \bbR^r \quad \textnormal{and} \quad D_Z(a(n)2^j,k) \in \bbR^p
\end{equation}
denote the \textit{wavelet transform} at scale $a(n)2^j$ of the stochastic processes $Y$, $X$ or $Z$, respectively. Whenever well defined, the \textit{wavelet random matrix} -- or \textit{sample wavelet (co)variance} -- of $Y$ at scale $a(n)2^j$ is denoted by
\begin{equation}\label{e:W(a(nu))}
{\mathcal S}_{\geq 0}(p,\bbR) \ni \mathbf{W}(a(n) 2^j) = \frac{1}{n_{a,j}}\sum^{n_{a,j}}_{k=1}D(a(n)2^j,k)D(a(n)2^j,k)^*, \quad n_{a,j} = \frac{n}{a(n)2^j}.
\end{equation}
The remaining wavelet random matrix terms $\W_X,\W_{X,Z},\W_Z$ are naturally defined as
$$
\mathbf{W}_X(a(n) 2^j) = \frac{1}{n_{a,j}}\sum^{n_{a,j}}_{k=1}D_X(a(n)2^j,k)D_X(a(n)2^j,k)^* \in {\mathcal S}_{\geq 0}(r,\bbR),
$$
$$
\mathbf{W}_{X,Z}(a(n) 2^j) = \frac{1}{n_{a,j}}\sum^{n_{a,j}}_{k=1}D_X(a(n)2^j,k)D_Z(a(n)2^j,k)^* \in {\mathcal M}(r,p,\bbR)
$$
\begin{equation}\label{e:WZ(a(n)2^j)}
\textnormal{and}\quad \mathbf{W}_{Z}(a(n) 2^j) = \frac{1}{n_{a,j}}\sum^{n_{a,j}}_{k=1}D_Z(a(n)2^j,k)D_Z(a(n)2^j,k)^* \in {\mathcal S}_{\geq 0}(p,\bbR).
\end{equation}
In particular, since $p = p(n) \rightarrow \infty$ in general (see \eqref{e:p(n),a(n)_conditions}), only $\mathbf{W}_X(a(n) 2^j)$ in \eqref{e:WZ(a(n)2^j)} has fixed dimensions.

We further define the auxiliary random matrix
\begin{equation}\label{e:B-hat_a(2^j)}
\widehat{{\mathbf B}}_a(2^j) = {\mathbf P}_H^{-1}\big\{a(n)^{-{\mathbf H}-(1/2){\mathbf I}}\hspace{1mm}{\mathbf W}_X(a(n)2^j)\hspace{1mm}a(n)^{-{\mathbf H}^*-(1/2){\mathbf I}} \big\}({\mathbf P}_H^*)^{-1}\in {\mathcal S}_{\geq 0}(r,\bbR).
\end{equation}
Its mean is denoted by
\begin{equation}\label{e:B_a(2^j)=EB-hat_a(2^j)}
{\mathbf B}_a(2^j) := \bbE \widehat{{\mathbf B}}_a(2^j)
\end{equation}
whenever it exists. In \eqref{e:B-hat_a(2^j)}, we assume that the \textit{scaling matrix} ${\mathbf H}$ has the Jordan form
\begin{equation}\label{e:H_is_diagonalizable}
{\mathbf H} = {\mathbf P}_H \textnormal{diag}(h_1,\hdots,h_r) {\mathbf P}^{-1}_H, \quad  {\mathbf P}_H \in GL(r,\bbR), \quad -1/2 < h_1 \leq \hdots \leq h_r <\infty.
\end{equation}
For the sake of illustration, when $X$ is an ofBm, ${\mathbf H}$ is a \textit{Hurst matrix} whose ordered eigenvalues satisfy $0 < h_1 \leq \hdots \leq h_r \leq 1$ (see Section \ref{s:examples}). Moreover, in this case it can be shown that the relation
\begin{equation}\label{e:W_X(a2^j)_approx}
{\mathbf W}_X(a(n)2^j) \approx a(n)^{{\mathbf H}+(1/2){\mathbf I}}{\mathbf W}_X(2^j) a(n)^{{\mathbf H}^*+(1/2){\mathbf I}} %
\end{equation}
holds approximately in law (cf.\ Abry and Didier \cite{abry:didier:2018:dim2}, Proposition 3.1), where ${\mathbf W}_X(2^j) := \frac{1}{n_{a,j}}\sum^{n_{a,j}}_{k=1}D_X(2^j,k)D_X(2^j,k)^*$. Hence, the matrix $\widehat{{\mathbf B}}_a(2^j)$ can be interpreted as a version of ${\mathbf W}_X(a(n)2^j)$ after compensating for scaling (i.e., multiplication by $a(n)^{-{\mathbf H}-(1/2){\mathbf I}}$ and its transpose) and non-canonical coordinates (i.e., multiplication by ${\mathbf P}_H^{-1}$ and its transpose).

We make use of the following assumptions in the main results of this paper (Section \ref{s:main}). For expository purposes, we first state the assumptions, and then provide some interpretation. Throughout the sequel, we fix a finite number $m \in \bbN$ of integers
\begin{equation}\label{e:def_j1,jm}
0 \leq j_1<\ldots<j_m.
\end{equation}
They correspond to the entries of the vector of random matrices $\big({\mathbf W}(a(n)2^{j})\big)_{j=j_1,\hdots,j_m}$, whose spectral behavior in the three-way limit \eqref{e:three-fold_lim} is the central focus of this work.

\medskip

\noindent {\sc Assumption $(A1)$}: Given \eqref{e:P(n)}, \eqref{e:W(a(nu))} and \eqref{e:WZ(a(n)2^j)}, for $j = j_1,\ldots,j_m$ and any $n \in \bbN$, the wavelet random matrix
$$
{\mathbf W}(a(n)2^j) = {\mathbf P}(n){\mathbf W}_X(a(n)2^j){\mathbf P}^*(n) + {\mathbf W}_Z(a(n)2^j)
$$
\begin{equation}\label{e:W=PWXP*+WZ+PWXZ+WXZ*P*}
 + {\mathbf P}(n){\mathbf W}_{X,Z}(a(n)2^j)+ {\mathbf W}^*_{X,Z}(a(n)2^j){\mathbf P}^*(n),
\end{equation}
and each sum term on the right-hand side of \eqref{e:W=PWXP*+WZ+PWXZ+WXZ*P*} are well defined a.s. Also, all entry-wise moments of the random matrices in \eqref{e:W=PWXP*+WZ+PWXZ+WXZ*P*} exist and
\begin{equation}\label{e:EW_X,Z(a(n)2^j)=0}
\bbE {\mathbf W}_{X,Z}(a(n)2^j) = {\mathbf 0}.
\end{equation}

\medskip

\noindent {\sc Assumption $(A2)$}: In \eqref{e:W=PWXP*+WZ+PWXZ+WXZ*P*}, %
\begin{equation}\label{e:assumptions_WZ=OP(1)}
\|\W_Z(a(n)2^j)\|  = O_{\bbP}(1) \quad\text{and}\quad \|\E \mathbf{W}_Z(a(n)2^j)\| = O(1).
\end{equation}

\medskip

\noindent {\sc Assumption $(A3)$}: the random matrix $\widehat{{\mathbf B}}_a(2^j)$ as in \eqref{e:B-hat_a(2^j)} satisfies
\begin{equation}\label{e:sqrt(K)(B^-B)->N(0,sigma^2)}
\Big(\hspace{1mm}\sqrt{n_{a,j}}\hspace{0.5mm}(\vecoper_{{\mathcal S}}\widehat{{\mathbf B}}_a(2^j) - \vecoper_{{\mathcal S}}{\mathbf B}_a(2^j))\hspace{1mm}\Big)_{j=j_1,\hdots,j_m}\stackrel{d}\rightarrow {\mathcal N}\big(0,\Sigma_B(j_1,\hdots,j_m)\big), \quad n \rightarrow \infty,
\end{equation}
for some $\Sigma_B(j_1,\hdots,j_m) \in {\mathcal S}_{\geq 0}(m \cdot r,\bbR)$. In addition, its mean ${\mathbf B}_a(2^j) = \bbE \widehat{{\mathbf B}}_a(2^j)$ satisfies
\begin{equation}\label{e:|bfB_a(2^j)-B(2^j)|=O(shrinking)}
\|{{\mathbf B}}_a(2^j) - \mathbf B(2^j) \| = o(1), \quad  n\to\infty, \quad j=j_1,\hdots,j_m,
\end{equation}
where $\mathbf B(2^j)$ is some matrix such that
\begin{equation}\label{e:B(2^j)_full_rank}
\mathbf B(2^j) \in \mathcal S_{>0}(r,\bbR). %
\end{equation}

\medskip

\noindent {\sc Assumption $(A4)$}: The dimension $p(n) \geq r$ and the dyadic scaling factor $a(n)$ satisfy the relations
\begin{equation}\label{e:p(n),a(n)_conditions}
{a(n) \leq \frac{n}{2^{j_m}}}, \quad \frac{a(n)}{n}+\frac{n}{a(n)^{h_1 + 3/2}} \rightarrow 0, \quad\frac{p(n)}{n/a(n)}=O(1),\quad n \rightarrow \infty.
\end{equation}
\medskip

\noindent {\sc Assumption $(A5)$}: Let ${\mathbf P}(n) \in {\mathcal M}(p,r,\bbR)$ and ${\mathbf P}_H \in GL(r,\bbR)$ be deterministic matrices as in \eqref{e:P(n)} and \eqref{e:H_is_diagonalizable}, respectively. Let
\begin{equation}\label{e:P(n)=Q(n)R(n)}
{\mathbf P}(n){\mathbf P}_H={\mathbf Q}(n){\mathbf R}(n)
\end{equation}
be the $QR$ decomposition of ${\mathbf P}(n){\mathbf P}_H$ (cf.\ \eqref{e:M=QR}). Then, there exists a (deterministic) matrix ${\mathbf A} \in \mathcal{S}_{>0}(r,\bbR)$ with Cholesky decomposition
${\mathbf A} = {\mathbf R}^* {\mathbf R}$
such that
\begin{equation}\label{e:<p1,p2>=c12_2}
\|{\mathbf R}(n)-{\mathbf R}\|  = o(1).
\end{equation}

\medskip

Assumptions $(A1 - A3)$ pertain to wavelet domain behavior.  Assumption $(A1)$ holds under very general conditions. In fact, under $(W1-W3)$, it is satisfied assuming \eqref{e:X_and_Z_are_independent}. Assumption $(A2)$ ensures that the influence of the matrices ${\mathbf W}_{Z}(a(n)2^j)$ and $\bbE {\mathbf W}_{Z}(a(n)2^j)$ is not too large on the behavior of ${\mathbf W}(a(n)2^j)$ and $\bbE {\mathbf W}(a(n)2^j)$, respectively. In particular, the matrix ${\mathbf W}_{Z}(a(n)2^j)$ for the noise term displays no explosive scaling behavior.  Assumption $(A3)$ posits the asymptotic normality of the (wavelet domain) fractional component ${\mathbf W}_{X}(a(n)2^j)$ after compensating for scaling and non-canonical coordinates.

In turn, assumption $(A4)$ controls the divergence rates among $n$, $a(n)$ and $p(n)$ in the three-way limit. In particular, it states that the scaling factor $a(n)$ must blow up slower than $n$, and that the three-component ratio $\frac{p(n) \hspace{0.5mm}a(n)}{n}$ must converge to a constant (cf.\ the traditional ratio $\lim_{n \rightarrow \infty} p(n)/n$ for high-dimensional sample covariance matrices). Assumption $(A5)$ ensures that, asymptotically speaking, the angles between the column vectors of the matrix ${\mathbf P}(n) {\mathbf P}_H$ converge in such a way that the matrix  $\lim_{n \rightarrow \infty}{\mathbf P}^*_H {\mathbf P}^*(n){\mathbf P}(n){\mathbf P}_H = \lim_{n \rightarrow \infty}{\mathbf R}^*(n){\mathbf R}(n)  = {\mathbf A}$ has full rank. This entails that ${\mathbf P}(n)$ does not strongly impact the scaling properties of the hidden random matrix ${\mathbf W}_{X}(a(n)2^j)$.\vspace{2mm}

A discussion of some broad Gaussian and non-Gaussian contexts where assumptions $(A1-A3)$ are satisfied is deferred to Section \ref{s:examples}. Heuristically, assuming a large enough $N_{\psi}$, these assumptions hold for several instances of $X$ and $Z$. This is so, for example, when $Z$ is an ARMA-type $p$-variate process and, for some appropriate matrix ${\mathbf H}$, $X$ is a $r$-variate, stationary-increment (Gaussian) process satisfying the scaling relation $\{X(ct)\}_{t \in \bbR} \approx \{c^{{\mathbf H}}X(t)\}_{t \in \bbR}$ for all $c > 0$ (see the examples in Section \ref{s:examples}).

\section{Main results}\label{s:main}

\subsection{Asymptotic behavior of wavelet eigenvalues}

In our first theorem, we establish that, after proper rescaling, the $r$ largest eigenvalues of a wavelet random matrix $\mathbf{W}(a(n)2^j)$ in high dimensions converge in probability to deterministic functions $\xi_q(2^j)$, $q = 1,\hdots,r$. Thus, these functions can be interpreted as asymptotic rescaled eigenvalues. Notably, they display a scaling property. Moreover, the remaining $p(n)-r$ eigenvalues of a wavelet random matrix are bounded in probability.
\begin{theorem}\label{t:lim_n_a_times_lambda/a^(2h+1)}
Fix any $j$ as in \eqref{e:def_j1,jm} and assume $(W1 - W3)$ and $(A1 - A5)$ hold. Then, for $p = p(n)$, the limits
\begin{equation}\label{e:lim_n_a*lambda/a^(2h+1)}
\plim_{n \rightarrow \infty}\frac{\lambda_{p-r+q}\big(\mathbf{W}(a(n)2^j)\big)}{a(n)^{2h_q+ 1}} =: \xi_q(2^j) > 0, \quad q=1,\ldots,r,
\end{equation}
exist, and the deterministic functions $\xi_q$ satisfy the scaling relation
\begin{equation}\label{e:xi-i0_scales}
\xi_{q}(2^j)  = 2^{j \hspace{0.5mm}(2 h_{q}+1)} \xi_{q}(1).
\end{equation}
In addition,
\begin{equation}\label{e:lim_lambda_p-r(W)}
0\leq \lambda_{1}\big(\mathbf{W}(a(n)2^j)\big) \leq \ldots \leq \lambda_{p-r}\big(\mathbf{W}(a(n)2^j)\big) = O_{\bbP}(1).
\end{equation}
\end{theorem}

\begin{remark}\label{r:explicit_expression_for_xiq}
For any fixed $j$ as in \eqref{e:def_j1,jm} and $q \in \{1,\hdots,r\}$, relation \eqref{e:xi_ell(2^j)=lambda_r-r2+ell} in the proof of Theorem \ref{t:lim_n_a_times_lambda/a^(2h+1)} provides the explicit expression $\xi_q(2^j) = \lambda_{r-r_2+q}(\boldsymbol \Lambda)$, where $\boldsymbol \Lambda = \boldsymbol \Lambda(2^j)$ is determined from relations \eqref{e:M=B22-B23*B33(-1)B32}, \eqref{e:Pi3} and \eqref{e:def_Lambda}. In particular, $\xi_q(2^j)$ depends on ${\mathbf B}(2^j)$ and on the limiting behavior of the coordinates matrix ${\mathbf P}(n)$ (see \eqref{e:P=(P1(n)_P2(n)_P3(n))} and \eqref{e:R=(R1,R2,R3)}). See also Example \ref{ex:proof_of_Theo_3.1} for an illustration based on a simplified case.
\end{remark}
In our second theorem, we establish the asymptotic normality of the $r$ largest wavelet log-eigenvalues in high dimensions. Note that this theorem requires stronger assumptions than the previous one (see Remark \ref{r:asympt_rescaled_eigenvalues}, $(i)-(v)$, on the use of assumptions in Theorems \ref{t:lim_n_a_times_lambda/a^(2h+1)} and \ref{t:asympt_normality_lambdap-r+q}).
\begin{theorem}\label{t:asympt_normality_lambdap-r+q}
Fix integers $0\leq j_1<j_2<\ldots < j_m$ as in \eqref{e:def_j1,jm} and assume $(W1 - W3)$ and $(A1 - A5)$ hold. Further suppose that
\begin{equation}\label{e:xiq_distinct}
\text{ whenever $h_{\ell_1}=h_{\ell_2}$ with $\ell_1\neq\ell_2$,} \quad \text{then} \quad \xi_{\ell_1}(1)\neq \xi_{\ell_2}(1).
\end{equation}
Then, for $p = p(n)$, as $n \rightarrow \infty$,
$$
 \Big( \sqrt{n_{a,j}}\Big( \log \lambda_{p-r+q}\big({\mathbf W}(a(n)2^{j})\big)  - \log \lambda_{p-r+q}\big(\bbE {\mathbf W} (a(n)2^{j})\big) \Big)_{q=1,\hdots,r} \Big)_{j=j_1,\hdots,j_m}
 $$
 \begin{equation}\label{e:asympt_normality_lambda2}
 \hspace{8cm}\stackrel{d}\rightarrow {\mathcal N}(0,\Sigma_{\lambda})
\end{equation}
for some $\Sigma_{\lambda} \in {\mathcal S}_{\geq 0}(m \cdot r,\bbR)$.
\end{theorem}

\begin{remark}\label{r:theo_3.2}
Condition \eqref{e:xiq_distinct} covers the central subcases where the scaling eigenvalues are simple ($h_1 < \hdots < h_r$) or identical ($h_1 = \hdots = h_r$) with distinct constants $\xi_q(1)$, $q = 1,\hdots,r$.
\end{remark}

\begin{remark}\label{r:asympt_rescaled_eigenvalues}
Some comments are in order on the use of each assumption within the theorems and also on the statements of the theorems.
\begin{itemize}
\item [$(i)$] In Theorems \ref{t:lim_n_a_times_lambda/a^(2h+1)} and \ref{t:asympt_normality_lambdap-r+q}, only assumptions $(A1-A5)$ are directly used. Nevertheless, assumptions $(W1-W3)$ on the underlying wavelet basis are implicitly used in the definition of wavelet random matrices. Also, they are applied in the construction of examples of frameworks where conditions $(A1-A5)$ hold. As anticipated in the Introduction, these examples are developed in Section \ref{s:examples}.
\item [$(ii)$] In Theorem \ref{t:lim_n_a_times_lambda/a^(2h+1)}, the only aspect of \eqref{e:sqrt(K)(B^-B)->N(0,sigma^2)} that is used is the fact that
\begin{equation}\label{e:||B-hat_a(2^j)-B(2^j)||->0}
\big\|\big(\vecoper_{{\mathcal S}}\widehat{{\mathbf B}}_a(2^j) - \vecoper_{{\mathcal S}}{\mathbf B}_a(2^j)\big)_{j=j_1,\hdots,j_m}\big\|\stackrel{\bbP}\rightarrow 0, \quad n \rightarrow \infty.
\end{equation}
By contrast, the convergence in distribution \eqref{e:sqrt(K)(B^-B)->N(0,sigma^2)} is fully used in the proof of Theorem \ref{t:asympt_normality_lambdap-r+q}.
\item [$(iii)$] By comparison to Theorem \ref{t:lim_n_a_times_lambda/a^(2h+1)}, the asymptotic normality of wavelet log-eigenvalues obtained in Theorem \ref{t:asympt_normality_lambdap-r+q} requires the additional condition \eqref{e:xiq_distinct} so as to ensure the simplicity of wavelet eigenvalues. Without condition \eqref{e:xiq_distinct}, due to the lack of smoothness of eigenvalues, the asymptotic distribution of wavelet log-eigenvalues in high dimensions is expected to be generally non-Gaussian. A broad characterization of such distribution remains an open problem.

\item[$(iv)$] The assumption that $\bbE {\mathbf W}_{X,Z}(a(n)2^j) = {\mathbf 0}$ (see \eqref{e:EW_X,Z(a(n)2^j)=0}; cf.\ \eqref{e:X_and_Z_are_independent}) is \textit{not} essential. It just conveniently simplifies some expressions in the proof of Theorem \ref{t:asympt_normality_lambdap-r+q}, hence rendering the argument more readable.
\item[$(v)$]  For illustration, suppose in  \eqref{e:three-fold_lim} that the limit $c:=\lim_{n \rightarrow \infty}\frac{p(n) \hspace{0.5mm}a(n)}{n}\in[0,\infty)$ exists. In this case, unlike in a traditional Mar$\breve{\textnormal{c}}$enko-Pastur limit (see Bai and Silverstein \cite{bai:silverstein:2010}, Chapter 3), the particular value of $c\geq 0$
    does not play any role in the claims of either one of the two theorems. The boundedness of the ratio \eqref{e:three-fold_lim} is critically used in the bounds \eqref{e:term3_bound} and \eqref{e:sqrt(n/a)(fnq2(rescaled_WXZ)-fnq2(0))_bound}, when proving Theorem \ref{t:asympt_normality_lambdap-r+q}. Namely, \eqref{e:three-fold_lim} guarantees that the centered Taylor expansions of the functions $f_{n,q,2}$ and $f_{n,q,3}$ (see \eqref{e:fv} and \eqref{e:fv_3}) vanish.
\item[$(vi)$] In the rich literature on spiked covariance models (Johnstone \cite{johnstone:2001}, Baik and Silverstein \cite{baik:silverstein:2006}, Wang and Fan \cite{wang:fan:2017}, Cai et al.\ \cite{cai:han:pan:2020}, Diaconu \cite{diaconu:2023}), the top eigenvalues of sample covariance matrices may also display asymptotically Gaussian fluctuations under conditions (Bai and Yao \cite{bai:yao:2008}). In this case, though, Gaussianity is a fixed-scale phenomenon, stemming from direct assumptions on the magnitude of top population eigenvalues and low-rank perturbations of sample covariance matrices (e.g., Bai and Yao \cite{bai:yao:2012}). Similar remarks can be made about related phenomena appearing in the vast literature on principal components analysis (e.g., Johnstone \cite{johnstone:2001}, Johnstone and Paul \cite{johnstone:paul:2018}, Wang and Fan \cite{wang:fan:2017}). By contrast, in Theorem \ref{t:asympt_normality_lambdap-r+q} the asymptotically Gaussian fluctuations are a large-scale phenomenon. They are fundamentally based on the distinct scaling behavior displayed by the latent process $X$ and by the noise term $Z$, captured in the eigenvalues of wavelet random matrices.
\end{itemize}
\end{remark}

\begin{remark}\label{r:multiresolution_RM}
The mathematical framework of Theorems \ref{t:lim_n_a_times_lambda/a^(2h+1)} and \ref{t:asympt_normality_lambdap-r+q} applies to a much larger class of random matrices which includes sample covariance matrices. This is so because, as mentioned in Remark \ref{r:asympt_rescaled_eigenvalues}, $(i)$, only the assumptions $(A1 - A5)$ are directly used in the proofs of the theorems.

To see this, let $Y = \{Y(t)\}_{t \in \bbZ}$ be the $p$-variate stochastic process \eqref{e:Y(t)}, with $X = \{X(t)\}_{t \in \bbZ}$ and $Z = \{Z(t)\}_{t \in \bbZ}$ as described in Section \ref{s:WRMs_and_assumptions}. For any scale $\widetilde{a} \in \bbN \cup \{0\}$ and ``time" parameter $\kappa \in \bbZ$, a \textit{multiresolution random vector}
\begin{equation}\label{e:T(a(n)2^j,k)}
T(\widetilde{a},\kappa) \in \bbR^p
\end{equation}
associated with $Y$ is a measurable function of the process $Y$ that depends on $\widetilde{a}$ and $\kappa$ (cf.\ Jaffard, Lashermes and Abry \cite{jaffard:lashermes:abry:2006} and Jaffard, Seuret et al.\ \cite{jaffard:seuret:wendt:leonarduzzi:roux:abry:2019}).  %
Examples of sequences of multiresolution random vectors include the wavelet transform \eqref{e:wavelet_transform_Y_X_Z} itself (for the choices $\widetilde{a} = a(n)2^j$, $\kappa = a(n)2^j k$), as well as the increments
\begin{equation}\label{e:increments}
T(\widetilde{a},\kappa) = Y(\kappa + \widetilde{a}) - Y(\kappa).
\end{equation}
Let $\bbZ_{\widetilde{a}}$ be the set of values of $\kappa$ available at scale $\widetilde{a}$, where $\widetilde{n}_{a}:=\textnormal{card}(\bbZ_{\widetilde{a}})$ (for wavelet random matrices, $\widetilde{n}_{a} = n_{a,j}$). Let $\{T(\widetilde{a},\kappa)\}_{\kappa \in \bbZ_{\widetilde{a}}}$ be a sequence of multiresolution random vectors as in \eqref{e:T(a(n)2^j,k)}. The associated \textit{multiresolution random matrix} is defined as
\begin{equation}\label{e:multires_RM}
{\mathbf W}(\widetilde{a}) = \frac{1}{\widetilde{n}_{a}} \sum_{\kappa \in \bbZ_{\widetilde{a}}} T(\widetilde{a},\kappa)T(\widetilde{a},\kappa)^* \in {\mathcal S}_{\geq 0}(p,\bbR).
\end{equation}
When $T(\widetilde{a},\kappa)$ is given by the increments \eqref{e:increments}, then ${\mathbf W}(\widetilde{a})$ is a classical \textit{sample covariance matrix} (at scale $\widetilde{a}$). Analogously, we can define multiresolution random vectors and random matrices associated with $X$ and $Z$. Then, \textit{mutatis mutandis}, under assumptions $(A1 - A5)$ the proofs of Theorems \ref{t:lim_n_a_times_lambda/a^(2h+1)} and \ref{t:asympt_normality_lambdap-r+q} show that the conclusions of these theorems hold for \eqref{e:multires_RM}.
\end{remark}

\subsection{Consequences for statistical inference: a short discussion}\label{s:consequences_for_statistics}

For the sake of illustration, consider first the classical univariate context $p=1$. Suppose \eqref{e:D(a(n)2^j,k)_intro} is the wavelet transform of a fBm with Hurst (scaling) exponent $h \in (0,1)$. Then, under mild assumptions on the wavelet basis it can be shown that \eqref{e:W(a2^j)_intro} satisfies
\begin{equation}\label{e:W(a(n)2^j)_fBm}
\bbR \ni {\mathbf W}(a(n)2^j) \approx a(n)^{2h+1}{\mathbf W}(2^j), \quad \sqrt{n_{a,j}}\big({\mathbf W}(2^j)- \bbE{\mathbf W}(2^j)) \stackrel{d}\rightarrow {\mathcal N}(0,\sigma^2_B(j)),
\end{equation}
for large $n$ and $a(n)$ (e.g., Bardet \cite{bardet:2002}, Moulines et al.\ \cite{moulines:roueff:taqqu:2008}). After linearizing the first relation in \eqref{e:W(a(n)2^j)_fBm} by means of a logarithmic transformation, a multiscale regression-type procedure can be used for statistical inference on $h$ and other parameters (Veitch and Abry \cite{veitch:abry:1999}).

The difficulties involved in high-dimensional statistical inference are much greater. Non-canonical scaling coordinates ${\mathbf P}(n){\mathbf P}_H$ (see \eqref{e:P(n)=Q(n)R(n)}) generally mix together slow and fast scaling laws present in the behavior of high-dimensional fractional stochastic processes. This leads to the so-called \textit{amplitude }and \textit{dominance effects} (see Abry and Didier \cite{abry:didier:2018:dim2} for a detailed discussion). These effects manifest themselves in the form of strong biases in standard, univariate-like statistical methodology when applied to measurements of multidimensional phenomena such as Internet traffic (e.g., Abry and Didier \cite{abry:didier:2018:n-variate}, Section 6), cointegration (see, for instance, Kaufmann and Stern \cite{kaufmann:stern:2002}, Schmith et al.\ \cite{schmith:johansen:thejll:2012} on climate science) and systems modeled in blind source separation problems (e.g., Comon and Jutten \cite{comon:jutten:2010}; see also Section \ref{s:examples} in this paper).

Theorems \ref{t:lim_n_a_times_lambda/a^(2h+1)} and \ref{t:asympt_normality_lambdap-r+q} bear direct consequences for statistical inference. This is so because they provide a framework for the high-dimensional estimation of the parameters $h_1,\hdots,h_r$ and $r$, and hence, of the scaling properties of the system \eqref{e:Y(t)} even in the presence of non-canonical coordinates ${\mathbf P}(n){\mathbf P}_H$ and high-dimensional, non-Gaussian noise.

In fact, fix $j$. In light of \eqref{e:lim_n_a*lambda/a^(2h+1)}, \eqref{e:xi-i0_scales} and \eqref{e:asympt_normality_lambda2}, the random vector
\begin{equation}\label{e:log_lambda_p-r+q/2log_a}
\Big\{\frac{\log \lambda_{p(n)-r+q}\big({\mathbf W}(a(n)2^j)\big)}{2 \log a(n)} - \frac{1}{2}\Big\}_{q = 1,\hdots,r}
\end{equation}
can be interpreted, in the language of statistics, as \textit{consistent} and \textit{asymptotically normal} estimators of the vector of scaling parameters $\{h_{q}\}_{q=1,\hdots,r}$ in high dimensions. Under the same conditions, the lowest $p(n)-r$ wavelet log-eigenvalues stay bounded, whence
\begin{equation}\label{e:log_lambda_p-r/2log_a}
\frac{\log \lambda_{p(n)-r}\big({\mathbf W}(a(n)2^j )\big)}{2 \log a(n)} - \frac{1}{2}
\end{equation}
converges to zero in probability (see Figure \ref{fig:logeig}). Moreover, Theorem \ref{t:asympt_normality_lambdap-r+q} can be used in testing the hypothesis of the equality of scaling eigenvalues (cf.\ Remark \ref{r:theo_3.2}).

For significantly improved finite-sample and asymptotic estimation properties, Theorems \ref{t:lim_n_a_times_lambda/a^(2h+1)} and \ref{t:asympt_normality_lambdap-r+q} can be used as a theoretical basis for the development of a multiscale regression-type statistical methodology in the wavelet eigenvalue domain. On this topic, see Abry et al.\ \cite{abry:boniece:didier:wendt:2023:regression} (see also Section \ref{s:conclusion} in this paper).

On a related note, in Section \ref{s:Gaussian} we provide examples to which the comments made in this section apply.

\section{Examples}\label{s:examples}

Recall that assumptions $(A1)$, $(A2)$ and $(A3)$ (i.e., \eqref{e:W=PWXP*+WZ+PWXZ+WXZ*P*}, \eqref{e:assumptions_WZ=OP(1)} and \eqref{e:sqrt(K)(B^-B)->N(0,sigma^2)}--\eqref{e:B(2^j)_full_rank}, respectively) are stated in the wavelet domain. Under ($W1-W3$), for any choice of pair of zero-mean, uncorrelated second order processes $Z$ and $X$, assumption $(A1)$ is satisfied. Hence, the key assumptions to be verified for specific instances of $Z$ and $X$ are $(A2)$ and $(A3)$.

For this reason, in this section we provide broad Gaussian and non-Gaussian classes of examples where assumptions $(A2)$ and $(A3)$ are satisfied (further details about ($A4$) are available in Appendix C of Abry et al.\ \cite{abry:boniece:didier:wendt:2023:extended}). Throughout this section, we suppose assumptions ($W1-W3$) hold. For the discussions, recall that the number of vanishing moments $N_{\psi}$ is given by \eqref{e:N_psi}, and the time-domain smoothness of the underlying wavelet basis is controlled by $\alpha>1$ in \eqref{e:psihat_is_slower_than_a_power_function}.

\subsection{Gaussian instances}\label{s:Gaussian}

When $X$ and $Z$ are each marginally Gaussian, it can be shown that conditions \eqref{e:assumptions_WZ=OP(1)}--\eqref{e:B(2^j)_full_rank} hold under very general assumptions.  For illustration, in this section we consider a few examples.

\begin{example}\label{ex:oss_ofBm}
As discussed in the Introduction, ofBm is the natural multivariate generalization of fBm. It is defined as a $(i)$ Gaussian; $(ii)$ o.s.s.; $(iii)$ stationary-increment stochastic process. Suppose
\begin{equation}\label{e:oss_ofBm}
\textnormal{$X$ is an $\bbR^r$-valued ofBm}
\end{equation}
with (generalized) spectral density
\begin{equation}\label{e:ofBm_harmonizable_repres}
{\mathfrak{g}}_X(x) = x^{-({\mathbf H}+(1/2){\mathbf I})}_+ \hspace{0.25mm} {\mathcal A}{\mathcal A}^* \hspace{0.25mm}x^{-({\mathbf H}^*+(1/2){\mathbf I})}_+ + x^{-({\mathbf H}+(1/2){\mathbf I})}_- \hspace{0.25mm}\overline{{\mathcal A}{\mathcal A}^*} \hspace{0.25mm}x^{-({\mathbf H}^*+(1/2){\mathbf I})}_-, \quad x \in \bbR \backslash\{0\}
\end{equation}
(Didier and Pipiras \cite{didier:pipiras:2011}, Theorem 3.1). In \eqref{e:ofBm_harmonizable_repres}, $x_{\pm}=\max\{\pm x,0\}$, the constant matrix ${\mathcal A} \in {\mathcal M}(r,\bbC)$ satisfies $\det\Re(\mathcal{A}\mathcal{A}^*) >0$, and the Hurst matrix is given by
\begin{equation}\label{e:Hurst_matrix_ofBm}
{\mathbf H} = {\mathbf P}_H \textnormal{diag}(h_1,\hdots,h_r){\mathbf P}^{-1}_H \in {\mathcal M}(r,\bbR),\quad {\mathbf P}_H \in GL(r,\bbC),
\end{equation}
where $h_1,\ldots,h_r \in (0,1) \backslash\{1/2\}$. Now fix any $j$ as in \eqref{e:def_j1,jm}. If, in addition, $N_\psi \geq 2$, then assumption ($A3$) is satisfied as a consequence of Proposition C.1 %
($i$), Abry et al.\ \cite{abry:boniece:didier:wendt:2023:extended}. In other words, relations \eqref{e:sqrt(K)(B^-B)->N(0,sigma^2)}--\eqref{e:B(2^j)_full_rank} hold for a matrix sequence ${\mathbf B}_a(2^j) \in {\mathcal S}_{\geq0}(\bbR,r)$ and a matrix ${\mathbf B}(2^j) \in {\mathcal S}_{>0}(\bbR,r)$.
\end{example}

\begin{example}\label{ex:ARMA-type}
Let $\{\mathbf A_\ell(p)\}_{\ell \in \bbN} \subseteq \mathcal M(p,\bbR)$. Consider the $p$-variate white noise sequence ${\boldsymbol \varepsilon}=\{{\boldsymbol \varepsilon}(t)\}_{t\in\bbZ}$, where $ \bbE {\boldsymbol \varepsilon}_t {\boldsymbol \varepsilon}_s^*= {\mathbf 1}_{\{t=s\}}\Sigma_{\boldsymbol \varepsilon}(p)$ for some $\Sigma_{{\boldsymbol \varepsilon}}(p) \in {\mathcal S}_{\geq 0}(p,\bbR)$. Suppose
\begin{equation}\label{e:sup-p_Sigma(p)<infty}
\sup_{p \in \bbN} \|\Sigma_{{\boldsymbol \varepsilon}}(p)\|<\infty,\quad \sup_{p \in \bbN} \sum_{\ell \in \bbZ}\|\mathbf A_{\ell}(p)\| <\infty.
\end{equation}
Then, for each $p$,
\begin{equation}\label{e:Z(t)=ARMA}
Z(t)= \sum_{\ell \in \bbZ} \mathbf A_\ell(p) {\boldsymbol \varepsilon}(t-\ell), \quad t \in \bbZ,
\end{equation}
is a $p$-variate, (weakly) stationary linear process. In particular, all classical ARMA-type multivariate processes can be written in this form (e.g., Brockwell and Davis \cite{brockwell:davis:1991}). Furthermore, under \eqref{e:sup-p_Sigma(p)<infty}, assumption ($A2$) holds under ($A4$) as a consequence of Proposition C.1, $(ii)$, Abry et al.\ \cite{abry:boniece:didier:wendt:2023:extended} (with $d = 0$ and, trivially, $N_{\psi} \geq 1 > d$).
\end{example}

The following example provides another illustration of the breadth and flexibility of the wavelet-domain framework defined by assumptions $(A1-A3)$. %

\begin{example} \label{ex:3}
Let
\begin{equation}\label{e:ofBm_factor_model_example}
\{\mathcal X(t)\}_{t \in \bbZ}
\end{equation}
be an ofBm as in \eqref{e:oss_ofBm}--\eqref{e:Hurst_matrix_ofBm}. In addition, assume
\begin{equation}\label{e:h1<...<hr}
h_1 < \hdots < h_r.
\end{equation}
Also, let $\{{\mathcal Z}(t)\}_{t \in \bbZ}$ be a $p$-variate, Gaussian process with maximal memory parameter $d$ uniformly in $p$ satisfying
\begin{equation}\label{e:d<(3/2)*(h1+1/2)}
0 < d< \min\{\tfrac{3}{2} (h_1+1/2),~2h_1+1/2\}
\end{equation}
(see Definition C.1 in Abry et al.\ \cite{abry:boniece:didier:wendt:2023:extended} for a precise description of this type of process). In particular, the (asymptotic) scaling laws present in the dynamics of $\mathcal Z$ may actually exceed those in $\mathcal X$. Now consider the process
\begin{equation}\label{e:mathcal_Y(t)=mathcal_P*mathcal_X(t)+mathcal_Z(t)}
\mathcal Y (t)= \boldsymbol{\mathcal P}(n)  \mathcal X(t) +  \mathcal Z(t), \quad t\in \bbZ,
\end{equation}
where the matrix $\boldsymbol{\mathcal P}(n) \in\mathcal M(p,r)$ satisfies
\begin{equation}\label{e:strongfactor}
\tfrac{1}{p(n)} \boldsymbol{\mathcal P}^*(n)\boldsymbol{\mathcal P}(n) \to \mathbf A, \quad\text{for some}~\mathbf A \in \mathcal S_{> 0}(r,\bbR).
\end{equation}
Expression \eqref{e:mathcal_Y(t)=mathcal_P*mathcal_X(t)+mathcal_Z(t)} defines an example of a so-called high-dimensional \textit{factor model}, which is the subject of a vast literature (e.g., Bai \cite{bai:2003} and Bai and Ng \cite{bai:ng:2013}). Condition \eqref{e:strongfactor} is sometimes called the ``strong factor'' assumption (e.g., Bai and Ng \cite{bai:ng:2002,bai:ng:2023}). It stands in sharp contrast with assumption $(A5)$, which in this context may be interpreted as a ``weak factor'' one.

Theorems \ref{t:lim_n_a_times_lambda/a^(2h+1)} and \ref{t:asympt_normality_lambdap-r+q} can be applied in the study of the scaling behavior of \eqref{e:mathcal_Y(t)=mathcal_P*mathcal_X(t)+mathcal_Z(t)}. In fact, for any
\begin{equation}\label{e:a(n),b}
\textnormal{$\{p(n)\}_{n \in \bbN} \subseteq \bbN$, \hspace{1mm}dyadic $\{a(n)\}_{n \in \bbN}$ \hspace{1mm}and \hspace{1mm}scalar $b \in \bbR$,}
\end{equation}
multiplication by $\frac{a(n)^{-b}}{\sqrt {p(n)}}$ converts the system \eqref{e:mathcal_Y(t)=mathcal_P*mathcal_X(t)+mathcal_Z(t)} into the format \eqref{e:Y(t)}, where $Y(t):=\tfrac{a(n)^{-b}}{\sqrt{p(n)}}  \mathcal Y(t)$, $\mathbf P(n): =\frac{1}{\sqrt{p(n)} }\boldsymbol{\mathcal P}(n)$,
\begin{equation}\label{e:Z(t)=residual_factor_model}
Z(t):= \frac{a(n)^{-b}}{\sqrt {p(n)}}\mathcal Z(t)~\textnormal{ and } ~X(t) := a(n)^{-b}{\mathcal X}(t).
\end{equation}
It is clear that $ \{{\mathbf P}(n)\}_{n \in \bbN}$ satisfies ($A5$). Now, assume   $N_\psi \geq 2$ satisfies $N_\psi> d+\frac12$ and that, in addition to \eqref{e:p(n),a(n)_conditions}, $n a(n)^{-2\alpha}=O(1)$. Proposition C.1, $(iii)$, of Abry et al.\ \cite{abry:boniece:didier:wendt:2023:extended} implies that there is a choice of \eqref{e:a(n),b} for which ($A4$) holds with $\frac{p(n)}{n/a(n)}\to c>0$, and also such that the associated wavelet random matrices $\mathbf{W}_{Z}(a(n)2^j)$ and
\begin{equation}\label{e:W_X(a2^j)_factor_model_example}
\mathbf{W}_{ X}(a(n)2^j)
\end{equation}
satisfy assumptions $(A2)$ and $(A3)$, respectively. In the latter case, as shown in the proposition, the eigenvalues of the scaling matrix $\widetilde{{\mathbf H}}$  (in place of ${\mathbf H}$ as in \eqref{e:H=PHdiag(h1,...,hn)P^(-1)H}) are given by
\begin{equation}\label{e:h-tilde_q=h_q-b>1/2}
\widetilde h_q=h_q-b >1/2, \quad q=1,\ldots, r.
\end{equation}
Consequently, since $\mathbf{W}_{\mathcal Y}(a(n)2^j) = \frac{a(n)^{-2b}}{p(n)}\mathbf W(a(n)2^j)$, Theorem \ref{t:lim_n_a_times_lambda/a^(2h+1)} implies that, as $n \rightarrow \infty$,
$$
\frac{\lambda_{p-r+q}\big(\mathbf{W}_{\mathcal Y}(a(n)2^j)\big)}{p(n)a(n)^{2{ h}_q+ 1}} =\frac{\lambda_{p-r+q}\big(\mathbf{W}(a(n)2^j)\big)}{a(n)^{2{\widetilde h}_q+ 1}} \stackrel{\bbP}\to  \xi_q(2^j) , \quad q=1,\ldots r.
$$
Furthermore, by Theorem \ref{t:asympt_normality_lambdap-r+q}, under \eqref{e:h1<...<hr} the fluctuations of the log-eigenvalues of $\mathbf W_{\mathcal Y}(a(n)2^j)$ are characterized by the limit \eqref{e:asympt_normality_lambda2}, notably with the same rate $\sqrt{n_{a,j}}$.

These developments may be further extended so as to include ``weak factor'' assumptions (e.g., Bai and Ng \cite{bai:ng:2023}), among other possible factor modeling contexts.

One noteworthy consequence of these calculations is that the scaling behavior of the ``factors'' $\mathcal X(t)$ stands out in the wavelet spectral (eigenvalue) domain without any direct knowledge of the ``factors'' themselves.  This is roughly analogous to a similar phenomenon in factor model inference, wherein detection of a factor (or estimation of its variance) is relatively easier in comparison to estimating the factor direction itself. In contrast, in the related statistical inference literature concerning factor models with long memory, multi-step methods have been used where factors themselves are estimated at a first step (e.g.,  Cheung \cite{cheung:2022}, Ergemen and Rodr\'{i}guez-Caballero \cite{ergemen:rodriguez-caballero:2023}). In a related vein, eigenanlaysis approaches  in the presence of multiple scaling laws have also been proposed in the cointegration literature (e.g., Zhang et al.\ \cite{zhang:robinson:yao:2018}).
\vspace{0.5ex}
\end{example}

\begin{remark}\label{r:on_prop_Gaussian_framework}
Supposing $X$ is an ofBm --  or even that it has (first order) stationary increments -- is \textit{not} crucial for assumption $(A3)$ to hold. It is well known that wavelet frameworks are suitable for stationary increment processes of any order. This is so because, for such processes, wavelet coefficients are, in general, stationary as long as the chosen number of vanishing moments of the underlying wavelet basis (see \eqref{e:N_psi}) is sufficiently large. This topic has been broadly explored in the literature (e.g., Flandrin \cite{flandrin:1992}, Wornell and Oppenheim \cite{wornell:oppenheim:1992}, Veitch and Abry \cite{veitch:abry:1999}, Moulines et al.\ \cite{moulines:roueff:taqqu:2008}, Roueff and Taqqu \cite{roueff:taqqu:2009}; see Abry et al.\ \cite{abry:didier:li:2019} on the multivariate stochastic processes). In particular, condition \eqref{e:H_is_diagonalizable} on scaling eigenvalues allows for stationary fractional processes $X$ exhibiting long-range dependence (see, for instance, Embrechts and Maejima \cite{embrechts:maejima:2002}, Pipiras and Taqqu \cite{pipiras:taqqu:2017}).
\end{remark}

\subsection{Non-Gaussian instances: a short discussion}\label{s:non-Gaussian}

Characterizing the wavelet domain behavior of high-dimensional, non-Gaussian, second-order and fractional frameworks is a very broad topic that lies well outside the scope of this paper. In this section, we restrict ourselves to discussing certain non-Gaussian instances so as to help illustrate the fact that the wavelet domain properties established in Section \ref{s:main} do not fundamentally require the system \eqref{e:Y(t)} to be Gaussian.

\begin{example}\label{ex:non-Gaussian_linear}
Let $X = \{X(t)\}_{t \in \bbZ}$ be a possibly non-Gaussian, $\bbR^r$-valued stochastic processes made up of independent linear fractional processes with finite fourth moments. Then, based on the framework constructed in Roueff and Taqqu \cite{roueff:taqqu:2009}, we can show that the associated random matrix $\widehat{{\mathbf B}}_a(2^j) \in {\mathcal S}_{>0}(r,\bbR)$ satisfies conditions \eqref{e:sqrt(K)(B^-B)->N(0,sigma^2)} and \eqref{e:|bfB_a(2^j)-B(2^j)|=O(shrinking)} (i.e., in assumption $(A3)$) under conditions $(W1-W3)$ and mild additional assumptions on the wavelet $\psi$ and on the process $X$. This claim is made precise in Proposition C.2, Abry et al.\ \cite{abry:boniece:didier:wendt:2023:extended}.

Even though -- in this particular case -- the $r$ components of $X$ are assumed independent, note that statistically estimating the $r$ univariate scaling exponents based on the associated model \eqref{e:Y(t)} is still, in general, a nontrivial problem. This is so due to the presence of the unknown coordinates matrix ${\mathbf P}(n){\mathbf P}_H = {\mathbf P}(n)$, as well as of the high-dimensional noise component $Z$. In fact, mathematically speaking, this model is a high-dimensional version of the so-called \textit{blind source separation} problems, which are of great interest in the field of signal processing (e.g., Naik and Wang \cite{naik:wang:2014}; in a fractional context, see Abry et al.\ \cite{abry:didier:li:2019}).
\end{example}

\begin{example}\label{ex:Haar_sub-Gaussian}
Recall that a distribution is called \textit{sub-Gaussian} when its tails are no heavier than those of the Gaussian distribution (Vershynin \cite{vershynin:2018}, Proposition 2.5.2). Sub-Gaussian distributions form a broad family that includes the Gaussian distribution itself, as well as compactly supported distributions, for example. Suppose the noise process $\{Z(t)\}_{t \in \bbZ}$ consists of (discrete-time) i.i.d.\ sub-Gaussian observations. Consider the Haar wavelet framework, where the wavelet coefficients are computed by means of Mallat's iterative procedure \eqref{e:Mallat}. Then, for any $j \in \bbN \cup \{0\}$, the nonzero coefficients in the sequences $\{h_{j,2^{j}k - \ell}\}_{\ell \in \bbZ}$ and $\{h_{j,2^{j}k' - \ell}\}_{\ell \in \bbZ}$ do not overlap for $k \neq k'$ (cf.\ \eqref{e:disc2}). For this reason, the wavelet coefficients are independent at any fixed scale. Although we do not provide a proof due to space constraints, it is then possible to use traditional concentration of measure techniques to show that, under ($A4$), the wavelet random matrix ${\mathbf W}_{Z}$ satisfies condition \eqref{e:assumptions_WZ=OP(1)} in assumption $(A2)$. The study of the properties of wavelet random matrices ${\mathbf W}_{Z}$ under other wavelet bases or other classes of non-Gaussian observations is currently a topic of research.
\end{example}

\begin{remark}\label{r:non-Gaussian_B-hat}
In general, depending on the properties of the latent process $X$, the random matrix $\widehat{{\mathbf B}}_a(2^j)$ may not be asymptotically Gaussian, i.e., assumption $(A3)$ may not hold. In the univariate context ($r=1$), see, for instance, Bardet and Tudor \cite{bardet:tudor:2010}, Clausel et al.\ \cite{clausel:roueff:taqqu:tudor:2014:waveletestimation}. Nevertheless, for $r > 1$, the literature still lacks broad characterizations of conditions under which $\widehat{{\mathbf B}}_a(2^j)$ (or ${\mathbf W}_X(a(n)2^j)$) is asymptotically non-Gaussian.
\end{remark}

\section{Proofs of the main results}\label{s:proof_main_results}

In this section, by redefining $\mathbf P(n)$ if necessary we may assume without loss of generality that $\mathbf P_H=I$. In addition, whenever convenient we omit dependence on $n$ and write
\begin{equation}\label{e:P(n)PH_equiv_P}
p = p(n), \quad a = a(n), \quad {\mathbf P}={\mathbf P}(n) = {\mathbf P}(n){\mathbf P}_H.
\end{equation}
In \eqref{e:P(n)PH_equiv_P}, the column vectors of ${\mathbf P}$ are denoted by ${\mathbf p}_{\ell}={\mathbf p}_{\ell}(n)$, $\ell = 1,\hdots,r$.

In the proofs, we fix an arbitrary $q\in\{1,\ldots,r\}$ and focus on the associated rescaled eigenvalue $\frac{\lambda_{p-r+q}(\W(a(n)2^j))}{a(n)^{2h_q+1}}$. We define the associated sets of indices
\begin{equation} \label{e:def_indexsets}
 \begin{array}{lll}
\mathcal{I}_- := \{\ell: h_\ell < h_q\}, \quad \mathcal{I}_0 := \{\ell: h_\ell =h_q\}, \quad \mathcal{I}_+ := \{\ell: h_\ell > h_q\}.
 \end{array}
 \end{equation}
Note that $\mathcal{I}_-$ and $\mathcal{I}_+$ are possibly empty. Also write their respective cardinalities as
\begin{equation}\label{e:r1,r2,r3}
r_1:= \textnormal{card}(\mathcal{I}_-), \quad r_2:= \textnormal{card}(\mathcal{I}_0) \geq 1, \quad r_3:= \textnormal{card}(\mathcal{I}_+).
\end{equation}

Throughout this section, we often make use of the asymptotic notation \eqref{e:OP(1),OmegaP(1)} to denote residual random matrix terms, where convergence or boundedness in probability or deterministically refers to their \textit{spectral norms}. In particular, in the depiction of the residual terms, it will be notationally convenient to build upon condition \eqref{e:assumptions_WZ=OP(1)} and relation \eqref{e:|a(n)(-D)W_X,Z(a(n)2j))|=OP(1)} by writing
\begin{equation}\label{e:WZ=OP(1),a^(-h-1/2I)WXZ_repeat}
\begin{gathered}
{\mathbf W}_Z(a(n)2^{j})=O_{\bbP}(1), \quad \bbE {\mathbf W}_Z(a(n)2^{j})=O(1),\\
a(n)^{-{\mathbf H}-(1/2){\mathbf I}}{\mathbf W}_{X,Z}(a(n)2^{j})=O_{\bbP}(1), \quad a(n)^{-{\mathbf H}-(1/2){\mathbf I}}\bbE {\mathbf W}_{X,Z}(a(n)2^{j})= {\mathbf 0},
\end{gathered}
\end{equation}
where the last equality follows from \eqref{e:EW_X,Z(a(n)2^j)=0}. In \eqref{e:WZ=OP(1),a^(-h-1/2I)WXZ_repeat}, the matrix dimensions of the $O(1)$, $O_\bbP(1)$ and ${\mathbf 0}$ terms are implicit.

\subsection{Proving Theorem \ref{t:lim_n_a_times_lambda/a^(2h+1)}}\label{s:proving_theorem_3.1}

Fix $q \in \{1,\hdots,r\}$ and define the diagonal matrix
\begin{equation}\label{e:h-bf=diag(h1,...,hr)}
\mathbf h = \text{diag}(h_1,\ldots, h_r), \quad -1/2 < h_1\leq \ldots \leq h_r < \infty.
\end{equation}
Bearing in mind \eqref{e:P(n)PH_equiv_P}, we can use \eqref{e:B-hat_a(2^j)}, \eqref{e:W=PWXP*+WZ+PWXZ+WXZ*P*}, \eqref{e:WZ=OP(1),a^(-h-1/2I)WXZ_repeat} and \eqref{e:h-bf=diag(h1,...,hr)} to write
\begin{equation}\label{e:wave_RM_P=I}%
\frac{{\mathbf W}(a(n)2^j)}{a(n)^{2h_q+1}}  = \underbrace{\frac{{\mathbf P}(n){\mathbf W}_X(a(n)2^j){\mathbf P}^*(n)}{a(n)^{2h_q+1}} }_{\textnormal{main scaling term}}
\end{equation}
$$
+ \underbrace{\frac{{\mathbf W}_Z(a(n)2^j)}{a(n)^{2h_q+1}}
 + \frac{{\mathbf P}(n){\mathbf W}_{X,Z}(a(n)2^j)}{a(n)^{2h_q+1}}+ \frac{{\mathbf W}^*_{X,Z}(a(n)2^j){\mathbf P}^*(n)}{a(n)^{2h_q+1}}}_{\textnormal{residual}}
$$
\begin{equation*}%
= \underbrace{ \frac{{\mathbf P}(n) a(n)^{\mathbf h} \widehat{{\mathbf B}}_a(2^j) a(n)^{\mathbf h}{\mathbf P}^*(n) }{a(n)^{2h_q}} }_{\textnormal{main scaling term}} +
\underbrace{\frac{O_{\bbP}(1)}{a(n)^{2h_q+1}}+
\frac{{\mathbf P}(n)a(n)^{{\mathbf h}}O_{\bbP}(1)}{a(n)^{2h_q+1/2}}+\frac{O^*_{\bbP}(1)a(n)^{{\mathbf h}}{\mathbf P}^*(n)}{a(n)^{2h_q+1/2}}}_{\textnormal{residual}}
\end{equation*}
\begin{equation}\label{e:rescaled_W_conv_in_prob}
=: \underbrace{ \frac{{\mathbf P}(n) a(n)^{\mathbf h} \widehat{{\mathbf B}}_a(2^j) a(n)^{\mathbf h}{\mathbf P}^*(n) }{a(n)^{2h_q}} }_{\textnormal{main scaling term}} + \underbrace{ \Xi_q(n) }_{\textnormal{residual}}.
\end{equation}

Likewise, based on relations \eqref{e:B_a(2^j)=EB-hat_a(2^j)}, \eqref{e:EW_X,Z(a(n)2^j)=0}, \eqref{e:assumptions_WZ=OP(1)} and \eqref{e:WZ=OP(1),a^(-h-1/2I)WXZ_repeat}, the deterministic counterpart of \eqref{e:rescaled_W_conv_in_prob}
can be re-expressed as
\begin{equation}\label{e:rescaled_EW_original}
\frac{\bbE{\mathbf W}(a(n)2^j)}{a(n)^{2h_q+1}} = \underbrace{\frac{{\mathbf P}(n)a(n)^{{\mathbf h}} {\mathbf B}_a(2^j)a(n)^{{\mathbf h}}{\mathbf P}^*(n)}{a(n)^{2h_q}}}_{\textnormal{main scaling term}}
+ \underbrace{\frac{O(1)}{a(n)^{2h_q+1}}}_{\textnormal{residual}}.
\end{equation}

We break up the analysis of the convergence of the top eigenvalues of the wavelet random matrix ${\mathbf W}(a(n)2^j)/a(n)^{2h_q+1}$ as in \eqref{e:wave_RM_P=I} into two main results, namely, Theorem \ref{t:lim_n_a_times_lambda/a^(2h+1)} itself and Proposition \ref{p:conv_w-by-w_rescaled_eigenvalues}. In Theorem \ref{t:lim_n_a_times_lambda/a^(2h+1)}, the proof of the main claim \eqref{e:lim_n_a*lambda/a^(2h+1)} contains the backbone of the overall mathematical framework, i.e., a two-step argument with random sub-subsequences.

 On the other hand, Proposition \ref{p:conv_w-by-w_rescaled_eigenvalues} -- which is assumed in the proof of Theorem \ref{t:lim_n_a_times_lambda/a^(2h+1)} -- contains the bulk of the technical argument. The proposition pertains to subsequences $\frac{\lambda_{p-r+\ell}\big(\W(a(n')2^j)\big)}{a(n')^{2h_\ell+1}}$ and involves the crucial role played by wavelet \textit{eigenvectors}. The main difficulty involved in proving Proposition \ref{p:conv_w-by-w_rescaled_eigenvalues} -- and, ultimately, \eqref{e:lim_n_a*lambda/a^(2h+1)} -- lies in dealing with potentially divergent terms in the expressions for rescaled eigenvalues.  Specifically, it is possible that $\|\Xi_q(n)\|\stackrel \bbP \to \infty$, since%
\begin{equation}\label{e:||Xi_q||_explosive?}
\Big\| \frac{{\mathbf P}(n)a(n)^{{\mathbf h}}}{a(n)^{2h_q+1/2}}\Big\|\to \infty %
\end{equation}
whenever $h_r > 2h_q+1/2$. In other words, even the residual term $\Xi_q(n)$ in \eqref{e:rescaled_W_conv_in_prob} must be \textit{a priori} treated as explosive in norm.%

This issue is not restricted to the residual term. In fact, given the fixed $q \in \{1,\hdots,r\}$, let $\ell \in {\mathcal I}_0$. Now let
\begin{equation}\label{e:u_p-r+ell(n)_def}
{\mathbf u}_{p-r+\ell} := {\mathbf u}_{p-r+\ell}(n) = {\mathbf u}_{p-r+\ell}(n,\omega)
\end{equation}
be a (random) unit eigenvector associated with the $(p-r+\ell)$-th eigenvalue of the random matrix ${\mathbf W}(a(n)2^j)$. %
Also, rewrite entry-wise
\begin{equation}\label{e:B-hat_a(2^j)_entry-wise}
\widehat{{\mathbf B}}_a(2^j)=\big(\widehat{b}_{\ell_1, \ell_2} \big)_{\ell_1,\ell_2=1,\ldots,r}.
\end{equation}
After post- and pre-multiplying \eqref{e:rescaled_W_conv_in_prob} by the vector \eqref{e:u_p-r+ell(n)_def} and its transpose, respectively, we can re-express the rescaled wavelet eigenvalue as
$$
\frac{\lambda_{p-r+\ell}\big(\W(a(n)2^j)\big)}{{a(n)^{2h_q+1}}}
$$
\begin{equation}\label{e:rescaled_eigen_p-r+ell_quad_form_in_terms_of_scaling_terms}
= {\mathbf u}^*_{p-r+\ell}\underbrace{{\mathbf P} a^{\mathbf h -  h_q{\mathbf I}} \widehat{{\mathbf B}}_a(2^j)
a^{\mathbf h -  h_q{\mathbf I}}{\mathbf P}^*}_{\textnormal{main scaling term}} {\mathbf u}_{p-r+\ell} + {\mathbf u}^*_{p-r+\ell}  \underbrace{\Xi_q(n)}
_{\textnormal{residual}}{\mathbf u}_{p-r+\ell}
\end{equation}
$$
= \sum_{\ell_1 \in {\mathcal I}_0} \sum_{\ell_2 \in {\mathcal I}_0} \widehat{b}_{\ell_1, \ell_2} \hspace{0.5mm}\langle {\mathbf p}_{\ell_1},{\mathbf u}_{p-r+\ell}\rangle  \hspace{0.5mm} \langle {\mathbf p}_{\ell_2},{\mathbf u}_{p-r+\ell} \rangle
$$
$$
+ 2  \sum_{\ell_1 \in {\mathcal I}_0}\sum_{\ell_2 \in {\mathcal I}_+}\widehat{b}_{\ell_1 , \ell_2} \hspace{0.5mm}\langle {\mathbf p}_{\ell_1},{\mathbf u}_{p-r+\ell}\rangle \hspace{0.5mm} \underbrace{a^{h_{\ell_2}-h_{q}}\langle {\mathbf p}_{\ell_2},{\mathbf u}_{p-r+\ell}\rangle }_{\text{possibly divergent}}
$$
\begin{equation}\label{e:rescaled_eigen_p-r+ell_quad_form}
+ \sum_{\ell_1 \in  {\mathcal I}_+} \sum_{\ell_2 \in {\mathcal I}_+} \widehat{b}_{\ell_1, \ell_2}\hspace{1mm} \underbrace{a^{h_{\ell_1}-h_{q}}\langle {\mathbf p}_{\ell_1},{\mathbf u}_{p-r+\ell}\rangle }_{\text{possibly divergent}}\hspace{1mm}\underbrace{a^{h_{\ell_2}-h_{q}}\langle {\mathbf p}_{\ell_2},{\mathbf u}_{p-r+\ell}\rangle }_{\text{possibly divergent}}
+o_{\bbP}(1).
\end{equation}
The projected main scaling term in \eqref{e:rescaled_eigen_p-r+ell_quad_form_in_terms_of_scaling_terms} yields all three double summation terms in \eqref{e:rescaled_eigen_p-r+ell_quad_form} (plus some vanishing terms). Also, to fix ideas, we take for granted the implicit claim that ${\mathbf u}^*_{p-r+\ell}  \Xi_q(n){\mathbf u}_{p-r+\ell} = o_{\bbP}(1)$ (\textbf{n.b.}: in general, ${\mathbf u}^*_{p-r+\ell}  \Xi_q(n){\mathbf u}_{p-r+\ell} \neq \|\Xi_q(n)\|$; cf.\ \eqref{e:||Xi_q||_explosive?}). So, note that, in principle, there is no guarantee that angular terms such as $\langle {\mathbf p}_{\ell_1},{\mathbf u}_{p-r+\ell}\rangle$ shrink fast. Since $h_{\ell_1} > h_{q}$ for $\ell_1 \in {\mathcal I}_+$, then the terms marked my braces in \eqref{e:rescaled_eigen_p-r+ell_quad_form} represent  potentially explosive terms (in probability).

For expository purposes, we first state Proposition \ref{p:conv_w-by-w_rescaled_eigenvalues} and then provide some interpretation.
\begin{proposition}\label{p:conv_w-by-w_rescaled_eigenvalues}
Suppose the assumptions of Theorem \ref{t:lim_n_a_times_lambda/a^(2h+1)} hold. Fix $q \in\{1,\ldots,r\}$, together with its associated indices \eqref{e:def_indexsets}.  For each $n$, let
$\p_{r+1}(n),\ldots,\p_p(n)$
be an orthonormal basis for $\textnormal{nullspace}\{{\mathbf P}^*(n)\}$.  Let $n'\in\bbN'$ be any subsequence along which
\begin{equation}\label{e:subseq_condition_1_top}
\sum_{\ell=1}^r\sum^{p(n')}_{i=r+1}\langle {\mathbf p}_{  i}(n), {\mathbf u}_{p-r+\ell}(n')\rangle^2 \to  0\quad \text{a.s.},
\end{equation}
and also
\begin{equation}\label{e:subseq_condition_2_top}
\sum_{i \in \mathcal I_+}\sum_{\ell \in \mathcal I_0}\langle {\mathbf p}_{  i}(n'),{\mathbf u}_{p-r+\ell}(n')\rangle^2 + \sum_{i \in \mathcal I_+\cup\mathcal I_0}\sum_{\ell \in \mathcal I_-}\langle {\mathbf p}_{  i}(n'),{\mathbf u}_{p-r+\ell}(n')\rangle^2 \to 0 \quad \text{a.s.},
\end{equation}
\begin{equation}\label{e:B-hat(2j)->B(2j)_prop}
  \max_{\ell \in {\mathcal I}_0}\big|{\mathbf u}^*_{p-r+\ell}(n')  \Xi_q(n'){\mathbf u}_{p-r+\ell}(n')|\to 0, \quad \widehat{\B}_{a}(2^j) \to \B(2^j) \quad \text{a.s.}%
\end{equation}
Then, there exists a deterministic matrix  ${\boldsymbol \Lambda} = {\boldsymbol \Lambda}(2^j)$, independent of the choice of $n'$, such that
\begin{equation}\label{e:lim_nu_rescaled_eigenvalue}
\lim_{n'\to\infty}\frac{\lambda_{p-r+\ell}\big(\W(a(n')2^j)\big)}{a(n')^{2h_\ell+1}}=\lambda_{r-r_2+\ell}(\boldsymbol\Lambda)\quad \text{a.s.,}\quad \ell\in\mathcal I_0,
\end{equation}
where $\mathcal I_0$ is given by \eqref{e:def_indexsets}. In particular, the limits in \eqref{e:lim_nu_rescaled_eigenvalue} are constant a.s.
\end{proposition}

In Proposition \ref{p:conv_w-by-w_rescaled_eigenvalues}, relations \eqref{e:subseq_condition_1_top}--\eqref{e:B-hat(2j)->B(2j)_prop} are required to hold almost surely. The existence of a subsequence on which all statements hold is guaranteed in Lemmas \ref{l:|<p3,uq(n)>|a(n)^{h_3-h_1}=O(1)} and \ref{l:sum<pi(n),up-r+q(n)>2-o(a-varpi)_first}, where the convergence statements are shown to hold \textit{in probability} along the full sequence. For this reason, it will be useful to consider the full sequence $n \in \bbN$ in the following discussion.%

Condition \eqref{e:subseq_condition_1_top} expresses that
\begin{equation}\label{e:subseq_condition_1_top_explanation}
\textnormal{span}_{\ell =1,\hdots,r}\{{\mathbf u}_{p-r+\ell}(n)\} \textnormal{ and }\textnormal{span}\{{\mathbf P}(n)\} \textnormal{ asymptotically align}.
\end{equation}
Equivalently, the top $r$ wavelet eigenvectors jointly align, in the limit, with the subspace of $\bbR^p$ containing the image of the scaling process $X$. From an eigenvector perspective, this is why the top $r$ wavelet eigenvalues display scaling behavior.

Condition \eqref{e:subseq_condition_2_top} states that, again in angular terms,
\begin{equation}\label{e:subseq_condition_2_top_explanation}
\textnormal{span}_{\ell \in {\mathcal I}_0}\{ {\mathbf u}_{p-r+\ell}(n)\} \textnormal{ and }\textnormal{span}_{i \in {\mathcal I}_+}\{{\mathbf p}_{i}(n)\} \textnormal{ are asymptotically orthogonal.}
\end{equation}
It should be noticed that the latter space is associated with directions corresponding to scaling exponents strictly larger than $h_q$, where potentially explosive terms appear (see \eqref{e:rescaled_eigen_p-r+ell_quad_form}). Namely, relation \eqref{e:subseq_condition_2_top_explanation} expresses that the wavelet eigenvectors ${\mathbf u}_{p-r+\ell}(n)$, $\ell \in {\mathcal I}_0$, eventually turn away from these directions, which ultimately prevents divergent behavior in \eqref{e:rescaled_eigen_p-r+ell_quad_form}.

Likewise, \eqref{e:subseq_condition_2_top} also states that
\begin{equation}\label{e:subseq_condition_2_top_explanation-2}
\textnormal{span}_{\ell \in {\mathcal I}_-}\{ {\mathbf u}_{p-r+\ell}(n)\}\textnormal{ and }\textnormal{span}_{i \in {\mathcal I}_0 \cup {\mathcal I}_+}\{{\mathbf p}_{i}(n)\} \textnormal{ are asymptotically orthogonal},
\end{equation}
expressing, analogously, that eigenvectors corresponding to exponents strictly less than $h_q$  turn away from spaces containing scaling behavior associated with exponents equal to $h_q$ or larger.
Figure \ref{fig:eigenvectors} provides a schematic illustration of relations \eqref{e:subseq_condition_1_top_explanation}--\eqref{e:subseq_condition_2_top_explanation-2}.

In turn, even though $\|\Xi_q(n)\|$ is potentially explosive (cf.\ \eqref{e:||Xi_q||_explosive?}), the first expression in \eqref{e:B-hat(2j)->B(2j)_prop} states that the residual $\Xi_q(n)$ is negligible \textit{along the directions given by the eigenvectors} ${\mathbf u}_{p-r+\ell}(n)$, $\ell \in {\mathcal I}_0$, of ${\mathbf W}(a(n)2^j)$.

Lastly,  the second expression in \eqref{e:B-hat(2j)->B(2j)_prop} is a simple statement about the convergence of the auxiliary wavelet random matrix $\widehat{\B}_{a}(2^j) $.

\begin{figure}
    \centering
    \includegraphics[scale=0.25]{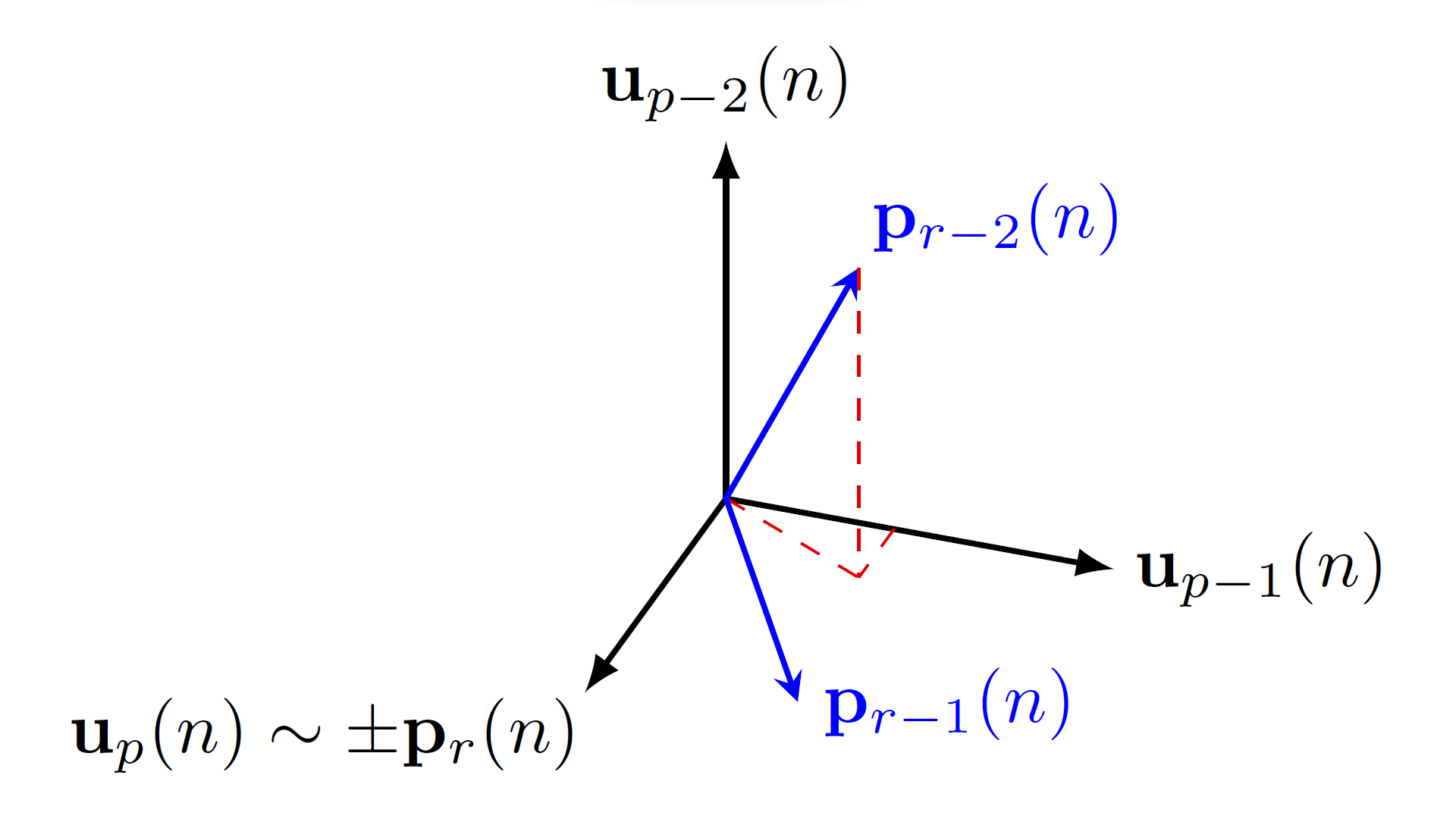}
 \caption{ \textbf{Schematic representation of the behavior of wavelet eigenvectors vis-\`{a}-vis the coordinate vectors in the three-way limit \eqref{e:three-fold_lim}.} The plot depicts the case $r = 3$ and $r_1 = r_2 = r_3 = 1$ (i.e., $h_1 < h_2 < h_3$). It visually represents the asymptotic behavior of the \textnormal{angles} among the high-dimensional vectors. In the limit, we observe that $\textnormal{span}_{\ell=1,2,3}\{{\mathbf u}_{p-r+\ell}(n)\}\sim \textnormal{span}_{\ell=1,2,3}\{{\mathbf p}_{\ell}(n)\}$, $|\langle {\mathbf u}_{p-1}(n),{\mathbf p}_r(n)\rangle| \sim 0$ and $\max\big\{|\langle {\mathbf u}_{p-2}(n),{\mathbf p}_r(n)\rangle|, |\langle {\mathbf u}_{p-2}(n),{\mathbf p}_{r-1}(n)\rangle| \big\}\sim 0$. In particular, $\langle {\mathbf u}_p(n),{\mathbf p}_r(n)\rangle^2 \sim 1$ and, \textnormal{approximately}, ${\mathbf u}_{p-1}(n) \in \textnormal{span}\{{\mathbf p}_{r-1}(n),{\mathbf p}_r(n)\}$. In the general case, the former three relations are given by \eqref{e:subseq_condition_1_top_explanation}, \eqref{e:subseq_condition_2_top_explanation} and \eqref{e:subseq_condition_2_top_explanation-2}, respectively.}
    \label{fig:eigenvectors}
\end{figure}

\vspace{2mm}
Assuming Proposition \ref{p:conv_w-by-w_rescaled_eigenvalues}, we are now in a position to establish Theorem \ref{t:lim_n_a_times_lambda/a^(2h+1)}.\\

{\noindent \sc Proof of Theorem \ref{t:lim_n_a_times_lambda/a^(2h+1)}}: Statement \eqref{e:lim_lambda_p-r(W)} is a consequence of expression \eqref{e:lambdap-r=O_P(1)} in Lemma \ref{l:gutted_log_eig_consistency}. So, assume for the moment that \eqref{e:lim_n_a*lambda/a^(2h+1)} holds. To establish the scaling relationship \eqref{e:xi-i0_scales}, fix an arbitrary $q\in\{1,\ldots,r\}$ and let $\widetilde a(n) = a(n)2^j$. By \eqref{e:lim_n_a*lambda/a^(2h+1)},
$$
\xi_q(2^j)=\plim_{n \rightarrow \infty}\frac{\lambda_{p-r+q}\big(\mathbf{W}(a(n)2^j)\big)}{a(n)^{2h_q+ 1}} =\plim_{n \rightarrow \infty}\frac{\lambda_{p-r+q}\big(\mathbf{W}(\widetilde a(n))\big)}{(\widetilde a(n)2^{-j})^{2h_q+ 1}} = 2^{j(2h_q+1)}\xi_q(1),
$$
as we wanted to show.

So, we now prove \eqref{e:lim_n_a*lambda/a^(2h+1)}. Let $\bbN'$ be any subsequence of $\bbN$. As a consequence of condition \eqref{e:sqrt(K)(B^-B)->N(0,sigma^2)}, $\widehat{\B}_{a}(2^j)\stackrel \bbP\to \B(2^j)$ as $n' \rightarrow \infty$. Also, recall that, for any $\ell \in {\mathcal I}_0$, $h_{\ell} = h_{q}$. Then, again for $\ell \in {\mathcal I}_0$, $\Xi_{\ell}(n') = \Xi_q(n')$ (cf.\ \eqref{e:rescaled_W_conv_in_prob}). Thus, Lemma \ref{l:|<p3,uq(n)>|a(n)^{h_3-h_1}=O(1)}, $(ii)$, implies that $\max_{\ell \in {\mathcal I}_0}\big|{\mathbf u}^*_{p-r+\ell}(n') \Xi_q(n'){\mathbf u}_{p-r+\ell}(n') \big| \stackrel \bbP \to 0$ as $n'  \rightarrow \infty$. Now note that, by Lemma \ref{l:|<p3,uq(n)>|a(n)^{h_3-h_1}=O(1)}, $(i)$, the eigenvector ${\mathbf u}_{p-r+\ell}(n')$ is asymptotically orthogonal to a coordinate vector ${\mathbf p}_i(n')$ associated with a larger scaling exponent $h_{i} > h_{\ell}$. In particular, for the fixed $q$ and the associated index sets \eqref{e:def_indexsets}, by considering ${\mathbf u}_{p-r+\ell}(n')$ for either index range $\ell \in {\mathcal I}_0$ or $\ell \in {\mathcal I}_-$, Lemma \ref{l:|<p3,uq(n)>|a(n)^{h_3-h_1}=O(1)}, $(i)$, implies that
$$
\sum_{i \in \mathcal I_+}\sum_{\ell \in \mathcal I_0}\langle {\mathbf p}_{  i}(n'),{\mathbf u}_{p-r+\ell}(n')\rangle^2 + \sum_{i \in \mathcal I_+\cup\mathcal I_0}\sum_{\ell \in \mathcal I_-}\langle {\mathbf p}_{  i}(n'),{\mathbf u}_{p-r+\ell}(n')\rangle^2 \stackrel\bbP \to 0, \quad n' \rightarrow \infty.
$$
Furthermore, by Lemma \ref{l:sum<pi(n),up-r+q(n)>2-o(a-varpi)_first},  $\sum_{\ell=1}^r\sum^{p(n')}_{i=r+1}\langle {\mathbf p}_{  i}(n'), {\mathbf u}_{p-r+\ell}(n')\rangle^2 \stackrel \bbP\to  0.$ Thus, there exists a further subsequence $\bbN''\ni n''$ such that \eqref{e:subseq_condition_1_top}, \eqref{e:subseq_condition_2_top} and \eqref{e:B-hat(2j)->B(2j)_prop} hold along $n''$.
By Proposition \ref{p:conv_w-by-w_rescaled_eigenvalues}, along this same subsequence $n''\in \bbN''$,
\begin{equation*}
\lim_{n''\to\infty}\frac{\lambda_{p-r+\ell}(\W(a(n'')2^j))}{a({n''})^{2h_\ell+1}}=\lambda_{r-r_2+\ell}(\boldsymbol\Lambda) \quad \textnormal{a.s.},\quad \ell\in\mathcal I_0.
\end{equation*}
Hence, \eqref{e:lim_n_a*lambda/a^(2h+1)} holds for
\begin{equation}\label{e:xi_ell(2^j)=lambda_r-r2+ell}
\xi_\ell(2^j) := \lambda_{r-r_2+\ell}(\boldsymbol\Lambda). \quad \Box
\end{equation}

\subsection{Proving Proposition \ref{p:conv_w-by-w_rescaled_eigenvalues}}\label{s:proof_of_Prop_conv_subseq_of_rescaled_eigenvalues}

It remains to establish Proposition \ref{p:conv_w-by-w_rescaled_eigenvalues}. For the reader's convenience, we now provide a short discussion of the proof method.

So, consider a subsequence $\bbN'$ as defined in the assumptions of the proposition. Define the event
\begin{equation}\label{e:event_A}
A=\Big\{\omega: \text{ \eqref{e:subseq_condition_1_top}, \eqref{e:subseq_condition_2_top} and \eqref{e:B-hat(2j)->B(2j)_prop} hold} \Big\}.
\end{equation}
In particular, $\bbP(A)=1$. Hereinafter, we use the expression
\begin{equation*}%
\textnormal{``for each $\omega \in A$ a.s."}
\end{equation*}
to mean ``for each $\omega \in A$ up to intersection with a probability 1 event". Then, we show that, for each $\omega \in A$ a.s., any arbitrary (sub)subsequence
\begin{equation}\label{e:nu_n(w)_contained_in_N'}
\{\nu_n(\omega)\}_{n\in\bbN}\subseteq \bbN'
\end{equation}
contains a refinement (still denoted $\{\nu_n(\omega)\}_{n\in\bbN}$, for notational simplicity) such that
\begin{equation}\label{e:nu_n_mainlimit}
\lim_{n\to\infty}\frac{\lambda_{p-r+\ell}\big(\W(a(\nu_n(\omega))2^j)\big)}{a(\nu_n(\omega))^{2h_\ell+1}}(\omega)=\lambda_{r-r_2+\ell}(\boldsymbol\Lambda),
\end{equation}
where the matrix $\boldsymbol\Lambda$ is deterministic. This establishes the almost sure limit \eqref{e:lim_nu_rescaled_eigenvalue}.

In turn, constructing the refined subsequence $\{\nu_n(\omega)\}_{n\in\bbN}$ over which \eqref{e:nu_n_mainlimit} holds requires four steps, labeled \textbf{(a)--(d)}.
\begin{itemize}
\item [\textbf{(a)}]{ \textbf{Passing from high- to fixed-dimensional coordinates.}\vspace{0.1cm} }

 We use the $QR$ decomposition of ${\mathbf P} = {\mathbf P}(n){\mathbf P}_H = {\mathbf Q}(n){\mathbf R}(n)$ to make a change-of-coordinates from the high-dimensional eigenvectors (notation: ${\mathbf u}_{p-r+\ell}(n) \in \bbR^p$) of $\frac{\W(a(n)2^j)}{a(n)^{2h_q+1}}$ to \textit{fixed}-dimensional \textit{bounded} coordinates (notation: ${\boldsymbol \tau}_{\ell}(n) = {\mathbf Q}^*(n){\mathbf u}_{p-r+\ell}(n) \in \bbR^r$). This is convenient because it allows us to refine the subsequence $\{\nu_n(\omega)\}_{n\in\bbN}$ so as to obtain the almost sure convergence of these coordinates $\{{\boldsymbol \tau}_{\ell}(\nu_n)={\boldsymbol \tau}_{\ell}(\nu_n,\omega): \ell = 1,\hdots,r\} \subseteq \bbR^r$ to a set of possibly random orthonormal vectors $\{{\boldsymbol \tau}_{\ell}={\boldsymbol \tau}_{\ell}(\omega): \ell = 1, \hdots, r\} \subseteq \bbR^r$. \vspace{0.1cm}
\item [\textbf{(b)}] \textbf{Dealing with the potentially explosive terms by replacing each of them with a generic variable $x$.}\vspace{0.1cm}

 Consider the refined subsequence $\{\nu_n(\omega)\}_{n\in\bbN}$ obtained in \textbf{(a)}. Starting from the expression for $\frac{\lambda_{p-r+\ell}\big(\W(a(\nu_n)2^j)\big)}{a(\nu_n)^{2h_q+1}}$, we define two functions
    \begin{equation}\label{e:f-hat_n(x,u)=varphi-hat_n(x,tau)}
    \widehat{f}_{\nu_n}({\mathbf x},{\mathbf u})= \widehat{\varphi}_{\nu_n}({\mathbf x},{\boldsymbol \tau})
    \end{equation}
    that, up to residuals, \textit{express and generalize} the main term on the right-hand side of \eqref{e:rescaled_eigen_p-r+ell_quad_form} in the following two senses.
\begin{itemize}
 \item [$(i)$] In $\widehat{f}_{\nu_n}$ and $\widehat{\varphi}_{\nu_n}$, respectively, eigenvectors are \textit{replaced} by a general argument in high-dimensional (${\mathbf u}$) and fixed-dimensional coordinates (${\boldsymbol \tau}$).
\item [$(ii)$] In both functions, the potentially divergent terms in \eqref{e:rescaled_eigen_p-r+ell_quad_form} are \textit{replaced} by a vector of generic variables ${\mathbf x}$.
\end{itemize}
In particular, if we set ${\mathbf u} = {\mathbf u}_{p-r+\ell}(\nu_n)$, then there exists a vector ${\mathbf x}_{\ell} \in \bbR^{r_3}$ such that we can express
$$
\frac{\lambda_{p-r+ \ell}\big({\mathbf W}(a(\nu_n)2^j)\big)}{a(\nu_n)^{2h_{q}+1}} - {\mathbf u}^*_{p-r+\ell}(\nu_n) \underbrace{ \Xi_q(\nu_n) }_{\textnormal{residual}}{\mathbf u}_{p-r+\ell}(\nu_n) $$
\begin{equation}\label{e:rescaled_eigen_p-r+ell_in_terms_of_fn_and_varphin}
 = \widehat{f}_{\nu_n} \big({\mathbf x}_{\ell},{\mathbf u}_{p-r+\ell}(\nu_n)\big)= \widehat{\varphi}_{\nu_n}\big({\mathbf x}_{\ell},{\boldsymbol \tau}_{\ell}(\nu_n)\big)
\end{equation}
(cf.\ relation \eqref{e:rescaled_eigen_p-r+ell_quad_form_in_terms_of_scaling_terms}). In \eqref{e:rescaled_eigen_p-r+ell_in_terms_of_fn_and_varphin}, the last equality follows from \eqref{e:f-hat_n(x,u)=varphi-hat_n(x,tau)} for ${\boldsymbol \tau} = {\boldsymbol \tau}_{\ell}(\nu_n) = {\mathbf Q}^*(\nu_n){\mathbf u}_{p-r+\ell}(\nu_n) \in \bbR^r$.

    We further define a function $\varphi$ that may be interpreted as a \textit{pointwise limit}
    \begin{equation}\label{e:varphi=lim_varphi-hat_nu-n}
    \varphi= \lim_{n \rightarrow \infty}\widehat{\varphi}_{\nu_n}.
    \end{equation}
\item [\textbf{(c)}] \textbf{Obtaining $\lambda_{r-r_2+\ell}(\boldsymbol\Lambda)$ by minimizing $\varphi$ with respect to the vector ${\mathbf x}$ of generic variables as in (b).} \vspace{0.1cm}

    For the function $\varphi$ obtained in \textbf{(b)} (see \eqref{e:varphi=lim_varphi-hat_nu-n}), let ${\mathbf x}_{*}({\boldsymbol \tau})$ be the \textit{minimizer} of $\varphi$ in ${\mathbf x}$ for a given ${\boldsymbol \tau}$. As it turns out, we can reexpress
    \begin{equation}\label{e:varphi(x*(tau),tau)}
    \varphi\big({\mathbf x}_*(\boldsymbol \tau),\boldsymbol \tau \big) =\boldsymbol \tau^* \boldsymbol \Lambda \boldsymbol \tau \quad \textnormal{for any }\boldsymbol \tau \in \mathcal W: = \text{span}\{\boldsymbol \Lambda\}.
    \end{equation}
    The newly defined matrix $\boldsymbol \Lambda \in {\mathcal S}_{\geq 0}(r,\bbR)$ is deterministic and can be shown to have rank $r_2 = {\mathcal I}_0$. It is based on this deterministic function \eqref{e:varphi(x*(tau),tau)} that the limit of $\frac{\lambda_{p-r+\ell }\big(\W(a(\nu_n)2^j)\big)}{a(\nu_n)^{2h_q+1}}$ will be obtained in step \textbf{(d)}.\vspace{0.1cm}
\item [\textbf{(d)}] \textbf{Squeezing $\frac{\lambda_{p-r+(r_1 + 1)}\big(\W(a(\nu_n)2^j)\big)}{a(\nu_n)^{2h_q+1}}$ based on $\varphi$.} \vspace{0.1cm}

For $\ell = r_1 + 1 =\min (\mathcal I_0)$, we use the (unambiguously bounded) functions $\widehat{f}_{\nu_n}$ and $\widehat{\varphi}_{\nu_n}$ defined in \textbf{(b)} to construct \textit{lower} and \textit{upper bounds} for
\begin{equation}\label{e:lambda_p-r+(r1+1)/a^(2hq+1)_intuition}
\frac{\lambda_{p-r+(r_1 + 1)}\big(\W(a(\nu_n)2^j)\big)}{a(\nu_n)^{2h_q+1}}.
\end{equation}
The convergence of \eqref{e:lambda_p-r+(r1+1)/a^(2hq+1)_intuition} is then obtained by means of \textit{squeezing}.

  To be slightly more precise, let
    \begin{equation}\label{e:w_in_R^r_intuition}
    {\mathbf w} \in \bbR^r
    \end{equation}
    be an eigenvector associated with the smallest nonzero eigenvalue of the matrix ${\boldsymbol \Lambda}$ defined in \textbf{(c)}. Namely, from \eqref{e:varphi(x*(tau),tau)},
\begin{equation}\label{e:varphi(x*(w),w)=<varphi(x*(tau),tau)}
\varphi\big({\mathbf x}_*({\mathbf w}),{\mathbf w}\big) \leq    \varphi\big({\mathbf x}_*(\boldsymbol \tau),\boldsymbol \tau \big) \quad \textnormal{for any }\boldsymbol \tau \in \mathcal W: = \text{span}\{\boldsymbol \Lambda\}.
\end{equation}
    A lower bound for \eqref{e:lambda_p-r+(r1+1)/a^(2hq+1)_intuition} can be naturally constructed based on \eqref{e:rescaled_eigen_p-r+ell_in_terms_of_fn_and_varphin} by minimizing the functions $\widehat{\varphi}_{\nu_n}$, $\varphi$ with respect to each argument. In fact, analogously to \eqref{e:varphi(x*(tau),tau)}, let ${\mathbf x}_{*,\nu_n}({\boldsymbol \tau})$ be the minimizer of $\widehat{\varphi}_{\nu_n}$ in ${\mathbf x}$ for a given ${\boldsymbol \tau}$. Then,
$$
\frac{\lambda_{p-r+(r_1 + 1)}\big(\W(a(\nu_n)2^j)\big)}{a(\nu_n)^{2h_q+1}} \geq \widehat{\varphi}_{\nu_n}\big({\mathbf x}_{*,\nu_n}({\boldsymbol \tau}_{\ell}(\nu_n)),{\boldsymbol \tau}_{\ell}(\nu_n)\big) + o(1)
$$
\begin{equation}\label{e:lower_bound_intuition}
\rightarrow \varphi\big({\mathbf x}_{*}({\boldsymbol \tau}_{\ell}),{\boldsymbol \tau}_{\ell}\big)  \geq \varphi\big({\mathbf x}_{*}({\mathbf w}),{\mathbf w}\big), \quad n \rightarrow \infty.
\end{equation}
In \eqref{e:lower_bound_intuition}, the first inequality follows from \eqref{e:rescaled_eigen_p-r+ell_in_terms_of_fn_and_varphin} and from minimization with respect to ${\mathbf x}$. The second inequality stems from \eqref{e:varphi(x*(w),w)=<varphi(x*(tau),tau)}.

Constructing an upper bound is more elaborate. Note that all the potentially explosive terms appearing in \eqref{e:rescaled_eigen_p-r+ell_quad_form} involve the coordinate vectors ${\mathbf p}_{\ell}(\nu_n)$, $\ell \in {\mathcal I}_+$. Nevertheless, %
it can be shown (Lemma \ref{l:<p,w>=infinitesimal}) that
 one can always find a sequence of unit vectors ${\mathbf v}(\nu_n)$ with the following three key properties.%
\begin{itemize}
\item [$(i)$] ${\mathbf v}(\nu_n) \in \textnormal{span}\{{\boldsymbol {\mathbf u}}_{p-r+(r_1+1)}(\nu_n),\ldots,{\boldsymbol {\mathbf u}}_{p}(\nu_n)\}$.
\item [$(ii)$] Let ${\mathbf x}_*({\mathbf w}) = \big( x_{*,r-r_3+1}({\mathbf w}),\hdots, x_{*,r}({\mathbf w})\big)$ and ${\mathbf w}$ be as in \eqref{e:varphi(x*(tau),tau)} and \eqref{e:w_in_R^r_intuition}, respectively. Then, $\langle \mathbf{p}_i(\nu_n), {\mathbf v}(\nu_n) \rangle = \frac{x_{*,i}({\mathbf w})}{a(\nu_n)^{h_i - h_q}}$, $i \in \mathcal{I}_+$, for large $n$. In particular, such angles shrink fast enough so that they cancel any explosive scaling terms $a(\nu_n)^{h_i - h_q}$ (cf.\ \eqref{e:rescaled_eigen_p-r+ell_quad_form}).
\item [$(iii)$] In ${\boldsymbol\tau}$ coordinates (see \textbf{(a)}), $\bbR^r \ni {\mathbf w}(\nu_n) := {\mathbf Q}^*(\nu_n){\mathbf v}(\nu_n) \rightarrow {\mathbf w}$, $n \rightarrow \infty$. \vspace{2mm}
\end{itemize}

\noindent (\textbf{intuition on the existence of a unit vector ${\mathbf v}(\nu_n)$ satisfying $(i)$, $(ii)$ and $(iii)$}: in Figure \ref{fig:eigenvectors}, even if $\textnormal{span}\{{\mathbf p}_{r-1}(n),{\mathbf p}_{r}(n)\}$ and $\textnormal{span}\{{\mathbf u}_{p-1}(n),{\mathbf u}_{p}(n)\}$ do not exactly coincide, in general one can find a unit vector ${\mathbf v}(n)$ in the latter space displaying any preset and small -- possibly zero -- angular magnitude $\langle {\mathbf p}_{r}(n),{\mathbf v}(n) \rangle$).\vspace{2mm}

We arrive at
$$
\frac{\lambda_{p-r+(r_1 + 1)}\big(\W(a(\nu_n)2^j)\big)}{a(\nu_n)^{2h_q+1}} \leq {\mathbf v}^*(\nu_n)
\frac{\W(a(\nu_n)2^j) }{a(\nu_n)^{2h_q+1}} {\mathbf v}(\nu_n)
$$
\begin{equation}\label{e:high-level_system_ineqs}
= \widehat \varphi_{\nu_n}\big(\mathbf x_*({\mathbf w}),{\mathbf w}(\nu_n)\big) +o(1)\rightarrow \varphi\big(\mathbf x_*({\mathbf w}),{\mathbf w}\big), \quad n \rightarrow \infty.
\end{equation}
In \eqref{e:high-level_system_ineqs}, the inequality, equality and limit are consequences of $(i)$, $(ii)$ and $(iii)$, respectively.

From the lower and upper bounds \eqref{e:lower_bound_intuition} and \eqref{e:high-level_system_ineqs}, we conclude that \eqref{e:lambda_p-r+(r1+1)/a^(2hq+1)_intuition} converges to $\varphi\big(\mathbf x_*({\mathbf w}),{\mathbf w}\big)$ as $n \rightarrow \infty$.

To finish the proof of \eqref{e:lim_nu_rescaled_eigenvalue}, the conclusion is then extended to any $\ell \in {\mathcal I}_0 =\{r_1+1,\ldots,r_1+r_2\}$ by induction.

\end{itemize}

For the sake of illustration, the example right below the proof of Proposition \ref{p:conv_w-by-w_rescaled_eigenvalues} contains steps \textbf{(a)--(d)} described in a simple context.

\begin{remark}\label{r:nonmeasurable}
Generally speaking, the technique of constructing $\omega$-dependent indices \eqref{e:nu_n(w)_contained_in_N'} yields possibly non-measurable sequences such as \eqref{e:nu_n_mainlimit}. Nevertheless, this poses no difficulties in the framework of Proposition \ref{p:conv_w-by-w_rescaled_eigenvalues} since the event of the convergence of such sequence is, indeed, a measurable set. In fact, as shown in the proof of Proposition \ref{p:conv_w-by-w_rescaled_eigenvalues}, it occurs with probability 1. For terminological simplicity, throughout the proof of the proposition, as well as in Lemmas \ref{l:fullrank_limit_P*U}, \ref{l:<p,w>=infinitesimal}, \ref{l:max|<up-r+q(n),pi(n)>|a(n)^(hi-hq)=OP(1)} and \ref{l:supR'max(angles*powerlaws)_bounded_for_subseq}, the word ``random" is applied in the extended sense of $\omega$-dependent constructs, regardless of measurability.
\end{remark}

We are now in a position to prove Proposition \ref{p:conv_w-by-w_rescaled_eigenvalues}.\\

\noindent {\sc Proof of Proposition \ref{p:conv_w-by-w_rescaled_eigenvalues}}:  Let $\bbN'$ be a subsequence as in condition \eqref{e:B-hat(2j)->B(2j)_prop} and consider $\omega$ in the event $A$ as in \eqref{e:event_A}. Take an arbitrary random subsequence $\{\nu_n(\omega)\}_{n \in \bbN}$ as in \eqref{e:nu_n(w)_contained_in_N'}. We now follow the four steps \textbf{(a)--(d)} as described at the beginning of this section (Section \ref{s:proof_of_Prop_conv_subseq_of_rescaled_eigenvalues}).\\

We first tackle \textbf{(a)}. Define the sequence of rectangular random matrices
\begin{equation}\label{e:def_Un}
\mathbf U_r(n) = \mathbf U_r(n,\omega) :=(\mathbf u_{p-r+1}(n)\ldots \mathbf u_p(n))  \in {\mathcal M}(p,r,\bbR),
\end{equation}
where each $\mathbf u_\ell(n)=\mathbf u_\ell(n,\omega)$ is a.s.\ a (random) unit eigenvector associated with the $\ell$--th eigenvalue of $\W(a(n)2^j)$ (cf.\ \eqref{e:u_p-r+ell(n)_def}). Consider the matrix $\mathbf Q(n)$ from the $QR$ decomposition $\mathbf P(n) =  \mathbf P(n){\mathbf P}_H=\mathbf Q(n)\mathbf R(n)$ as in \eqref{e:P(n)=Q(n)R(n)}. Define
\begin{equation}\label{e:T(n)}
\mathbf T(n) =\mathbf T(n,\omega) :=(\boldsymbol \tau_1(n),\ldots,\boldsymbol \tau_r(n)): = \mathbf Q^*(n) \mathbf U_r(n) \in {\mathcal M}(r,\bbR),
\end{equation}
where each $\boldsymbol \tau_i(n) = \boldsymbol \tau_i(n, \omega)$ denotes a (random) column of $\mathbf T(n)=\mathbf T(n,\omega)$. Note that, for $\omega \in A$ a.s., the fixed-dimensional sequence $\{{\mathbf T}(\nu_n,\omega)\}_{n \in \bbN}$ is bounded in norm a.s. So, by applying the Bolzano-Weierstrass theorem for each $\omega \in A$  a.s., we may refine the subsequence $\{\nu_n(\omega)\}_{n\in\bbN}$ further (still denoted $\nu_n$, for notational simplicity) so as to obtain the limit
\begin{equation}\label{e:Gamma_a.s._limit}
\lim_{n\to\infty} \mathbf T(\nu_n)=:\mathbf T = (\boldsymbol \tau_1, \ldots, \boldsymbol \tau_r) \in {\mathcal M}(r,\bbR) \quad \textnormal{a.s.}
\end{equation}
In \eqref{e:Gamma_a.s._limit}, each $\boldsymbol \tau_i = \boldsymbol \tau_i( \omega)$ denotes a column of $\mathbf T=\mathbf T(\omega)$. Moreover, by Lemma \ref{l:fullrank_limit_P*U}, $(i)$, the limiting column vectors $\{{\boldsymbol \tau}_{\ell}: \ell = 1,\hdots,r\} $ are orthonormal a.s.\\

We now turn to step \textbf{(b)}. It will be useful to introduce some notation. Starting from the limit $\widehat{{\mathbf B}}_a(2^j) \rightarrow {\mathbf B}(2^j)$  a.s.\ (see \eqref{e:B-hat(2j)->B(2j)_prop}), recast
\begin{equation}\label{e:B(2^j)=(B_i,ell)}
\widehat {\mathbf B}_a(2^j) =(\widehat {\mathbf B}_{i\ell})_{i,\ell=1,2,3}, \quad {\mathbf B}(2^j) =(\mathbf B_{i\ell})_{i,\ell=1,2,3},
\end{equation}
where ${\mathbf B}_{i\ell}$ and $\widehat {\mathbf B}_{i\ell}$ denote blocks of size $r_i\times r_\ell$.  Similarly, define
\begin{equation}\label{e:P=(P1(n)_P2(n)_P3(n))}
\mathbf P  = \big(\mathbf P_1(n) ~\mathbf P_2(n) ~\mathbf P_3(n)\big), \quad \mathbf R(n) \ = \big(\mathbf R_1(n) ~\mathbf R_2(n)~\mathbf R_3(n)\big),
\end{equation}
where, for $i = 1,2,3$, $\mathbf P_i(n) \in {\mathcal M}(p,r_i,\bbR)$ and $\mathbf R_i(n) \in {\mathcal M}(r,r_i,\bbR)$.  Also, considering the limit ${\mathbf R}(n) \rightarrow {\mathbf R}$ as in \eqref{e:<p1,p2>=c12_2}, recast
\begin{equation}\label{e:R=(R1,R2,R3)}
\mathbf R \ = \big( \mathbf R_1 ~\mathbf R_2~\mathbf R_3 \big),
\end{equation}
where each $\mathbf R_i$ is a full rank matrix of size $r\times r_i$. Now define the diagonal matrix
\begin{equation}\label{e:h1}
\mathbf h_1 = \text{diag}(h_1,\ldots, h_{r_1})
\end{equation}
(cf.\ \eqref{e:h-bf=diag(h1,...,hr)}), and let $\mathbf x\in \mathbb R^{r_3}$, ${\mathbf u} \in \bbS^{p-1}$ and ${\boldsymbol \tau} \in \bbS^{r-1}$. Further define the scalar-valued functions ${\widehat f}_{\nu_n}(\mathbf x,{\mathbf u})$, ${\widehat \varphi}_{\nu_n}(\mathbf x,{\boldsymbol \tau})$ and $\varphi(\mathbf x,{\boldsymbol \tau})$ by means of the relations
\begin{equation} \label{e:def_fn(x,u)}
0 \leq {\widehat f}_{\nu_n}(\mathbf x,{\mathbf u}) =\begin{pmatrix}
a^{\mathbf h_1 - h_q {\mathbf I}}\mathbf P^*_1(\nu_n) {\mathbf u}\\
\mathbf P^*_2(\nu_n) {\mathbf u}\\
\mathbf x
\end{pmatrix}^*\widehat {\mathbf B}_a(2^j) \begin{pmatrix}
a^{\mathbf h_1 - h_q  {\mathbf I}}\mathbf P^*_1(\nu_n) {\mathbf u}\\
\mathbf P^*_2(\nu_n) {\mathbf u}\\
\mathbf x
\end{pmatrix},
\end{equation}
\begin{equation} \label{e:def_phi_n(x,u)}
0 \leq {\widehat \varphi}_{\nu_n}(\mathbf x,{\boldsymbol \tau}) =\begin{pmatrix}
a^{\mathbf h_1 - h_q  {\mathbf I}}\mathbf R^*_1(\nu_n) {\boldsymbol \tau}\\
\mathbf R^*_2(\nu_n) {\boldsymbol \tau}\\
\mathbf x
\end{pmatrix}^*\widehat {\mathbf B}_a(2^j) \begin{pmatrix}
a^{\mathbf h_1 - h_q {\mathbf I}}\mathbf R^*_1(\nu_n) {\boldsymbol \tau}\\
\mathbf R^*_2(\nu_n) {\boldsymbol \tau}\\
\mathbf x
\end{pmatrix}
\end{equation}
and
\begin{equation} \label{e:def_varphi(x,u)}
0 \leq \varphi(\mathbf x,{\boldsymbol \tau}) = \begin{pmatrix}
\mathbf 0\\
{\mathbf R}^*_2 {\boldsymbol \tau}\\
\mathbf x
\end{pmatrix}^* {\mathbf B}(2^j) \begin{pmatrix}
\mathbf 0\\
{\mathbf R}^*_2 {\boldsymbol \tau}\\
\mathbf x
\end{pmatrix}
\end{equation}
(cf.\ \eqref{e:f-hat_n(x,u)=varphi-hat_n(x,tau)} and \eqref{e:rescaled_eigen_p-r+ell_in_terms_of_fn_and_varphin}). In \eqref{e:def_fn(x,u)} and \eqref{e:def_phi_n(x,u)}, for notational simplicity we keep writing $\widehat {\mathbf B}_a(2^j)$ along $\nu_n$. Since $\mathbf B(2^j)\in \mathcal S_{>0}(r,\bbR)$, then $\mathbf B_{33}$ is invertible. Hence, for $\omega \in A$ a.s.\ and large enough $n$, $\widehat{\mathbf B}_{33}$ is also invertible. Thus, for any (large) $n$ and for each fixed $\mathbf u \in \mathcal \bbS^{p-1}$ and ${\boldsymbol \tau} \in \mathcal \bbS^{r-1}$, the functions $\widehat{f}_{\nu_n}(\cdot,\mathbf u)$, $\widehat{\varphi}_{\nu_n}(\cdot,{\boldsymbol \tau})$ and $\varphi(\cdot,{\boldsymbol \tau})$ have unique minimizers ${\mathbf x}_{*,\nu_n}({\mathbf u})$, ${\mathbf x}_{*,\nu_n}({\boldsymbol \tau})$ and $\mathbf x_*({\boldsymbol \tau})$, respectively, in the argument ${\mathbf x}$. In particular, we can express
\begin{equation}\label{e:def_x-star(u)}
{\mathbf x}_{*,\nu_n}({\boldsymbol \tau}) = -\big(\widehat{\mathbf B}^{-1}_{33}\widehat{{\mathbf B}}_{31}a^{\mathbf h_1 - h_q{\mathbf I}} {\mathbf R}_1^*(\nu_n)+ \widehat{\mathbf B}^{-1}_{33}\widehat{\mathbf B}_{32} \mathbf R^*_2(\nu_n)\big) {\boldsymbol \tau}, \quad \mathbf x_*({\boldsymbol \tau}) = - \mathbf B^{-1}_{33}\mathbf B_{32} \mathbf R_2^*{\boldsymbol \tau}.
\end{equation}
For notational simplicity, we further define
\begin{equation}\label{e:f-hat_nu(u),varphi-hat_nu(tau),varphi(tau)}
\widehat f_{\nu_n}(\mathbf u):=\widehat f_{\nu_n}\big(\mathbf x_{*,\nu_n}(\mathbf u),\mathbf u\big), \quad
\widehat \varphi_{\nu_n}({\boldsymbol \tau}):=\widehat \varphi_{\nu_n}\big( \mathbf x_{*,\nu_n}({\boldsymbol \tau}),{\boldsymbol \tau}\big),\quad
\varphi({\boldsymbol \tau}):=\varphi\big(\mathbf x_*({\boldsymbol \tau}),{\boldsymbol \tau}\big).
\end{equation}

In regard to \textbf{(c)}, define the matrix
\begin{equation}\label{e:M=B22-B23*B33(-1)B32}
 \mathbf M:= \mathbf B_{22} - \mathbf B_{23}\mathbf B_{33}^{-1}\mathbf B_{32}  \in \mathcal S(r_2,\bbR).
\end{equation}
From \eqref{e:def_varphi(x,u)}--\eqref{e:M=B22-B23*B33(-1)B32}, we can conveniently write
\begin{equation}\label{e:varphi(u)=varphi(x*(u),u)}
\varphi({\boldsymbol \tau}) = {\boldsymbol \tau}^* \mathbf R_2 \mathbf M\mathbf R_2^* {\boldsymbol \tau}.
\end{equation}
Let
\begin{equation}\label{e:Pi3}
{\mathbf \Pi}_3 \in {\mathcal M}(r,\bbR)
\end{equation}
be the projection matrix onto $\mathbf R_3^\perp$ (see the notation \eqref{e:A^perp}). Bearing in mind the matrix $\mathbf R_2 \mathbf M\mathbf R_2^*$ in \eqref{e:varphi(u)=varphi(x*(u),u)}, we define
\begin{equation} \label{e:def_Lambda}
\boldsymbol\Lambda := {\mathbf \Pi}_3^*\mathbf R_2 \mathbf  M \mathbf R_2^*{\mathbf \Pi}_3\in \mathcal S_{\geq 0}(r,\bbR).
\end{equation}
However, by Lemma \ref{l:M_in_S>0(r2,R)}, $(ii)$,
\begin{equation}\label{e:def_mathcal_W}
{\mathcal W}:=\text{span}\{{\boldsymbol\Lambda}\} = \text{span}\{\mathbf R_2,\mathbf R_3\}\cap \mathbf R_3^\perp.
\end{equation}
Then, $\mathbf \Pi_3 \boldsymbol \tau = \boldsymbol \tau$ for any $\boldsymbol\tau \in \mathcal{W}$. Thus, based on \eqref{e:varphi(u)=varphi(x*(u),u)}, we can recast
\begin{equation}\label{e:varphi(tau)=tau*_Lambda_tau}
\varphi({\boldsymbol \tau}) = {\boldsymbol \tau}^* {\boldsymbol \Lambda}{\boldsymbol \tau}, \quad {\boldsymbol \tau} \in {\mathcal W}.
\end{equation}

We are now in possession of all the elements described in \textbf{(a)--(c)}. Following the description of \textbf{(d)}, we establish \eqref{e:lim_nu_rescaled_eigenvalue} first for $\ell=\min \hspace{1mm}\mathcal I_0 = r_1+1$, and then proceed by induction.

In the construction of the argument, it will be convenient to consider the decomposition of $\mathcal W$ (see \eqref{e:def_mathcal_W}) given by
\begin{equation}\label{e:span_equals_W}
\text{span}\{{\boldsymbol \tau}_{\ell}(\omega): \ell \in \mathcal I_0\} = \mathcal W \hspace{3mm} \textnormal{a.s.},
\end{equation}
itself a consequence of Lemma \ref{l:fullrank_limit_P*U}, $(iii)$. On the other hand, by Lemma \ref{l:M_in_S>0(r2,R)}, $(iii)$, $\text{rank}(\boldsymbol\Lambda)=r_2 \geq 1$. So, for some $\eta \in \bbN$, let
\begin{equation}\label{e:chi1,...,chi_eta}
0 < \chi_1 < \chi_2 < \ldots  <\chi_\eta
\end{equation}
be the distinct values among the strictly positive $r_2$ eigenvalues of ${\boldsymbol \Lambda}$. Also, let $\mathcal M_1,\ldots,\mathcal M_\eta$ be the (deterministic) eigenspaces associated with each of the $\eta\geq 1$ distinct positive eigenvalues \eqref{e:chi1,...,chi_eta} of $\boldsymbol \Lambda$. Then, as a consequence of \eqref{e:def_mathcal_W} and \eqref{e:span_equals_W}, we can further write
\begin{equation}\label{e:span(t_ell)_I0=W=M1+...+Meta}
\text{span}\{{\boldsymbol \tau}_{\ell}(\omega): \ell \in \mathcal I_0\} = \mathcal W = \mathcal M_1 \oplus \ldots \oplus\mathcal M_\eta,
\end{equation}
where the first equality in \eqref{e:span(t_ell)_I0=W=M1+...+Meta} holds a.s.\ (\textbf{n.b.}: relation \eqref{e:span(t_ell)_I0=W=M1+...+Meta} does not per se determine the connection between subsets of vectors ${\boldsymbol \tau}_{\ell}(\omega)$ and the eigenspaces $\mathcal M_{i}$. This connection will be established in the next stages of this proof.)\vspace{2mm}

\noindent \textbf{Step} ${\boldsymbol \ell} = {\mathbf r_1 + 1}$. First, we establish a lower bound, as well as its limit, for the rescaled eigenvalue $\frac{\lambda_{p-r+(r_1+1)}\big(\W(a(\nu_n)2^j)\big)}{a(\nu_n)^{2h_q+1}}$ (cf.\ \eqref{e:high-level_system_ineqs}). Recall that the vectors $\boldsymbol \tau_\ell(n) = \boldsymbol \tau_\ell(n,\omega)$, $\ell = 1,\hdots,r$, are given by \eqref{e:T(n)}. Also, let $\widehat f_{\nu_n}$ and ${\widehat \varphi}_{\nu_n}$ be as in \eqref{e:def_fn(x,u)}. Then, for $\omega \in A$ a.s., as $n \rightarrow \infty$,
$$
\frac{\lambda_{p-r+(r_1+1)}\big(\W(a(\nu_n)2^j)\big)}{a(\nu_n)^{2h_q+1}}
$$
$$
\geq \widehat f_{\nu_n}\big( {\mathbf x}_{*,\nu_n}({\mathbf u}_{p-r+r_1+1}(\nu_n)),{\mathbf u}_{p-r+r_1+1}(\nu_n)\big) + \mathbf u^*_{p-r+r_1+1}(\nu_n) \Xi_q(\nu_n)\mathbf u_{p-r+r_1+1}(\nu_n)
$$
\begin{equation}\label{e:rescaled_lambda2_lower_bound_first}
\geq {\widehat \varphi}_{\nu_n}\big( {\mathbf x}_{*,\nu_n}({\boldsymbol \tau}_{r_1+1}(\nu_n)),{\boldsymbol \tau}_{r_1+1}(\nu_n)\big)+ o(1) \rightarrow \varphi(\boldsymbol \tau _{{r_1+1}}) \geq \chi_1.%
\end{equation}
In \eqref{e:rescaled_lambda2_lower_bound_first}, the first inequality follows from \eqref{e:rescaled_eigen_p-r+ell_in_terms_of_fn_and_varphin} and the fact that ${\mathbf x}_{*,\nu_n}({\mathbf u}_{p-r+r_1+1}(\nu_n))$ is a minimizer of $\widehat f_{\nu_n}\big( {\mathbf x},{\mathbf u}_{p-r+r_1+1}(\nu_n)\big)$ in the argument ${\mathbf x}$. The second inequality in \eqref{e:rescaled_lambda2_lower_bound_first} holds by \eqref{e:rescaled_eigen_p-r+ell_in_terms_of_fn_and_varphin} (\textbf{n.b.:} ${\mathbf x}_{*,\nu_n}({\mathbf u}_{p-r+r_1+1}(\nu_n))={\mathbf x}_{*,\nu_n}({\boldsymbol \tau}_{r_1+1}(\nu_n))$). The $o(1)$ term appearing in \eqref{e:rescaled_lambda2_lower_bound_first} is a consequence of the fact that  $\mathbf u^*_{p-r+r_1+1}(\nu_n)\Xi_q(\nu_n)\mathbf u_{p-r+r_1+1}(\nu_n) = o(1)$ for the given $\omega \in A$ a.s.\ due to condition \eqref{e:B-hat(2j)->B(2j)_prop}. Also, ${\widehat \varphi}_{n}\big( {\mathbf x}_{*,\nu_n}({\boldsymbol \tau}_{r_1+1}(\nu_n)),{\boldsymbol \tau}_{r_1+1}(\nu_n)\big) \rightarrow \varphi(\boldsymbol \tau _{{r_1+1}})$ due to expressions \eqref{e:def_fn(x,u)}--\eqref{e:def_x-star(u)}. In addition, the last inequality in \eqref{e:rescaled_lambda2_lower_bound_first} holds since $\boldsymbol \tau _{{r_1+1}} \in {\mathcal W} \cap \bbS^{r-1}$ (see \eqref{e:span(t_ell)_I0=W=M1+...+Meta}) and $\chi_1$ is the smallest value $\varphi$ can take on ${\mathcal W} \cap \bbS^{r-1}$. This establishes the lower bound.

We now construct an upper bound, as well as its limit, for the rescaled eigenvalue $\frac{\lambda_{p-r+(r_1+1)}\big(\W(a(\nu_n)2^j)\big)}{a(\nu_n)^{2h_q+1}}$ (cf.\ \eqref{e:high-level_system_ineqs}). Fix an arbitrary  (deterministic) vector
\begin{equation}\label{e:w_in_M1_in_Sr-1}
\mathbf w\in \mathcal M_1\cap \mathcal \bbS^{r-1}
\end{equation}
(namely, $\mathbf w$ is any unit eigenvector of $\boldsymbol \Lambda$ associated with its smallest positive eigenvalue $\chi_1$). By relations \eqref{e:varphi(tau)=tau*_Lambda_tau} and \eqref{e:chi1,...,chi_eta},
\begin{equation}\label{e:f(w11)=chi_1}
\varphi(\mathbf w) = \chi_1.
\end{equation}
In view of \eqref{e:span_equals_W}, we can use the a.s.\ orthonormal vectors $\{{\boldsymbol \tau}_{\ell}(\omega): \ell \in \mathcal I_0\}$ to write
\begin{equation}\label{e:w=sum_ell_alpha(omega)*tau_ell(omega)}
\mathbf w = \sum_{\ell \in \mathcal I_0}\alpha_\ell(\omega) \boldsymbol\tau_\ell(\omega).
\end{equation}
Let $\mathbf x_*(\mathbf w) = (x_{*,r_1+r_2+1}({\mathbf w} ),\ldots, x_{*,r}({\mathbf w}))$ be the minimizer  of $\varphi(\cdot,{\mathbf w})$, as defined by \eqref{e:def_x-star(u)}.  In view of the convergence conditions \eqref{e:subseq_condition_1_top} and \eqref{e:subseq_condition_2_top}, for  $\omega \in A$ a.s.\ we may apply Lemma \ref{l:<p,w>=infinitesimal}  to extract a sequence $\{{\mathbf v}(\nu_n)\}_{n \in \bbN} = \{{\mathbf v}(\nu_n,\omega)\}_{n \in \bbN}$  of unit vectors
\begin{equation}\label{e:v(n)_in_span(u_p-r+r1+1,...,u_p(n))}
{\mathbf v}(\nu_n) \in \textnormal{span}\{{\mathbf u}_{{p-r+r_1+1}}(\nu_n),\hdots,{\mathbf u}_{p}(\nu_n)\}
\end{equation}
such that, as $n \rightarrow \infty$,
\begin{equation}\label{e:pi(n),v(n)=x*/a^(hi-hq)}
\langle \mathbf p_i(\nu_n), {\mathbf v}(\nu_n) \rangle = \frac{x_{*,i}({\mathbf w})}{a(\nu_n)^{h_i-h_q}},\quad i\in\mathcal I_+, \quad \textnormal{and} \quad \mathbf w(\nu_n)={\mathbf Q}^*(\nu_n)\mathbf v(\nu_n)\rightarrow \mathbf w.
\end{equation}
Then, as $n \rightarrow \infty$,
$$
\frac{\lambda_{p-r+(r_1+1)}\big(\W(a(\nu_n)2^j)\big)}{a(\nu_n)^{2h_q+1}}  \leq {\mathbf v}^*(\nu_n)
\frac{\W(a(\nu_n)2^j) }{a(\nu_n)^{2h_q+1}} {\mathbf v}(\nu_n)
$$
\begin{equation}\label{e:rescaled_lambda2_upper_bound1}
={\widehat \varphi}_n\big(\mathbf x_*(\mathbf w),\mathbf w(\nu_n)\big)
 +{\mathbf v}^*_{{r_1+1}}(\nu_n) \Xi_q(\nu_n){\mathbf v}_{{r_1+1}}(\nu_n)
\rightarrow \varphi({\mathbf w}) =\chi_1.
\end{equation}
In \eqref{e:rescaled_lambda2_upper_bound1}, the inequality is a consequence of \eqref{e:v(n)_in_span(u_p-r+r1+1,...,u_p(n))} and \eqref{e:lambdaq(M)_based_on_eigenvecs}. The convergence follows since ${\mathbf v}^*_{r_1+1}(\nu_n) \Xi_q(\nu_n){\mathbf v}_{r_1+1}(\nu_n) =o(1)$, and also because ${\widehat \varphi}_{\nu_n}\big(\mathbf x_*(\mathbf w),\mathbf w(\nu_n)\big)
  \to \varphi(\mathbf w)$, itself a consequence of expressions \eqref{e:def_phi_n(x,u)}, \eqref{e:def_varphi(x,u)} and of the limit in \eqref{e:pi(n),v(n)=x*/a^(hi-hq)}. This establishes the upper bound.

So, expressions \eqref{e:rescaled_lambda2_lower_bound_first} and \eqref{e:rescaled_lambda2_upper_bound1} show that, for $\omega \in A$ a.s.,
\begin{equation}\label{e:f(ul(omega)=f(u-tilde))}
\frac{\lambda_{p-r+(r_1+1)}\big(\W(a(\nu_n)2^j)\big)}{a(\nu_n)^{2h_q+1}}(\omega) \to \varphi(\boldsymbol \tau _{{r_1+1}}(\omega)) = \varphi({\mathbf w})=\chi_1,
\quad n \rightarrow \infty,
\end{equation}
where the last equality follows from \eqref{e:f(w11)=chi_1}. In addition, expression \eqref{e:f(ul(omega)=f(u-tilde))} implies that ${\boldsymbol \tau}_{{r_1+1}}(\omega)\in \mathcal M_1$. This establishes \eqref{e:lim_nu_rescaled_eigenvalue} for the index value $\ell=r_1+1=\min \hspace{1mm}\mathcal I_0$.\vspace{3mm}

\noindent \textbf{Step general }${\boldsymbol \ell} \in {\mathcal I}_0$. We now proceed by induction through the set $\mathcal I_0$. Consider the double decomposition \eqref{e:span(t_ell)_I0=W=M1+...+Meta} of ${\mathcal W}$, and let $\mathcal M_0=\emptyset$. For the induction hypothesis, assume that, for ${\boldsymbol \ell}-1 \in {\mathcal I}_0$, there exists $k \in \bbN$ such that
\begin{equation}\label{e:inductive_hypothesis}
\mathcal M_0\oplus \ldots \oplus \mathcal M_{k-1} \subsetneq \text{span}\{\boldsymbol \tau_{r_1+1},\ldots,\boldsymbol\tau_{\ell-1}\} \subseteq \mathcal M_0\oplus \ldots \oplus \mathcal M_{k} \hspace{3mm}\textnormal{a.s.}
\end{equation}
Further assume that, for $i = r_1+1,\hdots,\ell-1$ and for $\omega \in A$ a.s.,
\begin{equation}\label{e:rescaled_eigen_p-r+i/a^(2*hq+1)->varphi(tau_i)}
\lim_{n \rightarrow \infty}\frac{\lambda_{p-r+i}\big(\W(a(\nu_n)2^j)\big)}{a(\nu_n)^{2h_q+1}} = \varphi({\boldsymbol \tau}_{i}) \in \{\chi_1,\hdots,\chi_\eta\}.
\end{equation}
In \eqref{e:rescaled_eigen_p-r+i/a^(2*hq+1)->varphi(tau_i)}, for ${\boldsymbol \tau}_i={\boldsymbol \tau}_i(\omega)$ as in \eqref{e:Gamma_a.s._limit}, we suppose
\begin{equation}\label{e:chi1=varphi(tau_1)=<...=<varphi(tau_ell-1)=chi_k}
\chi_1 = \varphi({\boldsymbol \tau}_{ r_1+1 }) \leq \hdots \leq \varphi({\boldsymbol \tau}_{\ell-1}) = \chi_k
\end{equation}
(\textbf{n.b.:} $k$ does \textit{not} depend on $\omega$ -- cf.\ the decomposition $\mathcal W = \mathcal M_1 \oplus \ldots \oplus\mathcal M_\eta$ in \eqref{e:span(t_ell)_I0=W=M1+...+Meta}, which is deterministic).

So, starting from the induction hypothesis \eqref{e:inductive_hypothesis}--\eqref{e:chi1=varphi(tau_1)=<...=<varphi(tau_ell-1)=chi_k}, note that $\textnormal{dim}(\mathcal M_0\oplus \ldots \oplus \mathcal M_{k}) \geq \ell -1 -r_1$. Our goal is to show that
\begin{equation}\label{e:case_(i)_induction}
(i) \quad \textnormal{if }~\textnormal{dim}(\mathcal M_0\oplus \ldots \oplus \mathcal M_{k}) > \ell -1 -r_1,
\end{equation}
then, almost surely,
\begin{equation}\label{e:rescaled_eigen_p-r+ell/a^(2*hq+1)->varphi(tau_ell)_case_(i)}
\lim_{n \rightarrow \infty}\frac{\lambda_{p-r+\ell}\big(\W(a(\nu_n)2^j)\big)}{a(\nu_n)^{2h_q+1}} = \varphi({\boldsymbol \tau}_{\ell}) = \chi_k, \quad {\boldsymbol \tau}_{\ell} \in \text{span}\{\boldsymbol \tau_{r_1+1},\ldots,\boldsymbol\tau_{\ell-1}\}^\perp \cap \mathcal M_{k};
\end{equation}
or
\begin{equation}\label{e:case_(ii)_induction}
(ii) \quad \textnormal{if }~\textnormal{dim}(\mathcal M_0\oplus \ldots \oplus \mathcal M_{k}) = \ell -1 -r_1,
 \end{equation}
 then, almost surely,
\begin{equation}\label{e:rescaled_eigen_p-r+ell/a^(2*hq+1)->varphi(tau_ell)_case_(ii)}
\lim_{n \rightarrow \infty }\frac{\lambda_{p-r+\ell}\big(\W(a(\nu_n)2^j)\big)}{a(\nu_n)^{2h_q+1}} = \varphi({\boldsymbol \tau}_{\ell}) = \chi_{k+1}, \quad {\boldsymbol \tau}_{\ell} \in \mathcal M_{k+1}.
\end{equation}
In either case, \eqref{e:inductive_hypothesis}--\eqref{e:chi1=varphi(tau_1)=<...=<varphi(tau_ell-1)=chi_k} are extended to $\ell > \ell -1$, which establishes the induction.

So, under \eqref{e:case_(i)_induction} and \eqref{e:case_(ii)_induction}, respectively, either
\begin{equation}\label{e:case_(i)}
(i) \quad \text{span}\{\boldsymbol \tau_{ r_1+1 },\ldots,\boldsymbol\tau_{\ell-1}\} \subsetneq \mathcal M_0\oplus \ldots \oplus \mathcal M_{k} \hspace{3mm}\textnormal{a.s.},
\end{equation}
or
\begin{equation}\label{e:case_(ii)}
(ii) \quad \text{span}\{\boldsymbol \tau_{r_1+1 },\ldots,\boldsymbol\tau_{\ell-1}\} =\mathcal M_0\oplus \ldots \oplus \mathcal M_{k} \hspace{3mm}\textnormal{a.s.}
 \end{equation}
 We consider each case $(i)$ and $(ii)$ separately. To avoid the introduction of cumbersome notation and to facilitate comparison with the inductive step $\ell = r_1 + 1$, we reuse the notation $\mathbf w$, $\mathbf w(\nu_n)$, and $\mathbf v(\nu_n)$ according to convenience.

We begin with $(i)$. Again in view of \eqref{e:span(t_ell)_I0=W=M1+...+Meta}, since the vectors $\boldsymbol \tau_i$ are orthonormal a.s., the first inclusion in  \eqref{e:inductive_hypothesis} implies that
\begin{equation}\label{e:tau_ell_in_M-k+...+M-eta_and_sphere}
\boldsymbol \tau_\ell \in \mathcal W \cap \text{span}\{\boldsymbol \tau_{r_1+1},\ldots,\boldsymbol\tau_{\ell-1}\}^\perp \subseteq  \mathcal M_k \oplus \ldots \oplus \mathcal M_\eta \hspace{3mm}\textnormal{a.s.}
\end{equation}
However,  ${\mathcal M}_{k}, \ldots, {\mathcal M}_\eta$ are the eigenspaces of ${\boldsymbol \Lambda}$ associated with the distinct eigenvalues $\chi_{k} < \hdots < \chi_{\eta}$, respectively. Hence, relation \eqref{e:tau_ell_in_M-k+...+M-eta_and_sphere} implies that the unit vector $\boldsymbol \tau_\ell = \boldsymbol \tau_\ell(\omega)$ is a convex combination of eigenvectors of ${\boldsymbol \Lambda}$ associated with eigenvalues no smaller than $\chi_k$. Therefore, by expression \eqref{e:varphi(tau)=tau*_Lambda_tau},
\begin{equation}\label{e:varphi(tau_ell)>=chi_k}
\varphi({\boldsymbol \tau}_\ell) \geq \chi_k \hspace{3mm}\textnormal{a.s.}
\end{equation}
So, arguing as in \eqref{e:rescaled_lambda2_lower_bound_first} with $\ell$ replacing $r_1 + 1$, as $n \rightarrow \infty$,
\begin{equation}\label{e:rescaled_lambda_lower_bound_q(k,ell+1)2}
\frac{\lambda_{p-r+\ell}\big(\W(a(\nu_n)2^j)\big)}{a(\nu_n)^{2h_q+1}} \geq {\widehat \varphi}_{n}\big( {\mathbf x}_{*,\nu_n}({\boldsymbol \tau}_{\ell}(\nu_n)),{\boldsymbol \tau}_{\ell}(\nu_n)\big)+ o(1) \rightarrow \varphi(\boldsymbol \tau _{{\ell}}) \geq \chi_k.
\end{equation}

On the other hand, under \eqref{e:case_(i)}, relations \eqref{e:span(t_ell)_I0=W=M1+...+Meta} and \eqref{e:inductive_hypothesis} imply that there exists a random unit vector $\mathbf w=\mathbf w(\omega) \in \textnormal{span}\{\boldsymbol \tau_\ell,\ldots,\boldsymbol\tau_{r-r_3}\}\cap \mathcal M_k$. In particular,
\begin{equation}\label{e:varphi(w)=chi_k}
\varphi({\mathbf w}) = \chi_k.
\end{equation}
Moreover, there are random coefficients $\beta_i = \beta_i(\omega)$, $i = \ell, \hdots,r-r_3$, based on which we may express
$$
{\mathbf w}(\omega)= \sum_{ i= \ell}^{r-r_3}\beta_i(\omega) {\boldsymbol\tau}_{i}(\omega) \in {\mathcal M}_k \cap \bbS^{r-1} %
$$
(cf.\ relation \eqref{e:w=sum_ell_alpha(omega)*tau_ell(omega)}, where the left-hand side is deterministic). Again in view of conditions \eqref{e:subseq_condition_1_top} and \eqref{e:subseq_condition_2_top}, for $\omega \in A$ a.s.\ Lemma \ref{l:<p,w>=infinitesimal} implies that we may pick a sequence $\{{\mathbf v}(\nu_n)\}_{n \in \bbN}$ of unit vectors ${\mathbf v}(\nu_n) \in \textnormal{span}\{{\mathbf u}_{p-r+\ell}(\nu_n),\hdots,{\mathbf u}_{p}(\nu_n)\}$ such that
\begin{equation}\label{e:pick_the_vector_main_manuscript}
\langle \mathbf p_i(\nu_n), {\mathbf v}(\nu_n) \rangle = \frac{{\mathbf x}_{*,i}({\mathbf w})}{a(\nu_n)^{h_i-h_q}},\quad i\in\mathcal I_+, \quad \textnormal{and} \quad \mathbf Q^*(\nu_n) {\mathbf v}(\nu_n) \rightarrow  \mathbf w.
\end{equation}
Thus, as $n\rightarrow \infty$, arguing similarly as in  \eqref{e:rescaled_lambda2_upper_bound1},
\begin{equation}\label{e:rescaled_lambda_upperbound2}
\frac{\lambda_{p-r+\ell}\big(\W(a(\nu_n)2^j)\big)}{a(\nu_n)^{2h_q+1}} \leq {\mathbf v}^*(\nu_n)
\frac{\W(a(\nu_n)2^j) }{a(\nu_n)^{2h_q+1}}  {\mathbf v}(\nu_n)\rightarrow \varphi({\mathbf w}).
\end{equation}
As a consequence of \eqref{e:rescaled_lambda_lower_bound_q(k,ell+1)2}, \eqref{e:varphi(w)=chi_k} and \eqref{e:rescaled_lambda_upperbound2}, for $\omega \in A$ a.s.\ and ${\mathbf w}={\mathbf w}(\omega) \in {\mathcal M}_k$,
\begin{equation}\label{e:prop_case_(i)_final_step}
\frac{\lambda_{p-r+\ell}\big(\W(a(\nu_n)2^j)\big)}{a(\nu_n)^{2h_q+1}}(\omega) \rightarrow \varphi({\mathbf w}(\omega)) = \chi_k, \quad n \rightarrow \infty.
\end{equation}
Relations \eqref{e:rescaled_lambda_lower_bound_q(k,ell+1)2}, \eqref{e:rescaled_lambda_upperbound2} and \eqref{e:prop_case_(i)_final_step} further imply that
\begin{equation}\label{E:varphi(tau_ell)=chi_k}
\varphi({\boldsymbol \tau}_{\ell}) = \chi_k.
\end{equation}
Together with \eqref{e:tau_ell_in_M-k+...+M-eta_and_sphere}, expressions \eqref{e:prop_case_(i)_final_step} and \eqref{E:varphi(tau_ell)=chi_k} establish \eqref{e:rescaled_eigen_p-r+ell/a^(2*hq+1)->varphi(tau_ell)_case_(i)} in case $(i)$.

In case $(ii)$, first note that
\begin{equation}\label{e:span(tau-ell,...,tau-r-r1)=M-k+1,...,M-eta}
  \text{span}\{\boldsymbol \tau_\ell,\ldots,\boldsymbol\tau_{r-r_3}\} =\mathcal M_{k+1} \oplus\ldots\oplus \mathcal M_\eta \hspace{3mm}\textnormal{a.s.}
\end{equation}
by \eqref{e:span(t_ell)_I0=W=M1+...+Meta} and \eqref{e:case_(ii)}. Then, since $\chi_{k+1}$ is the smallest value $\varphi$ can take on $\mathcal M_{k+1} \oplus\ldots\oplus \mathcal M_\eta\cap \mathcal \bbS^{r-1}$, by relation \eqref{e:varphi(tau)=tau*_Lambda_tau},
\begin{equation}\label{e:varphi(tau_ell)_>=_varphi(w)=chi_k+1}
\varphi({\boldsymbol \tau}_{\ell}) \geq \chi_{k+1}.
\end{equation}
By analogous arguments to those for \eqref{e:rescaled_lambda2_lower_bound_first}, we obtain, for $\omega \in A$ a.s.,
$$
\frac{\lambda_{p-r+\ell}\big(\W(a(\nu_n)2^j)\big)}{a(\nu_n)^{2h_q+1}} \geq {\widehat \varphi}_{n}\big( {\mathbf x}_{*,n}({\boldsymbol \tau}_{\ell}(\nu_n)),{\boldsymbol \tau}_{\ell}(\nu_n)\big)+o(1) \rightarrow \varphi(\boldsymbol \tau _{{\ell}}) \geq \chi_{k+1}, \quad n \rightarrow \infty.
$$
On the other hand, fix any (deterministic) $\mathbf w \in \mathcal M_{k+1} \cap \bbS^{r-1}$ (i.e.,  ${\mathbf w}$ is an eigenvector of $\boldsymbol \Lambda$ associated with $\chi_{k+1}$). Given \eqref{e:subseq_condition_1_top} and \eqref{e:subseq_condition_2_top}, for $\omega \in A$ a.s.\ Lemma \ref{l:<p,w>=infinitesimal} implies that there exists a sequence $\{{\mathbf v}(\nu_n)\}_{n \in \bbN}$ of unit vectors in $\textnormal{span}\{{\mathbf u}_{p-r+\ell}(\nu_n),\hdots,{\mathbf u}_{p}(\nu_n)\}$ satisfying \eqref{e:pick_the_vector_main_manuscript}.  Then, the analogous limit \eqref{e:rescaled_lambda_upperbound2} follows, with $\varphi(\mathbf w)=\chi_{k+1}$. From the lower and the upper limits, we conclude that, for $\omega \in A$ a.s.,
\begin{equation}\label{e:case_(ii)_lim_rescaled_eigenvalue}
\frac{\lambda_{p-r+\ell}\big(\W(a(\nu_n)2^j)\big)}{a(\nu_n)^{2h_q+1}}(\omega) \rightarrow \varphi({\mathbf w}) =  \varphi(\boldsymbol \tau _{{\ell}}) = \chi_{k+1}, \quad n \rightarrow \infty.
\end{equation}
Moreover, \eqref{e:span(tau-ell,...,tau-r-r1)=M-k+1,...,M-eta} and \eqref{e:case_(ii)_lim_rescaled_eigenvalue} imply that ${\boldsymbol \tau}_\ell \in {\mathcal M}_{k+1}$. This establishes \eqref{e:rescaled_eigen_p-r+ell/a^(2*hq+1)->varphi(tau_ell)_case_(ii)} in case $(ii)$. So, the induction is complete, which in turn establishes \eqref{e:lim_nu_rescaled_eigenvalue}. $\Box$\\

In the following example, we illustrate some of the main aspects involved in the proof of Proposition \ref{p:conv_w-by-w_rescaled_eigenvalues} and Theorem \ref{t:lim_n_a_times_lambda/a^(2h+1)}. To facilitate comparison, the description is broken up into the same steps \textbf{(a)--(d)} used in the proof of Proposition \ref{p:conv_w-by-w_rescaled_eigenvalues}. The example involves the simplest possible instance where there are both a slower and a faster eigenvalue than the reference eigenvalue $\lambda_{p-r+q}\big({\mathbf W}(a(n)2^j)\big)= \lambda_{p-1}\big({\mathbf W}(a(n)2^j)\big)$.

\begin{example}\label{ex:proof_of_Theo_3.1} Suppose $r = 3$ and $r_1 = r_2 = r_3 = 1$ (i.e., $h_1 < h_2 < h_3$). Hence, ${\mathbf P}(n) = \big({\mathbf p}_{1}(n),{\mathbf p}_{2}(n),{\mathbf p}_{3}(n) \big) \in {\mathcal M}(p,3,\bbR)$ where $p = p(n) \rightarrow \infty$. For ease of interpretation, suppose in addition that, for all $n \in \bbN$,
\begin{equation}\label{e:R(n)=R=(e1,e2,e3)}
{\mathbf R}(n) = {\mathbf R} = \big({\mathbf e}_{1} \hspace{2mm} {\mathbf e}_{2} \hspace{2mm}{\mathbf e}_{3}\big)\in {\mathcal M}(3,\bbR).
\end{equation}
In high-dimensional coordinates, \eqref{e:R(n)=R=(e1,e2,e3)} implies that the vectors ${\mathbf p}_{1}(n),{\mathbf p}_{2}(n),{\mathbf p}_{3}(n)$ are orthonormal. Moreover, the wavelet eigenvectors satisfy, in the three-way limit \eqref{e:three-fold_lim},
$$
\langle {\mathbf u}_{p-3+\ell}(n),{\mathbf p}_{\ell}(n)\rangle^2 \stackrel{\bbP}\rightarrow 1, \quad \ell = 1,2,3
$$
(cf.\ Figure \ref{fig:eigenvectors}, which displays the general case of non-orthogonal ${\mathbf p}_{1}(n),{\mathbf p}_{2}(n),{\mathbf p}_{3}(n)$).
\vspace{0.2mm}

\noindent \textbf{(a)--(b)} For $\nu_n$ as in \eqref{e:Gamma_a.s._limit}, we can write
\begin{equation*}%
\widehat{f}_{\nu_n}(x,{\mathbf u}) = \widehat{b}_{2 2} \hspace{0.5mm} \langle {\mathbf p}_{2}(\nu_n),{\mathbf u}\rangle^2
+ 2\hspace{1mm}\widehat{b}_{23} \hspace{0.5mm}\langle {\mathbf p}_{2}(\nu_n),{\mathbf u}\rangle \hspace{0.5mm} x + \widehat{b}_{33}\hspace{1mm} x^2
+ o_{\textnormal{a.s.}}(1),
\end{equation*}
\begin{equation*}%
\widehat{\varphi}_{\nu_n}(x,{\boldsymbol \tau}) = \widehat{b}_{2 2} \hspace{0.5mm} \langle {\mathbf r}_{2}(\nu_n),{\boldsymbol \tau}\rangle^2
+ 2\hspace{1mm}\widehat{b}_{23} \hspace{0.5mm}\langle {\mathbf r}_{2}(\nu_n),{\boldsymbol \tau}\rangle \hspace{0.5mm} x + \widehat{b}_{33}\hspace{1mm} x^2 + o_{\textnormal{a.s.}}(1)
\end{equation*}
(cf.\ \eqref{e:def_fn(x,u)} and \eqref{e:def_phi_n(x,u)}). By analogy to \eqref{e:B-hat_a(2^j)_entry-wise}, let ${\mathbf B}(2^j)=\big(b_{\ell_1, \ell_2} \big)_{\ell_1,\ell_2=1,2,3}$. Then, as $n \rightarrow \infty$,
\begin{equation*}%
\widehat{\varphi}_{\nu_n}(x,{\boldsymbol \tau}) \rightarrow \varphi(x,{\boldsymbol \tau}) = b_{2 2} \hspace{0.5mm} \langle {\mathbf e}_{2},{\boldsymbol \tau}\rangle^2
+ 2\hspace{1mm} b_{23} \hspace{0.5mm}\langle {\mathbf e}_{2},{\boldsymbol \tau}\rangle \hspace{0.5mm} x + b_{33}\hspace{1mm} x^2
\end{equation*}
(cf.\ \eqref{e:varphi=lim_varphi-hat_nu-n} and \eqref{e:def_varphi(x,u)}).\vspace{0.4mm}

\noindent \textbf{(c)} For any fixed ${\boldsymbol \tau}$, the global minimizer of $\varphi(x,{\boldsymbol \tau})$ in $x$ is given by $x_{*}({\boldsymbol \tau}) = - \frac{b_{23}}{b_{33}}\langle {\mathbf e}_2, {\boldsymbol \tau}\rangle$. Therefore,
\begin{equation}\label{e:varphi_tau_example}
\varphi({\boldsymbol \tau})= \varphi(x_{*}({\boldsymbol \tau}),{\boldsymbol \tau}) = \frac{1}{b_{33}} \big(b_{22}b_{33} - b^2_{23} \big) \langle {\mathbf e}_2, {\boldsymbol \tau}\rangle^2.
\end{equation}\vspace{0.4mm}

\noindent \textbf{(d)} By \eqref{e:def_mathcal_W} and \eqref{e:R(n)=R=(e1,e2,e3)}, ${\mathcal W} = \textnormal{span}\{{\mathbf e}_2\} \subseteq \bbR^3$. In particular, ${\mathcal M}_1 = \{\pm {\mathbf e}_2\} \ni {\mathbf w}$. Hence, by \eqref{e:varphi_tau_example}, the lower and upper bounds in \eqref{e:rescaled_lambda2_lower_bound_first} and \eqref{e:rescaled_lambda2_upper_bound1} are given by $\varphi({\mathbf w}) = \frac{1}{b_{33}} \big(b_{22}b_{33} - b^2_{23} \big)$.\vspace{0.6mm}

In other words, we ultimately conclude that
$$
\frac{\lambda_{p-1}\big(\W(a(n)2^j)\big)}{a(n)^{2h_2+1}} \stackrel{\bbP}\rightarrow \varphi({\mathbf w}) = \frac{1}{b_{33}} \big(b_{22}b_{33} - b^2_{23} \big), \quad n \rightarrow \infty.
$$
In addition, $\frac{\lambda_{p-2}\big(\W(a(n)2^j)\big)}{a(n)^{2h_2+1}} \stackrel{\bbP}\rightarrow 0$ and $\frac{\lambda_{p}\big(\W(a(n)2^j)\big)}{a(n)^{2h_2+1}} \stackrel{\bbP}\rightarrow \infty$ as $n \rightarrow \infty$ (cf.\ Corollary \ref{c:PWXP^*+R_asymptotics}).
\end{example}

\subsection{Proving Theorem \ref{t:asympt_normality_lambdap-r+q}}\label{s:proving_theorem_3.2}

In this section, again for notational simplicity we work under \eqref{e:P(n)PH_equiv_P}, namely, $p = p(n)$, $a = a(n)$ and ${\mathbf P} = {\mathbf P}(n) = {\mathbf P}(n){\mathbf P}_H$.

As with Proposition \ref{p:conv_w-by-w_rescaled_eigenvalues}, before showing Theorem \ref{t:asympt_normality_lambdap-r+q} for the reader's convenience we provide a summary of the proof method.

The argument is based on mean value theorem-type expansions of the expressions on the left-hand side of \eqref{e:asympt_normality_lambda2}. So, recall expressions \eqref{e:wave_RM_P=I} and \eqref{e:rescaled_EW_original}, namely,
\begin{equation}\label{e:rescaled_W}
\frac{{\mathbf W}(a2^j)}{a^{2h_q+1}} = \underbrace{\frac{{\mathbf P}a^{{\mathbf h}} \widehat{{\mathbf B}}_a(2^j)a^{{\mathbf h}}{\mathbf P}^*}{a^{2h_q}}}_{\textnormal{main scaling term}} + \underbrace{\frac{O_{\bbP}(1)}{a^{2h_q+1}}+
\frac{{\mathbf P}a^{{\mathbf h}}O_{\bbP}(1)}{a^{2h_q+1/2}}+\frac{O^*_{\bbP}(1)a^{{\mathbf h}}{\mathbf P}^*}{a^{2h_q+1/2}}}_{\textnormal{residual}}
\end{equation}
and
\begin{equation}\label{e:rescaled_EW}
\frac{\bbE{\mathbf W}(a2^j)}{a^{2h_q+1}} = \underbrace{\frac{{\mathbf P}a^{{\mathbf h}} {\mathbf B}_a(2^j)a^{{\mathbf h}}{\mathbf P}^*}{a^{2h_q}}}_{\textnormal{main scaling term}} + \underbrace{\frac{O(1)}{a^{2h_q+1}}}_{\textnormal{residual}}.
\end{equation}
Further recall that each of the two $O_{\bbP}(1)$ terms in \eqref{e:rescaled_W} and the $O(1)$ term in \eqref{e:rescaled_EW} correspond to, respectively, ${\mathbf W}_Z(a(n)2^{j})$, $a(n)^{-{\mathbf H}-(1/2){\mathbf I}}{\mathbf W}_{X,Z}(a(n)2^{j})$ and $\bbE {\mathbf W}_Z(a(n)2^{j})$ in \eqref{e:WZ=OP(1),a^(-h-1/2I)WXZ_repeat}. Ultimately, the asymptotic fluctuations of the log-eigenvalues of $\frac{{\mathbf W}(a2^j)}{a^{2h_q+1}}$ will stem from the main scaling terms in \eqref{e:rescaled_W} and \eqref{e:rescaled_EW}.

We can apply \eqref{e:rescaled_W} and \eqref{e:rescaled_EW} so as to decompose
\begin{equation}\label{e:loglambda-loglambdaE_3_terms}
\log \lambda_{p-r+q}\Big( \frac{{\mathbf W}(a(n)2^{j})}{a^{2h_q+1}} \Big)  - \log \lambda_{p-r+q}\Big( \frac{\bbE {\mathbf W} (a(n)2^{j})}{a^{2h_q+1}} \Big)
\end{equation}
$$
= \Big\{\log \lambda_{p-r+q}\Big( \frac{{\mathbf P}a^{{\mathbf h}} \widehat{{\mathbf B}}_a(2^j)a^{{\mathbf h}}{\mathbf P}^*}{a^{2h_q}} + \underbrace{ \frac{O_{\bbP}(1)}{a^{2h_q+1}}+
\frac{{\mathbf P}a^{{\mathbf h}}O_{\bbP}(1)}{a^{2h_q+1/2}}+\frac{O^*_{\bbP}(1)a^{{\mathbf h}}{\mathbf P}^*}{a^{2h_q+1/2}} }_{(*)}\Big)
$$
\begin{equation}\label{e:loglambda-loglambdaE_3_terms_term1}
- \log \lambda_{p-r+q}\Big( \frac{{\mathbf P}a^{{\mathbf h}} {\mathbf B}_a(2^j) a^{{\mathbf h}}{\mathbf P}^*}{a^{2h_q}} + \underbrace{\frac{O_{\bbP}(1)}{a^{2h_q+1}}+
 \frac{{\mathbf P}a^{{\mathbf h}}O_{\bbP}(1)}{a^{2h_q+1/2}}+\frac{O^*_{\bbP}(1)a^{{\mathbf h}}{\mathbf P}^*}{a^{2h_q+1/2}} }_{(*)}\Big) \Big\}
\end{equation}
$$
+ \Big\{\log \lambda_{p-r+q}\Big( \underbrace{ \frac{{\mathbf P}a^{{\mathbf h}} {\mathbf B}_a(2^j) a^{{\mathbf h}}{\mathbf P}^*}{a^{2h_q}}  + \frac{O_{\bbP}(1)}{a^{2h_q+1}} }_{(**)}+ \frac{{\mathbf P}a^{{\mathbf h}}}{a^{h_q}}
\frac{O_{\bbP}(1)}{a^{h_q+1/2}}+\frac{O^*_{\bbP}(1)}{a^{h_q+1/2}}\frac{a^{{\mathbf h}}{\mathbf P}^*}{a^{h_q}}\Big)
$$
\begin{equation}\label{e:loglambda-loglambdaE_3_terms_term2}
- \log \lambda_{p-r+q}\Big( \underbrace{ \frac{{\mathbf P}a^{{\mathbf h}} {\mathbf B}_a(2^j) a^{{\mathbf h}}{\mathbf P}^*}{a^{2h_q}}  + \frac{O_{\bbP}(1)}{a^{2h_q+1}} }_{(**)}\Big) \Big\}
\end{equation}
\begin{equation}\label{e:loglambda-loglambdaE_3_terms_term3}
+ \Big\{\log \lambda_{p-r+q}\Big( \underbrace{ \frac{{\mathbf P}a^{{\mathbf h}} {\mathbf B}_a(2^j) a^{{\mathbf h}}{\mathbf P}^*}{a^{2h_q}} }_{(***)} + \frac{O_{\bbP}(1)}{a^{2h_q+1}}\Big)
- \log \lambda_{p-r+q}\Big( \underbrace{  \frac{{\mathbf P}a^{{\mathbf h}} {\mathbf B}_a(2^j) a^{{\mathbf h}}{\mathbf P}^*}{a^{2h_q}} }_{(***)} + \frac{O(1)}{a^{2h_q+1}}\Big)\Big\}.
\end{equation}
Then, in the proof we consider each sum term \eqref{e:loglambda-loglambdaE_3_terms_term1}, \eqref{e:loglambda-loglambdaE_3_terms_term2} and \eqref{e:loglambda-loglambdaE_3_terms_term3} separately. For each one of them, the common factors in the arguments are marked by underbraces ($(*)$, $(**$), $(***)$). Accordingly, for each sum term the expansions are based on the differences
$$
\widehat{{\mathbf B}}_a(2^j)- {\mathbf B}_a(2^j) \in {\mathcal M}(r,\bbR), \quad \frac{a^{- {\mathbf h}- (1/2){\mathbf I}}{\mathbf W}_{X,Z}(a2^j)}{a^{h_q + 1/2}} - {\mathbf 0} =
\frac{O_{\bbP}(1)}{a^{h_q + 1/2}} - {\mathbf 0} \in {\mathcal M}(r,p,\bbR),
$$
\begin{equation}\label{e:asympt_normality_differences}
\textnormal{and}\quad \frac{{\mathbf W}_{Z}(a2^j)}{a^{2h_q+1}}- \frac{\bbE {\mathbf W}_{Z}(a2^j)}{a^{2h_q+1}} = \frac{O_{\bbP}(1)}{a^{2h_q+1}}- \frac{O(1)}{a^{2h_q+1}} \in {\mathcal S}(p,\bbR),
\end{equation}
respectively (see expressions \eqref{e:Taylor}, \eqref{e:sqrt(n/a)(fnq2(o(1))-fnq0(0))} and \eqref{e:sqrt(n/a)(fnq3(WZ/a^2hq+1)-fnq(0))}). For all three terms, differentiability can be proven to hold in a suitably defined neighborhood containing the terms appearing in \eqref{e:asympt_normality_differences}. This allows us to construct mean value theorem-type expansions.

Then, after multiplication by the rate $\sqrt{n_{a,j}}$, we show that the term \eqref{e:loglambda-loglambdaE_3_terms_term1} converges to a Gaussian distribution, where the fluctuations fundamentally originate in condition \eqref{e:sqrt(K)(B^-B)->N(0,sigma^2)}. We further show that, again after multiplication by the rate $\sqrt{n_{a,j}}$, the terms \eqref{e:loglambda-loglambdaE_3_terms_term2} and \eqref{e:loglambda-loglambdaE_3_terms_term3} converge to zero in probability (see expressions \eqref{e:f_1_weakconv}, \eqref{e:f_2_to0} and \eqref{e:f_3_to0} for the precise statements). \\

We are now in a position to show Theorem \ref{t:asympt_normality_lambdap-r+q}. Even though some steps involved in tackling each term \eqref{e:loglambda-loglambdaE_3_terms_term1}, \eqref{e:loglambda-loglambdaE_3_terms_term2} and \eqref{e:loglambda-loglambdaE_3_terms_term3} display formal similarities, we opted for repeating them so as to facilitate reading. In regard to the notation, in the proof we use \eqref{e:pi_i1,i2} and also express the generic matrices ${\mathbf B} = \big(b_{\ell,\ell' }\big)$, ${\mathbf K} = \big(\kappa_{\ell,\ell'}\big)$ entry-wise. \\

\noindent {\sc Proof of Theorem \ref{t:asympt_normality_lambdap-r+q}}: Fix an arbitrary $q\in \{1,\ldots,r\}$, and let $\mathcal{I}_0$ and (the possibly empty sets) $\mathcal{I}_-,\mathcal{I}_+$ be as in \eqref{e:def_indexsets}. For the sake of concision, we focus on the case where
\begin{equation}\label{e:I_-,I_+_neq_emptyset}
{\mathcal I}_{-}, {\mathcal I}_{+} \neq \emptyset,
\end{equation}
since the remaining cases can be promptly established by a natural simplification of the argument for the case \eqref{e:I_-,I_+_neq_emptyset}.

Consider a generic matrix term ${\mathbf B} \in {\mathcal S}_{\geq 0}(r,\bbR)$ and matrix residual terms ${\mathbf K}$ in either ${\mathcal M}(r,p,\bbR)$ or ${\mathcal S}(p,\bbR)$. For notational simplicity, it is convenient to define the sequences of symmetric random matrices
\begin{equation}\label{e:W-tilde1(a2^j,B)}
\widetilde{{\mathbf W}}_1(a2^j,{\mathbf B}) = a^{2h_q+1}\Big(\frac{{\mathbf P}a^{{\mathbf h}} {\mathbf B} a^{{\mathbf h}}{\mathbf P}^*}{a^{2h_q}}  + \frac{O_{\bbP}(1)}{a^{2h_q+1}}+
\frac{{\mathbf P}a^{{\mathbf h}}O_{\bbP}(1)}{a^{2h_q+1/2}}+\frac{O^*_{\bbP}(1)a^{{\mathbf h}}{\mathbf P}^*}{a^{2h_q+1/2}} \Big),
\end{equation}
\begin{equation}\label{e:W-tilde2(a2^j,K)}
\widetilde{{\mathbf W}}_2(a2^j,{\mathbf K}) = a^{2h_q+1}\Big( \frac{{\mathbf P}a^{{\mathbf h}} {\mathbf B}_a(2^j) a^{{\mathbf h}}{\mathbf P}^*}{a^{2h_q}} + \frac{O_{\bbP}(1)}{a^{2h_q+1}} + \frac{{\mathbf P}a^{{\mathbf h}}}{a^{h_q}} {\mathbf K} + {\mathbf K}^*\frac{a^{{\mathbf h}}{\mathbf P}^*}{a^{h_q}}\Big)
\end{equation}
and
\begin{equation}\label{e:W-tilde3(a2^j,K)}
\widetilde{{\mathbf W}}_3(a2^j,{\mathbf K}) = a^{2h_q+1}\Big( \frac{{\mathbf P}a^{{\mathbf h}} {\mathbf B}_a(2^j) a^{{\mathbf h}}{\mathbf P}^*}{a^{2h_q}} + {\mathbf K}\Big).
\end{equation}
Now define the functions
\begin{equation}\label{e:fv}
f_{n,q,1}({\mathbf B}) := \log \lambda_{p-r+q}\Bigg(\frac{\widetilde{{\mathbf W}}_1(a2^j,{\mathbf B})}{a^{2h_q+1}}\Bigg), \quad
f_{n,q,2}({\mathbf K})  := \log \lambda_{p-r+q}\Bigg( \frac{\widetilde{{\mathbf W}}_2(a2^j,{\mathbf K}) }{a^{2h_q+1}}\Bigg)
\end{equation}
and
\begin{equation}\label{e:fv_3}
f_{n,q,3}({\mathbf K}):= \log \lambda_{p-r+q}\Bigg( \frac{\widetilde{{\mathbf W}}_3(a2^j,{\mathbf K}) }{a^{2h_q+1}}\Bigg).
\end{equation}
For the sake of interpretation, ${\mathbf B}$, ${\mathbf K}$ in \eqref{e:fv} and ${\mathbf K}$ in \eqref{e:fv_3}, respectively, replace and generalize the arguments $\widehat{{\mathbf B}}_a(2^j)$ and ${\mathbf B}_a(2^j)$ in \eqref{e:loglambda-loglambdaE_3_terms_term1}, the argument $\frac{O_{\bbP}(1)}{a^{h_q+1/2}} $ in \eqref{e:loglambda-loglambdaE_3_terms_term2}, and the arguments $\frac{O_{\bbP}(1)}{a^{2h_q+1}}$ and $\frac{O(1)}{a^{2h_q+1}} $ in \eqref{e:loglambda-loglambdaE_3_terms_term3}. In the course of this proof, we will establish in what sense the functions in \eqref{e:fv} and \eqref{e:fv_3} are well defined.

The layout of the proof is as follows.  We will establish the  convergence
\begin{equation}\label{e:f_1_weakconv}
\Big(\sqrt{n_{a,j}}\hspace{0.5mm}\Big(f_{n,q,1}(\widehat{{\mathbf B}}_a(2^j)) - f_{n,q,1}({\mathbf B}_a(2^j))\Big)_{q=1,\hdots,r}\Big)_{j=j_1,\hdots,j_m}\stackrel{d}\rightarrow {\mathcal N}(0,\Sigma_{\lambda}),
\end{equation}
and also that
\begin{equation}\label{e:f_2_to0}
\sqrt{n_{a,j}}\Big(f_{n,q,2}\Big( \frac{a^{- {\mathbf h}- (1/2){\mathbf I}}{\mathbf W}_{X,Z}(a2^j)}{a^{h_q + 1/2}} \Big) - f_{n,q,2}(
\mathbf{0})\Big) = o_{\bbP}(1),
\end{equation}
\begin{equation}\label{e:f_3_to0}
 \sqrt{n_{a,j}}\Big(f_{n,q,3}\Big( \frac{{\mathbf W}_Z(a2^j)}{a^{2h_q+1}}\Big) -f_{n,q,3}\Big( \frac{\bbE{\mathbf W}_Z(a2^j)}{a^{2h_q+1}}\Big)\Big) = o_{\bbP}(1).
\end{equation}
Then, as a consequence of \eqref{e:loglambda-loglambdaE_3_terms}, \eqref{e:f_1_weakconv}, \eqref{e:f_2_to0} and \eqref{e:f_3_to0},
$$
 \Big( \sqrt{n_{a,j}}\Big( \log \lambda_{p-r+q}\big({\mathbf W}(a(n)2^{j})\big)  - \log \lambda_{p-r+q}\big(\bbE {\mathbf W} (a(n)2^{j}) \big) \Big)_{q=1,\hdots,r} \Big)_{j=j_1,\hdots,j_m}
$$
$$
 =  \Big(\sqrt{n_{a,j}}\hspace{0.5mm}\Big(f_{n,q,1}(\widehat{{\mathbf B}}_a(2^j)) - f_{n,q,1}({\mathbf B}_a(2^j))\Big)_{q=1,\hdots,r}\Big)_{j=j_1,\hdots,j_m}+ o_{\bbP}(1) \stackrel{d}\rightarrow {\mathcal N}(0,\Sigma_{\lambda}),
$$
as $n \rightarrow \infty$, which proves \eqref{e:asympt_normality_lambda2}.\\

So, we proceed first to establish \eqref{e:f_1_weakconv}.  Recall that, for any ${\mathbf M} \in {\mathcal S}(p,\bbR)$, the differential of a \emph{simple} eigenvalue $\lambda_{i}({\mathbf M})$ exists in a connected vicinity of ${\mathbf M}$ and is given by
\begin{equation}\label{e:dlambdal}
d\lambda_{i}({\mathbf M}) = {\mathbf u}^*_{i} \hspace{0.5mm}\{d {\mathbf M} \} \hspace{0.5mm}{\mathbf u}_{i},
\end{equation}
where ${\mathbf u}_{i}$ is a unit eigenvector of ${\mathbf M}$ associated with $\lambda_{i}({\mathbf M})$ (Magnus \cite{magnus:1985}, p.\ 182, Theorem 1).

Consider expression \eqref{e:W-tilde(a2^j,B,K1,K2)} for the matrix $\overline{{\mathbf W}} \in {\mathcal S}(p,\bbR)$. Note that
$$
\widetilde{{\mathbf W}}_1(a2^j,{\mathbf B}) = \overline{{\mathbf W}}\big(a2^j,{\mathbf B},O_{\bbP}(1)/a^{2h_q+1},O_{\bbP}(1)/a^{h_q+1/2}\big),
$$
where $O_{\bbP}(1)/a^{2h_q+1} = o_{\bbP}(1)$, $O_{\bbP}(1)/a^{h_q+1/2} = o_{\bbP}(1)$. Thus, under condition \eqref{e:xiq_distinct}, Lemma \ref{l:f1,f2,f3_well_defined} implies that, for large enough $n$ and with probability going to 1, the eigenvalue $\lambda_{p-r+q}\Big( \frac{\widetilde{{\mathbf W}}_1(a2^j,{\mathbf B}) }{a^{2h_q+1}}\Big)$ must be simple and strictly positive for any ${\mathbf B}$ in some open and connected set
\begin{equation}\label{e:O-neighborhood_delta0,r}
{\mathcal O}_{\delta_0,r}  \subseteq {\mathcal S}_{\geq 0}(r,\bbR), \quad {\mathcal O}_{\delta_0,r} \ni {\mathbf B}(2^j),
\end{equation}
in the topology of ${\mathcal S}(r,\bbR)$.
In particular, the logarithmic function $f_{n,q,1}$ in \eqref{e:fv} is well defined in the vicinity \eqref{e:O-neighborhood_delta0,r}. Then, again for large $n$ with probability going to 1, the derivative of the function $f_{n,q,1}$ exists in the vicinity \eqref{e:O-neighborhood_delta0,r}. On the other hand, by condition \eqref{e:|bfB_a(2^j)-B(2^j)|=O(shrinking)}, ${\mathbf B}_a(2^j) \rightarrow {\mathbf B}(2^j)$ as $n \rightarrow \infty$. Hence, with probability going to 1, for large enough $n$ and for any ${\mathbf B} \in {\mathcal O}_{\delta_0,r}$, an application of Lemma \ref{l:mean_value_theorem} (for the choices $T_0 = \vecoper_{{\mathcal S}} \hspace{0.5mm}{\mathbf B}_a(2^j)$, $T_1 = \vecoper_{{\mathcal S}} \hspace{0.5mm}{\mathbf B}$, $m = r(1+r)/2$, $G(\vecoper_{{\mathcal S}} {\mathbf B}) = f_{n,q,1}({\mathbf B})$) yields
\begin{equation}\label{e:Taylor_determ}
f_{n,q,1}({\mathbf B}) - f_{n,q,1}({\mathbf B}_a(2^j)) = \sum_{1 \leq \ell \leq \ell' \leq r}\frac{\partial}{\partial b_{\ell,\ell'}}f_{n,q,1}(\breve{{\mathbf B}}) \hspace{1mm} \pi_{\ell,\ell'}({\mathbf B} - {\mathbf B}_a(2^j))
\end{equation}
for some matrix $\breve{{\mathbf B}} \in {\mathcal S}_{\geq 0}(r,\bbR)$ lying in a segment connecting ${\mathbf B}$ and ${\mathbf B}_a(2^j)$ across ${\mathcal  S}_{\geq 0}(r,\bbR)$. Define the event $A_n = \big\{ \omega:  \widehat{{\mathbf B}}_{a}(2^j) \in {\mathcal O}_{\delta_0,r} \big\}$. By \eqref{e:sqrt(K)(B^-B)->N(0,sigma^2)} and \eqref{e:|bfB_a(2^j)-B(2^j)|=O(shrinking)},
\begin{equation}\label{e:W_has_pairwise_distinct_eigens}
\bbP (A_n) \rightarrow 1, \quad n \rightarrow \infty.
\end{equation}
By \eqref{e:Taylor_determ} and \eqref{e:W_has_pairwise_distinct_eigens}, with probability going to 1, for large enough $n$ the expansion
\begin{equation}\label{e:Taylor}
f_{n,q,1}(\widehat{{\mathbf B}}_{a} (2^j)) - f_{n,q,1}({\mathbf B}_a(2^j)) = \sum_{1 \leq \ell \leq \ell' \leq r}\frac{\partial}{\partial b_{\ell,\ell'}}f_{n,q,1}(\breve{{\mathbf B}}_{a} (2^j)) \hspace{1mm} \pi_{\ell,\ell'}(\widehat{{\mathbf B}}_{a}(2^j) - {\mathbf B}_a(2^j))
\end{equation}
holds for some matrix $\breve{{\mathbf B}}_{a}(2^j)$ lying in a segment connecting $\widehat{{\mathbf B}}_{a}(2^j)$ and ${\mathbf B}_a(2^j)$ across ${\mathcal  S}_{\geq 0}(r,\bbR)$.

So, for a generic matrix $\breve{{\mathbf B}} \in {\mathcal S}_{\geq 0}(r,\bbR)$, consider the matrix of derivatives
$$
\Big\{\frac{\partial}{\partial b_{\ell,\ell'}}f_{n,q,1}(\breve{{\mathbf B}})\Big\}_{1 \leq \ell \leq \ell' \leq r}
$$
\begin{equation}\label{e:df1/dbi1,i2}= \Big\{ \lambda^{-1}_{p-r+q}\Big(\frac{ \widetilde{{\mathbf W}}_1(a2^j,\breve{{\mathbf B}} ) }{a^{2h_q+1}}\Big)
\cdot \frac{\partial}{\partial b_{\ell,\ell'}}\lambda_{p-r+q}\Big(\frac{ \widetilde{{\mathbf W}}_1(a2^j,\breve{{\mathbf B}} )}{a^{2h_q+1}}\Big)\Big\}_{1 \leq \ell \leq \ell' \leq r}.
\end{equation}
In \eqref{e:df1/dbi1,i2}, the differential of the eigenvalue $\lambda_{p-r+q}\Big(\frac{\widetilde{{\mathbf W}}_1(a2^j,\breve{{\mathbf B}} )}{a^{2h_q+1}}\Big)$ is given by expression \eqref{e:dlambdal} with $\frac{\widetilde{{\mathbf W}}_1(a2^j,\breve{{\mathbf B}} )}{a^{2h_q+1}}$ in place of ${\mathbf M}$ and ${\mathbf u}_{i}:={\mathbf u}_{p-r+q}(n) = {\mathbf u}_{p-r+q}\big(n,\widetilde{{\mathbf W}}_1(a2^j,\breve{{\mathbf B}} )\big)$ denoting a unit eigenvector of $\frac{\widetilde{{\mathbf W}}_1(a2^j,\breve{{\mathbf B}} )}{a^{2h_q+1}}$ associated with its $(p-r+q)$--th eigenvalue. Moreover,
$$
\frac{\partial}{\partial b_{\ell,\ell'}}\lambda_{p-r+q}\Big(\frac{\widetilde{{\mathbf W}}_1(a2^j,\breve{{\mathbf B}} )}{a^{2h_q+1}}\Big)
$$
$$
= {\mathbf u}^*_{p-r+q}(n)\frac{\partial}{\partial b_{\ell,\ell'}}\Big[\frac{{\mathbf P}a^{{\mathbf h}} \breve{{\mathbf B}} a^{{\mathbf h}}{\mathbf P}^*}{a^{2h_q}} +\frac{O_{\bbP}(1)}{a^{2h_q+1}} + \frac{{\mathbf P}a^{{\mathbf h}}}{a^{h_q}} \frac{O_{\bbP}(1)}{a^{h_q+1/2}} + \frac{O^*_{\bbP}(1)}{a^{h_q+1/2}}\frac{a^{{\mathbf h}}{\mathbf P}^*}{a^{h_q}}\Big]{\mathbf u}_{p-r+q}(n)
$$
\begin{equation}\label{e:deriv_lambda_p-r+q(W-tilde_1)}
= {\mathbf u}^*_{p-r+q}(n)\Big( \frac{{\mathbf P}a^{{\mathbf h}} {\mathbf 1}_{(\ell,\ell') \cup (\ell',\ell)} a^{{\mathbf h}}{\mathbf P}^*}{a^{2h_q}}\Big){\mathbf u}_{p-r+q}(n), \quad \quad  1 \leq \ell \leq \ell' \leq r,
\end{equation}
where ${\boldsymbol 1}_{(\ell,\ell') \cup (\ell',\ell)}$ is a matrix with 1 on entries $(\ell,\ell')$ and $(\ell',\ell)$, and zeroes elsewhere. Now consider using $\breve{{\mathbf B}}_{a}(2^j)$ in place of $\breve{{\mathbf B}}$ and $\widetilde{{\mathbf W}}_1(a2^j,\breve{{\mathbf B}}_{a}(2^j))$ in place of $\widetilde{{\mathbf W}}_1(a2^j,\breve{{\mathbf B}})$ in \eqref{e:deriv_lambda_p-r+q(W-tilde_1)}. By relation \eqref{e:dlambdal}, under condition \eqref{e:xiq_distinct}, we can pick the sequence ${\mathbf u}_{p-r+q}(n)$ provided in Proposition \ref{p:|lambdaq(EW)-xiq(2^j)|_bound} and Lemma \ref{l:max|<up-r+q(n),pi(n)>|a(n)^(hi-hq)=OP(1)} so as to obtain, for $1 \leq \ell \leq \ell' \leq r$,
$$
a^{-(2h_{q}+1)} {\mathbf u}^*_{p-r+q}(n)\Big\{\frac{\partial}{\partial b_{\ell,\ell'}}\widetilde{{\mathbf W}}_1(a2^j,\breve{{\mathbf B}}_{a}(2^j))\Big\}{\mathbf u}_{p-r+q}(n)
$$
\begin{equation}\label{e:a^(-(2hq+1))u*_p-r+q(n)partial_W_u_p-r+q(n)}
= {\mathbf u}^*_{p-r+q}(n){\mathbf P} \textnormal{diag}(a^{h_1 - h_{q}},\hdots,a^{h_r - h_{q}}){\boldsymbol 1}_{(\ell,\ell') \cup (\ell',\ell)}
\textnormal{diag}(a^{h_1 - h_{q}},\hdots,a^{h_r - h_{q}}){\mathbf P}^*{\mathbf u}_{p-r+q}(n).
\end{equation}
If the indices are such that $\ell < \ell'$, then \eqref{e:a^(-(2hq+1))u*_p-r+q(n)partial_W_u_p-r+q(n)} is equal to
\begin{equation}\label{e:limit_derivative_eigenvalue}
2 \langle {\mathbf p}_{ \ell}(n),{\mathbf u}_{p-r+q}(n)\rangle a^{h_{\ell} - h_{q}} \langle {\mathbf p}_{ \ell'}(n),{\mathbf u}_{p-r+q}(n)\rangle a^{h_{\ell'} - h_{q}}
\stackrel{\bbP}\rightarrow \left\{\begin{array}{cc}
0, & \ell \in\mathcal{I}_-;\\
2\hspace{0.5mm}\gamma_{{\ell}q }\gamma_{\ell'q}, & \ell,\ell'\in\mathcal{I}_0, \hspace{1mm}\ell < \ell';\\
2\hspace{0.5mm}\gamma_{\ell q} x_{\ell',*}, & \ell \in \mathcal{I}_0, \ell'\in \mathcal{I}_+;\\
2\hspace{0.5mm}x_{\ell,*}x_{\ell',*}, &  \ell,\ell'\in \mathcal{I}_+, \hspace{1mm}\ell < \ell'.\\
\end{array}\right.
\end{equation}
Otherwise, i.e., if $\ell = \ell'$, then \eqref{e:a^(-(2hq+1))u*_p-r+q(n)partial_W_u_p-r+q(n)} is equal to
\begin{equation}\label{e:limit_derivative_eigenvalue_ell=ell'}
a^{2(h_{\ell} - h_{q})}  \langle {\mathbf p}_{ \ell}(n),{\mathbf u}_{p-r+q}(n)\rangle^2 \stackrel{\bbP}\rightarrow \left\{\begin{array}{cc}
0, & \ell \in\mathcal{I}_-;\\
\gamma^2_{{\ell}q }, & \ell=\ell'\in\mathcal{I}_0;\\
x^2_{\ell,*}, &  \ell=\ell'\in \mathcal{I}_+.\\
\end{array}\right.
\end{equation}
In both \eqref{e:limit_derivative_eigenvalue} and \eqref{e:limit_derivative_eigenvalue_ell=ell'}, the entries $x_{\ell,*}$ (depending on $q$), $\ell \in \mathcal{I}_+$, of the vector $\textbf{x}_{q,*}(2^j)$ are given by expression \eqref{e:inner*scaling=o(1)} in Lemma \ref{l:max|<up-r+q(n),pi(n)>|a(n)^(hi-hq)=OP(1)}. In turn, the entries $\gamma_{\ell,q}$, $\ell = 1,\hdots,r$, of the vector ${\boldsymbol \gamma}_{\ell}$ are given by expression \eqref{e:<p,u>_to_gamma} in Proposition \ref{p:|lambdaq(EW)-xiq(2^j)|_bound}. In addition, since $\breve{{\mathbf B}}_a(2^j) \stackrel{\bbP}\rightarrow {\mathbf B}(2^j)$, $n\rightarrow \infty$, as a consequence of conditions \eqref{e:sqrt(K)(B^-B)->N(0,sigma^2)} and \eqref{e:|bfB_a(2^j)-B(2^j)|=O(shrinking)}, Corollary \ref{c:PWXP^*+R_asymptotics} implies that
\begin{equation}\label{e:lambda_p-r+q(W1)/a^(2hq+1)->xi_q}
\lambda_{p-r+q}\Big(\frac{\widetilde{{\mathbf W}}_1(a2^j,\breve{{\mathbf B}}_{a}(2^j))}{a^{2h_q+1}}\Big) \stackrel{\bbP}\rightarrow \xi_q(2^j)>0, \quad n \rightarrow \infty.
\end{equation}
Then, by \eqref{e:limit_derivative_eigenvalue}--\eqref{e:lambda_p-r+q(W1)/a^(2hq+1)->xi_q}, the limit in probability of expression \eqref{e:df1/dbi1,i2} (with  $\breve{{\mathbf B}}_{a}(2^j)$ in place of $\breve{{\mathbf B}}$) can be pictorially represented by the upper triangular scheme
\begin{equation}\label{e:limit_derivative}
\frac{1}{\xi_q(2^j)}\times
\left(\begin{array}{cccccc}
{\mathbf 0} & {\mathbf 0} & {\mathbf 0} \\
 & \hspace{3mm}\big\{{\boldsymbol 1}_{\ell=\ell'} + 2 {\boldsymbol 1}_{\ell < \ell'}\big\}\hspace{0.5mm} (\gamma_{{\ell}q }\gamma_{\ell'q})_{\ell,\ell'\in\mathcal{I}_0,\hspace{0.5mm}\ell \leq \ell'} & 2(\gamma_{\ell q} x_{\ell',*})_{\ell \in \mathcal{I}_0, \ell'\in \mathcal{I}_+, \hspace{0.5mm}\ell \leq \ell'}\\
 &   & \big\{{\boldsymbol 1}_{\ell=\ell'} + 2 {\boldsymbol 1}_{\ell < \ell'}\big\}\hspace{0.5mm}(x_{\ell,*}x_{\ell',*})_{\ell,\ell'\in \mathcal{I}_+, \hspace{0.5mm}\ell \leq \ell'}
\end{array}\right).
\end{equation}
In \eqref{e:limit_derivative}, the empty entries are not used.
The ${\mathbf 0}$ on the upper left corner is a placeholder for a triangular array of zeroes, the other two ${\mathbf 0}$s being placeholders for rectangular ones.

Turning back to \eqref{e:Taylor}, by the arbitrariness of $j$ and $q$, expression \eqref{e:limit_derivative} and condition \eqref{e:sqrt(K)(B^-B)->N(0,sigma^2)} imply that
\begin{equation}\label{e:term1_conv_in_law}
\Big(\sqrt{n_{a,j}}\hspace{0.5mm}\Big(f_{n,q,1}(\widehat{{\mathbf B}}_a(2^j)) - f_{n,q,1}({\mathbf B}_a(2^j))\Big)_{q=1,\hdots,r}\Big)_{j=j_1,\hdots,j_m}\stackrel{d}\rightarrow {\mathcal N}(0,\Sigma_{\lambda}),
\end{equation}
as $n \rightarrow \infty$, for some $\Sigma_\lambda \in \mathcal S_{\geq 0}(r\cdot m,\bbR)$, i.e.,  \eqref{e:f_1_weakconv} holds.

We now turn to \eqref{e:f_3_to0}. Consider expression \eqref{e:W-tilde(a2^j,B,K1,K2)} for the matrix $\overline{{\mathbf W}} \in {\mathcal S}(p,\bbR)$. Note that
$$
\widetilde{{\mathbf W}}_3(a2^j,{\mathbf K}) = \overline{{\mathbf W}}\big(a2^j,{\mathbf B}_a(2^j),{\mathbf K},{\mathbf 0}\big),
$$
where ${\mathbf B}_a(2^j)$ satisfies \eqref{e:|bfB_a(2^j)-B(2^j)|=O(shrinking)}. Hence, under condition \eqref{e:xiq_distinct}, Lemma \ref{l:f1,f2,f3_well_defined} implies that, for large enough $n$, the deterministic eigenvalue $\lambda_{p-r+q}\Big( \frac{\widetilde{{\mathbf W}}_3(a2^j,{\mathbf K}) }{a^{2h_q+1}}\Big)$ must be simple and strictly positive for any ${\mathbf K}$ in some open and connected vicinity
\begin{equation}\label{e:O-neighborhood_zeta01,p}
{\mathcal O}_{\zeta_{01},p} = \{ {\mathbf K} \in {\mathcal S}(p,\bbR): \|{\mathbf K}\| < \zeta_{01}\}
\end{equation}
(\textbf{n.b.}: $ p = p(n)$). In particular, the logarithmic function $f_{n,q,3}$ in \eqref{e:fv_3} is well defined in the vicinity \eqref{e:O-neighborhood_zeta01,p}. Hence, an application of Lemma \ref{l:mean_value_theorem} (for $T_0 = \vecoper_{{\mathcal S}} \hspace{0.5mm}\big(O_{\bbP}(1)/a^{2h_q+1}\big)$, $T_1 = \vecoper_{{\mathcal S}} \hspace{0.5mm}{\mathbf K}$, $m = p(1+p)/2$, $G(\vecoper_{{\mathcal S}}\hspace{0.5mm} {\mathbf K}) = f_{n,q,3}({\mathbf K})$) implies that, for large $n$, we can write
$$
f_{n,q,3}({\mathbf K}) - f_{n,q,3}\Big(\frac{O(1)}{a^{2h_q+1}}\Big)
$$
$$
 = \log \lambda_{p-r+q}\Big(\frac{\widetilde{{\mathbf W}}_3(a2^j,{\mathbf K})}{a^{2h_q+1}}\Big)
- \log \lambda_{p-r+q}\Big(\frac{\widetilde{{\mathbf W}}_3(a2^j,O(1)/a^{2h_q+1})}{a^{2h_q+1}}\Big)
$$
\begin{equation}\label{e:fnq3(R)-fnq3(0)}
= \sum_{1 \leq i_1 \leq i_2 \leq p} \frac{\partial}{\partial \kappa_{i_1,i_2}}f_{n,q,3}(\breve{{\mathbf K}}) \hspace{1mm} \pi_{i_1,i_2}\Big({\mathbf K} -  \frac{O(1)}{a^{2h_q+1}}   \Big)
\end{equation}
for some matrix $\breve{{\mathbf K}} \in {\mathcal S}_{\geq 0}(p,\bbR)$ lying in a segment connecting ${\mathbf K}$ and $\frac{O(1)}{a^{2h_q+1}} $ across ${\mathcal S}_{\geq 0}(p,\bbR)$. For a generic matrix $\breve{{\mathbf K}} \in {\mathcal S}_{\geq 0}(p,\bbR)$, consider the matrix of derivatives
$$
\Big\{\frac{\partial}{\partial \kappa_{i_1,i_2}}f_{n,q,3}(\breve{{\mathbf K}})\Big\}_{1 \leq i_1 \leq i_2 \leq p}
$$
\begin{equation}\label{e:df/dbi1,i2}
= \Big\{ \lambda^{-1}_{p-r+q}\Big(\frac{\widetilde{{\mathbf W}}_3(a2^j,\breve{{\mathbf K}})}{a^{2h_q+1}}\Big)
\cdot \frac{\partial}{\partial \kappa_{i_1,i_2}}\lambda_{p-r+q} \Big(\frac{\widetilde{{\mathbf W}}_3(a2^j,\breve{{\mathbf K}})}{a^{2h_q+1}}\Big) \Big\}_{1 \leq i_1 \leq i_2 \leq p}.
\end{equation}
In \eqref{e:df/dbi1,i2}, the differential of the eigenvalue $\lambda_{p-r+q} \big(\frac{\widetilde{{\mathbf W}}_3(a2^j,\breve{{\mathbf K}})}{a^{2h_q+1}}\big)$ is given by expression \eqref{e:dlambdal} with $\frac{\widetilde{{\mathbf W}}_3(a2^j,\breve{{\mathbf K}})}{a^{2h_q+1}}$
in place of ${\mathbf M}$ and ${\mathbf u}_i := {\mathbf u}_{p-r+q}(n) = {\mathbf u}_{p-r+q}\big(n,\widetilde{{\mathbf W}}_3(a2^j,\breve{{\mathbf K}})\big)$ denoting a unit eigenvector of $\frac{\widetilde{{\mathbf W}}_3(a2^j,\breve{{\mathbf K}})}{a^{2h_q+1}}$ associated with its $(p-r+q)$--th eigenvalue. In addition,
$$
\frac{\partial}{\partial \kappa_{i_1,i_2}}\lambda_{p-r+q}\Big(\frac{{\mathbf P}a^{{\mathbf h}} {\mathbf B}_a(2^j) a^{{\mathbf h}}{\mathbf P}^*}{a^{2h_q}} + {\mathbf K}\Big)
$$$$
= {\mathbf u}^*_{p-r+q}(n)\frac{\partial}{\partial \kappa_{i_1,i_2}}\Big[\frac{{\mathbf P}a^{{\mathbf h}} {\mathbf B}_a(2^j) a^{{\mathbf h}}{\mathbf P}^*}{a^{2h_q}} + {\mathbf K}\Big]{\mathbf u}_{p-r+q}(n)
$$
$$
= {\mathbf u}^*_{p-r+q}(n) {\mathbf 1}_{(i_1,i_2) \cup (i_2,i_1)}{\mathbf u}_{p-r+q}(n), \quad  1 \leq i_1 \leq i_2 \leq p.
$$
Under condition \eqref{e:assumptions_WZ=OP(1)}, ${\mathbf W}_Z(a2^j)/a^{2h_q + 1}  = O_{\bbP}(1)/a^{2h_q+1}=o_{\bbP}(1)$. So, for large enough $n$ with probability going to 1, expression \eqref{e:fnq3(R)-fnq3(0)} implies that
$$
\sqrt{\frac{n}{a}}\Big(f_{n,q,3}\Big( \frac{{\mathbf W}_Z(a2^j)}{a^{2h_q+1}}\Big) -f_{n,q,3}\Big( \frac{\bbE{\mathbf W}_Z(a2^j)}{a^{2h_q+1}}  \Big)\Big)
$$
$$
= \sum_{1 \leq i_1 \leq i_2 \leq p}\frac{\partial}{\partial \kappa_{i_1,i_2}}f_{n,q,3}(\breve{{\mathbf K}}_n) \hspace{1mm} \sqrt{\frac{n}{a}}\pi_{i_1,i_2} \Big( \frac{{\mathbf W}_Z(a2^j)}{a^{2h_q+1}} - \frac{\bbE{\mathbf W}_Z(a2^j)}{a^{2h_q+1}} \Big)
$$
$$
= \lambda^{-1}_{p-r+q}\Big( \frac{\widetilde{{\mathbf W}}_3(a2^j,\breve{{\mathbf K}}_n) }{a^{2h_q+1}}\Big)$$
\begin{equation}\label{e:sqrt(n/a)(fnq3(WZ/a^2hq+1)-fnq(0))}
\quad \times  \sum_{1 \leq i_1 \leq i_2 \leq p} {\mathbf u}^*_{p-r+q}(n) {\mathbf 1}_{(i_1,i_2) \cup (i_2,i_1)}{\mathbf u}_{p-r+q}(n) \hspace{1mm} \sqrt{\frac{n}{a}}\pi_{i_1,i_2}  \Big( \frac{{\mathbf W}_Z(a2^j)}{a^{2h_q+1}} - \frac{\bbE{\mathbf W}_Z(a2^j)}{a^{2h_q+1}} \Big) .
\end{equation}
In \eqref{e:sqrt(n/a)(fnq3(WZ/a^2hq+1)-fnq(0))}, the matrix $\breve{{\mathbf K}}_n$ lies in a segment connecting ${\mathbf W}_Z(a2^j)/a^{2h_q+1} = o_{\bbP}(1)$ and $\bbE{\mathbf W}_Z(a2^j)/a^{2h_q+1}= o(1)$ across ${\mathcal S}_{\geq 0}(p,\bbR)$. Hence, $\breve{{\mathbf K}}_n=o_{\bbP}(1)$. Thus, Corollary \ref{c:PWXP^*+R_asymptotics} implies that
\begin{equation} \label{e:lambda_p-r+q(W3)/a^(2hq+1)->xi_q(2^j)}
\lambda_{p-r+q}\Big(\frac{\widetilde{{\mathbf W}}_3(a2^j,\breve{{\mathbf K}}_n)}{a^{2h_q+1}}\Big) \stackrel{\bbP}\rightarrow \xi_{q}(2^j) > 0,
\end{equation}
$n \rightarrow \infty$. Now recall that, for ${\mathbf M} = (m_{i_1,i_2}) \in {\mathcal M}(p_1,p_2,\bbR)$,
\begin{equation}\label{e:max|sil(p)|=<C'}
\max_{i_1=1,\hdots,p_1; \hspace{.5mm}i_2 =1,\hdots,p_2}|m_{i_1, i_2}| \leq \| {\mathbf M}\|.
\end{equation}
Thus, by condition \eqref{e:assumptions_WZ=OP(1)},
\begin{equation}\label{e:max_pi_i1,i2(WZ-EWZ)=OP(1)}
\max_{i_1,i_2=1,\hdots,p}\big|\pi_{i_1,i_2}\big( {\mathbf W}_Z(a2^j) - \bbE {\mathbf W}_Z(a2^j) \big)\big| = O_{\bbP}(1).
\end{equation}
Also recall that, for a vector $\textbf{x} \in \bbR^p$,
\begin{equation}\label{e:|x|1=<sqrt(p)|x|2}
\|\textbf{x}\|_{1}\leq \sqrt{p} \hspace{0.5mm}\|\textbf{x}\|_{2}, \quad \text{where } \|\textbf{x}\|_{\ell}:= \Big(\sum^{p}_{i=1} |x_i|^{\ell}\Big)^{1/\ell}, \quad \ell = 1,2.
\end{equation}
Then, by \eqref{e:lambda_p-r+q(W3)/a^(2hq+1)->xi_q(2^j)} and \eqref{e:max_pi_i1,i2(WZ-EWZ)=OP(1)}, with probability going to 1 expression \eqref{e:sqrt(n/a)(fnq3(WZ/a^2hq+1)-fnq(0))} is bounded in absolute value by
$$
 \frac{O_{\bbP}(1)}{a^{2h_q+1}}\sqrt{\frac{n}{a}} \sum^{p}_{i_1 = 1}\sum^{p}_{i_2 = 1} |{\mathbf u}_{p-r+q}(n)_{i_1}{\mathbf u}_{p-r+q}(n)_{i_2}| =
\frac{O_{\bbP}(1)}{a^{2h_q+1}}\sqrt{\frac{n}{a}} \hspace{0.5mm}\|{\mathbf u}_{p-r+q}(n)\|^2_{1}
$$
\begin{equation}\label{e:term3_bound}
\leq \frac{O_{\bbP}(1)}{a^{2h_q+1}}\sqrt{\frac{n}{a}} \hspace{0.5mm}p = O_{\bbP}(1)\Big(\frac{n}{a^{(4/3) h_q + 5/3}}\Big)^{3/2} \frac{p}{n/a} \stackrel{\bbP}\to 0, \quad n \rightarrow \infty.
\end{equation}
In \eqref{e:term3_bound}, the inequality follows from \eqref{e:|x|1=<sqrt(p)|x|2} and the limit follows from condition \eqref{e:p(n),a(n)_conditions}, since $(4/3) h_q + 5/3> h_1 + 3/2$.  Therefore,
$
\sqrt{\frac{n}{a}}\Big(f_{n,q,3}\Big( \frac{{\mathbf W}_Z(a2^j)}{a^{2h_q+1}}\Big) -f_{n,q,3}\Big( \frac{\bbE{\mathbf W}_Z(a2^j)}{a^{2h_q+1}}  \Big)\Big) = o_{\bbP}(1),
$
which corresponds to \eqref{e:f_3_to0}.

We now turn to \eqref{e:f_2_to0}. Consider expression \eqref{e:W-tilde(a2^j,B,K1,K2)} for the matrix $\overline{{\mathbf W}} \in {\mathcal S}(p,\bbR)$. Note that
$$
\widetilde{{\mathbf W}}_2(a2^j,{\mathbf K}) = \overline{{\mathbf W}}\big(a2^j,{\mathbf B}_a(2^j),O_{\bbP}(1)/a^{2h_q+1},{\mathbf K}\big),
$$
where ${\mathbf B}_a(2^j)$ satisfies \eqref{e:|bfB_a(2^j)-B(2^j)|=O(shrinking)} and $O_{\bbP}(1)/a^{2h_q+1} = o_{\bbP}(1)$. Thus, under condition \eqref{e:xiq_distinct}, Lemma \ref{l:f1,f2,f3_well_defined} implies that, for large enough $n$ and with probability going to 1, the eigenvalue $\lambda_{p-r+q}\Big( \frac{\widetilde{{\mathbf W}}_2(a2^j,{\mathbf K}) }{a^{2h_q+1}}\Big)$ must be simple and positive for any ${\mathbf K}$ in some open and connected vicinity
\begin{equation}\label{e:O-neighborhood_zeta02,p}
{\mathcal O}_{\zeta_{02},r,p} = \{ {\mathbf K} \in {\mathcal M}(r,p,\bbR): \|{\mathbf K}\| < \zeta_{02}\}
\end{equation}
in the topology of ${\mathcal M}(r,p,\bbR)$ (\textbf{n.b.}: $ p = p(n)$). In particular, the logarithmic function $f_{n,q,2}$ in \eqref{e:fv} is well defined in the vicinity \eqref{e:O-neighborhood_zeta02,p}. Hence, for ${\mathbf K }\in {\mathcal O}_{\zeta_{02},r,p}$, an application of Lemma \ref{l:mean_value_theorem} (for $T_0 = {\mathbf 0}$, $T_1 = \vecoper \hspace{0.5mm} {\mathbf K}$ with $\vecoper$ as in \eqref{e:vec_non-symm_def}, $m = r\cdot p$, $G(\vecoper \hspace{0.5mm}{\mathbf K}) = f_{n,q,2}({\mathbf K})$) implies that
$$
f_{n,q,2}({\mathbf K}) - f_{n,q,2}({\mathbf 0}) = \log \lambda_{p-r+q}\Big(\frac{\widetilde{{\mathbf W}}_2(a2^j,{\mathbf K})}{a^{2h_q+1}}\Big)
- \log \lambda_{p-r+q}\Big(\frac{\widetilde{{\mathbf W}}_2(a2^j,{\mathbf 0})}{a^{2h_q+1}}\Big)
$$
\begin{equation}\label{e:f_n,q,2(R)-f_n,q,2(0)=MVT}
= \sum^{r}_{i_1 = 1}\sum^{p}_{i_2 = 1}\frac{\partial}{\partial \kappa_{i_1,i_2}}f_{n,q,2}(\breve{{\mathbf K}}) \hspace{1mm} \pi_{i_1,i_2}({\mathbf K})
\end{equation}
for some matrix $\breve{{\mathbf K}} \in {\mathcal M}(r,p,\bbR)$ lying in a segment connecting ${\mathbf K}$ and ${\mathbf 0}$ across ${\mathcal M}(r,p,\bbR)$. For a generic matrix ${\mathbf K} \in {\mathcal M}(r,p,\bbR)$, consider the matrix of derivatives
$$
\Big\{\frac{\partial}{\partial \kappa_{i_1,i_2}}f_{n,q,2}(\breve{{\mathbf K}})\Big\}_{i_1= 1,\hdots,r; \hspace{1mm}i_2 = 1,\hdots,p}
$$
\begin{equation}\label{e:df/dri1,i2}
= \Big\{ \lambda^{-1}_{p-r+q}\Big(\frac{\widetilde{{\mathbf W}}_2(a2^j,\breve{{\mathbf K}})}{a^{2h_q+1}}\Big)
\cdot \frac{\partial}{\partial \kappa_{i_1,i_2}}\lambda_{p-r+q}\Big(\frac{\widetilde{{\mathbf W}}_2(a2^j,\breve{{\mathbf K}})}{a^{2h_q+1}}\Big)\Big\}_{i_1= 1,\hdots,r; \hspace{1mm}i_2 = 1,\hdots,p}.
\end{equation}
In \eqref{e:df/dri1,i2}, the differential of the eigenvalue $\lambda_{p-r+q}\big(\frac{\widetilde{{\mathbf W}}_2(a2^j,\breve{{\mathbf K}})}{a^{2h_q+1}}\big)$ is given by expression \eqref{e:dlambdal} with $\frac{\widetilde{{\mathbf W}}_2(a2^j,\breve{{\mathbf K}})}{a^{2h_q+1}}$
in place of ${\mathbf M}$ and ${\mathbf u}_i := {\mathbf u}_{p-r+q}(n) = {\mathbf u}_{p-r+q}(n,\widetilde{{\mathbf W}}_2(a2^j,\breve{{\mathbf K}}))$ denoting a unit eigenvector of $\frac{\widetilde{{\mathbf W}}_2(a2^j,\breve{{\mathbf K}})}{a^{2h_q+1}}$ associated with its $(p-r+q)$--th eigenvalue. Moreover,
$$
\frac{\partial}{\partial \kappa_{i_1,i_2}}\lambda_{p-r+q}\Big(\frac{\widetilde{{\mathbf W}}_2(a2^j,\breve{{\mathbf K}})}{a^{2h_q+1}}\Big)
$$
$$
= {\mathbf u}^*_{p-r+q}(n)\frac{\partial}{\partial \kappa_{i_1,i_2}}\Big[\frac{{\mathbf P}a^{{\mathbf h}} \mathbf{B}_a(2^j) a^{{\mathbf h}}{\mathbf P}^*}{a^{2h_q}} +\frac{O_{\bbP}(1)}{a^{2h_q+1}} + \frac{{\mathbf P}a^{{\mathbf h}}}{a^{h_q}} \breve{{\mathbf K}} + \breve{{\mathbf K}}^* \frac{a^{{\mathbf h}}{\mathbf P}^*}{a^{h_q}}\Big]{\mathbf u}_{p-r+q}(n)
$$
$$
= {\mathbf u}^*_{p-r+q}(n)\Big( \frac{{\mathbf P}a^{{\mathbf h}}}{a^{h_q}} {\mathbf 1}_{i_1,i_2}  + {\mathbf 1}_{i_2,i_1} \frac{a^{{\mathbf h}}{\mathbf P}^*}{a^{h_q}}\Big){\mathbf u}_{p-r+q}(n), \quad \quad  i_1= 1,\hdots,r, \quad i_2= 1,\hdots,p.
$$
Therefore,
\begin{equation}\label{e:sqrt(n)(fnq2(R)-fnq2(0))}
\sqrt{\frac{n}{a}}\Big(f_{n,q,2}({\mathbf K}) - f_{n,q,2}(\mathbf{0})\Big) = \lambda^{-1}_{p-r+q}\Big(\frac{\widetilde{{\mathbf W}}_2(a2^j,\breve{{\mathbf K}})}{a^{2h_q+1}}\Big)
\end{equation}
$$
\times \sum^{r}_{i_1 = 1}\sum^{p}_{i_2 = 1} {\mathbf u}^*_{p-r+q}(n)\Big( \frac{{\mathbf P}a^{{\mathbf h}}}{a^{h_q}} {\mathbf 1}_{i_1,i_2} + {\mathbf 1}_{i_2,i_1}\frac{a^{{\mathbf h}}{\mathbf P}^*}{a^{h_q}}\Big){\mathbf u}_{p-r+q}(n)
\hspace{1mm}\sqrt{\frac{n}{a}} \pi_{i_1,i_2} ({\mathbf K}).
$$
Note that, by Lemma \ref{l:|a(n)(-D)W_X,Z(a(n)2j))|=OP(1)}, $a^{- {\mathbf h}- (1/2){\mathbf I}}{\mathbf W}_{X,Z}(a2^j)/a^{h_q+1/2} = O_{\bbP}(1)/a^{h_q+1/2}= o_{\bbP}(1)$. Thus, for large $n$ with probability going to 1, expression  \eqref{e:sqrt(n)(fnq2(R)-fnq2(0))} implies that
\begin{equation}\label{e:sqrt(n/a)(fnq2(o(1))-fnq0(0))}
\sqrt{\frac{n}{a}}\Big(f_{n,q,2}\Big( \frac{a^{- {\mathbf h}- (1/2){\mathbf I}}{\mathbf W}_{X,Z}(a2^j)}{a^{h_q + 1/2}} \Big) - f_{n,q,2}(\mathbf{0})\Big)
\end{equation}
$$
= \lambda^{-1}_{p-r+q}\Big(\frac{\widetilde{{\mathbf W}}_2(a2^j,\breve{{\mathbf K}}_n)}{a^{2h_q+1}}\Big)\sum^{r}_{i_1 = 1}\sum^{p}_{i_2 = 1}\Bigg({\mathbf u}^*_{p-r+q}(n)\Big( \frac{{\mathbf P}a^{{\mathbf h}}}{a^{h_q}} {\mathbf 1}_{i_1,i_2} + {\mathbf 1}_{i_2,i_1}\frac{a^{{\mathbf h}}{\mathbf P}^*}{a^{h_q}}\Big){\mathbf u}_{p-r+q}(n)
$$
$$
\times \sqrt{\frac{n}{a}} \pi_{i_1,i_2} \Big( \frac{a^{- {\mathbf h}- (1/2){\mathbf I}}{\mathbf W}_{X,Z}(a2^j)}{a^{h_q + 1/2}} \Big)\Bigg).
$$
Note that $\breve {\mathbf K}_n$ lies in a segment connecting $\mathbf 0$ and the matrix $a^{- {\mathbf h}- (1/2){\mathbf I}}{\mathbf W}_{X,Z}(a2^j)/ a^{h_q + 1/2} = o_{\bbP}(1)$ across ${\mathcal M}(r,p,\bbR)$. Hence, $\breve {\mathbf K}_n= o_{\bbP}(1)$. Thus, Corollary \ref{c:PWXP^*+R_asymptotics} implies that
\begin{equation}\label{e:max_(up-r+q,p_ell(n))a^(h_ll-hq)}
\lambda_{p-r+q}\Big(\frac{\widetilde{{\mathbf W}}_2(a2^j,\breve{{\mathbf K}}_n)}{a^{2h_q+1}}\Big)\stackrel{\bbP}\rightarrow \xi_{q}(2^j), \quad n \rightarrow \infty.
\end{equation}
As a consequence of Lemma \ref{l:|a(n)(-D)W_X,Z(a(n)2j))|=OP(1)} and of \eqref{e:max|sil(p)|=<C'},
$
\max_{i_1,i_2}|\pi_{i_1,i_2}( a^{-\mathbf{h}-(1/2){\mathbf I}}{\mathbf W}_{X,Z}(a2^j) )| = O_{\bbP}(1).
$
Therefore, by expressions \eqref{e:|x|1=<sqrt(p)|x|2}, \eqref{e:max_(up-r+q,p_ell(n))a^(h_ll-hq)} and by condition \eqref{e:p(n),a(n)_conditions}, with probability going to 1 the right-hand side of \eqref{e:sqrt(n/a)(fnq2(o(1))-fnq0(0))} is bounded, in absolute value, by
$$
O_{\bbP}(1) \sqrt{\frac{n}{a}} \Big( \frac{1}{a^{h_q + 1/2}} \Big)\sum^{r}_{i_1 = 1}\sum^{p}_{i_2 = 1} \Big|{\mathbf u}^*_{p-r+q}(n)\Big( \frac{{\mathbf P}a^{{\mathbf h}}}{a^{h_q}} {\mathbf 1}_{i_1,i_2} + {\mathbf 1}_{i_2,i_1}\frac{a^{{\mathbf h}}{\mathbf P}^*}{a^{h_q}}\Big){\mathbf u}_{p-r+q}(n)\Big|
$$
$$
= O_{\bbP}(1) \sqrt{\frac{n}{a}} \Big( \frac{1}{a^{h_q + 1/2}} \Big)\sum^{r}_{i_1 = 1}\Big| \langle{\mathbf p}_{i_1},{\mathbf u}_{p-r+q}(n)\rangle a^{h_{i_1}-h_q} \Big| \hspace{1mm}\sum^{p}_{i_2 = 1}  | {\mathbf u}_{p-r+q}(n)_{i_2} |
$$
\begin{equation}\label{e:sqrt(n/a)(fnq2(rescaled_WXZ)-fnq2(0))_bound}
\leq O_{\bbP}(1) \sqrt{\frac{n}{a}} \Big( \frac{1}{a^{h_q + 1/2}} \Big) \hspace{1mm} \sqrt{p}
= O_{\bbP}(1) \sqrt{\frac{p}{n/a}} \cdot  \frac{n}{a^{h_q + 3/2}} \stackrel \bbP \to 0.
\end{equation}
In \eqref{e:sqrt(n/a)(fnq2(rescaled_WXZ)-fnq2(0))_bound}, $\sum^{r}_{i_1 = 1} | \langle{\mathbf p}_{i_1},{\mathbf u}_{p-r+q}(n)\rangle a^{h_{i_1}-h_q} | = O_{\bbP}(1)$ is a consequence of Lemma \ref{l:|<p3,uq(n)>|a(n)^{h_3-h_1}=O(1)}, $(i)$, and the limit follows from \eqref{e:p(n),a(n)_conditions}. In other words, %
\eqref{e:f_2_to0} holds. This concludes the proof of \eqref{e:asympt_normality_lambda2}. $\Box$\\

\begin{remark}
Note that, in \eqref{e:loglambda-loglambdaE_3_terms_term1}, there is functional dependence among the matrices $\widehat{{\mathbf B}}_a(2^j) \in {\mathcal S}_{\geq 0}(r,\bbR)$, $o_{\bbP}(1) \in {\mathcal S}_{\geq 0}(p,\bbR)$ and $O_{\bbP}(1) \in {\mathcal M}(r,p,\bbR)$. Also, analogous statements hold for \eqref{e:loglambda-loglambdaE_3_terms_term2} and \eqref{e:loglambda-loglambdaE_3_terms_term3}. However, expressions \eqref{e:Taylor_determ}, \eqref{e:fnq3(R)-fnq3(0)} and \eqref{e:f_n,q,2(R)-f_n,q,2(0)=MVT} represent partial mean value theorem-type expansions of each term. Namely, each matrix argument is first treated as a functionally independent variable, and then the actual value of the matrix argument is plugged back in (see \eqref{e:Taylor}, \eqref{e:sqrt(n/a)(fnq3(WZ/a^2hq+1)-fnq(0))} and \eqref{e:sqrt(n/a)(fnq2(o(1))-fnq0(0))}, respectively).
\end{remark}

\section{Conclusion and open problems}\label{s:conclusion}

In this paper, we mathematically characterize the asymptotic and large-scale behavior of the eigenvalues of wavelet random matrices in high dimensions. We assume that possibly non-Gaussian, finite-variance $p$-variate measurements are made of a low-dimensional $r$-variate ($r \ll p$) fractional stochastic process with unknown scaling coordinates and in the presence of additive high-dimensional noise. In the three-way limit where the sample size ($n$), dimension ($p(n)$) and scale ($a(n)$) go to infinity, we establish that the rescaled $r$ largest eigenvalues of the wavelet random matrices converge to scale-invariant functions, whereas the remaining $p(n)-r$ eigenvalues remain bounded. In addition, under slightly stronger assumptions, we show that the $r$ largest log-eigenvalues of wavelet random matrices exhibit asymptotically Gaussian distributions. The results bear direct consequences for high-dimensional modeling starting from measurements of the form of a signal-plus-noise system \eqref{e:Y(t)}, where $X$ is a latent process containing fractal information and ${\mathbf P}$ (as well as $Z$) is unknown.

This research leads to many relevant open problems involving scale invariance in high dimensions, some of which can be briefly described as follows. $(i)$ The results in Section \ref{s:main} build upon broad wavelet domain assumptions, hence providing a rich framework for future research pursuits involving wavelet random matrices. A natural direction of inquiry is the mathematical study of the properties of wavelet random matrices in second order, non-Gaussian fractional instances. This generally involves the mathematical control of the impact of heavier tails under the broad assumptions of Theorems \ref{t:lim_n_a_times_lambda/a^(2h+1)} and \ref{t:asympt_normality_lambdap-r+q}. $(ii)$ In turn, an interesting direction of extension is the characterization of sets of conditions under which the $r$ largest eigenvalues of wavelet random matrices exhibit non-Gaussian fluctuations (cf.\ Remarks \ref{r:asympt_rescaled_eigenvalues} and \ref{r:non-Gaussian_B-hat}). $(iii)$ In modeling, starting from measurements of the general form \eqref{e:Y(t)}, the construction of an extended framework for the detection of scaling laws in high-dimensional systems calls for the investigation of the behavior of wavelet random matrices when $r \rightarrow \infty$. In particular, such undertaking requires a deeper study, in the wavelet domain, of the so-named \textit{eigenvalue repulsion effect} (e.g., Tao \cite{tao:2012}), which may severely skew the observed scaling laws (see Wendt et al.\ \cite{wendt:abry:didier:2019:bootstrap} on preliminary computational studies). $(iv)$ Building upon the discussion in Remark \ref{r:multiresolution_RM}, one can envision the development of a theory of general multiresolution random matrices, encompassing both wavelet random matrices and sample covariance matrices. This includes the study of broad classes of models to which the assumptions apply.  $(v)$ Superior finite-sample statistical performance can be attained by means of a wavelet eigenvalue regression procedure across scales (cf.\ Abry and Didier \cite{abry:didier:2018:dim2,abry:didier:2018:n-variate}). Namely, fix a range of scales
$j = j_1 , j_1 + 1, \hdots , j_m$ and define
\begin{equation}\label{e:hl-hat}
\{ \widehat{\ell}_q \}_{q = 1,\hdots,r} :=  \Big\{\frac{1}{2}\Big(\sum_{j=j_1}^{j_m} w_j \log_2 \lambda_{p-r+q}\big({\mathbf W}(a(n)2^j)\big) -1\Big)\Big\}_{q = 1,\hdots,r}.
\end{equation}
In \eqref{e:hl-hat}, $w_j$, $j = j_1,\hdots,j_m$, are weights satisfying the relations $\sum^{j_m}_{j=j_1}w_j = 0$, $\sum^{j_m}_{j=j_1}j w_j = 1$,
where $w_j = 1$ if $m = 1$. As an immediate consequence of Theorem \ref{t:lim_n_a_times_lambda/a^(2h+1)}, $\{ \widehat{\ell}_q \}_{q = 1,\hdots,r}$ provides a consistent estimator of $\{h_q\}_{q = 1,\hdots,r}$. Moreover, as in low dimensions (Abry and Didier \cite{abry:didier:2018:n-variate}), Theorem \ref{t:asympt_normality_lambdap-r+q} points to asymptotic normality. However, additional results are required. This and other issues are tackled in Abry et al. \cite{abry:boniece:didier:wendt:2023:regression}. $(vi)$ In applications, the results in this paper naturally pave the way for the investigation of scaling behavior in high-dimensional (``Big") data from fields such as physics, neuroscience and signal processing.

\appendix

\section{Assumptions on the wavelet multiresolution analysis}\label{s:assumptions_on_the_MRA}

In the main results of the paper, we make use of the following conditions on the underlying wavelet MRA.

\medskip

\noindent {\sc Assumption $(W1)$}: $\psi \in L^2(\bbR)$ is a wavelet function, namely, it satisfies the relations
\begin{equation}\label{e:N_psi}
\int_{\bbR} \psi^2(t)dt = 1 , \quad \int_{\bbR} t^{p}\psi(t)dt = 0, \quad p = 0,1,\hdots, N_{\psi}-1, \quad \int_{\bbR} t^{N_{\psi}}\psi(t)dt \neq 0,
\end{equation}
for some integer (number of vanishing moments) $N_{\psi} \geq 1$.

\medskip

\noindent {\sc Assumption ($W2$)}: the scaling and wavelet functions
\begin{equation}\label{e:supp_psi=compact}
\textnormal{$\phi\in L^1(\bbR)$ and $\psi\in L^1(\bbR)$ are compactly supported }
\end{equation}
and $\widehat{\phi}(0)=1$.

\medskip

\noindent {\sc Assumption $(W3)$}: there is $\alpha > 1$ such that
\begin{equation}\label{e:psihat_is_slower_than_a_power_function}
\sup_{x \in \bbR} |\widehat{\psi}(x)| (1 + |x|)^{\alpha} < \infty.
\end{equation}

\medskip

Conditions \eqref{e:N_psi} and \eqref{e:supp_psi=compact} imply that $\widehat{\psi}(x)$ exists, is infinitely differentiable everywhere and its first $N_{\psi}-1$ derivatives are zero at $x = 0$. Condition \eqref{e:psihat_is_slower_than_a_power_function}, in turn, implies that $\psi$ is continuous (see Mallat \cite{mallat:1999}, Theorem 6.1) and, hence, bounded.

Note that assumptions ($W1-W3$) are closely related to the broad wavelet framework for the analysis of $\kappa$-th order ($\kappa \in \bbN \cup \{0\}$) stationary-increment stochastic processes laid out in Moulines et al.\ \cite{moulines:roueff:taqqu:2007:Fractals,moulines:roueff:taqqu:2007:JTSA,moulines:roueff:taqqu:2008} and Roueff and Taqqu~\cite{roueff:taqqu:2009}. The Daubechies scaling and wavelet functions generally satisfy ($W1-W3$) (see Moulines et al.\ \cite{moulines:roueff:taqqu:2008}, p.\ 1927, or Mallat \cite{mallat:1999}, p.\ 253). Usually, the parameter $\alpha$ increases to infinity as $N_{\psi}$ goes to infinity (see Moulines et al.\ \cite{moulines:roueff:taqqu:2008}, p.\ 1927, or Cohen \cite{cohen:2003}, Theorem 2.10.1). Also, under the orthogonality of the underlying wavelet and scaling function basis, ($W1-W3$) imply the so-called Strang-Fix condition (see Mallat \cite{mallat:1999}, Theorem 7.4, and Moulines et al.\ \cite{moulines:roueff:taqqu:2007:JTSA}, p.\ 159, condition (W-4)).

\section{Auxiliary statements}\label{s:auxiliary}

The proofs of Section \ref{s:proof_main_results} depend on a series of auxiliary statements that are established in this section.

In the following lemma, we establish some properties of the matrix ${\boldsymbol \Lambda}$ used in the proofs of Proposition \ref{p:conv_w-by-w_rescaled_eigenvalues}/Theorem \ref{t:lim_n_a_times_lambda/a^(2h+1)} and Corollary \ref{c:PWXP^*+R_asymptotics}. Note that part $(i)$ of the lemma, on the matrix ${\mathbf M}$, is needed for establishing part $(ii)$.

In order to prove the lemma, we recap the following basic definitions and identities. For any two subspaces $W,V\subseteq \bbR^r$,
\begin{equation}\label{e:W+V}
W + V := \big\{{\mathbf w} + {\mathbf v}:  {\mathbf w} \in W, \hspace{1mm}{\mathbf v} \in V\big\}.
\end{equation}
We also write \eqref{e:W+V} as $W \oplus V$ (direct sum) when, in addition, $W \cap V = \{{\mathbf 0}\}$ (e.g., Horn and Johnson \cite{horn:johnson:2013}, p.\ 2). Starting from \eqref{e:W+V}, it can be shown that
\begin{equation}\label{e:(W+V)^perp}
\big(W +V \big)^{\perp} = W^{\perp} \cap V^{\perp}
\end{equation}
(e.g., Horn and Johnson \cite{horn:johnson:2013}, p.\ 16). In addition, by the subspace intersection lemma (e.g., Horn and Johnson \cite{horn:johnson:2013}, p.\ 4),
\begin{equation}\label{e:dim(W_and_V)}
\dim(W\cap V) = \dim(W)+\dim(V)-\dim(W+V).
\end{equation}
For $m_1,m_2 \in \bbN$, consider two matrices ${\mathbf M}_1 \in {\mathcal M}(m_1,r,\bbR)$ and ${\mathbf M}_2 \in {\mathcal M}(m_2,r,\bbR)$. Then, based on \eqref{e:span(M1,...,Mm)} and \eqref{e:W+V}, we can write
\begin{equation}\label{e:span(M1,M2)=span(M1)+span(M2)}
\textnormal{span}\{{\mathbf M}_1,{\mathbf M}_2\} = \textnormal{span}\{{\mathbf M}_1\} + \textnormal{span}\{{\mathbf M}_2\}.
\end{equation}
For any matrix ${\mathbf M}$, further recall the notation $\mathbf M^\perp = \text{nullspace}\{ \mathbf M^*\}$ as in \eqref{e:A^perp}.

\begin{lemma}\label{l:M_in_S>0(r2,R)} Let ${\mathbf M}$ and ${\boldsymbol \Lambda}$ be as in \eqref{e:M=B22-B23*B33(-1)B32} and \eqref{e:def_Lambda}, respectively. Then,
\begin{itemize}
\item[$(i)$] $\mathbf M \in\mathcal S_{>0}(r_2,\bbR)$;
\item[$(ii)$]
$\textnormal{span}\{\boldsymbol\Lambda\}=\textnormal{span}\{\mathbf R_2,\mathbf R_3\}\cap \mathbf R_3^\perp$;
\item[$(iii)$]  $\textnormal{rank} (\boldsymbol \Lambda)  = r_2.$
\end{itemize}
\end{lemma}
\begin{proof}
To show $(i)$, let $\mathbf v\in \bbR^{r_2}$ be any unit vector. Then, the vector ${\mathbf w}^* := \big({\mathbf v}^*, - (\mathbf B_{33}^{-1}\mathbf B_{32}{\mathbf v})^*\big) \in \bbR^{r_2+r_3}$ satisfies
$$
0 < \mathbf w^* \begin{pmatrix} \mathbf B_{22} & \mathbf B_{23}\\ \mathbf B_{32} & \mathbf B_{33}\end{pmatrix} \mathbf w = \mathbf v^* \mathbf M \mathbf v,
$$
where the inequality stems from the fact that $(\mathbf B_{i\ell})_{i,\ell=2,3}\in \mathcal S_{>0}(r_2+r_3,\bbR)$. This establishes $(i)$.

We now show $(ii)$. Since the matrix ${\boldsymbol\Lambda}$ is symmetric, $\text{span}\{\boldsymbol\Lambda\}^\perp = \text{nullspace}\{\boldsymbol \Lambda\}$. Therefore, $(ii)$ is proven once we establish the set of equalities
\begin{equation}\label{e:nullspace(Lambda)=span(R3)+R^perp3_and_R^perp2}
{\big(\textnormal{span}\{\mathbf R_2,\mathbf R_3\}\cap \mathbf R_3^\perp\big)}^{\perp} = {\mathbf R}^{\perp}_2 \cap {\mathbf R}^{\perp}_3 \oplus \textnormal{span}\{{\mathbf R}_3\}
= \text{nullspace}\{\boldsymbol\Lambda\}.
\end{equation}
Indeed, to prove the leftmost equality \eqref{e:nullspace(Lambda)=span(R3)+R^perp3_and_R^perp2}, it suffices to show that
\begin{equation}\label{e:span=intersec+perp-perp}
\big(\textnormal{span}\{\mathbf R_2,\mathbf R_3\}\cap \mathbf R_3^\perp\big)^{\perp}=\big(\textnormal{span}\{\mathbf R_2,\mathbf R_3\}\big)^\perp + \big(\mathbf R_3^\perp\big)^{\perp} = ({\mathbf R}^{\perp}_2 \cap {\mathbf R}^{\perp}_3 )\oplus \textnormal{span}\{{\mathbf R}_3\}.
\end{equation}
The first equality in \eqref{e:span=intersec+perp-perp} follows from relation \eqref{e:(W+V)^perp}. The second equality results from the relation $\textnormal{span}\{{\mathbf R}_2,{\mathbf R}_3\}^{\perp} = {\mathbf R}^{\perp}_2 \cap {\mathbf R}^{\perp}_3$ (itself a consequence of \eqref{e:(W+V)^perp} and \eqref{e:span(M1,M2)=span(M1)+span(M2)}), and also from the fact that $\textnormal{span}\{{\mathbf R}_2,{\mathbf R}_3\}^{\perp} \cap \text{span}\{\mathbf R_3\}=\{\mathbf 0\}$. In other words, the leftmost equality \eqref{e:nullspace(Lambda)=span(R3)+R^perp3_and_R^perp2} holds.

In regard to the rightmost equality in \eqref{e:nullspace(Lambda)=span(R3)+R^perp3_and_R^perp2}, we claim that
\begin{equation}\label{e:span(R3)+(R3^perp+R2^perp)^perp}
\text{span} \{\mathbf R_3\} \oplus \big ( \mathbf R_3 ^\perp \cap \mathbf R_2^\perp\big) \subseteq \text{nullspace}\{\boldsymbol\Lambda\}.
\end{equation}
In fact, recall that $\boldsymbol\Lambda = {\mathbf \Pi}_3^*\mathbf R_2 \mathbf  M \mathbf R_2^*{\mathbf \Pi}_3$ (see \eqref{e:def_Lambda}), where ${\mathbf \Pi}_3$ as in \eqref{e:Pi3} is the projection matrix onto $\textnormal{nullspace}\{\mathbf R^*_3\}$. Consider a nonzero vector ${\mathbf v} \in \text{span} \{\mathbf R_3\} \oplus \big ( \mathbf R_3 ^\perp \cap \mathbf R_2^\perp\big)$. Next, decompose ${\mathbf v} = {\mathbf v}_1 + {\mathbf v}_2$, where ${\mathbf v}_1 \in \textnormal{span}\{\mathbf R_3\} = (\textnormal{nullspace}\{\mathbf R^*_3\})^{\perp}$ and ${\mathbf v}_2 \in \mathbf R_3 ^\perp \cap \mathbf R_2^\perp = \textnormal{nullspace}\{\mathbf R^*_3\} \cap \textnormal{nullspace}\{\mathbf R^*_2\}$. Then, ${\mathbf \Pi}_3{\mathbf v}_1 = {\mathbf 0}$ and ${\mathbf R}^*_2 {\mathbf \Pi}_3{\mathbf v}_2 = {\mathbf R}^*_2 {\mathbf v}_2  ={\mathbf 0}$. As a result, ${\mathbf v} \in \textnormal{nullspace}\{\boldsymbol\Lambda\}$, i.e., \eqref{e:span(R3)+(R3^perp+R2^perp)^perp} holds.

So, we now prove the opposite inclusion to that in \eqref{e:span(R3)+(R3^perp+R2^perp)^perp}, i.e.,
\begin{equation}\label{e:span(R3)+(R3^perp+R2^perp)^perp_contains}
\text{span} \{\mathbf R_3\} \oplus \big ( \mathbf R_3 ^\perp \cap \mathbf R_2^\perp\big) \supseteq \text{nullspace}\{\boldsymbol\Lambda\}.
\end{equation}
 First note that, as a consequence of Lemma \ref{l:M_in_S>0(r2,R)}, $(i)$, there exists a (unique) square root ${\mathbf  M}^{1/2}$. Hence, if we define ${\mathbf L} = {\mathbf  M}^{1/2} {\mathbf R}^*_2{\mathbf \Pi}_3$, then we can write ${\mathbf L ^*\mathbf L} ={\boldsymbol \Lambda}$. Now let ${\mathbf v} \in \text{nullspace}\{\boldsymbol\Lambda\}$, i.e., ${\boldsymbol \Lambda}{\mathbf v} = {\mathbf 0}$. Then, ${\mathbf 0} = {\mathbf v}^*{\boldsymbol \Lambda}{\mathbf v} = (\mathbf L {\mathbf v})^*(\mathbf L {\mathbf v})$. Consequently, $\mathbf L {\mathbf v}= {\mathbf 0}$, i.e., ${\mathbf  M}^{1/2} {\mathbf R}^*_2{\mathbf \Pi}_3{\mathbf v} = {\mathbf 0}$. However, again by Lemma \ref{l:M_in_S>0(r2,R)}, $(i)$, the matrix $\mathbf M$ has full rank. Therefore, ${\mathbf R}^*_2{\mathbf \Pi}_3{\mathbf v} = {\mathbf 0}$. Thus, if ${\mathbf \Pi}_3{\mathbf v} \neq {\mathbf 0}$ (i.e., if ${\mathbf v} \notin \textnormal{span}\{{\mathbf R}_3\} = ({\mathbf R}^{\perp}_3)^{\perp}$), then the projection of ${\mathbf v}$ onto ${\mathbf R}^{\perp}_3$ is nonzero and ${\mathbf \Pi}_3{\mathbf v} \in {\mathbf R}^{\perp}_2$. Namely, ${\mathbf v} \in \text{span} \{\mathbf R_3\} \oplus\big ( \mathbf R_3 ^\perp \cap \mathbf R_2^\perp\big)$. Consequently, \eqref{e:span(R3)+(R3^perp+R2^perp)^perp_contains} holds, and so does the right equality in \eqref{e:nullspace(Lambda)=span(R3)+R^perp3_and_R^perp2}. This establishes $(ii)$.

For $(iii)$, note that $\bbR^r \supseteq \text{span}\{\mathbf R_2,\mathbf R_3\} +  \mathbf R_3^\perp \supseteq \text{span}\{\mathbf R_3\} +  \mathbf R_3^\perp = \bbR^r$. Therefore, $\dim(\text{span}\{\mathbf R_2,\mathbf R_3\} +  \mathbf R_3^\perp) = r$. So, by part $(ii)$ of this lemma and by relation \eqref{e:dim(W_and_V)}, $\dim (\textnormal{span}\{{\boldsymbol \Lambda}\})$ is given by
$$
\dim(\text{span}\{\mathbf R_2,\mathbf R_3\}) + \dim( \mathbf R_3^\perp) - \dim(\text{span}\{\mathbf R_2,\mathbf R_3\} +  \mathbf R_3^\perp) = (r_2+r_3) + (r-r_3) -r =  r_2.
$$
This shows $(iii)$.
\end{proof}

Hereinafter, whenever convenient we use again the notational convention \eqref{e:P(n)PH_equiv_P}, i.e., $p = p(n)$, $a = a(n)$ and ${\mathbf P} = {\mathbf P}(n) = {\mathbf P}(n){\mathbf P}_H$.

Several of the following lemmas in this section are stated in terms of the random matrices $\widetilde{\W}(a(n)2^j)$. These matrices are slightly generalized versions of $\W(a(n)2^j)$, and are defined in the statement of Lemma \ref{l:gutted_log_eig_consistency}. They appear multiple times throughout the proof of Theorem \ref{t:asympt_normality_lambdap-r+q}.

For the proofs, it is useful to first recall the variational characterization of the eigenvalues of a matrix ${\mathbf M} \in {\mathcal S}(p,\bbR)$ provided by the Courant--Fischer principle. In other words, fix $\ell \in \{1,\hdots,p\}$ and consider ordered eigenvalues as in \eqref{e:lambda1(M)=<...=<lambdap(M)}. By the Courant--Fischer principle, we can express
\begin{equation}\label{e:Courant--Fischer}
\lambda_{\ell}({\mathbf M}) = \inf_{{\mathcal U}_\ell} \sup_{{\mathbf u} \in {\mathcal U}_\ell \cap \bbS^{p-1}}{\mathbf u}^* {\mathbf M} {\mathbf u} = \sup_{{\mathcal U}_{p-\ell+1}}\inf_{{\mathbf u} \in {\mathcal U}_{p-\ell+1} \cap \bbS^{p-1}} {\mathbf u}^*{\mathbf M}{\mathbf u},
\end{equation}
where ${\mathcal U}_\ell$ is a $\ell$-dimensional subspace of $\bbR^p$ (Horn and Johnson \cite{horn:johnson:2013}, Chapter 4). A related useful fact is the following. For ${\mathbf M} \in {\mathcal S}(p,\bbR)$, let ${\mathbf u}_1,\hdots, {\mathbf u}_p$ be unit eigenvectors of ${\mathbf M}$ associated with the eigenvalues \eqref{e:lambda1(M)=<...=<lambdap(M)} respectively. Then, for $\ell \in \{1,\hdots,p\}$, we can further express
\begin{equation}\label{e:lambdaq(M)_based_on_eigenvecs}
\lambda_{\ell}({\mathbf M}) = \inf_{{\mathbf u} \in \textnormal{span}\{{\mathbf u}_{\ell},\hdots, {\mathbf u}_p\} \cap \bbS^{p-1}}{\mathbf u}^* {\mathbf M} {\mathbf u} = \sup_{{\mathbf u} \in \textnormal{span}\{{\mathbf u}_1,\hdots, {\mathbf u}_{\ell}\} \cap \bbS^{p-1}}{\mathbf u}^* {\mathbf M} {\mathbf u}.
\end{equation}

The following result,  Lemma \ref{l:gutted_log_eig_consistency}, is used in Theorem \ref{t:lim_n_a_times_lambda/a^(2h+1)}. It is also applied in Lemma \ref{l:sum<pi(n),up-r+q(n)>2-o(a-varpi)_first}, which in turn is needed to establish Proposition \ref{p:conv_w-by-w_rescaled_eigenvalues}/Theorem \ref{t:lim_n_a_times_lambda/a^(2h+1)}, Corollary \ref{c:PWXP^*+R_asymptotics} and Lemma \ref{l:max|<up-r+q(n),pi(n)>|a(n)^(hi-hq)=OP(1)}.
\begin{lemma}\label{l:gutted_log_eig_consistency}
Fix any $j$ as in \eqref{e:def_j1,jm}. For ${\mathbf B}(2^j)$ as in \eqref{e:|bfB_a(2^j)-B(2^j)|=O(shrinking)}, let $\mathbf B_n \in {\mathcal S}_{\geq 0}(r,\bbR)$ be a sequence of random (or deterministic) matrices such that, as $n \rightarrow \infty$,
\begin{equation}\label{e:Bn->B(2^j)}
\mathbf B_n \stackrel{\bbP}\rightarrow {\mathbf B}(2^j) \quad (\textnormal{or } \mathbf B_n \rightarrow {\mathbf B}(2^j)).
\end{equation}
Suppose ($A4$) and ($A5$) hold and let $\mathbf h$ be as in \eqref{e:h-bf=diag(h1,...,hr)}.
 Define the sequence of random matrices
\begin{equation}\label{e:W-tilde(a2^j)}
\widetilde{\W}(a(n)2^j) ={\mathbf P}(n)a(n)^{{\mathbf h+ \frac{1}{2}{\mathbf I}}}\hspace{0.25mm}\mathbf B_n\hspace{0.25mm} a(n)^{{\mathbf h + \frac{1}{2}{\mathbf I}}} {\mathbf P}^*(n) + {\mathbf M}_n \in {\mathcal S}(p,\bbR), \quad n \in \bbN,
\end{equation}
where
\begin{equation}\label{e:def_Mn}
{\mathbf M}_n= O_{\bbP}(1) +
{\mathbf P}(n)a(n)^{{\mathbf h + \frac{1}{2}{\mathbf I}}} O_{\bbP}(1)+ O^*_{\bbP}(1)  a(n)^{{\mathbf h+\frac{1}{2}{\mathbf I}}}{\mathbf P}^*(n)  \in {\mathcal S}(p,\bbR).
\end{equation}
In \eqref{e:def_Mn}, the terms $O_\bbP(1)$ denote generic random matrices with values in ${\mathcal S}_{\geq 0}(p,\bbR)$ (first term) and ${\mathcal M}(r,p,\bbR)$ (second and third terms), whose spectral  norms are bounded in probability. Then, as $n \rightarrow \infty$,
\begin{equation}\label{e:lambdap-r=O_P(1)}
\lambda_{p-r}\big(\widetilde{\W}(a(n)2^j)\big) = O_\bbP(1)
\end{equation}
and, for some constant $C > 0$,
\begin{equation}\label{e:log_lambdaq_consistency}
\frac{\lambda_{p-r+q}\big(\widetilde{\W}(a(n)2^j)\big)}{a(n)^{2h_q + 1}} \geq C \big\{\lambda_1({\mathbf B}(2^j))+ o_\bbP(1) \big\},  \quad q=1,\ldots,r.
\end{equation}
In particular, statements \eqref{e:lambdap-r=O_P(1)} and \eqref{e:log_lambdaq_consistency} hold for the wavelet random matrix $\mathbf{W}(a(n)2^j)$ as in \eqref{e:W=PWXP*+WZ+PWXZ+WXZ*P*}, whence $\lambda_{p-r}\big(\mathbf{W}(a(n)2^j)\big) = O_\bbP(1)$ and $\lambda_{p-r+1}\big(\mathbf{W}(a(n) 2^j)\big)\stackrel \bbP \to \infty.$
\end{lemma}

\begin{example}
In \eqref{e:W-tilde(a2^j)}, $\widetilde{\W}(a(n)2^j) = \W(a(n)2^j)$ for the choices ${\mathbf B}_n = \widehat{{\mathbf B}}_a(2^j)$,
$ O_{\bbP}(1) = {\mathbf W}_Z(a(n)2^{j})$ (first term in \eqref{e:def_Mn}) and $ O_{\bbP}(1) = a(n)^{-{\mathbf H}-(1/2){\mathbf I}}{\mathbf W}_{X,Z}(a(n)2^{j})$ (second and third terms in \eqref{e:def_Mn})  (cf.\ \eqref{e:WZ=OP(1),a^(-h-1/2I)WXZ_repeat}).
\end{example}

\noindent {\sc Proof of Lemma \ref{l:gutted_log_eig_consistency}}: To establish \eqref{e:lambdap-r=O_P(1)}, just note that, by relations \eqref{e:Courant--Fischer} and \eqref{e:W-tilde(a2^j)},
$$
0 \leq \lambda_{p-r}\big(\widetilde{\W}(a2^j)\big) \leq \sup_{{\mathbf u} \in \{\mathbf{p}_{1},\hdots,\mathbf{p}_{r}\}^{\perp} \cap \bbS^{p-1}}{\mathbf u}^* \widetilde{\W}(a2^j){\mathbf u} = O_\bbP(1).
$$

We now establish \eqref{e:log_lambdaq_consistency}. Fix $q \in \{1,\hdots,r\}$ and, for any $n$, define the subspace $ \mathcal V(n) =  \{\mathbf p_1(n),\ldots,\mathbf p_{q-1}(n)\}^\perp\cap \text{span}\{\mathbf P(n)\}$. Then, by \eqref{e:Courant--Fischer} and the fact that $\dim  \mathcal V(n)= r-q+1$,
$$
\lambda_{p-r+q}\big(\widetilde{\W}(a2^j)\big)=\sup_{{\mathcal U}_{r-q+1}}\inf_{{\mathbf u} \in {\mathcal U}_{r-q+1} \cap \bbS^{p-1}} {\mathbf u}^*\widetilde{\W}(a2^j){\mathbf u}
$$
\begin{equation}\label{e:lambda_p-q+1(W(a2^j))}
\geq \inf_{{\mathbf u} \in \mathcal V(n) \cap \bbS^{p-1}}{\mathbf u}^*\widetilde{\W}(a2^j){\mathbf u}=: \widetilde{{\mathbf u}}^*(n)\widetilde{\W}(a2^j)\widetilde{{\mathbf u}}(n).
\end{equation}
In \eqref{e:lambda_p-q+1(W(a2^j))}, $\widetilde{{\mathbf u}}(n) = \widetilde{{\mathbf u}}(n,\omega)\in \mathcal V(n)$ is a unit (random) vector at which $\inf_{{\mathbf u} \in \mathcal V(n) \cap \bbS^{p-1}}$ is attained, and, like the (deterministic) vectors ${\mathbf p}_{\ell}(n)$, $\ell = 1,\hdots,r$, it is a function of $n$.

So, let $\widetilde{{\mathbf u}}(n)$ be as in \eqref{e:lambda_p-q+1(W(a2^j))}. Suppose, for the moment, that the norm $\|a^{\mathbf h + \frac{1}{2}\mathbf I}{\mathbf P}^*(n)\widetilde{{\mathbf u}}(n)\|$ grows at least as fast as  $a^{h_q+1/2}$. In other words, suppose that
\begin{equation}\label{e:||a^hP*u-tilde||>=Ca^(h1)}
\|a^{\mathbf h + \frac{1}{2}\mathbf I}{\mathbf P}^*(n)\widetilde{{\mathbf u}}(n)\|^2 \geq C a^{2h_q+1}  \quad \textnormal{a.s.}
\end{equation}
In view of \eqref{e:W-tilde(a2^j)}, we can write
\begin{equation}\label{e:W=PWXP*+WZ+PWXZ+WXZ*P*_2}
a^{-1}\widetilde{\W}(a2^j)= {\mathbf P}(n)a^{\mathbf h} \mathbf B_n a^{\mathbf h}{\mathbf P}^*(n)+ O_\bbP(1) + {\mathbf P}(n) a^{\mathbf h} O_\bbP(1) + O^*_\bbP(1)a^{\mathbf h}{\mathbf P}^*(n).
\end{equation}
Therefore, by \eqref{e:Bn->B(2^j)}, \eqref{e:||a^hP*u-tilde||>=Ca^(h1)} and \eqref{e:W=PWXP*+WZ+PWXZ+WXZ*P*_2},
$$
 a^{-1}\widetilde{{\mathbf u}}^*\widetilde{\W}(a2^j)\widetilde{{\mathbf u}}
 $$
 $$
 =\|a^{\mathbf h}{\mathbf P}^*\widetilde{{\mathbf u}}\|^2 \Big\{ \frac{\widetilde{{\mathbf u}}^*{\mathbf P}a^{\mathbf h}}{\|\widetilde{{\mathbf u}}^*{\mathbf P}a^{\mathbf h}\|} \mathbf B_n \frac{a^{\mathbf h}{\mathbf P}^*\widetilde{{\mathbf u}}}{\|a^{\mathbf h}{\mathbf P}^*\widetilde{{\mathbf u}}\|}
 + \frac{\widetilde{{\mathbf u}}^* O_\bbP(1)\widetilde{{\mathbf u}}}{\|a^{\mathbf h}{\mathbf P}^*\widetilde{{\mathbf u}}\|^2} + \frac{2}{\|\widetilde{{\mathbf u}}^*{\mathbf P} a^{\mathbf h}\|} \widetilde{{\mathbf u}}^*O_\bbP(1)\frac{a^{\mathbf h}{\mathbf P}^*\widetilde{{\mathbf u}}}{\|a^{\mathbf h}{\mathbf P}^*\widetilde{{\mathbf u}}\|} \Big\}
$$
$$
 \geq  \|a^{\mathbf h}{\mathbf P}^*\widetilde{{\mathbf u}}\|^2 \big\{\lambda_1(\mathbf B_n)+ o_\bbP(1) \big\} \geq
 C a^{2h_q} \big\{\lambda_1({\mathbf B}(2^j))+ o_\bbP(1) \big\},
$$
where $\widetilde{{\mathbf u}} = \widetilde{{\mathbf u}}(n)$ and ${\mathbf P} = {\mathbf P}(n)$. Hence, \eqref{e:log_lambdaq_consistency} is established.

So, we now need to prove \eqref{e:||a^hP*u-tilde||>=Ca^(h1)}. To this end, note that $\text{span}\{\mathbf Q(n)\} = \text{span}\{\mathbf P(n)\} \supseteq {\mathcal V}(n) \ni \widetilde{{\mathbf u}}(n)$ a.s.\ by condition \eqref{e:P(n)=Q(n)R(n)}. Then, $\|\mathbf Q(n) \widetilde{{\mathbf u}}(n) \| = \|\widetilde{{\mathbf u}}(n)\|=1$ a.s. Consequently,
$$\sum_{i=q}^{r}\langle \mathbf p_i(n),\widetilde{\mathbf u}(n)\rangle^2=\| {\mathbf P}^*(n)\widetilde{{\mathbf u}}(n) \|^2 $$$$= \|{\mathbf R}^*(n) \mathbf Q^*(n)  \widetilde{{\mathbf u}}(n)\|^2 \geq \inf_{\mathbf u \in \bbS^{r-1}}\mathbf u^* {\mathbf R}(n){\mathbf R}^*(n) {\mathbf u} \geq C$$ for some $C > 0$, where the  last inequality follows from condition \eqref{e:<p1,p2>=c12_2}. This implies that
$$
\|a^{\mathbf h+\frac{1}{2}\mathbf I}{\mathbf P}^*(n)\widetilde{{\mathbf u}}(n)\|^2 = \sum_{i=q}^{r} a^{2h_i+1}\langle  \mathbf p_i(n),\widetilde{\mathbf u}(n)\rangle^2$$
$$ \geq a^{2h_q+1}\sum_{i=q}^{r} \langle  \mathbf p_i(n),\widetilde{\mathbf u}(n)\rangle^2 \geq C a^{2h_q+1}  \quad \textnormal{a.s.}
$$%
In other words, \eqref{e:||a^hP*u-tilde||>=Ca^(h1)} holds, as claimed. \\

Lemma \ref{l:|<p3,uq(n)>|a(n)^{h_3-h_1}=O(1)}, stated and proved next, is used in the proofs of Proposition \ref{p:conv_w-by-w_rescaled_eigenvalues}/Theorem \ref{t:lim_n_a_times_lambda/a^(2h+1)}, Theorem \ref{t:asympt_normality_lambdap-r+q}, Lemma \ref{l:fullrank_limit_P*U} (indirectly) as well as in the proofs of Corollary \ref{c:PWXP^*+R_asymptotics}, Proposition \ref{p:|lambdaq(EW)-xiq(2^j)|_bound}, and Lemmas \ref{l:max|<up-r+q(n),pi(n)>|a(n)^(hi-hq)=OP(1)} and \ref{l:supR'max(angles*powerlaws)_bounded_for_subseq}.

As one consequence of Lemma \ref{l:|<p3,uq(n)>|a(n)^{h_3-h_1}=O(1)}, we can see that the potentially explosive terms in expression \eqref{e:rescaled_eigen_p-r+ell_quad_form} are, in truth, bounded in probability. In particular, this implies relations \eqref{e:subseq_condition_2_top_explanation} and \eqref{e:subseq_condition_2_top_explanation-2} (cf.\ \eqref{e:subseq_condition_2_top}). Moreover, for any fixed $q \in \{1,\hdots,r\}$, the lemma also shows that, for the rescaled random matrix $\widetilde{{\mathbf W}}(a(n)2^j)/a(n)^{2h_q+1}$ (cf.\ \eqref{e:W-tilde(a2^j)}), all terms other than the main scaling term ${\mathbf P}(n) a(n)^{\mathbf h -  h_q{\mathbf I}} {\mathbf B}_n a(n)^{\mathbf h -  h_q{\mathbf I}}{\mathbf P}^*(n)$  behave like residuals in probability \textit{along the direction given by the eigenvector ${\mathbf {\mathfrak u}}_{p-r+q}(n)$} of  $\widetilde{{\mathbf W}}(a(n)2^j)$, in spite of the fact that their spectral norms may be unbounded in general. In other words, the first relation in \eqref{e:B-hat(2j)->B(2j)_prop} holds.

For the purpose of stating the lemma, for a generic fixed $q\in\{1,\ldots,r\}$, it will also be useful to define the random matrix
\begin{equation}\label{e:W_PX(a2^j)}
 \widetilde{ \Xi}_q(n):= \frac{\mathbf M_n}{a(n)^{2h_q+1}}=\frac{\widetilde{\W}(a(n)2^j)}{a(n)^{2h_q+1}}- \frac{ {\mathbf P}(n)a(n)^{{\mathbf h}}\hspace{0.25mm}\mathbf B_n\hspace{0.25mm} a(n)^{{\mathbf h }} {\mathbf P}^*(n) }{a(n)^{2h_q}}
\end{equation}
(cf.\ \eqref{e:rescaled_W_conv_in_prob}). Hereinafter, we use this notation.

\begin{lemma}\label{l:|<p3,uq(n)>|a(n)^{h_3-h_1}=O(1)} Fix any $j$ as in \eqref{e:def_j1,jm}. Let $\widetilde{\mathbf W}(a(n)2^j)$ be the sequence of random matrices given by \eqref{e:W-tilde(a2^j)} (see Lemma \ref{l:gutted_log_eig_consistency}). Fix $q \in\{1,\hdots,r\}$.  Also, let
\begin{equation}\label{e:ufrak_p-r+q(n,omega)}
{\boldsymbol {\mathfrak u}}_{p-r+q}(n) = {\boldsymbol {\mathfrak u}}_{p-r+q}(n,\omega)
\end{equation}
be a unit eigenvector associated with the $(p-r+q)$--th eigenvalue of $\widetilde{\mathbf W}(a(n)2^j)$.
\begin{itemize}
\item [$(i)$] Then,
\begin{equation}\label{e:supR'max(angles*powerlaws)=O_P(1)}
\max_{i \in \mathcal{I}_+}\big\{ |\langle {\mathbf p}_{  i}(n),{\boldsymbol {\mathfrak u}}_{p-r+q}(n)\rangle| \hspace{0.5mm}a(n)^{h_{i}-h_q}\big\} = O_{\bbP}(1), \quad n \rightarrow \infty.
\end{equation}

\item [$(ii)$] For $\widetilde{ \Xi}_q(n)$ as in \eqref{e:W_PX(a2^j)},
\begin{equation}\label{e:residual_of_W-tilde_along_eigenvector_directions}
\Big|{\boldsymbol {\mathfrak u}}^*_{p-r+q}(n)  \widetilde{ \Xi}_q(n)
{\boldsymbol {\mathfrak u}}_{p-r+q}(n) \Big|= o_{\bbP}(1), \quad n \rightarrow \infty.
\end{equation}
\end{itemize}
In particular, statements \eqref{e:supR'max(angles*powerlaws)=O_P(1)} and \eqref{e:residual_of_W-tilde_along_eigenvector_directions} hold for a unit eigenvector ${\mathbf u}_{p-r+q}(n)$ of the wavelet random matrix $\mathbf{W}(a(n)2^j)$ as in \eqref{e:W=PWXP*+WZ+PWXZ+WXZ*P*} and for the random matrix $\Xi_q(n)$ as in \eqref{e:rescaled_W_conv_in_prob}.
\end{lemma}
\begin{proof}
We first show $(i)$. For the fixed $q \in \{1,\hdots,r\}$, if the index set $\mathcal{I}_+$ as in \eqref{e:def_indexsets} is empty, then the claim holds vacuously. So, suppose $\mathcal{I}_+ \neq \emptyset$ and let $\mathbf h$ be the diagonal matrix given by \eqref{e:h-bf=diag(h1,...,hr)}. It suffices to show that
\begin{equation}\label{e:||a^(h-h_qI)P^*u_p-r+q(n)|| = OP(1)}
\| a^{\mathbf h - h_q{\mathbf I}} {\mathbf P}^*{\boldsymbol {\mathfrak u}}_{p-r+q}(n)\| = O_{\bbP}(1).
\end{equation}
For this purpose, we begin by establishing two bounds. In fact, on one hand note that, by \eqref{e:Courant--Fischer},

$$
\frac{\lambda_{p-r+q}(\widetilde{\mathbf W}(a2^j))}{a^{2h_q + 1}} = \inf_{{\mathcal U}_{p-r+q}}\sup_{{\mathbf u} \in {\mathcal U}_{p-r+q} \cap \bbS^{p-1}} {\mathbf u}^*\frac{\widetilde{\mathbf W}(a2^j)}{a^{2h_q + 1}}{\mathbf u} $$
\begin{equation}\label{e:lambdaq/a^(2h1+1)=OP(1)}\leq \sup_{{\mathbf u} \in {\textnormal{span}\{\mathbf p_{q+1},\ldots,\mathbf{p}_r\}}^{\perp} \cap \bbS^{p-1}} {\mathbf u}^*\frac{\widetilde{\mathbf W}(a2^j)}{a^{2h_q + 1}}{\mathbf u}
\leq \sup_{{\mathbf u} \in {\textnormal{span}\{\mathbf p_\ell,\ell\in \mathcal{I}_+\}}^{\perp} \cap \bbS^{p-1}} {\mathbf u}^*\frac{\widetilde{\mathbf W}(a2^j)}{a^{2h_q + 1}}{\mathbf u} = O_{\bbP}(1).
\end{equation}
On the other hand, again in light of \eqref{e:Courant--Fischer}, relation \eqref{e:def_Mn} implies that there are sequences of nonnegative random variables
\begin{equation}\label{e:zeta-n,zeta'-n}
\{\zeta_n\}_{n\in \bbN}=o_\bbP(1) \textnormal{ and }\{\zeta'_n\}_{n\in \bbN}=o_\bbP(1)
\end{equation}
such that
$$
\frac{\lambda_{p-r+q}(\widetilde{\mathbf W}(a2^j))}{a^{2h_q + 1}}  \geq \| a^{\mathbf h - h_q{\mathbf I}} {\mathbf P}^*{\boldsymbol {\mathfrak u}}_{p-r+q}(n)\|^2\lambda_1(\mathbf B_n) - 2  \| a^{\mathbf h - h_q{\mathbf I}} {\mathbf P}^*{\boldsymbol {\mathfrak u}}_{p-r+q}(n)\| \zeta_n  -\zeta'_n
$$
\begin{equation}\label{e:lambdaq/a^(2h1+1)=OP(1)_auxiliary_lower_bound}
=\| a^{\mathbf h - h_q{\mathbf I}} {\mathbf P}^*{\boldsymbol {\mathfrak u}}_{p-r+q}(n)\|\Big( \| a^{\mathbf h - h_q{\mathbf I}} {\mathbf P}^*{\boldsymbol {\mathfrak u}}_{p-r+q}(n)\|\lambda_1(\mathbf B_n) - \zeta_n\Big) - \zeta'_n.
\end{equation}
In turn, in order to use relations \eqref{e:lambdaq/a^(2h1+1)=OP(1)} and \eqref{e:lambdaq/a^(2h1+1)=OP(1)_auxiliary_lower_bound} to bound $\| a^{\mathbf h - h_qI} {\mathbf P}^*{\boldsymbol {\mathfrak u}}_{p-r+q}(n)\|$, we need to account for the role of the term $\| a^{\mathbf h - h_qI} {\mathbf P}^*{\boldsymbol {\mathfrak u}}_{p-r+q}(n)\|\lambda_1(\mathbf B_n) - \zeta_n$ in \eqref{e:lambdaq/a^(2h1+1)=OP(1)_auxiliary_lower_bound}. Indeed, $\lambda_1(\mathbf B_n) \stackrel{\bbP}\to \lambda_1(\mathbf B(2^j))>0$ by \eqref{e:Bn->B(2^j)} and \eqref{e:B(2^j)_full_rank}. Then, there exists $\delta>0$ such that the sequence of events $A_n:= \{\omega \in \Omega: \lambda_1(\mathbf B_n)-\zeta_n> \delta\}$ satisfies
\begin{equation}\label{e:P(Omega'n)->1}
\bbP( A_n )\to 1, \quad n \rightarrow \infty
\end{equation}
(n.b.: not to be confused with $A_n$ appearing in \eqref{e:W_has_pairwise_distinct_eigens}). Then, by dividing through by $\| a^{\mathbf h - h_q{\mathbf I}} {\mathbf P}^*{\boldsymbol {\mathfrak u}}_{p-r+q}(n)\|\lambda_1(\mathbf B_n) - \zeta_n$ in  \eqref{e:lambdaq/a^(2h1+1)=OP(1)_auxiliary_lower_bound}, we obtain, for each $\omega \in A_n$,
$$
\| a^{\mathbf h - h_q{\mathbf I}} {\mathbf P}^*{\boldsymbol {\mathfrak u}}_{p-r+q}(n)\|  \leq 1 +  \| a^{\mathbf h - h_q{\mathbf I}} {\mathbf P}^*{\boldsymbol {\mathfrak u}}_{p-r+q}(n)\|  \mathbf 1_{\{\| a^{\mathbf h - h_q{\mathbf I}} {\mathbf P}^*{\boldsymbol {\mathfrak u}}_{p-r+q}(n)\| > 1\}}
$$
\begin{equation}\label{e:||a^(h-hqI)P^*ufrak_p-r+q(n)||_bound }
\leq  1  + \frac{\frac{\lambda_{p-r+q}(\widetilde{\mathbf W}(a2^j))}{a^{2h_q + 1}}  +\zeta'_n}{ \lambda_1(\mathbf B_n) -\zeta_n}.
\end{equation}
So, fix $\varepsilon>0$ and let $\zeta'_n$ be as in \eqref{e:lambdaq/a^(2h1+1)=OP(1)_auxiliary_lower_bound}. By relations \eqref{e:lambdaq/a^(2h1+1)=OP(1)} and \eqref{e:zeta-n,zeta'-n}, there exists $M = M(\varepsilon) > 0$ such that
\begin{equation}\label{e:P(rescaled_eigenval_+_zeta-n>M)_bound}
\bbP\bigg(  \frac{\lambda_{p-r+q}(\widetilde{\mathbf W}(a2^j))}{a^{2h_q + 1}} + \zeta'_n >  M  \bigg) < \frac{\varepsilon}{2}.
\end{equation}
Pick $M'> 0$ such that $\delta(M' - 1) = M$. Then, relations \eqref{e:||a^(h-hqI)P^*ufrak_p-r+q(n)||_bound } and \eqref{e:P(rescaled_eigenval_+_zeta-n>M)_bound} imply that
$$
 \bbP\big(\| a^{\mathbf h - h_q{\mathbf I}} {\mathbf P}^*{\boldsymbol {\mathfrak u}}_{p-r+q}(n)\| > M' \big)
 $$$$\leq \bbP\big(A_n^c\big) + \bbP\bigg(\Big\{\frac{\lambda_{p-r+q}(\widetilde{\mathbf W}(a2^j))}{a^{2h_q + 1}} + \zeta'_n >  \delta(M' -1)\Big\} \cap   A_n \bigg) < \varepsilon,
$$
where the last inequality holds for large enough $n$ as a consequence of the limit \eqref{e:P(Omega'n)->1}. In other words, \eqref{e:||a^(h-h_qI)P^*u_p-r+q(n)|| = OP(1)} holds. This establishes \eqref{e:supR'max(angles*powerlaws)=O_P(1)}, and hence, $(i)$.

We now show $(ii)$. If $\mathcal{I}_+ = \emptyset$, then the statement holds trivially. So, suppose $\mathcal{I}_+ \neq \emptyset$. We can rewrite the left-hand side of \eqref{e:residual_of_W-tilde_along_eigenvector_directions} as
$$
\Big|{\boldsymbol {\mathfrak u}}^*_{p-r+q}(n) \Big(\frac{O_{\bbP}(1)}{a^{2h_q+1}}+
\frac{{\mathbf P}a^{{\mathbf h}}}{a^{h_q}}\frac{O_{\bbP}(1)}{a^{h_q + 1/2}}+\frac{O^*_{\bbP}(1)}{a^{h_q + 1/2}}\frac{a^{{\mathbf h}}{\mathbf P}^*}{a^{h_q}}\Big){\boldsymbol {\mathfrak u}}_{p-r+q}(n)\Big|
$$
$$
\leq \Big|{\boldsymbol {\mathfrak u}}^*_{p-r+q}(n) \frac{O_{\bbP}(1)}{a^{2h_q+1}}{\boldsymbol {\mathfrak u}}_{p-r+q}(n) \Big|+
\Big|{\boldsymbol {\mathfrak u}}^*_{p-r+q}(n) \Big(\frac{{\mathbf P}a^{{\mathbf h}}}{a^{h_q}}\frac{O_{\bbP}(1)}{a^{h_q + 1/2}}+\frac{O^*_{\bbP}(1)}{a^{h_q + 1/2}}\frac{a^{{\mathbf h}}{\mathbf P}^*}{a^{h_q}}\Big){\boldsymbol {\mathfrak u}}_{p-r+q}(n)\Big|
$$
\begin{equation}
= o_{\bbP}(1) + 2 \Big|{\boldsymbol {\mathfrak u}}^*_{p-r+q}(n) \Big(\frac{{\mathbf P}a^{{\mathbf h}}}{a^{h_q}}\frac{O_{\bbP}(1)}{a^{h_q + 1/2}}\Big){\boldsymbol {\mathfrak u}}_{p-r+q}(n)\Big|.
\end{equation}
However, $\frac{O_{\bbP}(1)}{a^{h_q + 1/2}}{\boldsymbol {\mathfrak u}}_{p-r+q}(n) = o_{\bbP}(1) \in \bbR^r$. Moreover, ${\boldsymbol {\mathfrak u}}^*_{p-r+q}(n) \frac{{\mathbf P}a^{{\mathbf h}}}{a^{h_q}} = O_{\bbP}(1) \in \bbR^r$, by \eqref{e:supR'max(angles*powerlaws)=O_P(1)}. This establishes \eqref{e:residual_of_W-tilde_along_eigenvector_directions} and, hence, $(ii)$. \end{proof}

The following lemma is used in the proofs of Proposition \ref{p:conv_w-by-w_rescaled_eigenvalues}/Theorem \ref{t:lim_n_a_times_lambda/a^(2h+1)}, Corollary \ref{c:PWXP^*+R_asymptotics} and Lemma \ref{l:max|<up-r+q(n),pi(n)>|a(n)^(hi-hq)=OP(1)}. It shows that, for any fixed $q \in \{1,\hdots,r\}$, the norm of an eigenvector ${\boldsymbol {\mathfrak u}}_{p-r+q}(n)$ of the random matrix $\widetilde{\mathbf W}(a(n)2^j)$  eventually concentrates in the space $\text{span}\{\mathbf P(n)\}$. As a consequence, relation \eqref{e:subseq_condition_1_top_explanation} holds (cf.\ \eqref{e:subseq_condition_1_top}).
\begin{lemma}\label{l:sum<pi(n),up-r+q(n)>2-o(a-varpi)_first} Fix any $j$ as in \eqref{e:def_j1,jm}. Let $\widetilde{\mathbf W}(a(n)2^j)$ be the sequence of random matrices given by \eqref{e:W-tilde(a2^j)} (see Lemma \ref{l:gutted_log_eig_consistency}). For each $n$, let
\begin{equation}\label{e:p_r+1(n),...,p_p(n)}
\p_{r+1}(n),\ldots,\p_p(n)
\end{equation}
be an orthonormal basis for $\textnormal{nullspace}({\mathbf P}^*(n))$. Fix any $q \in\{ 1,\hdots,r\}$, and let ${\boldsymbol {\mathfrak u}}_{p-r+q}(n) = {\boldsymbol {\mathfrak u}}_{p-r+q}(n,\omega)$ denote a unit eigenvector associated with $\widetilde{\mathbf W}(a(n)2^j)$, as in \eqref{e:ufrak_p-r+q(n,omega)}. Then,
\begin{equation}\label{e:sum<pi(n),up-r+q(n)>2-o(a-varpi)_first}
 \sum^{p}_{i=r+1}\langle {\mathbf p}_{  i}(n), {\boldsymbol {\mathfrak u}}_{p-r+q}(n)\rangle^2 = O_\bbP\big(a(n)^{-(h_q+ \frac{1}{2})}\big).
\end{equation}
In particular, \eqref{e:sum<pi(n),up-r+q(n)>2-o(a-varpi)_first} holds for a unit eigenvector $\mathbf u_{p-r+q}(n)= \mathbf u_{p-r+q}(n,\omega)$ associated with the random matrix ${\mathbf W}(a(n)2^j)$.
\end{lemma}
\begin{proof}
For ${\mathbf P} = {\mathbf P}(n)$, consider the projection ${\boldsymbol \upsilon}(n)$ of ${\boldsymbol{\mathfrak u}}_{p-r+q}(n)$ onto $\textnormal{nullspace}({\mathbf P}^*)$, i.e.,
\begin{equation}\label{e:v*(n)}
{\boldsymbol \upsilon}(n) =  \sum^{p}_{i=r+1} \langle \p_{i}(n),{\boldsymbol {\mathfrak u}}_{p-r+q}(n)\rangle \hspace{0.5mm} \p_{i}(n).
\end{equation}
Starting from expression \eqref{e:def_Mn}, it is clear that ${\boldsymbol \upsilon}^*(n)\big\{{\mathbf P}a^{{\mathbf h + \frac{1}{2}{\mathbf I}}}\hspace{0.25mm}\mathbf B_n\hspace{0.25mm} a^{{\mathbf h + \frac{1}{2}{\mathbf I}}} {\mathbf P}^* + {\mathbf P}a^{{\mathbf h + \frac{1}{2}{\mathbf I}}}O_\bbP(1)+ O^*_\bbP(1) a^{{\mathbf h + \frac{1}{2}{\mathbf I}}}{\mathbf P}^*\big\} {\boldsymbol \upsilon}(n) = {\mathbf 0}$. Hence, by \eqref{e:W-tilde(a2^j)},
\begin{equation}\label{e:v*(n)W(a2j)v(n)=OP(1)}
{\boldsymbol \upsilon}^*(n)\widetilde {\mathbf W}(a2^j) {\boldsymbol \upsilon}(n) = O_{\bbP}(1).
\end{equation}
However, let ${\mathcal O}(n) = ( \mathbf {\boldsymbol {\mathfrak u}}_{1}(n)\hdots \mathbf {\boldsymbol {\mathfrak u}}_{p}(n) )$ be a matrix of eigenvectors of $\widetilde {\mathbf W}(a2^j)$. Then,
$$
{\boldsymbol \upsilon}^*(n)\widetilde{\mathbf W}(a2^j){\boldsymbol \upsilon}(n)= {\boldsymbol \upsilon}^*(n) {\mathcal O}(n) \textnormal{diag}\Big(\lambda_1(\widetilde {\mathbf W}(a2^j)),\hdots,\lambda_{p}(\widetilde {\mathbf W}(a2^j))\Big){\mathcal O}^*(n){\boldsymbol \upsilon}(n)
$$
$$
= \sum^{p}_{i=1} \langle{\boldsymbol {\mathfrak u}}_{i}(n), {\boldsymbol \upsilon}(n)\rangle^2 \lambda_i(\widetilde {\mathbf W}(a2^j)) \geq \langle {\boldsymbol {\mathfrak u}}_{p-r+q}(n),{\boldsymbol \upsilon}(n)\rangle^2 \lambda_{p-r+q}(\widetilde {\mathbf W}(a2^j))
$$
$$
= \Big( \sum^{p}_{i=r+1} \langle \p_{i}(n), {\boldsymbol {\mathfrak u}}_{p-r+q}(n)\rangle^2 \Big)^2 \lambda_{p-r+q}(\widetilde{\mathbf W}(a2^j))
$$
\begin{equation}\label{e:v*(n)W(a2j)v(n)_lower_bound}
\geq \Big( \sum^{p}_{i=r+1} \langle \p_{i}(n), {\boldsymbol {\mathfrak u}}_{p-r+q}(n)\rangle^2 \Big)^2 a^{2h_q+1} \cdot C \big\{\lambda_1({\mathbf B}(2^j))+ o_\bbP(1) \big\} .
\end{equation}
In \eqref{e:v*(n)W(a2j)v(n)_lower_bound}, the last equality and the last inequality follow from \eqref{e:v*(n)} and \eqref{e:log_lambdaq_consistency} (see Lemma \ref{l:gutted_log_eig_consistency}), respectively. We claim that %
\begin{equation}\label{e:sum_<pi(n),u_p-r+q(n)>^2_=<_O(a^(-(2hq+1)))}
\Big( \sum^{p}_{i=r+1} \langle \p_{i}(n), {\boldsymbol {\mathfrak u}}_{p-r+q}(n)\rangle^2 \Big)^2 \leq  O_\bbP\big(a^{-(2h_q+1)}\big).
\end{equation}
In fact, fix any $\varepsilon > 0$ and any $M > 0$. Then, there exists $n_0 \in \bbN$ such that $n \geq n_0$ implies $\lambda_1({\mathbf B}(2^j))+ o_\bbP(1) \geq \frac{\lambda_1({\mathbf B}(2^j))}{2} > 0$ with probability at least $1 - \varepsilon/2$, where the strict inequality is a consequence of \eqref{e:B(2^j)_full_rank}. Also, \eqref{e:v*(n)W(a2j)v(n)=OP(1)} implies that there exists $n_1 \in \bbN$ such that $n \geq n_1$ implies $C \frac{\lambda_1({\mathbf B}(2^j))}{2} M \geq {\boldsymbol \upsilon}^*(n)\widetilde {\mathbf W}(a2^j) {\boldsymbol \upsilon}(n)$ with probability at least $1 - \varepsilon/2$. Now recall the elementary inequality $\bbP(A \cap B) \geq 1 - \bbP(A^c) - \bbP(B^c)$ for any two events $A$ and $B$. Then, for $n \geq \max\{n_0,n_1\}$, \eqref{e:v*(n)W(a2j)v(n)_lower_bound} implies that
$$
\Big( \sum^{p}_{i=r+1} \langle \p_{i}(n), {\boldsymbol {\mathfrak u}}_{p-r+q}(n)\rangle^2 \Big)^2 a^{2h_q+1}  \leq M
$$
with probability at least $1 - \varepsilon$. This establishes \eqref{e:sum_<pi(n),u_p-r+q(n)>^2_=<_O(a^(-(2hq+1)))}.  Hence, \eqref{e:sum<pi(n),up-r+q(n)>2-o(a-varpi)_first} holds.
\end{proof}

Note that, as a consequence of Lemmas \ref{l:|<p3,uq(n)>|a(n)^{h_3-h_1}=O(1)} and \ref{l:sum<pi(n),up-r+q(n)>2-o(a-varpi)_first}, for the vectors $\mathbf p_{r+1}(n),\ldots,\mathbf p_p(n)$ as in \eqref{e:p_r+1(n),...,p_p(n)}, there exist a subsequence $n'\in\bbN'$ along which \eqref{e:subseq_condition_1_top}, \eqref{e:subseq_condition_2_top} and the first relation in \eqref{e:B-hat(2j)->B(2j)_prop} hold. This fact is used in the proofs of Theorem \ref{t:lim_n_a_times_lambda/a^(2h+1)} and Proposition \ref{p:|lambdaq(EW)-xiq(2^j)|_bound}, as well as in the assumptions of Proposition \ref{p:conv_w-by-w_rescaled_eigenvalues}, Lemma \ref{l:fullrank_limit_P*U} and Lemma \ref{l:<p,w>=infinitesimal}.\\

The following result, Lemma \ref{l:fullrank_limit_P*U}, is used in the proofs of Proposition \ref{p:conv_w-by-w_rescaled_eigenvalues}, Corollary \ref{c:PWXP^*+R_asymptotics}, and Lemma \ref{l:<p,w>=infinitesimal}.  It pertains to the behavior of $\widetilde{\mathbf T}(\nu_n)$, i.e., the wavelet eigenvectors after switching to $\tau$ coordinates (see \eqref{e:f-hat_n(x,u)=varphi-hat_n(x,tau)} and \eqref{e:T(n)}), along an appropriate random subsequence $\nu_n(\omega)$.

We proceed to state the lemma, and then provide some interpretation before proving it. For $\ell = 1,\hdots,p$, as in \eqref{e:ufrak_p-r+q(n,omega)} let ${\boldsymbol {\mathfrak u}}_{\ell}(n)= {\boldsymbol {\mathfrak u}}_{\ell}(n,\omega)$ be an eigenvector associated with the $\ell$--th eigenvalue of a random matrix $\widetilde{\mathbf W}(a(n)2^j)$ of the form given by \eqref{e:W-tilde(a2^j)}. In the proof of Lemma \ref{l:fullrank_limit_P*U} and also elsewhere, we make use of the sequence of rectangular random matrices
\begin{equation}\label{e:U_r(n)}
\widetilde{{\mathbf U}}_r(n) = \widetilde{{\mathbf U}}_r(n,\omega) =\big( \boldsymbol{\mathfrak u}_{p-r+1}(n),\ldots,\boldsymbol{\mathfrak u}_{p}(n)\big) \in {\mathcal M}(p,r,\bbR), \quad n \in \bbN,
\end{equation}
which contains eigenvectors associated with the $r$ top eigenvalues of the random matrix $\widetilde{\mathbf W}(a(n)2^j)$ (cf.\ \eqref{e:def_Un}).

In the lemma, we also make use of the following basic fact. For a matrix ${\mathbf M}=(m_{i,\kappa})\in {\mathcal M}(r,\bbR)$, we can write
\begin{equation}\label{e:tr(M*M)=sum_singval^2}
\sum^{r}_{i,\kappa=1} m_{i,\kappa}^2= \textnormal{tr}({\mathbf M}^* {\mathbf M}) = \sum_{i=1}^r\sigma_i^2({\mathbf M}),
\end{equation}
where $\sigma_i({\mathbf M})$, $i = 1,\hdots,r$, are the singular values of ${\mathbf M}$.

\begin{lemma}\label{l:fullrank_limit_P*U}  Fix any $j$ as in \eqref{e:def_j1,jm}. Let $\widetilde{\mathbf W}(a(n)2^j)$, $\widetilde{{\mathbf U}}_r(n)$ and ${\mathbf Q}(n)$ be the sequences of random matrices given by \eqref{e:W-tilde(a2^j)} (see Lemma \ref{l:gutted_log_eig_consistency}), \eqref{e:U_r(n)} and \eqref{e:P(n)=Q(n)R(n)}, respectively. Consider the random matrix
\begin{equation}\label{e:T-tilde(n)}
\widetilde{\mathbf T}(n) = \widetilde{\mathbf T}(n,\omega) :=(\boldsymbol \tau_1(n),\ldots,\boldsymbol \tau_r(n)): = \mathbf Q^*(n) \widetilde{\mathbf U}_r(n) \in {\mathcal M}(r,\bbR)
\end{equation}
(cf.\ \eqref{e:T(n)}). Let $n'\in\bbN'$ be any sequence along which \eqref{e:subseq_condition_1_top} and \eqref{e:subseq_condition_2_top} hold with ${\boldsymbol{\mathfrak u}}_{p-r+\ell}(n')$ in place of ${\mathbf u}_{p-r+\ell}(n')$, and consider any random subsequence $\nu_1(\omega) <\nu_2(\omega) <\ldots$ of $n' \in \bbN'$ along which
\begin{equation}\label{e:def_Gamma}
\lim_{n\to\infty} \widetilde{\mathbf T}(\nu_n)=:\widetilde{\mathbf T} = (\boldsymbol \tau_1, \ldots, \boldsymbol \tau_r)   \quad a.s.
\end{equation}
\begin{itemize}
\item [$(i)$] Then, the vectors $\boldsymbol \tau_1,\ldots,\boldsymbol \tau_r$ are orthonormal a.s., i.e., the matrix $\widetilde{{\mathbf T}}=\widetilde{{\mathbf T}}(\omega)$ in \eqref{e:def_Gamma} is orthogonal a.s.;
\item[$(ii)$] For a fixed $q \in \{1,\hdots,r\}$, consider the associated index sets as defined in \eqref{e:def_indexsets} as well as their cardinalities $r_i$, $i = 1,2,3$, as in \eqref{e:r1,r2,r3}. Also consider $\widetilde{\mathbf T}$ as in \eqref{e:def_Gamma} and ${\mathbf R}$ as in \eqref{e:<p1,p2>=c12_2}. Then, the matrix $\boldsymbol \Gamma :=\mathbf R^* \widetilde{\mathbf T}$ is nonsingular a.s.\ and may be expressed as
\begin{equation}\label{e:gamma_breakdown}
{\boldsymbol \Gamma} = {\boldsymbol \Gamma}(\omega) = \begin{pmatrix}\boldsymbol \Gamma_{11} & \boldsymbol \Gamma_{12} & \boldsymbol \Gamma_{13}\\
\mathbf 0 & \boldsymbol \Gamma_{22} & \boldsymbol \Gamma_{23}\\
\mathbf 0 & \mathbf 0 & \boldsymbol \Gamma_{33}
\end{pmatrix},
\end{equation}
where the submatrices $\boldsymbol \Gamma_{i \kappa}$, $i,\kappa = 1,2,3$, are of size $r_i\times r_\kappa$. Moreover, the submatrices $\boldsymbol \Gamma_{ii}$, $i=1,2,3$, are nonsingular a.s.
\item [$(iii)$] For a fixed $q \in \{1,\hdots,r\}$, let $\mathcal I_0$ be the associated index set as defined in \eqref{e:def_indexsets} and let $\mathbf R_i$, $i=1,2,3$, be the matrices as in \eqref{e:R=(R1,R2,R3)}. Then, $\textnormal{span}\{{\boldsymbol \tau}_\ell(\omega): \ell \in {\mathcal I}_0\} = \textnormal{span}\{\mathbf R_2,\mathbf R_3\}\cap \mathbf R_3^\perp$ a.s., i.e., expression \eqref{e:span_equals_W} holds.
\end{itemize}
\end{lemma}

Some comments are in order on the statement of the lemma. Though the fixed-dimensional matrix $\widetilde{{\mathbf T}}(\nu_n(\omega))$ is not necessarily orthogonal, part $(i)$ of the lemma shows that it %
does become an orthogonal matrix in the limit. In regard to the orthonormal limiting column vectors ${\boldsymbol \tau}_1(\omega), \ldots, {\boldsymbol \tau}_r(\omega)$ as in \eqref{e:def_Gamma}, parts $(ii)$ and $(iii)$ of the lemma show that $\textnormal{span}\{{\boldsymbol \tau}_\ell(\omega): \ell \in {\mathcal I}_0\}$ generates the fixed-dimensional space needed in expressing $\lambda_{r-r_2 + \ell}(\Lambda)$ as in \eqref{e:lim_nu_rescaled_eigenvalue}.

On sufficient conditions for the same statements to hold along the \textit{main} sequence $n \in \bbN$, see Proposition \ref{p:|lambdaq(EW)-xiq(2^j)|_bound}.

We are now in a position to prove the lemma.\vspace{2mm}

\noindent {\sc Proof of Lemma \ref{l:fullrank_limit_P*U}}. First, we show $(i)$. By \eqref{e:subseq_condition_1_top}, for each $q = 1,\hdots,r$,
\begin{equation}\label{e:sum<pi(n),up-r+q(n)>2->0}
\sum^{p(\nu_n)}_{i=r+1} \langle {\mathbf p}_{  i}(\nu_n), {\boldsymbol {\mathfrak u}}_{p-r+q}(\nu_n)\rangle^2 \to 0 \quad \textnormal{a.s.}, \quad n \rightarrow \infty.
\end{equation}
 So, write ${\mathbf q}_1(n),\ldots, {\mathbf q}_r(n)$ for the (orthonormal) columns of ${\mathbf Q}(n)$. By relation \eqref{e:sum<pi(n),up-r+q(n)>2->0} and the fact that $\textnormal{span}\{{\mathbf P}(n)\}=\textnormal{span}\{{\mathbf Q}(n)\}$, as $n \rightarrow \infty$,
$\sum_{i=1}^r \langle \mathbf q_i(\nu_n), {\boldsymbol{\mathfrak u}}_{p-r+q}(\nu_n)\rangle^2\to 1$, $q=1,\ldots,r$, a.s. Then, by relations \eqref{e:tr(M*M)=sum_singval^2} and \eqref{e:T-tilde(n)}, as $n \rightarrow \infty$,
\begin{equation}\label{e:sum_of_singular_values_tends_to_r}
\sum^{r}_{s=1}\sigma_{s}^2\big(\widetilde{\mathbf T}(\nu_n)\big) = \sum_{i,\ell=1}^r \langle {\mathbf q}_i(\nu_n),{\boldsymbol {\mathfrak u}}_{p-r+\ell}(\nu_n)\rangle^2 \rightarrow r \quad \text{a.s.}
\end{equation}
However, the largest singular value $\sigma_{r}(\widetilde{\mathbf T}(n))$ of $\widetilde{\mathbf T}(n)$ satisfies the bound $\sigma_{r}^2\big(\widetilde{\mathbf T}(n)\big)\leq \|{\mathbf Q}^*(n)\|^2\|\widetilde{\mathbf U}_r(n)\|^2 = 1$  a.s. Hence, by \eqref{e:sum_of_singular_values_tends_to_r}, $\sigma_s^2(\widetilde{\mathbf T}(\nu_n))\to 1$ a.s., $s=1,\ldots,r$. By the continuity of singular values, this implies that $\widetilde{\mathbf T}$ is an orthogonal matrix with probability 1. This establishes $(i)$.

For statement $(ii)$, $\boldsymbol \Gamma$ is nonsingular a.s.\ as consequence of statement $(i)$ and of the invertibility of $\mathbf R$ (see condition \eqref{e:<p1,p2>=c12_2}). Now define the sequences of random matrices
\begin{equation}\label{e:Gamma(nu_n)=block-wise_decomp}
\boldsymbol \Gamma(\nu_n) := \mathbf R^*(\nu_n) \widetilde{\mathbf T}(\nu_n) =  \mathbf P^*(\nu_n) \widetilde{\mathbf U}_r(\nu_n) =:\big( \boldsymbol \Gamma_{i\kappa}(\nu_n)\big)_{i,\kappa=1,2,3},
\end{equation}
where each $\boldsymbol \Gamma_{i \kappa}(\nu_n)$ is a block of size $r_i\times r_\kappa$. Since $\lim_{n \rightarrow \infty}{\boldsymbol \Gamma}(\nu_n) = {\boldsymbol \Gamma}$ a.s., then \eqref{e:gamma_breakdown} is a consequence of \eqref{e:subseq_condition_2_top}. In addition, the classical formula $\det({\boldsymbol\Gamma}) = \prod^{3}_{i=1} \det({\boldsymbol \Gamma}_{ii})$ implies that ${\boldsymbol \Gamma}_{ii}$, $i=1,2,3$, are nonsingular a.s.

 In regard to statement $(iii)$, by expression \eqref{e:gamma_breakdown}, the columns of $\widetilde{\mathbf T}$ are such that
\begin{equation}\label{e:tau_ell_in_R3perp,ell_in_I0}
{\boldsymbol \tau}_\ell(\omega) \in \mathbf R_3^\perp, \quad \ell \in \mathcal I_0.
\end{equation}
Likewise, for $i \in \mathcal I_-$, $\boldsymbol \tau_i(\omega) \in \text{span}\{\mathbf R_2,\mathbf R_3\}^\perp$. Now note that $\textnormal{dim}\big(\text{span}\{\mathbf R_2,\mathbf R_3\}^\perp \big) = r_1$. Since $\textnormal{card}({\mathcal I}_{-}) = r_1$, the linear independence of the vectors $\{{\boldsymbol \tau}_{\ell}(\omega): \ell \in \mathcal I_-\}$ (part $(i)$ of this lemma) implies that $\textnormal{span}\{{\boldsymbol \tau}_{\ell}(\omega): \ell \in \mathcal I_-\} = \text{span}\{\mathbf R_2,\mathbf R_3\}^\perp$. Thus, by the orthogonality of the vectors $\boldsymbol \tau_1(\omega),\ldots,\boldsymbol \tau_r(\omega)$ (again by part $(i)$ of this lemma), $\{{\boldsymbol \tau}_{\ell}(\omega) : \ell \in \mathcal I_0\} \subseteq \textnormal{span}\{\mathbf R_2,\mathbf R_3\}$. Moreover, by relation \eqref{e:tau_ell_in_R3perp,ell_in_I0},
$$
\{{\boldsymbol \tau}_{\ell}(\omega): \ell \in \mathcal I_0\} \subseteq \text{span}\{\mathbf R_2,\mathbf R_3\} \cap {\mathbf R}^{\perp}_3.
$$
However, as a consequence of Lemma \ref{l:M_in_S>0(r2,R)}, $(ii)$ and $(iii)$, $\dim (\textnormal{span}\{\mathbf R_2,\mathbf R_3\}\cap \mathbf R_3^\perp) = r_2 = \textnormal{card}({\mathcal I}_0)$. Hence, expression \eqref{e:span_equals_W} is established. $\Box$\\

The following lemma is used in the proofs of Proposition \ref{p:conv_w-by-w_rescaled_eigenvalues}/Theorem \ref{t:lim_n_a_times_lambda/a^(2h+1)}, Corollary \ref{c:PWXP^*+R_asymptotics} and Lemma \ref{l:max|<up-r+q(n),pi(n)>|a(n)^(hi-hq)=OP(1)}.
\begin{lemma}\label{l:<p,w>=infinitesimal}
Suppose the assumptions of Lemma \ref{l:fullrank_limit_P*U}  hold.
Fix $q\in\{1,\ldots,r\}$ and consider the associated index set $\mathcal I_0$ (see \eqref{e:def_indexsets}). For a given $\ell\in\mathcal I_0$ and random vectors $\boldsymbol \tau_s = \boldsymbol \tau_s(\omega)$,  $s = \ell,\hdots,r_1 + r_2$, as in \eqref{e:def_Gamma}, take any
\begin{equation}\label{e:gamma-tilde}
\mathbf w = \mathbf w(\omega) \in \textnormal{span}\{\boldsymbol \tau_\ell,\ldots,\boldsymbol \tau_{r_1+r_2}\} \cap \bbS^{r-1}.
\end{equation}
Also, fix any random vector
\begin{equation}\label{e:x*}
\mathbf{x}_{*}= \mathbf{x}_{*}(\omega) = \big(x_{*,{r-r_3+1}}(\omega),\hdots,x_{*,r}(\omega)\big)^* \in \bbR^{r_3}.
\end{equation}
Consider a random subsequence $\nu_1(\omega) <\nu_2(\omega) <\ldots$ of $n' \in \bbN'$ as in the statement of Lemma \ref{l:fullrank_limit_P*U}. Then, there exists a sequence of unit vectors ${\mathbf v}(\nu_n) \in \textnormal{span}\{{\boldsymbol {\mathfrak u}}_{p-r+\ell}(\nu_n),\ldots,{\boldsymbol {\mathfrak u}}_{p}(\nu_n)\}$ satisfying, for large enough $n$,
\begin{equation}\label{e:<p,w>=infinitesimal_3}
\langle \mathbf{p}_i(\nu_n), {\mathbf v}(\nu_n) \rangle = \frac{x_{*,i}}{a(\nu_n)^{h_i - h_q}}, \quad i \in \mathcal{I}_+,
\end{equation}
and such that
\begin{equation}\label{e:<p,w>=infinitesimal_2}
\lim_{n\to\infty} {\mathbf Q}^*(\nu_n) {\mathbf v}(\nu_n) = {\mathbf w} \quad a.s.
\end{equation}
In particular, relations \eqref{e:pi(n),v(n)=x*/a^(hi-hq)} and \eqref{e:pick_the_vector_main_manuscript} hold in the proof of Theorem \ref{t:lim_n_a_times_lambda/a^(2h+1)}.
\end{lemma}
Lemma \ref{l:<p,w>=infinitesimal} is what makes the bounds \eqref{e:rescaled_lambda2_upper_bound1} and \eqref{e:rescaled_lambda_upperbound2} possible and useful. This is so because, by relation \eqref{e:<p,w>=infinitesimal_3}, there are no potentially explosive terms in ${\mathbf v}^*(\nu_n) \frac{{\mathbf W}(a(\nu_n)2^j)}{a(\nu_n)^{2h_q+1}}{\mathbf v}(\nu_n)$, and by relation \eqref{e:<p,w>=infinitesimal_2}, the sequence $\mathbf Q^*(\nu_n){\mathbf v}(\nu_n)$ converges to a target vector in the subspace $\textnormal{span}\{\boldsymbol \tau_\ell,\ldots,\boldsymbol \tau_{r_1+r_2}\}$. In particular, the upper bound can be made asymptotically sharp, as described in the proof of Theorem \ref{t:lim_n_a_times_lambda/a^(2h+1)}.\\

\noindent {\sc Proof of Lemma \ref{l:<p,w>=infinitesimal}}: For the fixed $\ell \in \mathcal I_0$ and for $\mathbf{x}_{*}=\mathbf{x}_{*}(\omega)$ as in \eqref{e:x*}, define
\begin{equation}\label{e:vartheta_n}
{\boldsymbol \vartheta}_{\nu_n} = \Big(\frac{x_{*,i}}{a(\nu_n)^{h_i-h_q}}\Big)_{i \in \mathcal{I}_+}\in \bbR^{r_3},
\end{equation}
which is a (random) vector containing the right-hand terms in \eqref{e:<p,w>=infinitesimal_3}. Now, for random vectors ${\mathbf w}$, ${\boldsymbol \tau}_i$, $i=\ell,\hdots,r$, as in \eqref{e:gamma-tilde}, let $\boldsymbol \alpha = (0,\ldots,0,\alpha_\ell,\ldots,\alpha_{r_1+r_2})^*\in\bbR^{r_2}$ be the unit vector of (generally random) coefficients such that
\begin{equation}\label{e:w=sum_i=ell-to-r1+r2_alpha-i*tau-i}
\mathbf w(\omega) =  \sum_{i=\ell}^{r_1+r_2}\alpha_i(\omega) {\boldsymbol \tau}_i(\omega) = \sum_{i=r_1+1}^{r_1+r_2}\alpha_i(\omega) {\boldsymbol \tau}_i(\omega) .
\end{equation}
The proof consists in constructing a sequence of random coefficient vectors
\begin{equation}\label{e:||c(nu_n)||=1}
\bbR^r \ni {\boldsymbol c}(\nu_n)=\big(0,\hdots,0,c_\ell(\nu_n),\hdots, c_{r}(\nu_n)\big)^* ,\quad \|{\boldsymbol c}(\nu_n)\|=1,
\end{equation}
such that the sequence of associated random vectors ${\mathbf v}(\nu_n) := \sum^{r}_{i=\ell}c_i(\nu_n) {\boldsymbol {\mathfrak u}}_{p-r+i}(\nu_n)$, $n \in \bbN$, satisfies relations \eqref{e:<p,w>=infinitesimal_3} and \eqref{e:<p,w>=infinitesimal_2}. This will be achieved by solving a linear system of the form
\begin{equation}\label{e:linear_system_upper_bound_schematic}
{\boldsymbol \vartheta}_{\nu_n} = {\mathbf P}^*_3(\nu_n) \widetilde{\mathbf U}_{r}(\nu_n) \big(0,\hdots,0,c_\ell(\nu_n),\hdots, c_{r}(\nu_n)\big)^* \in \bbR^{r_3}.
\end{equation}
In \eqref{e:linear_system_upper_bound_schematic}, ${\mathbf P}^*_3(\nu_n)$ and $\widetilde{\mathbf U}_{r}(\nu_n)$ are given by \eqref{e:P=(P1(n)_P2(n)_P3(n))} and \eqref{e:U_r(n)}, respectively. Also, as $n \rightarrow \infty$, $c_i(\nu_n) \rightarrow \alpha_{i}$ for $i = \ell,\hdots,r-r_3$ and $c_i(\nu_n) \rightarrow 0$ for $i \in {\mathcal I}_+$.\vspace{2mm}

So, consider relation \eqref{e:Gamma(nu_n)=block-wise_decomp}. By Lemma \ref{l:fullrank_limit_P*U}, $\boldsymbol \Gamma_{33} = \lim_{n\to\infty} \boldsymbol \Gamma_{33}(\nu_n)$ is nonsingular a.s. Therefore, by the continuity of the determinant, $\boldsymbol \Gamma_{33}(\nu_n)$ is nonsingular for all sufficiently large $n$. Hence, for each $s\in(0,1)$ there exists a vector ${\mathbf z}_{\nu_n} (s) \in \bbR^{r_3}$ satisfying
\begin{equation}\label{e:Gamma*z=etc}
\boldsymbol \Gamma_{33}(\nu_n) \mathbf z_{\nu_n} (s) =\boldsymbol\vartheta_{\nu_n} - \boldsymbol\Gamma_{32}(\nu_n)\cdot s \boldsymbol \alpha.
\end{equation}
Note that, for ${\boldsymbol \vartheta}_{\nu_n}$ as in \eqref{e:vartheta_n}, ${\boldsymbol \vartheta}_{\nu_n}  \to {\mathbf 0}$ a.s.\ as $n \rightarrow \infty$. In addition, as a consequence of Lemma \ref{l:fullrank_limit_P*U}, $\lim_{n\to\infty} \boldsymbol \Gamma_{32}(\nu_n)=\mathbf 0_{r_3\times r_2}$ a.s. Since, again, $\boldsymbol \Gamma_{33}$ is nonsingular a.s., then
\begin{equation}\label{e:|varsigma|->0}
\sup_{s \in [0,1]}\|{\mathbf z}_{\nu_n} (s)\| \rightarrow 0 \quad \textnormal{a.s.}, \quad n \rightarrow \infty.
\end{equation}
Thus, for any small $\varepsilon \in (0,1)$, $0 \leq \sup_{s \in [0,1]}\|{\mathbf z}_{\nu_n} (s)\| < \varepsilon < 1$ for large enough $\nu_n(\omega)$ a.s. On other hand, for each $n$, the function $f(s) = 1-s^2 - \|\mathbf z_{\nu_n} (s)\|^2$ depends continuously on $s$. Moreover, $f(0) > 1 - \varepsilon$ and $f(1) = - \|\mathbf z_{\nu_n}(1)\|^2$. Hence, $f$ must have a root $s_0=s_0(\nu_n)\in(0,1]$ for large enough $\nu_n(\omega)$. So, we define the unit vector
$$
{\mathbf c}^*(\nu_n) := \big({\mathbf 0}^*_{r_1},
s_0\boldsymbol \alpha^*, {\mathbf z}_{\nu_n}^* (s_0) \big)^*\in\bbR^r.
$$

Moreover, in view of \eqref{e:|varsigma|->0}, $s_0\to 1$ a.s.~as $n\to\infty$. This implies that
\begin{equation}\label{e:T(n)*c->w}
\mathbf Q^*(\nu_n) \widetilde{{\mathbf U}}_r(\nu_n) {\mathbf c}(\nu_n) =
\widetilde{\mathbf T}(\nu_n) \begin{pmatrix}
\mathbf 0_{r_1}\\
s_0\boldsymbol \alpha\\
\mathbf z_{\nu_n} (s_0)
\end{pmatrix}
\to \widetilde{\mathbf T} \begin{pmatrix}
\mathbf 0_{r_1}\\
\boldsymbol \alpha\\
\mathbf 0_{r_3}
\end{pmatrix} = \mathbf w \quad \text{a.s.}, \quad n\to\infty.
\end{equation}
Thus, if we define
\begin{equation}\label{e:def_v(nu_n)}
\mathbf v(\nu_n): = \widetilde{{\mathbf U}}_r(\nu_n){\mathbf c}(\nu_n) %
\end{equation}
then $\mathbf v(\nu_n)$ satisfies \eqref{e:<p,w>=infinitesimal_2}.  Also,  by construction, $s^2_0+ \|\mathbf z_{\nu_n}(s_0)\|^2 = 1$. Since $\mathbf w$ is a unit vector and the vectors $\boldsymbol \tau_i$ are orthonormal, then $\|\boldsymbol\alpha\|=1$ a.s. This shows that $\|\mathbf v(\nu_n)\|= s_0^2\|\boldsymbol \alpha\|^2 +  \|\mathbf z_{\nu_n} (s_0)\|^2 = 1$. Also, by the definition of $\boldsymbol \alpha$ (see \eqref{e:w=sum_i=ell-to-r1+r2_alpha-i*tau-i}), the first $\ell-1$ entries of the vector ${\mathbf c}^*(\nu_n)  \in\bbR^r$ are zero (cf.\ \eqref{e:||c(nu_n)||=1}), so $\mathbf v(\nu_n) \in \textnormal{span}\{{\boldsymbol {\mathfrak u}}_{p-r+\ell}(\nu_n),\ldots,{\boldsymbol {\mathfrak u}}_{p}(\nu_n)\}$.

We now show that \eqref{e:<p,w>=infinitesimal_3} holds for $\mathbf v(\nu_n)$ as in \eqref{e:def_v(nu_n)}. By expression \eqref{e:Gamma(nu_n)=block-wise_decomp}, we can write $\mathbf P_3^*(\nu_n) \widetilde{{\mathbf U}}_{r}(\nu_n)=\big( \mathbf \Gamma_{31}(\nu_n) ~ \mathbf \Gamma_{32}(\nu_n) ~  \mathbf \Gamma_{33}(\nu_n)\big)\in \mathcal M(r_3,r,\bbR)$. So, by relations \eqref{e:Gamma*z=etc} and \eqref{e:def_v(nu_n)},  a.s.\ for large enough $n$,
\begin{equation}\label{e:P_3^*(nu_n)v(nu_n)=vartheta_n}
\mathbf P_3^*(\nu_n)\mathbf v(\nu_n) =   \mathbf \Gamma_{32}(\nu_n) \cdot s_0\boldsymbol\alpha +  \mathbf \Gamma_{33}(\nu_n)\mathbf z_{\nu_n}(s_0)
= \boldsymbol\vartheta_{\nu_n} %
\end{equation}
(cf.\ \eqref{e:linear_system_upper_bound_schematic}). Together, relations \eqref{e:def_v(nu_n)} and \eqref{e:P_3^*(nu_n)v(nu_n)=vartheta_n} imply that \eqref{e:linear_system_upper_bound_schematic} holds. This establishes \eqref{e:<p,w>=infinitesimal_3}. $\Box$\\

The following corollary of the proof of Theorem \ref{t:lim_n_a_times_lambda/a^(2h+1)} is used several times throughout the proof of Theorem \ref{t:asympt_normality_lambdap-r+q} and also in the proof of Proposition \ref{p:|lambdaq(EW)-xiq(2^j)|_bound}.  %
\begin{corollary}\label{c:PWXP^*+R_asymptotics}
Fix any $j$ as in \eqref{e:def_j1,jm}. Let $\widetilde{ \mathbf W}(a(n)2^j)$ be the sequence of random matrices given by \eqref{e:W-tilde(a2^j)} (see Lemma \ref{l:gutted_log_eig_consistency}). Let $\xi_{\ell}(2^j)$, $\ell=1,\ldots,r$, be the functions appearing in \eqref{e:lim_n_a*lambda/a^(2h+1)} (see Theorem \ref{t:lim_n_a_times_lambda/a^(2h+1)}). Then, as $n \rightarrow \infty$,
\begin{equation*}\label{e:convergence_rescaled_eigenvalue_W-tilde}
\lambda_{p-r+\ell}\Big(\frac{\widetilde{\W}(a(n)2^j)}{a(n)^{2h_\ell+1}}\Big) \stackrel \bbP \to \xi_{\ell}(2^j), \quad \ell=1,\ldots,r.
\end{equation*}
In particular, for a fixed $q \in \{1,\hdots,r\}$ and its associated index sets as in \eqref{e:def_indexsets},
\begin{equation}\label{e:lambda_p-r+q->xi_based_on_residual_Rq(n)_Ba}
\lambda_{p-r+\ell}\Big(\frac{\widetilde{\W}(a(n)2^j)}{a(n)^{2h_q+1}}\Big) \stackrel{\bbP}\rightarrow\begin{cases}
0, & \ell \in \mathcal I_-;\\
 \xi_{\ell}(2^j), & \ell \in \mathcal I_0;\\
 \infty, & \ell \in \mathcal I_+,
\end{cases}
\quad n \rightarrow \infty.
\end{equation}
\end{corollary}
\begin{proof}
The proofs of Theorem \ref{t:lim_n_a_times_lambda/a^(2h+1)} and Proposition \ref{p:conv_w-by-w_rescaled_eigenvalues} can be repeated, \textit{mutatis mutandis}, with the more general matrices $\widetilde{\W}(a2^j)$ as in \eqref{e:W-tilde(a2^j)} in place of $\W(a2^j)$ (\textbf{n.b.}: even though $\W(a2^j) \in {\mathcal S}_{\geq 0}(p,\bbR)$, only the fact that $\W(a2^j) \in {\mathcal S}(p,\bbR)$ plays a role in the proof of Theorem \ref{t:lim_n_a_times_lambda/a^(2h+1)}). In particular, Lemmas \ref{l:gutted_log_eig_consistency}--\ref{l:<p,w>=infinitesimal} -- which are used in the proofs of Theorem \ref{t:lim_n_a_times_lambda/a^(2h+1)} and Proposition \ref{p:conv_w-by-w_rescaled_eigenvalues} -- are stated and proved for the matrices $\widetilde \W(a2^j)$.
\end{proof}

Proposition \ref{p:|lambdaq(EW)-xiq(2^j)|_bound}, stated and proved next, is used in the proofs of Theorem \ref{t:asympt_normality_lambdap-r+q} and Lemma \ref{l:max|<up-r+q(n),pi(n)>|a(n)^(hi-hq)=OP(1)}. By comparison to Lemma \ref{l:fullrank_limit_P*U}, which pertains to convergent \textit{sub}sequences $\widetilde{\mathbf T}(\nu_n)$, the proposition shows that, under the additional condition \eqref{e:xiq_distinct}, the \textit{main} sequence of random matrices $\widetilde{\mathbf T}(n)={\mathbf Q}^*(n) \widetilde{{\mathbf U}}_r(n)$ (in particular, ${\mathbf T}(n)$ as in \eqref{e:T(n)}) has a limit $\widetilde{\mathbf T}$ in probability that is deterministic. This yields the convergence of the vector ${\mathbf P}^*(n) {\boldsymbol{\mathfrak u}}_{p-r+q}(n)$, which plays a role analogous to that of the convergence of eigenvectors in fixed dimensions (cf.\ Proposition 1, $(iii)$, in Abry and Didier \cite{abry:didier:2018:n-variate}), the latter being meaningless in high dimensions. %

\begin{proposition}\label{p:|lambdaq(EW)-xiq(2^j)|_bound}
Fix any $j$ as in \eqref{e:def_j1,jm}. Let $\widetilde{ \mathbf W}(a(n)2^j)$ be the sequence of random matrices given by \eqref{e:W-tilde(a2^j)} (see Lemma \ref{l:gutted_log_eig_consistency}). Suppose condition \eqref{e:xiq_distinct} in Theorem \ref{t:asympt_normality_lambdap-r+q} holds.
Fix $q\in\{1,\ldots,r\}$, and let ${\mathbf B}(2^j)$ and ${\mathbf R}$ be as in \eqref{e:B(2^j)_full_rank} and \eqref{e:<p1,p2>=c12_2}, respectively. Then, there are deterministic vectors $\boldsymbol \tau_q$ and $\boldsymbol{\gamma}_{q}:=\mathbf R^*\boldsymbol \tau_q$, depending only on $\mathbf B(2^j)$ and $\mathbf R$, and a sequence of $(p-r+q)$--th unit random eigenvectors $\{{\boldsymbol{\mathfrak u}}_{p-r+q}(n)\}_{n\in\bbN}$ of $\widetilde{\mathbf W}(a(n)2^j)$ along which
\begin{equation}\label{e:<p,u>_to_gamma}
\plim_{n\to\infty} {\mathbf Q}^*(n) {\boldsymbol{\mathfrak u}}_{p-r+q}(n)=\boldsymbol\tau_q \in \bbR^r, \qquad \plim_{n\to\infty} {\mathbf P}^*(n) {\boldsymbol{\mathfrak u}}_{p-r+q}(n)= \boldsymbol{\gamma}_{q} \in \bbR^r.
\end{equation}
In particular, for some sequence of unit eigenvectors $\mathbf u_{p-r+q}(n)$ of $\W(a(n)2^j)$, under condition \eqref{e:xiq_distinct} the limits $\plim_{n\to\infty} {\mathbf Q}^*(n) {\mathbf u}_{p-r+q}(n)=\boldsymbol\tau_q$ and $\plim_{n\to\infty} {\mathbf P}^*(n)  \mathbf u_{p-r+q}(n)$ exist.
\end{proposition}

Note that both the statement and proof of Proposition \ref{p:conv_w-by-w_rescaled_eigenvalues} naturally apply to the sequence of random matrices $\widetilde{\mathbf W}(a(n)2^j)$. We use this fact in proving Proposition \ref{p:|lambdaq(EW)-xiq(2^j)|_bound}.\\

\noindent{\sc Proof of Proposition \ref{p:|lambdaq(EW)-xiq(2^j)|_bound}} For the fixed $q\in\{1,\ldots,r\}$, let $\mathcal I_0$ be as in \eqref{e:def_indexsets}, and also let
\begin{equation}\label{e:tau_q(n)=Q*(n)_u_p-r+q(n)}
\boldsymbol \tau_q(n) = \boldsymbol \tau_q(n,\omega) = {\mathbf Q}^*(n)  {\boldsymbol{\mathfrak u}}_{p-r+q}(n)
\end{equation}
be as in \eqref{e:T-tilde(n)}. The second statement in \eqref{e:<p,u>_to_gamma} is a direct consequence of the first one, since $ {\mathbf P}^*(n) {\boldsymbol{\mathfrak u}}_{p-r+q}(n) =\mathbf  R^*(n) \boldsymbol \tau_q(n) \stackrel\bbP \to \mathbf R^*\boldsymbol\tau_q=\boldsymbol \gamma_q$ as $n \rightarrow \infty$.

We now show the first statement in \eqref{e:<p,u>_to_gamma}. So, starting from \eqref{e:tau_q(n)=Q*(n)_u_p-r+q(n)}, it suffices to show that, for some deterministic vector ${\mathbf w}$ and for any arbitrary subsequence $n'\in\bbN'$ of $\bbN$, there exists a sub-subsequence $n'' \in \bbN'' \subseteq \bbN'$ such that
\begin{equation}\label{e:tau_q(n'')->w_a.s.}
\boldsymbol \tau_q(n'') \rightarrow {\mathbf w} \quad \text{a.s.}, \quad n'' \rightarrow \infty.
\end{equation}

In fact, first note that, by Corollary \ref{c:PWXP^*+R_asymptotics},
\begin{equation}\label{e:Prop_B.1_conv_rescaled_eigenval}
\frac{\lambda_{p-r+\ell}\big(\widetilde{{\mathbf W}}(a(n)2^j)\big)}{a(n)^{2h_{q}+1}} \stackrel{\bbP}\rightarrow \xi_{\ell}(2^j), \quad \ell \in {\mathcal I}_0, \quad n \rightarrow \infty.
\end{equation}
Now, in view of Lemmas \ref{l:|<p3,uq(n)>|a(n)^{h_3-h_1}=O(1)}, \ref{l:sum<pi(n),up-r+q(n)>2-o(a-varpi)_first} and condition \eqref{e:sqrt(K)(B^-B)->N(0,sigma^2)}, if needed we can refine the sub-subsequence $n''\in\bbN''$ so that relations \eqref{e:subseq_condition_1_top}, \eqref{e:subseq_condition_2_top} and \eqref{e:B-hat(2j)->B(2j)_prop} hold (cf.\ the proof of Proposition \ref{p:conv_w-by-w_rescaled_eigenvalues}).
By an application of Proposition \ref{p:conv_w-by-w_rescaled_eigenvalues} naturally adapted to the random matrix $\widetilde{{\mathbf W}}(a(n'')2^j)$,
\begin{equation}\label{e:Prop_B.1_conv_rescaled_eigenval_n''}
\frac{\lambda_{p-r+\ell}\big(\widetilde{{\mathbf W}}(a(n'')2^j)\big)}{a(n'')^{2h_{q}+1}} \rightarrow \lambda_{r-r_2+\ell}(\boldsymbol\Lambda) \quad \text{a.s.}, \quad \ell \in {\mathcal I}_0, \quad n'' \rightarrow \infty,
\end{equation}
where $\boldsymbol \Lambda$ is the deterministic matrix given by \eqref{e:def_Lambda}. Now recall that
\begin{equation}\label{e:Chi1,...,Chir2}
0 < \chi_1 < \chi_2 < \hdots < \chi_{\eta}
\end{equation}
are the $\eta \in \bbN$ distinct positive eigenvalues of the matrix $\boldsymbol \Lambda$ (see \eqref{e:chi1,...,chi_eta}). Then, by the limits \eqref{e:Prop_B.1_conv_rescaled_eigenval} and \eqref{e:Prop_B.1_conv_rescaled_eigenval_n''} under condition \eqref{e:xiq_distinct}, the scalars $\xi_\ell(2^j)$ in \eqref{e:Prop_B.1_conv_rescaled_eigenval} are distinct, i.e.,
\begin{equation}\label{e:chi1=xi_r1+1<...<chir2=xi_r1+r2}
\chi_1=\xi_{r_1+1}(2^j)< \ldots <\chi_{r_2}=\xi_{r_1+r_2}(2^j)
\end{equation}
(in particular, $\eta = r_2$ in \eqref{e:Chi1,...,Chir2}). Hence, each eigenspace
\begin{equation}\label{e:Mi_of_Lambda}
\mathcal M_i \textnormal{ of } {\boldsymbol \Lambda}
\end{equation}
(see \eqref{e:span(t_ell)_I0=W=M1+...+Meta}) corresponding to these $r_2$ distinct positive eigenvalues is one-dimensional, i.e.,
\begin{equation}\label{e:dim(Mi)=1}
\text{dim}(\mathcal M_i) = 1,\quad i = 1,\hdots,r_2.
\end{equation}
 In particular, we can fix a deterministic vector
\begin{equation}
{\mathbf w} \in {\mathcal M}_{q-r_1} \cap {\mathbb S}^{r-1}
\end{equation}
(\textbf{n.b.}: ${\mathbf w}$ will be the vector appearing in the limit \eqref{e:tau_q(n'')->w_a.s.}, hence the use of the same notation).

Now note that, for each $\ell = 1,\hdots,r$, the sequence $\boldsymbol \tau _{\ell}(n'')$ is a.s.\ bounded. Following the method of proof of Proposition \ref{p:conv_w-by-w_rescaled_eigenvalues} (see relations \eqref{e:nu_n(w)_contained_in_N'} and \eqref{e:nu_n_mainlimit}), we establish the almost sure convergence of $\boldsymbol\tau_{\ell}(n'')$ by considering subsequences for each $\omega \in \Omega$ a.s. To this end, for a fixed $\omega\in \Omega$, let $\nu_n(\omega)$ denote an arbitrary subsequence of $n''\in\bbN''$. Then, we can apply Bolzano-Weierstrass to further refine the subsequence $\nu'_n(\omega)$ so that the limits $\lim_{n\to\infty} \boldsymbol \tau_{\ell}(\nu'_n(\omega))=:\boldsymbol \tau_\ell(\omega)$ exist a.s.\ as in \eqref{e:Gamma_a.s._limit}. Suppose, for the moment, that
\begin{equation}\label{e:span(tau_r1+ell),ell=1,...,r2}
\text{span}\{\boldsymbol \tau_{r_1+i}(\omega)\} = \mathcal M_{i}, \quad i=1,\ldots,r_2.
\end{equation}
In view of \eqref{e:dim(Mi)=1}, this implies that the limiting vectors $\boldsymbol \tau_\ell(\omega)$, $\ell \in {\mathcal I}_0$, are deterministic up to a sign. Consequently, by multiplying ${\boldsymbol \tau}_q(\nu'_n(\omega))$ by $(-1)$ if needed, we obtain $\lim_{n\to\infty}\boldsymbol \tau_q(\nu'_n(\omega))=\boldsymbol\tau_q(\omega) = \mathbf w$. Since the sequence $\nu'_n(\omega)$ is arbitrary, \eqref{e:tau_q(n'')->w_a.s.} holds. This proves that $\boldsymbol \tau_q(n)\stackrel \bbP \to \mathbf w$ as $n \rightarrow \infty$, which establishes the first convergence statement in \eqref{e:<p,u>_to_gamma}.\vspace{2mm}

So, we now need to show \eqref{e:span(tau_r1+ell),ell=1,...,r2}. For this purpose, we use the function $\varphi$ as in \eqref{e:varphi(tau)=tau*_Lambda_tau} as well as the spectral (eigenspace) structure \eqref{e:Mi_of_Lambda} of the matrix ${\boldsymbol \Lambda}$ as in \eqref{e:def_Lambda}. We prove that, for $i = 1,\hdots,r_2$,
\begin{equation}\label{e:rescaled_eigen_p-r+i/a^(2*hq+1)->varphi(tau_i)_distinct_chis}
\frac{\lambda_{p-r+(r_1+i)}\big(\widetilde{\W}(a(\nu_n)2^j)\big)}{a(\nu_n)^{2h_q+1}}(\omega) \rightarrow \varphi({\boldsymbol \tau}_{r_1+i}(\omega)), \quad n \rightarrow \infty,
\end{equation}
where, for ${\boldsymbol \tau}_i={\boldsymbol \tau}_i(\omega)$ as in \eqref{e:Gamma_a.s._limit},
\begin{equation}\label{e:chi1=varphi(tau_1)=<...=<varphi(tau_ell-1)=chi_k_distinct_chis}
\chi_1 = \varphi({\boldsymbol \tau}_{r_1+1}) < \hdots < \varphi({\boldsymbol \tau}_{r_1+i}) = \chi_{i}.
\end{equation}
(cf.\ \eqref{e:rescaled_eigen_p-r+i/a^(2*hq+1)->varphi(tau_i)} and \eqref{e:chi1=varphi(tau_1)=<...=<varphi(tau_ell-1)=chi_k}). In fact, as a consequence of \eqref{e:span(t_ell)_I0=W=M1+...+Meta}, \eqref{e:dim(Mi)=1}, and the claim  \eqref{e:rescaled_eigen_p-r+i/a^(2*hq+1)->varphi(tau_i)_distinct_chis}--\eqref{e:chi1=varphi(tau_1)=<...=<varphi(tau_ell-1)=chi_k_distinct_chis}, ${\boldsymbol \tau}_{r_1+1}(\omega), \hdots, {\boldsymbol \tau}_{r_1+r_2}(\omega)$ are eigenvectors of ${\boldsymbol \Lambda}$ associated, respectively, with the eigenvalues \eqref{e:Chi1,...,Chir2}. In other words, relation \eqref{e:span(tau_r1+ell),ell=1,...,r2} holds.

In turn, we establish \eqref{e:rescaled_eigen_p-r+i/a^(2*hq+1)->varphi(tau_i)_distinct_chis}--\eqref{e:chi1=varphi(tau_1)=<...=<varphi(tau_ell-1)=chi_k_distinct_chis} by induction. Indeed, by \eqref{e:Prop_B.1_conv_rescaled_eigenval_n''} and \eqref{e:chi1=xi_r1+1<...<chir2=xi_r1+r2}, repeating the arguments for  \eqref{e:f(ul(omega)=f(u-tilde))}, we obtain that
$$
\chi_1 \leftarrow \frac{\lambda_{p-r+(r_1+1)}\big(\widetilde{{\mathbf W}}(a(\nu_n)2^j)\big)}{a(\nu_n)^{2h_{q}+1}}(\omega) \rightarrow \varphi({\boldsymbol \tau}_{r_1+1}(\omega)), \quad n \rightarrow \infty.
$$
In other words, the claim holds for $i = 1$. Next, for the induction hypothesis, assume that \eqref{e:rescaled_eigen_p-r+i/a^(2*hq+1)->varphi(tau_i)_distinct_chis}--\eqref{e:chi1=varphi(tau_1)=<...=<varphi(tau_ell-1)=chi_k_distinct_chis} hold $i = r_1+1,\hdots,\ell-1$. Then, the decomposition $\text{span}\{{\boldsymbol \tau}_{\ell}(\omega): \ell \in \mathcal I_0\} =\mathcal M_1 \oplus \ldots \oplus\mathcal M_{r_2}$ (see \eqref{e:span(t_ell)_I0=W=M1+...+Meta}) and relation \eqref{e:dim(Mi)=1} imply that the vectors $\boldsymbol \tau_\ell(\omega)$ satisfy
$\text{span}\{{\boldsymbol \tau}_{r_1+i}(\omega)\} = {\mathcal M}_i$, $i = 1,\hdots,\ell-1$.
In turn, this implies that case $(i)$ (see \eqref{e:case_(i)}) in the proof of Proposition \ref{p:conv_w-by-w_rescaled_eigenvalues} holds with $k = \ell$, i.e.,
\begin{equation*}%
\mathcal M_0\oplus \ldots \oplus \mathcal M_{\ell-1} = \text{span}\{\boldsymbol \tau_{r_1+1},\ldots,\boldsymbol\tau_{\ell-1}\} \subsetneq \mathcal M_0\oplus \ldots \oplus \mathcal M_{\ell}, \quad  \mathcal M_0:=\emptyset.
\end{equation*}
So, by following the same argument starting at relation \eqref{e:case_(i)}, we conclude that relation \eqref{e:prop_case_(i)_final_step} holds, namely,
\begin{equation*}%
\frac{\lambda_{p-r+\ell}\big(\widetilde{\W}(a(\nu_n)2^j)\big)}{a(\nu_n)^{2h_q+1}}(\omega) \rightarrow \varphi({\boldsymbol \tau}_{\ell}(\omega))=\chi_{\ell}, \quad n \rightarrow \infty.
\end{equation*}
This finishes the induction. Hence, \eqref{e:rescaled_eigen_p-r+i/a^(2*hq+1)->varphi(tau_i)_distinct_chis}--\eqref{e:chi1=varphi(tau_1)=<...=<varphi(tau_ell-1)=chi_k_distinct_chis} hold, and so does \eqref{e:span(tau_r1+ell),ell=1,...,r2}. $\Box$\\

The following result, Lemma \ref{l:max|<up-r+q(n),pi(n)>|a(n)^(hi-hq)=OP(1)}, is used in the proof of Theorem \ref{t:asympt_normality_lambdap-r+q}. It establishes that the terms \eqref{e:supR'max(angles*powerlaws)=O_P(1)}, which are shown to be bounded in probability in Lemma \ref{l:|<p3,uq(n)>|a(n)^{h_3-h_1}=O(1)}, $(i)$, in truth have limits in probability under the additional condition \eqref{e:xiq_distinct}. As with Proposition \ref{p:|lambdaq(EW)-xiq(2^j)|_bound}, when proving Lemma \ref{l:max|<up-r+q(n),pi(n)>|a(n)^(hi-hq)=OP(1)} we use the fact that the proof of Proposition \ref{p:conv_w-by-w_rescaled_eigenvalues} naturally applies to the sequence of random matrices $\widetilde{\mathbf W}(a(n)2^j)$ in place of ${\mathbf W}(a(n)2^j)$.

In the proof of Lemma \ref{l:max|<up-r+q(n),pi(n)>|a(n)^(hi-hq)=OP(1)}, as well as in that of the subsequent Lemma \ref{l:supR'max(angles*powerlaws)_bounded_for_subseq}, we refer to the functions
\begin{equation}\label{e:varphi,varphi-n}
 \varphi(\cdot,{\boldsymbol \tau}) \textnormal{ and }\widehat{\varphi}_n(\cdot,{\boldsymbol \tau})
\end{equation}
as defined by the expressions \eqref{e:def_varphi(x,u)} and \eqref{e:def_phi_n(x,u)}, respectively, with $\mathbf B_n$ naturally replacing $\widehat {\mathbf B}_a(2^j)$.

\begin{lemma}\label{l:max|<up-r+q(n),pi(n)>|a(n)^(hi-hq)=OP(1)} Fix any $j$ as in \eqref{e:def_j1,jm}. Let $\widetilde{ \mathbf W}(a(n)2^j)$ be the sequence of random matrices given by \eqref{e:W-tilde(a2^j)} (see Lemma \ref{l:gutted_log_eig_consistency}). Assume condition \eqref{e:xiq_distinct} in Theorem \ref{t:asympt_normality_lambdap-r+q} holds.
Fix $q \in \{1,\hdots,r\}$ and consider the associated index sets as in \eqref{e:def_indexsets}. Then, for the sequence of $(p-r+q)$--th unit eigenvectors $\{{\boldsymbol{\mathfrak u}}_{p-r+q}(n)\}_{n\in\bbN}$ of $\widetilde{\mathbf W}(a(n)2^j)$ described in Proposition \ref{p:|lambdaq(EW)-xiq(2^j)|_bound}, there are deterministic constants $x_{*,q,i}$, $i \in \mathcal{I}_+$, such that, as $n \rightarrow \infty$,
\begin{equation}\label{e:inner*scaling=o(1)}
\langle \p_i(n),{\boldsymbol{\mathfrak u}}_{p-r+q}(n) \rangle a(n)^{h_{i} - h_{q}} \stackrel{\bbP}\to x_{*,q,i},\quad i \in \mathcal{I}_+.
\end{equation}
\end{lemma}
\begin{proof}
For the fixed $q \in \{1,\hdots,r\}$, recall that we write $r_3 = \text{card}({\mathcal I}_+)$ (see \eqref{e:r1,r2,r3}). Let
\begin{equation}\label{e:x(n)}
\mathbf x(n)= \mathbf x(n,\omega):= \Big(\langle {\mathbf p}_i(n), {\boldsymbol{\mathfrak u}}_{p-r+q}(n) \rangle a(n)^{h_i - h_q} \Big)_{i \in{\mathcal{I}_+}} \in \bbR^{r_3}
\end{equation}
be a random vector containing the scalar terms appearing on the left-hand side of \eqref{e:inner*scaling=o(1)}. To establish \eqref{e:inner*scaling=o(1)}, it suffices to show that, for some deterministic vector ${\mathbf x}_* \in \bbR^{r_3}$ and any arbitrary subsequence $\{\mathbf x(n')\}_{n' \in \bbN'}$, there exists a further subsequence $n'' \in \bbN'' \subseteq \bbN'$ such that
\begin{equation}\label{e:x(n'')->x_*}
\mathbf x(n'') \rightarrow {\mathbf x}_*  \quad \text{a.s.}, \quad n'' \rightarrow \infty.
\end{equation}

In fact, under condition \eqref{e:xiq_distinct}, Proposition \ref{p:|lambdaq(EW)-xiq(2^j)|_bound} implies the existence of the limits $\boldsymbol \tau_\ell(n') =\mathbf  {\mathbf Q}^*(n') {\boldsymbol{\mathfrak u}}_{p-r+\ell}(n')\stackrel \bbP \to \boldsymbol \tau_\ell \in \bbR^r$, $\ell= 1,\hdots,r$, where each vector $\boldsymbol \tau_\ell$ is deterministic. By passing to a further subsequence $n''\in\bbN''\subseteq\bbN'$ if needed, we may assume that
\begin{equation}\label{e:lim_n''->infty_gamma-ell(n'')}
\lim_{n''\to\infty}\boldsymbol \tau_\ell(n'')=\boldsymbol \tau_\ell \quad \textnormal{ a.s.}, \quad \ell = 1,\hdots, r.
\end{equation}
For each fixed $\boldsymbol \tau \in \bbR^{r}$, recall that the vectors ${\mathbf x}_{*}(\boldsymbol \tau)$ and ${\mathbf x}_{*,n}({\boldsymbol \tau})$, defined in \eqref{e:def_x-star(u)}, are the minimizers of the functions $\varphi$ and ${\widehat \varphi}_n$, respectively, as in \eqref{e:varphi,varphi-n}. So, for the fixed $q \in \{1,\hdots,r\}$ and ${\boldsymbol \tau}_q$, ${\boldsymbol \tau}_q(n'')$ as in \eqref{e:lim_n''->infty_gamma-ell(n'')}, let
\begin{equation}\label{e:x_*(n),x_*_0}
\mathbf x_*=\mathbf x_*(\boldsymbol \tau_q)=(x_{*,q+1},\ldots,x_{*,r})\in \bbR^{r_3} \quad \textnormal{and} \quad \mathbf x_*(n'') = {\mathbf x}_{*,n}\big({\boldsymbol \tau}_q(n'')\big)\in \bbR^{r_3},
\end{equation}
where ${\mathbf x}_*$ is deterministic (\textbf{n.b.}: $\mathbf x_*$ will be the vector appearing in the limit \eqref{e:x(n'')->x_*}, hence the use of the same notation). Note that, by expression \eqref{e:def_x-star(u)},
\begin{equation}\label{e:x_*(n'')->x_*}
{\mathbf x}_*(n'') \rightarrow {\mathbf x}_* \quad \textnormal{a.s.}, \quad n'' \rightarrow \infty.
\end{equation}
By \eqref{e:sqrt(K)(B^-B)->N(0,sigma^2)}, without loss of generality we can assume that
\begin{equation}\label{e:B-hat(2j)->B(2j)_n''->infty}
\B_n \to \B(2^j) \quad \text{a.s.}, \quad n'' \to \infty.
\end{equation}

We now use the convergence \eqref{e:x_*(n'')->x_*} of ${\mathbf x}_*(n'')$ to establish the convergence \eqref{e:x(n'')->x_*} of ${\mathbf x}(n'')$. So, suppose, for the moment, that the functions $\varphi$ and ${\widehat \varphi}_{n''}$ as in \eqref{e:varphi,varphi-n} satisfy the system of inequalities
\begin{equation}\label{e:string_ineqs_n''}
\varphi(\mathbf x_*, \boldsymbol \tau_q )=\lim_{n'' \to \infty } {\widehat \varphi}_{n''}\big(\mathbf x_*(n''), {\boldsymbol \tau}_{q}(n'')\big)\leq \lim_{n''\to \infty }{\widehat \varphi}_{n''}\big(\mathbf x(n''), {\boldsymbol \tau}_{q}( n'')\big)
 \leq  \varphi(\mathbf x_*, \boldsymbol \tau_q ) \quad \text{a.s.}
\end{equation}
Following the method of proof of Proposition \ref{p:conv_w-by-w_rescaled_eigenvalues}  (see relations \eqref{e:nu_n(w)_contained_in_N'} and \eqref{e:nu_n_mainlimit}), we show the almost sure convergence of $\mathbf x(n'')$ by considering subsequences for each $\omega \in \Omega$ a.s.

More precisely, consider $\mathbf x(n'')$ as in \eqref{e:x(n)}. By Lemma \ref{l:supR'max(angles*powerlaws)_bounded_for_subseq} we can pass to a further subsequence (still denoted $n'' \in \bbN''$, for simplicity) such that, for $\omega \in \Omega$ a.s.\ and along any arbitrary subsequence $\nu_n(\omega)$ of $n''$, $\sup_{n \in \bbN}\|\mathbf x(\nu_n(\omega))\| < \infty$. Thus, we can apply the Bolzano-Weierstrass theorem to obtain a sub-subsequence (still denoted $\nu'_n(\omega)$, for simplicity) such that $\mathbf x(\nu'_n)\to \mathbf x_\infty \in \bbR^{r_3}$ a.s., where $\mathbf x_\infty$ is a possibly random limit vector. However, along this sub-subsequence $\nu_n'$, relations \eqref{e:lim_n''->infty_gamma-ell(n'')} and \eqref{e:x_*(n'')->x_*}, respectively, imply that $\lim_{n\to\infty}\boldsymbol \tau_q(\nu_n')={\boldsymbol \tau}_{q}$ a.s.\ and $\lim_{n\to\infty}\mathbf x_*(\nu_n')  = \mathbf x_*$ a.s. Thus, by \eqref{e:string_ineqs_n''},
\begin{equation}\label{e:varphi(x*,tau-q)=<lims=<varphi(x*,tau-q)}
\varphi(\mathbf x_*, \boldsymbol \tau_q\big)=\lim_{n\to \infty } {\widehat \varphi}_{\nu_n'}\big(\mathbf x_*(\nu_n'), {\boldsymbol \tau}_{q}(\nu_n')\big)\leq \lim_{n\to \infty }{\widehat \varphi}_{\nu_n'}\big(\mathbf x(\nu_n'), {\boldsymbol \tau}_{q}(\nu_n')\big)
 \leq  \varphi(\mathbf x_*, \boldsymbol \tau_q ).
\end{equation}
In other words, by \eqref{e:varphi,varphi-n}, \eqref{e:B-hat(2j)->B(2j)_n''->infty} and \eqref{e:varphi(x*,tau-q)=<lims=<varphi(x*,tau-q)}, $\varphi(\mathbf x_*, \boldsymbol \tau_q)=\lim_{n\to \infty }{\widehat \varphi}_{\nu_n'}\big(\mathbf x(\nu_n'),{\boldsymbol \tau}_{q}(\nu_n')\big)= \varphi(\mathbf x_\infty, \boldsymbol \tau_q)$ a.s.  Since the minimizer $\mathbf x_*$ of $\varphi(\mathbf x, \boldsymbol \tau_q )$ in $\mathbf x$ is unique, we conclude that $\mathbf x_\infty=\mathbf x_*$ a.s. Since the original random subsequence $\nu_n$ was arbitrary, we obtain \eqref{e:x_*(n'')->x_*}. Hence, we conclude that $\mathbf x(n)\stackrel \bbP \to \mathbf x_*$, as anticipated. This establishes \eqref{e:inner*scaling=o(1)}.\vspace{2mm}

So, we now need to show \eqref{e:string_ineqs_n''}. Refining the subsequence $n''$ if necessary,  by Lemmas \ref{l:|<p3,uq(n)>|a(n)^{h_3-h_1}=O(1)} and \ref{l:sum<pi(n),up-r+q(n)>2-o(a-varpi)_first} we may assume the convergence statements \eqref{e:subseq_condition_1_top} and \eqref{e:subseq_condition_2_top} hold, with ${\boldsymbol{\mathfrak u}}_{p-r+\ell}(n'')$ in place of ${\mathbf u}_{p-r+\ell}(n'')$, along the subsequence $n''$ (cf.\ the proof of Proposition \ref{p:conv_w-by-w_rescaled_eigenvalues}). Since, in addition, relation \eqref{e:lim_n''->infty_gamma-ell(n'')} holds, then the assumptions of Lemma \ref{l:<p,w>=infinitesimal} hold along the \textit{non}random subsequence $n''$. So, by Lemma \ref{l:<p,w>=infinitesimal}%
, there is a sequence of unit vectors
\begin{equation}\label{e:v(n'')}
\bbR^{p} \ni \mathbf v(n'') \in \text{span}\{ {\boldsymbol{\mathfrak u}}_{p-r+q}(n''),\ldots,{\boldsymbol{\mathfrak u}}_{p}(n'')\}
\end{equation}
such that, for some large $M(\omega)$, $n''\geq M(\omega)$ implies that
\begin{equation}\label{e:p*v=x/a}
\langle \p_i(n''),\mathbf v(n'')\rangle = \frac{x_{*,i}}{a^{h_i -h_q}},\quad i \in \mathcal I_+,
\end{equation}
and
\begin{equation}\label{e:def_w(n'')}
 {\mathbf Q}^*(n'')\mathbf v(n'')=:  \mathbf w(n'') \to \boldsymbol\tau_q \quad \text{a.s.}
\end{equation}
Recall the notation \eqref{e:W_PX(a2^j)} for $\widetilde{\Xi}_q(n'')$. Thus, for ${\mathbf x}_*(n'')$, ${\mathbf x}(n'')$ and $\boldsymbol \tau_q(n'')$ as in \eqref{e:x_*(n),x_*_0}, \eqref{e:x(n)} and \eqref{e:lim_n''->infty_gamma-ell(n'')}, respectively,
$$
{\widehat \varphi}_{n''}\big(\mathbf x_*(n''), {\boldsymbol \tau}_{q}(n'')\big)+ {\boldsymbol{\mathfrak u}}_{p-r+q}^*(n'') \widetilde{\Xi}_q(n'')
{\boldsymbol{\mathfrak u}}_{p-r+q}(n'')
$$
$$
\leq {\widehat \varphi}_{n''}\big(\mathbf x(n''), {\boldsymbol \tau}_{q}(n'')\big)+ {\boldsymbol{\mathfrak u}}_{p-r+q}^*(n'') \widetilde{\Xi}_q(n''){\boldsymbol{\mathfrak u}}_{p-r+q}(n'')
$$
$$
=\frac{\lambda_{p-r+q}\big(\mathbf{\widetilde{W}}(a(n'')2^j)\big)}{a(n'')^{2h_q + 1}}
\leq {\mathbf v}^*(n'') \frac{ \mathbf{\widetilde{W}}(a(n'')2^j)}{a(n'')^{2h_q + 1}}{\mathbf v}(n'')
$$
\begin{equation}\label{e:gn_string_of_inequalities_2}
= {\widehat \varphi}_{n''}(\mathbf x_*, \mathbf w(n'')\big) +
{\mathbf v}^*(n'') \widetilde{\Xi}_q(n''){\mathbf v}(n'').
\end{equation}
The first inequality in \eqref{e:gn_string_of_inequalities_2} is a consequence of the fact that $\mathbf x_*(n'')$ ($\neq {\mathbf x}(n'')$) minimizes ${\widehat \varphi}_{n''}({\mathbf x}, {\boldsymbol \tau}_{q}(n'')\big)$ in ${\mathbf x}$. The first equality stems from \eqref{e:rescaled_eigen_p-r+ell_in_terms_of_fn_and_varphin} naturally reinterpreted for the matrix $\widetilde{{\mathbf W}}(a(n'')2^j)$. The second inequality follows from \eqref{e:v(n'')}. In particular, since \eqref{e:p*v=x/a} implies $\|a^{\mathbf h-h_q{\mathbf I}}\mathbf P^*\mathbf v(n'')\| = O_\bbP(1),$ we have
\begin{equation}\label{e:v*(n'')Xi(n'')v(n'')=oP(1)}
{\mathbf v}^*(n'')\widetilde{\Xi}_q(n''){\mathbf v}(n'') = {\mathbf v}^*(n'')\Big(\frac{O_{\bbP}(1)}{a^{2h_q+1}}+
\frac{{\mathbf P}a^{{\mathbf h}}}{a^{h_q}}\frac{O_{\bbP}(1)}{a^{h_q + 1/2}}+\frac{O^*_{\bbP}(1)}{a^{h_q + 1/2}}\frac{a^{{\mathbf h}}{\mathbf P}^*}{a^{h_q}}\Big){\mathbf v}(n'')=o_{\bbP}(1).
\end{equation}
In view of relations \eqref{e:sqrt(K)(B^-B)->N(0,sigma^2)}, \eqref{e:v*(n'')Xi(n'')v(n'')=oP(1)}, \eqref{e:residual_of_W-tilde_along_eigenvector_directions} (see Lemma \ref{l:|<p3,uq(n)>|a(n)^{h_3-h_1}=O(1)}, $(ii)$) and \eqref{e:B-hat(2j)->B(2j)_n''->infty}, by passing to a further subsequence if needed we may assume that
\begin{equation}\label{e:u*(n'')(W/a^(2h_q+1)-Pa^hB-hata^hP*/a^(2h_q))u(n'')}
{\mathbf v}^*(n'')\widetilde{\Xi}_q(n'')
 {\mathbf v}(n'') \to 0 \quad \textnormal{a.s.}, \qquad
{\boldsymbol{\mathfrak u}}_{p-r+q}^*(n'')
\widetilde{\Xi}_q(n'')
{\boldsymbol{\mathfrak u}}_{p-r+q}(n'')
\to 0 \quad \textnormal{a.s.}
\end{equation}
Now note that, by relations \eqref{e:lim_n''->infty_gamma-ell(n'')}, \eqref{e:x_*(n'')->x_*}, \eqref{e:def_w(n'')}, \eqref{e:def_phi_n(x,u)} and \eqref{e:def_varphi(x,u)},
 \begin{equation}\label{e:system_limits_varphi-hat_n''}
 \lim_{n''\to\infty} {\widehat \varphi}_{n''}\big(\mathbf x_*(n''), {\boldsymbol \tau}_{q}(n'')\big)= \varphi (\mathbf{x}_*,\boldsymbol \tau_q)=\lim_{n''\to\infty} {\widehat \varphi}_{n''}\big(\mathbf x_*, \mathbf w(n'')\big) \quad \text{a.s.}
 \end{equation}
 Thus, based on \eqref{e:u*(n'')(W/a^(2h_q+1)-Pa^hB-hata^hP*/a^(2h_q))u(n'')}, \eqref{e:system_limits_varphi-hat_n''} and the string of inequalities \eqref{e:gn_string_of_inequalities_2}, we obtain
$$
\varphi (\mathbf x_*, \boldsymbol \tau_q)=\lim_{n'' \to \infty } {\widehat \varphi}_{n''}\big(\mathbf x_*(n''), {\boldsymbol \tau}_{q}(n'')\big)
$$
$$
\leq \lim_{n''\to \infty }{\widehat \varphi}_{n''}\big(\mathbf x(n''), {\boldsymbol \tau}_{q}( n'')\big)
 \leq \lim_{n''\to \infty }{\widehat \varphi}_{n''}\big(\mathbf x_*, {\mathbf w}( n'')\big) = \varphi (\mathbf x_*, \boldsymbol \tau_q ).
$$
Hence, \eqref{e:string_ineqs_n''} holds, as claimed.
\end{proof}

The following lemma is used in the proof of Lemma \ref{l:max|<up-r+q(n),pi(n)>|a(n)^(hi-hq)=OP(1)}.
\begin{lemma}\label{l:supR'max(angles*powerlaws)_bounded_for_subseq}
Let $n'' \in \bbN''$ be the sequence appearing in \eqref{e:x_*(n'')->x_*}. Then, we can pass to a subsequence (still denoted $n'' \in \bbN''$, for simplicity) such that, for $\omega \in \Omega$ a.s.\ and along any arbitrary random subsequence $\nu_n(\omega)$ of $n''$,
\begin{equation}\label{e:supR'max(angles*powerlaws)_bounded_for_subseq}
 \sup_{n \in \bbN} \max_{i \in \mathcal{I}_+}\big\{ |\langle {\mathbf p}_{i}(\nu_n(\omega)),{\boldsymbol {\mathfrak u}}_{p-r+q}(\nu_n(\omega))\rangle| \hspace{0.5mm}a(\nu_n(\omega))^{h_{i}-h_q}\big\} <\infty.
\end{equation}
\end{lemma}
\begin{proof}
Let $\mathbf x(n)$ be as in \eqref{e:x(n)}. Consider the matrix $\mathbf Q(n)$ as in \eqref{e:P(n)=Q(n)R(n)} and let $\boldsymbol \tau_q(n)= \mathbf Q^*(n){\boldsymbol {\mathfrak u}}_{p-r+q}(n)$ (see \eqref{e:T-tilde(n)}). For the function $ {\widehat \varphi}_{n}$ as defined in \eqref{e:varphi,varphi-n}, we may rewrite
\begin{equation}\label{e:rexxpress_W_tilde}
\frac{\lambda_{p-r+q}\big(\widetilde{\mathbf{W}}(a(n'')2^j)\big)}{a(n'')^{2h_q + 1}} = {\widehat \varphi}_{n''}(\mathbf x(n''), {\boldsymbol \tau}_{q}(n'')\big)+ {\boldsymbol {\mathfrak u}}^*_{p-r+q}(n'')  {\widetilde\Xi_q(n'')}{\boldsymbol {\mathfrak u}}_{p-r+q}(n'')
\end{equation}
(cf.\ \eqref{e:rescaled_eigen_p-r+ell_in_terms_of_fn_and_varphin}). Moreover, \eqref{e:rexxpress_W_tilde} is bounded above by
$$
\inf_{{\mathcal U}_{p-r+q}}\sup_{{\mathbf u} \in {\mathcal U}_{p-r+q} \cap \bbS^{p-1}} {\mathbf u}^*\frac{\widetilde{\mathbf W}(a(n'')2^j)}{a(n'')^{2h_q + 1}}{\mathbf u} \leq \sup_{{\mathbf u} \in {\textnormal{span}\{\mathbf p_{q+1},\ldots,\mathbf{p}_r\}}^{\perp} \cap \bbS^{p-1}} {\mathbf u}^*\frac{\widetilde{\mathbf W}(a(n'')2^j)}{a(n'')^{2h_q + 1}}{\mathbf u}
$$
$$
\leq \sup_{{\mathbf u} \in {\textnormal{span}\{\mathbf p_i,i \in \mathcal{I}_+\}}^{\perp} \cap \bbS^{p-1}} {\mathbf u}^*\frac{\widetilde{\mathbf W}(a(n'')2^j)}{a(n'')^{2h_q + 1}}{\mathbf u} =:\widetilde{ \mathbf u}^*_{n''}\hspace{1mm}\frac{\widetilde{\mathbf W}(a(n'')2^j)}{a(n'')^{2h_q + 1}}\hspace{1mm}\widetilde{ \mathbf u}_{n''}
$$
\begin{equation}\label{e:lambdaq_over_a^(2h1+1)=O(1)}
= \begin{pmatrix}
a(n'')^{\mathbf h_1 - h_q {\mathbf I}}\mathbf P^*_1(n'') \widetilde {\mathbf u}_{n''}\\
\mathbf P^*_2(n'') \widetilde {\mathbf u}_{n''}\\
\mathbf 0
\end{pmatrix}^*  \mathbf B_{n''} \begin{pmatrix}
a(n'')^{\mathbf h_1 - h_q {\mathbf I}}\mathbf P^*_1(n'') \widetilde {\mathbf u}_{n''}\\\
\mathbf P^*_2(n'') \widetilde {\mathbf u}_{n''}\\
\mathbf 0
\end{pmatrix} + \widetilde {\mathbf u}^*_{n''} \hspace{1mm}\widetilde\Xi_q(n'')\hspace{1mm}\widetilde {\mathbf u}_{n''}.
\end{equation}
In \eqref{e:lambdaq_over_a^(2h1+1)=O(1)}, ${\mathbf h}_1$ is given by \eqref{e:h1} and $\mathbf P^*_1(n'')$, $\mathbf P^*_2(n'')$ are as in \eqref{e:P=(P1(n)_P2(n)_P3(n))}.

However, since $\widetilde {\mathbf u}_{n''} \in {\textnormal{span}\{\mathbf p_i,i \in \mathcal{I}_+\}}^{\perp}$, relation \eqref{e:W_PX(a2^j)} implies that
\begin{equation}\label{e:u-tilde*_n_Xi-tilde_u-tilde_n}
\widetilde {\mathbf u}^*_{n''}\hspace{0.5mm}  {\widetilde\Xi_q(n'')}\hspace{0.5mm} \widetilde {\mathbf u}_{n''} = o_\bbP(1).
\end{equation}
Hence, by relations \eqref{e:Bn->B(2^j)}, \eqref{e:residual_of_W-tilde_along_eigenvector_directions} (see Lemma \ref{l:|<p3,uq(n)>|a(n)^{h_3-h_1}=O(1)}, $(ii)$) and \eqref{e:u-tilde*_n_Xi-tilde_u-tilde_n}, we can find a further subsequence (still denoted $n''$, for simplicity) such that the event
$$
{A_0} = \Big\{\omega \in \Omega: \hspace{2mm}\|{\mathbf B}_{n''}- {\mathbf B}(2^j)\|=o(1),
$$
\begin{equation}\label{e:A0}
\hspace{2mm} {\boldsymbol {\mathfrak u}}^*_{p-r+q}(n'')  {\widetilde\Xi_q(n'')}{\boldsymbol {\mathfrak u}}_{p-r+q}(n'') = o(1), \hspace{2mm}\widetilde {\mathbf u}^*_{n''} \hspace{0.5mm} {\widetilde\Xi_q(n'')}\hspace{0.5mm} \widetilde {\mathbf u}_{n''} = o(1)\Big\}
\end{equation}
occurs with probability 1. So, fix $\omega \in A_0$ and consider any subsequence $\nu_n(\omega)$. Let ${\mathbf B}_{n,33}$ be the $r_3\times r_3$ lower-right sub-block of $\mathbf B_{n}$. Note that $\|a(\nu_n)^{\mathbf h_1 - h_q {\mathbf I}}\mathbf R^*_1(\nu_n) {\boldsymbol \tau}_{q}(\nu_n)\|$, $\|\mathbf R^*_2(\nu_n) {\boldsymbol \tau}_{q}(\nu_n)\|$ are bounded a.s. Then, for ${\widehat \varphi}_{n}$ as in \eqref{e:varphi,varphi-n}, we can almost surely write
$$
{\widehat \varphi}_{\nu_n}\big(\mathbf x(\nu_n), {\boldsymbol \tau}_{q}(\nu_n)\big)
= \mathbf{x}^*(\nu_n) {\mathbf B}_{\nu_n,33} \hspace{0.5mm} \mathbf{x}(\nu_n)+ O(\|\mathbf x(\nu_n)\|)
$$
\begin{equation}\label{e:varphi-hat_n=quad_form+O(||x||)}
\geq \mathbf{x}^*(\nu_n) {\mathbf B}_{\nu_n,33} \hspace{0.5mm} \mathbf{x}(\nu_n) + O(1).
\end{equation}
Also, almost surely,
\begin{equation}\label{e:max(non-dominant_scaling)=O(1)}
\max\big\{\|a(\nu_n)^{\mathbf h_1 - h_q {\mathbf I}}\mathbf P^*_1(\nu_n) \widetilde {\mathbf u}_{\nu_n}\|, \|\mathbf P^*_2(\nu_n) \widetilde {\mathbf u}_{\nu_n}\|\big\}= O(1)
\end{equation}
(cf.\ \eqref{e:lambdaq_over_a^(2h1+1)=O(1)}). Thus, in view of \eqref{e:rexxpress_W_tilde}--\eqref{e:max(non-dominant_scaling)=O(1)}, we can almost surely write
$$
\|  \mathbf{x}(\nu_n)\|^2 \big\{\lambda_1( {\mathbf B}_{\nu_n,33}) + o(1)\big\} + O(1) \leq   {\widehat \varphi}_{\nu_n}\big(\mathbf x(\nu_n), {\boldsymbol \tau}_{q}(\nu_n)\big)+o(1)
$$
$$
=\frac{\lambda_{p-r+q}\big(\widetilde{\mathbf W}(a(\nu_n)2^j)\big)}{a(\nu_n)^{2h_q + 1}} \leq  \sup_{n \in \bbN}\frac{\lambda_{p-r+q}\big(\widetilde{\mathbf W}(a(\nu_n)2^j)\big)}{a(\nu_n)^{2h_q + 1}} < \infty.
$$
Since, in addition, $\lambda_1( {\mathbf B}_{\nu_n,33})$ is a.s.\ bounded away from zero for large enough $n$, this establishes \eqref{e:supR'max(angles*powerlaws)_bounded_for_subseq}, and hence, the claim.
\end{proof}

As a consequence of the following simple lemma, the mean-value theorem-type expansions appearing in the proof of Theorem \ref{t:asympt_normality_lambdap-r+q} are well defined with probability going to 1 as $n \rightarrow \infty$. To state and prove the lemma, it will be convenient to define the magnitude
\begin{equation}\label{e:eta0}
\eta_0 := \frac{1}{2} \min_{\ell \in {\mathcal I}_0 \backslash \{r_1+1\}}\{\xi_\ell(2^j)-\xi_{\ell-1}(2^j)\}, \quad \min_{\ell \in \emptyset}\{\xi_\ell(2^j)-\xi_{\ell-1}(2^j)\} := 1.
\end{equation}
Note that, under condition \eqref{e:xiq_distinct}, $\eta_0 > 0$.
\begin{lemma}\label{l:f1,f2,f3_well_defined}
Fix any $j$ as in \eqref{e:def_j1,jm}. Suppose ($A4$) and ($A5$) hold and let $\mathbf h$ be as in \eqref{e:h-bf=diag(h1,...,hr)}. Let ${\mathbf B}(2^j)$ be the matrix given by \eqref{e:B(2^j)_full_rank}. Let $\xi_{\ell}(2^j)$, $\ell=1,\ldots,r$, be the functions appearing in \eqref{e:lim_n_a*lambda/a^(2h+1)}, and suppose condition \eqref{e:xiq_distinct} in Theorem \ref{t:asympt_normality_lambdap-r+q} holds. For ${\mathbf B} \in {\mathcal S}_{\geq 0}(r,\bbR)$, ${\mathbf K}_1 \in {\mathcal S}_{\geq 0}(p,\bbR)$ and ${\mathbf K}_2 \in {\mathcal M}(r,p,\bbR)$, define the deterministic matrix
$$
{\mathcal S}(p,\bbR) \ni \overline{{\mathbf W}}\big(a(n)2^j,{\mathbf B},{\mathbf K}_1,{\mathbf K}_2\big)
$$
\begin{equation}\label{e:W-tilde(a2^j,B,K1,K2)}
= a(n)^{2h_q+1}\Big(\frac{{\mathbf P}(n)a(n)^{{\mathbf h}} {\mathbf B} a(n)^{{\mathbf h}}{\mathbf P}^*(n)}{a(n)^{2h_q}}  + {\mathbf K}_1+
\frac{{\mathbf P}(n)a(n)^{{\mathbf h}}}{a(n)^{h_q}}{\mathbf K}_2+{\mathbf K}^*_2 \frac{a(n)^{{\mathbf h}}{\mathbf P}^*(n)}{a(n)^{h_q}} \Big).
\end{equation}
For $\delta > 0$, $\zeta_{1} > 0$, $\zeta_{2} >0$, further define the matrix vicinities
$$
{\mathcal O}_{\delta,r} = \{ {\mathbf B} \in {\mathcal S}_{\geq 0}(r,\bbR): \|{\mathbf B}-{\mathbf B}(2^j)\| < \delta\}, \quad {\mathcal O}_{\zeta_1,p} = \{ {\mathbf K}_1 \in {\mathcal S}(p,\bbR): \|{\mathbf K}_1\| < \zeta_1\}
$$
\begin{equation}\label{e:vicinities_Odelta,r_Ozeta01,p_Ozeta02,r,p}
and \hspace{3mm}{\mathcal O}_{\zeta_2,r,p} = \{ {\mathbf K}_2 \in {\mathcal M}(r,p,\bbR): \|{\mathbf K}_2\| < \zeta_2\}
\end{equation}
(\textbf{n.b.}: $ p = p(n)$).
\begin{itemize}
\item [$(i)$] Then, there exist $n_0 \in \bbN$, $\delta_0 > 0$, $\zeta_{01} > 0$ and $\zeta_{02} > 0$ such that, for any $n \geq n_0$, ${\mathbf B} \in {\mathcal O}_{\delta_0,r}$, ${\mathbf K}_1 \in {\mathcal O}_{\zeta_{01},p}$ and ${\mathbf K}_2 \in {\mathcal O}_{\zeta_{02},r,p}$,
$$
\lambda_{p-r+q-1}\Big( \frac{\overline{{\mathbf W}}\big(a(n)2^j,{\mathbf B},{\mathbf K}_1,{\mathbf K}_2\big)}{a(n)^{2h_q+1}}\Big) + \eta_0
< \lambda_{p-r+q}\Big( \frac{\overline{{\mathbf W}}\big(a(n)2^j,{\mathbf B},{\mathbf K}_1,{\mathbf K}_2\big)}{a(n)^{2h_q+1}}\Big)
$$
\begin{equation}\label{e:lambda_are_simple}
< \lambda_{p-r+q + 1}\Big( \frac{\overline{{\mathbf W}}\big(a(n)2^j,{\mathbf B},{\mathbf K}_1,{\mathbf K}_2\big)}{a(n)^{2h_q+1}}\Big) - \eta_0.
\end{equation}
In particular, $\lambda_{p-r+q}\Big( \frac{\overline{{\mathbf W}}\big(a(n)2^j,{\mathbf B},{\mathbf K}_1,{\mathbf K}_2\big)}{a(n)^{2h_q+1}}\Big)$ is a simple eigenvalue.
\item [$(ii)$] In addition, by possibly picking a larger $n_0 \in \bbN$ and restricting the vicinities \eqref{e:vicinities_Odelta,r_Ozeta01,p_Ozeta02,r,p} obtained in $(i)$, for any $n \geq n_0$, ${\mathbf B} \in {\mathcal O}_{\delta_0,r}$, ${\mathbf K}_1 \in {\mathcal O}_{\zeta_{01},p}$ and ${\mathbf K}_2 \in {\mathcal O}_{\zeta_{02},r,p}$,
\begin{equation}\label{e:lambda_p-r+q_is_positive}
\lambda_{p-r+q}\Big( \frac{\overline{{\mathbf W}}\big(a(n)2^j,{\mathbf B},{\mathbf K}_1,{\mathbf K}_2\big)}{a(n)^{2h_q+1}}\Big) > 0.
\end{equation}
\end{itemize}
\end{lemma}
\begin{proof}
We first show $(i)$. By way of contradiction, suppose \eqref{e:lambda_are_simple} does not hold. Namely, for any $n_0 \in \bbN$, $\delta_0 > 0$, $\zeta_{01} > 0$, $\zeta_{02} > 0$, there exist $n = n(n_0) \geq n_0$, ${\mathbf B}_{\delta_0} \in {\mathcal O}_{\delta_0,r}$, ${\mathbf K}_{1,\zeta_{01}} \in {\mathcal O}_{\zeta_{01},p}$ and ${\mathbf K}_{2,\zeta_{02}} \in {\mathcal O}_{\zeta_{02},r,p}$ such that
\begin{equation}\label{e:lambda_p-r+ell=<lambda_p-r+ell-1_contradiction}
\lambda_{p-r+q}\Big( \frac{\overline{{\mathbf W}}\big(a(n)2^j,{\mathbf B}_{\delta_0},{\mathbf K}_{1,\zeta_{01}},{\mathbf K}_{2,\zeta_{02}}\big)}{a(n)^{2h_q+1}}\Big)
< \lambda_{p-r+q-1}\Big( \frac{\overline{{\mathbf W}}\big(a(n)2^j,{\mathbf B}_{\delta_0},{\mathbf K}_{1,\zeta_{01}},{\mathbf K}_{2,\zeta_{02}}\big)}{a(n)^{2h_q+1}}\Big) + \eta_0
\end{equation}
or
\begin{equation}\label{e:lambda_p-r+ell+1=<lambda_p-r+ell-1-eta0_contradiction}
\lambda_{p-r+q + 1}\Big( \frac{\overline{{\mathbf W}}\big(a(n)2^j,{\mathbf B}_{\delta_0},{\mathbf K}_{1,\zeta_{01}},{\mathbf K}_{2,\zeta_{02}}\big)}{a(n)^{2h_q+1}}\Big) - \eta_0 \leq \lambda_{p-r+q}\Big( \frac{\overline{{\mathbf W}}\big(a(n)2^j,{\mathbf B}_{\delta_0},{\mathbf K}_{1,\zeta_{01}},{\mathbf K}_{2,\zeta_{02}}\big)}{a(n)^{2h_q+1}}\Big).
\end{equation}
So, pick $\delta_0 = \delta_0(n) = 1/n$, $\zeta_{01} = \zeta_{01}(n) = 1/a(n)^{2h_q+1}$ and $\zeta_{02} = \zeta_{02}(n) = 1/a(n)^{h_q+1/2}$. Then, $\|{\mathbf B}_{\delta_0}-{\mathbf B}(2^j)\| \rightarrow 0$, ${\mathbf K}_{1,\zeta_{01}} = O(1)/a(n)^{2h_q+1}$ and ${\mathbf K}_{2,\zeta_{02}} = O(1)/a(n)^{h_q+1/2}$ as $n \rightarrow \infty$. However, for ${\mathbf M}_{n(n_0)} = a^{2h_q+1}\big(\frac{O(1)}{a^{2h_q+1}}+
\frac{{\mathbf P}a^{{\mathbf h}}O(1)}{a^{2h_q+1/2}}+\frac{O^*(1)a^{{\mathbf h}}{\mathbf P}^*}{a^{2h_q+1/2}} \big)$ (see expression \eqref{e:def_Mn}), Corollary \ref{c:PWXP^*+R_asymptotics} implies that
\begin{equation*}%
 \lambda_{p-r+\ell}\Big( \frac{\overline{{\mathbf W}}\big(a2^j,{\mathbf B}_{\delta_0},{\mathbf K}_{1,\zeta_{01}},{\mathbf K}_{2,\zeta_{02}}\big)}{a^{2h_q+1}}\Big)
   \to
\begin{cases}
0, & \ell\in \mathcal I_-;\\
\xi_\ell(2^j), & \ell\in \mathcal I_0;\\
\infty, & \ell\in \mathcal I_+,
\end{cases}
\end{equation*}
as $n \rightarrow \infty$, where $\xi_\ell(2^j)$, $\ell \in \mathcal{I}_0$, are given by \eqref{e:lim_n_a*lambda/a^(2h+1)}. Hence, bearing in mind \eqref{e:eta0} under condition \eqref{e:xiq_distinct}, we arrive at a contradiction with \eqref{e:lambda_p-r+ell=<lambda_p-r+ell-1_contradiction} or \eqref{e:lambda_p-r+ell+1=<lambda_p-r+ell-1-eta0_contradiction}. This establishes \eqref{e:lambda_are_simple} and, hence, $(i)$.

Statement \eqref{e:lambda_p-r+q_is_positive} can be shown by a similar argument by contradiction based on Corollary \ref{c:PWXP^*+R_asymptotics}. This establishes $(ii)$.
\end{proof}

The following mean value theorem-type relation is repeatedly used in the proof of Theorem \ref{t:asympt_normality_lambdap-r+q}.
\begin{lemma}\label{l:mean_value_theorem}
For $m \in \bbN$, let $G: {\mathcal T} \rightarrow \bbR$ be a differentiable function, where ${\mathcal T} \subseteq \bbR^{m}$ is a connected, open set in $\bbR^{m}$. Let $T_0, T_1 \in {\mathcal T}$. Then, there is a vector $\Theta = \big(\theta_{i}\big)_{i= 1,\hdots,m}$ in the segment $\{T \in {\mathcal T}: T = s T_0 + (1-s) T_1, s \in [0,1]\} \subseteq {\mathcal T}$ such that
\begin{equation}\label{e:mean_value_theorem}
G(T_1) - G(T_0) = \sum^{m}_{i=1}\frac{\partial}{\partial t_{i}}G[\Theta]\, \Delta_{i},
\end{equation}
where $\Delta = T_1 - T_0 = \{\Delta_{i}\}_{i = 1,\hdots,m} \in \bbR^{m}$.
\end{lemma}
\begin{proof}
Define the path $\bbR^{m} \ni F(s) = T_0 + s \Delta$, $s \in [0,1]$. Also define the real-valued, composite function $H(s) = G[F(s)]$. Then, by the mean value theorem and the chain rule, there is $\varsigma \in [0,1]$ such that $H(1) - H(0) = H'(\varsigma) = \sum^{m}_{i=1}\frac{\partial}{\partial t_{i}}\, G[F(\varsigma)]\, \Delta_{i}$. This shows \eqref{e:mean_value_theorem}.
\end{proof}

The following lemma is used in the proofs of Proposition \ref{p:conv_w-by-w_rescaled_eigenvalues}/Theorem \ref{t:lim_n_a_times_lambda/a^(2h+1)} and Theorem \ref{t:asympt_normality_lambdap-r+q} (in the former case, implicitly by means of the auxiliary results on matrices of the general form $\widetilde{{\mathbf W}}(a(n)2^j)$).
\begin{lemma}\label{l:|a(n)(-D)W_X,Z(a(n)2j))|=OP(1)}
Fix any $j$ as in \eqref{e:def_j1,jm}. Assume ($A1-A4$) hold. Let $\mathbf{W}_{X,Z}(a(n) 2^j)$ be as in \eqref{e:W=PWXP*+WZ+PWXZ+WXZ*P*}. Then,
\begin{equation}\label{e:|a(n)(-D)W_X,Z(a(n)2j))|=OP(1)}
\|a(n)^{-{({\mathbf H}+\frac{1}{2}{\mathbf I})}} \mathbf{W}_{X,Z}(a(n) 2^j))\| = O_\bbP(1).
\end{equation}
\end{lemma}
\begin{proof}
For simplicity, write $\mathbf W_{Z,X}(a2^j)=\mathbf W_{X,Z}(a2^j)^*$.  For any ${\mathbf u}\in \bbR^p$,
\begin{equation}\label{e:u*W-ZX*W-XZu}
0\leq {\mathbf u}^* \mathbf{W}_{Z,X}(a 2^j) a^{-{({\mathbf H}+\frac{1}{2}{\mathbf I})^*}}  a^{-{({\mathbf H}+\frac{1}{2}{\mathbf I})}} \mathbf{W}_{X,Z}(a 2^j){\mathbf u}
\end{equation}
$$
= \frac{1}{n_{a,j}^2}\sum_{k=1}^{n_{a,j}}\sum_{k'=1}^{n_{a,j}} {\mathbf u}^* D_Z(a2^j,k)D_X(a2^j,k)^*a^{-{({\mathbf H}+\frac{1}{2}{\mathbf I})}^*}a^{-{({\mathbf H}+\frac{1}{2}{\mathbf I})}} D_X(a2^j,k')D_Z(a2^j,k')^*{\mathbf u}
$$
$$
= \frac{1}{n_{a,j}^2}\sum_{k=1}^{n_{a,j}}\sum_{k'=1}^{n_{a,j}} {\mathbf u}^* D_Z(a2^j,k)D_Z(a2^j,k')^*{\mathbf u}\Big\langle a^{-{({\mathbf H}+\frac{1}{2}{\mathbf I})}} D_X(a2^j,k),a^{-{({\mathbf H}+\frac{1}{2}{\mathbf I})}} D_X(a2^j,k')\Big\rangle
$$
$$
\leq \frac{1}{n_{a,j}^2}\Bigg[\sum_{k=1}^{n_{a,j}}\sum_{k'=1}^{n_{a,j}} \bigg( {\mathbf u}^* D_Z(a2^j,k)D_Z(a2^j,k')^*{\mathbf u}\bigg)^2
$$
$$
\times \sum_{\ell=1}^{n_{a,j}}\sum_{\ell'=1}^{n_{a,j}}  \Big\langle a^{-{({\mathbf H}+\frac{1}{2}{\mathbf I})}} D_X(a2^j,\ell),a^{-{({\mathbf H}+\frac{1}{2}{\mathbf I})}} D_X(a2^j,\ell')\Big\rangle^2\Bigg]^{1/2}
$$
$$%
= \frac{1}{n_{a,j}}\sum_{k=1}^{n_{a,j}} \bigg( D_Z(a2^j,k)^*\mathbf u\bigg)^2 \Bigg[\frac{1}{n_{a,j}^2}\sum_{\ell=1}^{n_{a,j}}\sum_{\ell'=1}^{n_{a,j}} \Big\langle a^{-{({\mathbf H}+\frac{1}{2}{\mathbf I})}} D_X(a2^j,\ell),a^{-{({\mathbf H}+\frac{1}{2}{\mathbf I})}} D_X(a2^j,\ell')\Big\rangle^2\Bigg]^{1/2},
$$%
where the second inequality is a consequence of the Cauchy-Schwarz inequality. However,
\begin{equation}\label{e:(1/naj)*sum_(Dz*u)^2}
\frac{1}{n_{a,j}}\sum_{k=1}^{n_{a,j}} \bigg( D_Z(a2^j,k)^*\mathbf u\bigg)^2=\frac{1}{n_{a,j}}\sum_{k=1}^{n_{a,j}} \mathbf u ^* D_Z(a2^j,k) D_Z(a2^j,k)^*\mathbf u   =  \mathbf u^*\mathbf W_Z(a2^j)\mathbf u \leq O_{\bbP}(1),
\end{equation}
where the last equality results from \eqref{e:assumptions_WZ=OP(1)}. Moreover,
$$
\Bigg[\frac{1}{n_{a,j}^2}\sum_{\ell=1}^{n_{a,j}}\sum_{\ell'=1}^{n_{a,j}} \Big\langle a^{-{({\mathbf H}+\frac{1}{2}{\mathbf I})}} D_X(a2^j,\ell),a^{-{({\mathbf H}+\frac{1}{2}{\mathbf I})}} D_X(a2^j,\ell')\Big\rangle^2\Bigg]^{1/2}
$$
$$
 \leq \Bigg[\frac{1}{n_{a,j}^2}\sum_{\ell=1}^{n_{a,j}}\sum_{\ell'=1}^{n_{a,j}}  \Big\|a^{-{({\mathbf H}+\frac{1}{2}{\mathbf I})}} D_X(a2^j,\ell)\Big\|^2  \Big\|a^{-{({\mathbf H}+\frac{1}{2}{\mathbf I})}} D_X(a2^j,\ell')\Big\|^2 \Bigg]^{1/2}
$$
$$
=\frac{1}{n_{a,j}}\sum_{\ell=1}^{n_{a,j}}  \Big\|a^{-{({\mathbf H}+\frac{1}{2}{\mathbf I})}} D_X(a2^j,\ell)\Big\|^2
= \text{tr} \bigg(a^{-{({\mathbf H}+\frac{1}{2}{\mathbf I})}} \mathbf{W}_X(a2^j)  a^{-{({\mathbf H}+\frac{1}{2}{\mathbf I})^*}}\bigg)
$$
\begin{equation}\label{e:r_PH^2_B-hat}
\leq r\bigg\|a^{-{({\mathbf H}+\frac{1}{2}{\mathbf I})}} \mathbf{W}_X(a2^j)  a^{-{({\mathbf H}+\frac{1}{2}{\mathbf I})^*}}\bigg\| \leq r \| \mathbf P_H\|^2 \| \widehat{\mathbf B}_a(2^j)\| = O_\bbP(1),
\end{equation}
where the last equality follows from \eqref{e:sqrt(K)(B^-B)->N(0,sigma^2)} and \eqref{e:<p1,p2>=c12_2}. Thus, in view of \eqref{e:(1/naj)*sum_(Dz*u)^2} and \eqref{e:r_PH^2_B-hat}, by taking $\sup_{\mathbf u \in \mathcal S^{p-1}}$ in \eqref{e:u*W-ZX*W-XZu}, we obtain the bound $\|a(n)^{-{({\mathbf H}+\frac{1}{2}{\mathbf I})}} \mathbf{W}_{X,Z}(a(n) 2^j)\| \leq O_{\mathbb P}(1)$. This establishes \eqref{e:|a(n)(-D)W_X,Z(a(n)2j))|=OP(1)}.

\end{proof}

\section{Proofs: Section \ref{s:examples}}\label{s:proofs_examples}

In this section, we provide the proofs of statements made in Section \ref{s:examples}. The results are organized into two subsections, corresponding to Gaussian and non-Gaussian examples.

Hereinafter, for a stochastic process $Z$, we define the differencing operator by means of $\Delta Z(t):= Z(t)-Z(t-1)$. Likewise, the $k$--th order difference $\Delta^k Z=\Delta(\Delta^{k-1} Z)$ is defined iteratively.

Consider the following definition.
\begin{definition}\label{def:fractional_process}
For each $p \in \bbN$, let $Z_p = Z = \{Z(t)\}_{t \in \bbZ}$ be a second-order, $p$-variate process. We say the sequence of processes $\{Z_p\}_{p \in \bbN}$ has \textit{maximal memory parameter $d$ uniformly in $p$} (or \textit{maximal order $d$}, for short) if
\begin{itemize}
\item[$(i)$] for a fixed integer $k_0\geq 0$ and each $p$, the $k_0$--th differenced process $\Delta^{k_0} Z$ is (weakly) stationary with spectral density
\begin{equation}\label{e:M(d)_0}
|1-e^{\imag x}|^{2k_0}\mathfrak g_p(x) \in {\mathcal S}_{\geq 0}(p,\bbC), \quad x \in [-\pi,\pi);
\end{equation}
\item[$(ii)$] there is a smallest $d\geq 0$ satisfying  $d-k_0\in(-1/2,1/2)$ such that
\begin{equation}\label{e:M(d)_1}
\sup_{p \in \bbN} \hspace{2mm}\textnormal{ess}\sup_{|x|\leq \pi} |x|^{2d}\| \mathfrak g_{p}(x)\| <\infty.%
\end{equation}
\end{itemize}
\end{definition}
For each $p$, the parameter $d$ in \eqref{e:M(d)_1} expresses the statement $\| \mathfrak g_{p}(x)\|=O(|x|^{-2d})$ as $x\to 0$. Namely, it describes the largest possible scaling law in the behavior of $Z$. In particular, the condition $\textnormal{ess}\sup_{|x|\leq \pi} |x|^{2d}\| \mathfrak g_{p}(x)\| <\infty$ includes spectral densities for multivariate long memory processes whose memory exponents are no greater than $d$ (cf.\ Kechagias and Pipiras \cite{kechagias:pipiras:2015:ident}). Further note that, for each $p$, $\mathfrak g_p$ is typically called the \textit{generalized spectral density} (of $Z$). This terminology is used in the sequel.

\subsection{Section \ref{s:Gaussian}: proofs and auxiliary results}\label{s:proofs_and_auxiliary_results_Gaussian}

In the following proposition, we show that all classes of examples described in Section \ref{s:Gaussian} satisfy ($A2$) or ($A3$).
\begin{proposition}\label{p:examples}
Suppose assumptions $(W1-W3)$ are in place. Then, the following claims hold.
\begin{itemize}
\item [$(i)$] Fix $N_\psi\geq 2$ and let $\vartheta_0 := \min\{2h_1 + 1,h_1 + 3/2\} > 1$. Consider any dyadic sequence $\{a(n)\}_{n \in \bbN}$ such that
    \begin{equation}\label{e:condition_(A4)_ofBm}
    a(n)\leq \frac{n}{2^{j_m}}, \quad \frac{a(n)}{n} + \frac{n}{a(n)^{\vartheta_0}} \rightarrow \infty, \quad n \rightarrow \infty.
    \end{equation}
 Then, assumption $(A3)$ is satisfied for $X = \{X(t)\}_{t \in \bbZ}$ as in \eqref{e:oss_ofBm}.
    \item [$(ii)$]  For $N_\psi \geq 1$ and under assumption $(A4)$, assumption $(A2)$ is satisfied for $Z = \{Z(t)\}_{t \in \bbZ}$ as in \eqref{e:Z(t)=ARMA}.
\item [$(iii)$]  For $d$ as in \eqref{e:d<(3/2)*(h1+1/2)}, fix $N_\psi\geq 2$ satisfying $N_\psi>d+\frac12$. Suppose $\frac{p(n)}{n/a(n)}\to c>0$ (cf.\ \eqref{e:p(n),a(n)_conditions}) and, for $\alpha$ as in \eqref{e:psihat_is_slower_than_a_power_function}, assume that ${n a(n)^{-2\alpha}=O(1)}$. Also, fix
    \begin{equation}\label{e:varepsilon_in_(0,(3/2)(h1+1/2)-d)}
     0<\varepsilon <  \min\Big\{\frac{3}{2}(h_1+1/2), \, 2h_1+1/2\Big\}-d.
     \end{equation}
     Further define
    \begin{equation}\label{E:b=h1+1/2-epsilon}
    b = h_1 + 1/2 - \varepsilon.
    \end{equation}
    Then, there exist sequences $\{a(n)\}_{n \in \bbN}$ and $\{p(n)\}_{n \in \bbN}$ such that assumptions ($A2$), ($A3$) and ($A4$) are satisfied (the latter, for large enough $n$) for $Z = \{Z(t)\}_{t \in \bbZ}$ and $X = \{X(t)\}_{t \in \bbZ}$ as in \eqref{e:Z(t)=residual_factor_model}. In addition, the eigenvalues of the  scaling matrix $\widetilde{{\mathbf H}}$ (cf.\ \eqref{e:H=PHdiag(h1,...,hn)P^(-1)H}) for $X$ are given by \eqref{e:h-tilde_q=h_q-b>1/2}.
\end{itemize}
\end{proposition}
\begin{proof}
We first show ($i$). Fix $N_\psi\geq 2$ and let $\{a(n)\}_{n \in \bbN}$ be a dyadic sequence as in \eqref{e:condition_(A4)_ofBm}. Then, the associated auxiliary random matrix $\widehat{{\mathbf B}}_a(2^j)$ as in \eqref{e:B-hat_a(2^j)} satisfies assumption ($A3$) as a consequence of Theorem 3.1, Lemma C.2 (extended to dimension $r$) and Proposition 3.1 in Abry and Didier \cite{abry:didier:2018:dim2}.\vspace{2mm}

To show $(ii)$, consider $Z$ as in \eqref{e:Z(t)=ARMA}. In this case, $Z$ has spectral density
$$
\mathfrak g_p(x)= \frac{1}{2\pi}\bigg(\sum_{\ell \in \bbZ }\mathbf A_\ell(p) e^{i\ell x}\bigg) \Sigma_\varepsilon(p) \bigg(\sum_{\ell \in \bbZ }\mathbf A_\ell(p) e^{i\ell x}\bigg)^*.
$$
Thus,
$$
\textnormal{ess}\sup_{|x|\leq \pi}\|\mathfrak g_p(x)\| \leq (2\pi)^{-1}\|\Sigma_{\boldsymbol \varepsilon}(p)\| \bigg(\sum_{\ell \in \bbZ }\|\mathbf A_\ell(p)\| \bigg)^2.
$$
Together with condition \eqref{e:sup-p_Sigma(p)<infty}, this implies that $Z$ is of maximal order $d=0$ as in Definition \ref{def:fractional_process}. Hence, by Lemma \ref{l:assumptions_WZ=OP(1)_hold}, assumption ($A2$) holds under $(A4)$.\vspace{2mm}

We now turn to ($iii$). Consider $Z$ as in \eqref{e:Z(t)=residual_factor_model}. We break up the proof into two cases, based on the magnitude
\begin{equation}\label{e:eta-0}
\eta_0 := 2(d-h_1 + \varepsilon)
\end{equation}
(\textbf{n.b.}: relation \eqref{e:wZ_bdd_factormodel_example} below shows that $2(d-h_1 + \varepsilon)$ naturally appears in the expression for the order of magnitude of $\|\mathbf W_Z(a(n)2^j)\|$).

First, assume $\eta_0 \leq 1$. For the fixed $c > 0$, pick any sequences $\{a(n)\}_{n \in \bbN}$ and $\{p(n)\}_{n \in \bbN}$ such that assumption ($A4$)  and relation \eqref{e:condition_(A4)_ofBm} are satisfied. Since $\mathcal Z$ is of maximal order $d$,  and $N_\psi>d+1/2$,  then Lemma \ref{l:assumptions_WZ=OP(1)_hold} gives $\|\mathbf W_{\mathcal Z}(a(n)2^j)\|=O_{\bbP}(a(n)^{2d})$. By \eqref{E:b=h1+1/2-epsilon} and $n/a(n)=O(p(n))$ (see \eqref{e:p(n),a(n)_conditions}), we obtain
$$
\|\mathbf W_{Z}(a(n)2^j)\|=\frac{\|\mathbf W_{\mathcal Z }(a(n)2^j)\|}{p(n)\hspace{1mm}a(n)^{2b}} = \frac{O_\bbP(a(n)^{2d})}{(n/a(n))\hspace{1mm}a(n)^{2b}}
$$
\begin{equation}\label{e:wZ_bdd_factormodel_example}
= O_{\bbP}( a(n)^{2(d-b)+1}/n)=O_{\bbP}( a(n)^{2(d-h_1+\varepsilon)}/n )= O_{\bbP}(1).
\end{equation}
In \eqref{e:wZ_bdd_factormodel_example}, we used the fact that
\begin{equation}\label{e:strongfactor_scalingassumption}
\frac{a(n)^{2(d-h_1+\varepsilon)}}n =O(1).
\end{equation}
In addition, by a similar reasoning,
\begin{equation}\label{e:||EW_Z||=O_P(1)_factor}
\|\bbE \mathbf W_{Z}(a(n)2^j)\| = O_{\bbP}(1).
\end{equation}
Hence, assumption ($A2$) holds.

Moreover, for $b$ as in \eqref{E:b=h1+1/2-epsilon}, let $\widetilde{{\mathbf H}} = {\mathbf H} - b \hspace{0.5mm}{\mathbf I}$. For ${\mathbf W}_X(a(n)2^j)$ as in \eqref{e:W_X(a2^j)_factor_model_example}, define the auxiliary random matrix
\begin{equation}\label{e:B-hat_a(2^j)-X}
\widehat{{\mathbf B}}_a(2^j)_{X} = {\mathbf P}_H^{-1}\big\{a(n)^{-\widetilde{{\mathbf H}}-(1/2){\mathbf I}}\hspace{1mm}{\mathbf W}_X(a(n)2^j)\hspace{1mm}a(n)^{-\widetilde{{\mathbf H}}^*-(1/2){\mathbf I}} \big\}({\mathbf P}_H^*)^{-1}\in {\mathcal S}_{\geq 0}(r,\bbR).
\end{equation}
Also, let $\widehat{{\mathbf B}}_a(2^j)_{{\mathcal X}}$ be the auxiliary random matrix associated with the process ${\mathcal X}$ as in \eqref{e:ofBm_factor_model_example}. Then, in view of \eqref{e:Z(t)=residual_factor_model}, relation \eqref{e:h-tilde_q=h_q-b>1/2} holds and
\begin{equation}\label{e:B-hat_a(2^j)-X=B-hat_a(2^j)-mathcalX}
\widehat{{\mathbf B}}_a(2^j)_{X}= \widehat{{\mathbf B}}_a(2^j)_{{\mathcal X}}.
\end{equation}
Thus, since
\begin{equation}\label{eN-psi>=2}
N_{\psi}\geq 2,
\end{equation}
part $(i)$ implies that ($A3$) holds (in particular, using the same rate of convergence $\sqrt{n_{a,j}}$).

Alternatively, assume
\begin{equation}\label{e:eta-0}
\eta_0 > 1.
\end{equation}
Observe that \eqref{e:varepsilon_in_(0,(3/2)(h1+1/2)-d)} further gives
\begin{equation}\label{e:eta0_upperbound}
\eta_0<\min\big\{h_1+3/2, ~2h_1+1\big\}.
\end{equation}
For the fixed $c > 0$, further define
\begin{equation}\label{e:a(n)=2^(1/eta-0*log2n)}
a(n) := 2^{\lfloor \frac{1}{\eta_0}\log_2 n\rfloor}, \quad p(n) := \Big\lfloor \frac{c \hspace{0.4mm}n}{a(n)}\Big\rfloor.
\end{equation}
We claim that there are constants $0 < C_1 < C_2 < \infty $ such that
\begin{equation}\label{e:C1=<a(n)^eta_0/n=<C2}
C_1 \leq \frac{a(n)^{\eta_0}}{n} \leq C_2, \quad n \in \bbN.
\end{equation}
In fact, \eqref{e:C1=<a(n)^eta_0/n=<C2} is a consequence of exponentiating the inequalities
$$
-\eta_0 \leq \eta_0\Big(  \Big\lfloor \frac{1}{\eta_0}\log_2 n \Big\rfloor - \frac{1}{\eta_0}\log_2 n \Big) \leq \eta_0, \quad n \in \bbN.
$$
As a consequence of \eqref{e:C1=<a(n)^eta_0/n=<C2}, relation \eqref{e:strongfactor_scalingassumption} holds. By the same argument, \eqref{e:wZ_bdd_factormodel_example} also holds. Furthermore, a similar reasoning establishes \eqref{e:||EW_Z||=O_P(1)_factor}. In other words, ($A2$) is satisfied, as claimed.

Moreover, for $a(n)$ and $p(n)$ as in \eqref{e:a(n)=2^(1/eta-0*log2n)}, $\lim_{n \rightarrow \infty} \frac{p(n)\hspace{0.2mm}a(n)}{n} = c$. Also, by relations \eqref{e:varepsilon_in_(0,(3/2)(h1+1/2)-d)}, \eqref{e:eta-0}, \eqref{e:eta0_upperbound} and \eqref{e:C1=<a(n)^eta_0/n=<C2}, $\frac{n}{a(n)^{h_1 + 3/2}} \leq \frac{C^{-1}_1}{a(n)^{h_1 + 3/2 - \eta_0}}\rightarrow 0$ as $n \rightarrow \infty$. Therefore, ($A4$) holds for large enough $n$, as claimed. Also, for $\vartheta_0$ as in \eqref{e:condition_(A4)_ofBm},  expression \eqref{e:eta0_upperbound} implies $\eta_0< \vartheta_0$, giving
$$
\frac{n}{a(n)^{\vartheta_0}} \leq \frac{n}{a(n)^{\eta_0}}  a(n)^{\eta_0-\vartheta_0} \leq C a(n)^{\eta_0-\vartheta_0} \to 0, \quad n \rightarrow \infty.
$$
Thus, for $\widehat{{\mathbf B}}_a(2^j)_{X}$ as in \eqref{e:B-hat_a(2^j)-X}, again relations \eqref{e:B-hat_a(2^j)-X=B-hat_a(2^j)-mathcalX} and \eqref{eN-psi>=2} combined with part $(i)$ show that assumption $(A3)$ is satisfied. Furthermore, as in the previous case $\eta_0 \leq 1$, relation \eqref{e:h-tilde_q=h_q-b>1/2} holds for $\eta_0 > 1$. This concludes the proof of $(iii)$. $\Box$\\
\end{proof}

In the remainder of this section, we state or establish all the auxiliary results needed in the proof of Proposition \ref{p:examples}, namely, Lemmas \ref{l:assumptions_WZ=OP(1)_hold}--\ref{l:lemma_laurent_massart}.

In the following lemma, we provide a bound on $\|{\mathbf W}_{Z}(a(n)2^j)\|$ and $\|\bbE {\mathbf W}_{Z}(a(n)2^j)\|$  under conditions on the underlying noise process $Z$.
\begin{lemma}\label{l:assumptions_WZ=OP(1)_hold}
Suppose assumptions $(W1-W3)$ and $(A4)$ are in place. For each $p = p(n)$, let $ Z = \{Z(t)\}_{t \in \bbZ}$ be a Gaussian, $p$-variate stochastic processes satisfying Definition \ref{def:fractional_process}. If, in addition, $N_\psi> d + \frac{1}{2}$ and $n a(n)^{-2\alpha}=O(1)$, then
\begin{equation}\label{e:assumptions_WZ=OP(1)_hold}
a(n)^{-2d}\|\mathbf W_{Z}(a(n)2^j)\| = O_{\bbP}(1), \quad\textnormal{and}\quad a(n)^{-2d}\|\bbE \mathbf W_{Z}(a(n)2^j)\| = O(1).
\end{equation}
\end{lemma}
\begin{proof}
The second relation in \eqref{e:assumptions_WZ=OP(1)_hold} is a consequence of \eqref{e:norm_of_EW_bdd} (see Lemma \ref{l:Dzq_bound}).

So, we show the first relation in \eqref{e:assumptions_WZ=OP(1)_hold}. Define
\begin{equation}
\mathcal H_{2^j}(x) := \sum_{\ell \in \bbZ} h_{j,\ell}e^{-\imag x \ell}.
\end{equation}
By relation (16) in Moulines et al.~\cite{moulines:roueff:taqqu:2007:JTSA}, p.\ 161, we can express $\mathcal H_{2^j}(x) = (1- e^{- \imag x})^{N_{\psi}}\widetilde{\mathcal H}_{2^j}(x)$, where $\widetilde{\mathcal H}_{2^j}(x)$ is a trigonometric polynomial. Since $N_\psi \geq d-1/2$, expression (17) in Moulines et al.~\cite{moulines:roueff:taqqu:2007:JTSA} and the fact that $Z$ is Gaussian imply that the wavelet coefficients $D_Z(2^j,k)$ are well defined a.s., are Gaussian and satisfy
$$
\bbE D_Z(2^j,k)D_Z(2^{j'},k')^*=\int^{\pi}_{-\pi} e^{\imag x (2^j k - 2^{j'} k')} \widetilde {\mathcal H}_{2^j}(x)\overline{\widetilde {\mathcal H}_{2^{j'}}(x)}|1-e^{\imag x}|^{2N_{\psi}}\mathfrak g_p(x) \hspace{0.5mm}dx
$$
\begin{equation}\label{e:EDZDZ*}
= \int^{\pi}_{-\pi} e^{\imag x (2^j k - 2^{j'} k')} {\mathcal H}_{2^j}(x)\overline{{\mathcal H}_{2^{j'}}(x)}
\hspace{0.5mm}\mathfrak{g}_p(x) \hspace{0.5mm}dx.
\end{equation}
Thus, the wavelet random matrices ${\mathbf W}_{Z}(a(n)2^j)$ are well defined a.s. Then, the claim is a consequence of Lemma \ref{l:W_Z_above<exp} (see expression \eqref{e:lambda-p((2^ja(n))^(-2d)W-Z(a(n)2^j))<C}). $\Box$\\
 \end{proof}

We now set up some notation for the next lemma. For a fixed $\mathbf u\in \mathcal S^{p-1}$, let
\begin{equation}\label{e:def_Vn_(u)}
V_n(\mathbf u):=\Big( D_Z(a2^j,1)^*{\mathbf u}, D_Z(a2^j,2)^*{\mathbf u}, \hdots, D_Z(a2^j,n_{a,j})^*{\mathbf u}\Big)^*
\end{equation}
be the vector of available wavelet coefficients of $Z$ at scale $a(n)2^j$ projected in the direction $\mathbf u$. Still for $\mathbf u\in \mathcal S^{p-1}$, let
\begin{equation}\label{e:Gamma-n(u)}
\mathbf \Gamma_n(\mathbf u)=\bbE V_n(\mathbf u)V_n(\mathbf u)^*.
\end{equation}
Also, define
\begin{equation}\label{e:beta}
\beta_0:=\limsup_{n \rightarrow \infty}   a(n)^{-2d}  \sup_{\|\mathbf u\|=1}\| \boldsymbol \Gamma_n(\mathbf u)\|.
\end{equation}
Note that, by Lemma \ref{l:Dzq_bound}, $\beta_0 < \infty$.
In the following lemma, we establish a concentration inequality for the norm $\|(a(n)2^j)^{-2d}{\mathbf W}_{Z}(a(n)2^j)\|= \lambda_p\big((a(n)2^j)^{-2d}{\mathbf W}_{Z}(a(n)2^j)\big)$. The proof follows an $\varepsilon$-net argument, involving steps of approximation, concentration and union bound (cf.\ Vershynin \cite{vershynin:2018}, Sections 4.4 and 4.6, or Lugosi \cite{lugosi:2017}, pp.\ 13--14).
\begin{lemma} \label{l:W_Z_above<exp}
Suppose the assumptions of Lemma \ref{l:assumptions_WZ=OP(1)_hold}  hold. Let
\begin{equation}\label{e:m(a(n)2j)}
m(a(n)2^j) =  \| \bbE {\mathbf W}_{Z}(a(n)2^j)\|.
\end{equation}
With $\beta_0$ as in \eqref{e:beta}, take any $\beta>\beta_0$, and let
\begin{equation}\label{e:t>m(a(n)2j)}
t \geq 5\beta>5\beta_0.%
\end{equation}
Then, for large enough $n \in \bbN$,
$$
\bbP\Big((a(n)2^j)^{-2d} \big(\| {\mathbf W}_{Z}(a(n)2^j)\|-m(a(n)2^j)\big) >  t \Big)
$$
\begin{equation}\label{e:W_Z_upr_bnd_finite_samp}
\leq\exp\left\{p(n)\log9 - \frac{n}{ a(n)2^j}   \left(\frac{t- \beta}{ 4\beta  } - \sqrt{\frac{t- \beta}{ 4\beta}}\right) \right\}.
\end{equation}
In particular, let $C$ be any constant satisfying
\begin{equation}\label{e:C_bound}
C> \beta \bigg(2 + \left(1+\sqrt{1 + 2^{j+2}c\log 9}\right)^2\bigg),
\end{equation}
where $c \geq 0$ is as in \eqref{e:p(n),a(n)_conditions}. Then, as $n \rightarrow \infty$,
\begin{equation}\label{e:lambda-p((2^ja(n))^(-2d)W-Z(a(n)2^j))<C}
\lambda_p\big((2^ja(n))^{-2d}{\mathbf W}_{Z}(a(n)2^j)\big) < C
\end{equation}
with probability going to 1.
\end{lemma}
\begin{proof}

So, fix a small $\varepsilon > 0$. Also fix ${\mathbf u} \in \mathcal{S}^{p-1}$ and let
\begin{equation}\label{e:mathcalDz}
{\mathcal D}^*_{Z,n} = \frac{1}{\sqrt{\mathbf u^* \bbE \mathbf W_Z(a2^j)\mathbf u}}\Big( D_Z(a2^j,1)^*{\mathbf u}, D_Z(a2^j,2)^*{\mathbf u}, \hdots, D_Z(a2^j,n_{a,j})^*{\mathbf u}\Big) \in \bbR^{n_{a,j}}.
\end{equation}
Write
$$
{\boldsymbol \Sigma}_{{\mathcal D},n} = \bbE {\mathcal D}_{Z,n}{\mathcal D}^*_{Z,n}
$$
and consider its spectral decomposition ${\boldsymbol \Sigma}_{{\mathcal D},n} = \mathbf O_{n}\mathbf \Lambda_{{\mathcal D},n}\mathbf O^*_{n}$ for an orthogonal matrix $\mathbf O_{n}$. Recast
$$
\frac{1}{n_{a,j}}{\mathcal D}^*_{Z,n}{\mathcal D}_{Z,n} \stackrel{d}= \frac{1}{n_{a,j}} {\mathbf Z}^*_{n}\boldsymbol \Sigma_{{\mathcal D},n}{\mathbf Z}_{n}= \frac{1}{n_{a,j}} {\mathbf Z}^*_n \mathbf O_{n}\mathbf \Lambda_{{\mathcal D},n}\mathbf O^*_{n}{\mathbf Z}_{n} \stackrel{d}= \frac{1}{n_{a,j}} {\mathbf Z}^*_{n}\mathbf\Lambda_{{\mathcal D},n}{\mathbf Z}_{n} =: \sum^{n_{a,j}}_{k=1}\eta_{k,n}Z^2_k,
$$
where $\mathbf Z_n = (Z_1,\ldots,Z_{n_{a,j}})^*$ is a vector of i.i.d.\ standard normal random variables. Let
\begin{equation}\label{e:eta-vec}
{\boldsymbol \eta} = {\boldsymbol \eta}_n = \Big( \eta_{1,n}, \hdots, \eta_{n_{a,j},n}  \Big)
\end{equation}
be the vector of eigenvalues of the deterministic matrix {$\frac{1}{n_{a,j}}\mathbf \Sigma_{{\mathcal D},n}$}. Note that, by the stationarity of $\{ D_Z(2^j,k)\}_{k\in\bbZ}$ (see \eqref{e:EDZDZ*}), %
$$
 \mathbf u^* \bbE \mathbf W_Z(a2^j)\mathbf u =\frac{1}{n_{a,j}}\sum_{k=1}^{n_{a,j}}  \bbE\big(\mathbf u^*D_Z(a2^j,k)D_Z(a2^j,k)^*\mathbf u\big)=   \bbE\big( D_Z(a2^j,1)^*{\mathbf u} \big)^2. %
$$
Hence,
$$
\frac{1}{n_{a,j}}\bbE {\mathcal D}^*_{Z,n}{\mathcal D}_{Z,n} =\frac{1}{n_{a,j}{\mathbf u^* \bbE \mathbf W_Z(a2^j)\mathbf u}} \cdot n_{a,j} \bbE \big( D_Z(a2^j,1)^*{\mathbf u}\big)^2=1,
$$
whence
\begin{equation}\label{e:eta-n=<2pibeta/naj*m(a2j)}
\sum_{i=1}^{n_{a,j}} \eta_{i,n} =1%
\end{equation}
On the other hand, let
$$
m_n(\mathbf u):= (a2^j)^{-2d} {\mathbf u}^*\bbE {\mathbf W}_{Z}(a2^j) {\mathbf u}.
$$
In particular,
\begin{equation}\label{e:sup_m-n(u)}
\sup_{\|\mathbf u\|=1}m_n(\mathbf u)=\frac{m(a2^j)}{(a2^j)^{2d}}.
\end{equation}
Recall that $\mathbf \Gamma_n(\mathbf u)$ is defined by \eqref{e:Gamma-n(u)}. Then, by Lemma \ref{l:Dzq_bound}, for all large $n$,
\begin{equation}\label{e:eta_inf_bound}
\| \boldsymbol \eta\|_\infty=\frac{1}{n_{a,j}}\| {\boldsymbol \Sigma}_{{\mathcal D},n}\| = \frac{ \|\mathbf \Gamma_n(\mathbf u)\|  }{n_{a,j}{\mathbf u}^*\bbE {\mathbf W}_{Z}(a2^j) {\mathbf u}}\leq  \frac{ \beta  a^{2d}}{n_{a,j}{\mathbf u}^*\bbE {\mathbf W}_{Z}(a2^j) {\mathbf u}} = \frac{\beta }{n_{a,j}m_n(\mathbf u)}.
\end{equation}
So, for $\beta$ as in \eqref{e:t>m(a(n)2j)}, consider any $s>0$ such that
\begin{equation}\label{e:s>4pi(beta+epsilon)}
s>  2\beta.
\end{equation}
Define
$$
x = \left(\frac{-\|\boldsymbol\eta\|_2 + \sqrt{\|\boldsymbol\eta\|_2^2 + 2 \|\boldsymbol\eta\|_\infty s/m_n(\mathbf u)}}{2\|\boldsymbol\eta\|_\infty}\right)^2.%
$$
Then, $2 \|{\boldsymbol \eta}\|_2 \hspace{1mm}\sqrt{x}+ 2 \|{\boldsymbol \eta}\|_{\infty} \hspace{1mm}x = s/m_n(\mathbf u)$.
Thus, using \eqref{e:sup_m-n(u)} and \eqref{e:eta-n=<2pibeta/naj*m(a2j)},
$$
\bbP\bigg((a2^j)^{-2d}\Big({\mathbf u}^* {\mathbf W}_{Z}(a2^j){\mathbf u} -m(a2^j)\Big) > s\bigg)
$$
$$
\leq \bbP\bigg((a2^j)^{-2d}\Big({\mathbf u}^* {\mathbf W}_{Z}(a2^j){\mathbf u} - {\mathbf u}^*\bbE {\mathbf W}_{Z}(a2^j) {\mathbf u}\Big)  > s\bigg)
$$
$$
=\bbP\Big(\frac{{\mathbf u}^* {\mathbf W}_{Z}(a2^j){\mathbf u}}{{\mathbf u}^*\bbE {\mathbf W}_{Z}(a2^j)\mathbf {\mathbf u}} - 1  > \frac{s}{(a2^j)^{-2d}{\mathbf u}^*\bbE {\mathbf W}_{Z}(a2^j)\mathbf {\mathbf u}} \Big) =  \bbP\Big(\sum^{n_{a,j}}_{k=1}\eta_{k,n}(Z^2_{k}-1) \geq \frac{s}{m_n(\mathbf u)}  \Big)
$$
\begin{equation}\label{e:P(u*WZu-m>s)=<exp(-x)}
=\bbP\Big(\sum^{n_{a,j}}_{k=1}\eta_{k,n}(Z^2_{k}-1)\geq 2 \|{\boldsymbol \eta}\|_2 \hspace{1mm}\sqrt{x}+ 2 \|{\boldsymbol \eta}\|_{\infty} \hspace{1mm}x \Big) \leq \exp\{-x\},
\end{equation}
where the last inequality is a consequence of Lemma \ref{l:lemma_laurent_massart}. %

Now, note that $\|\boldsymbol\eta\|_2\leq \sqrt{n_{a,j}}\|\boldsymbol \eta\|_\infty$ and that $\sqrt{\|\boldsymbol\eta\|_2^2 +  2 \|\boldsymbol\eta\|_\infty s/m_n(\mathbf u)}\leq \|\boldsymbol\eta\|_2 + \sqrt{2 \|\boldsymbol\eta\|_\infty s/m_n(\mathbf u)}$. We obtain
$$x=\frac{1}{2\|\boldsymbol \eta\|_\infty^2}\Bigg( \|\boldsymbol\eta\|_2^2 +  \frac{\|\boldsymbol\eta\|_\infty s}{m_n(\mathbf u)} -\|\boldsymbol\eta\|_2 \sqrt{\|\boldsymbol\eta\|_2^2 +  \frac{2 \|\boldsymbol\eta\|_\infty s}{m_n(\mathbf u)}}\Bigg)$$
$$\geq \frac{  1}{2\|\boldsymbol\eta\|_\infty^2}\Bigg(\frac{\|\boldsymbol\eta\|_\infty s}{m_n(\mathbf u)} -\|\boldsymbol\eta\|_2 \sqrt{ \frac{2 \|\boldsymbol\eta\|_\infty s}{m_n(\mathbf u)}}\Bigg)
$$
$$
\geq \frac{1}{\|\boldsymbol\eta\|_\infty} \left(\frac{s}{2m_n(\mathbf u)} - \sqrt{n_{a,j}\frac{s\|\boldsymbol \eta\|_\infty}{2m_n(\mathbf u)}}\right) $$
$$
\geq  \frac{1}{\|\boldsymbol\eta\|_\infty} \left(\frac{s}{2m_n(\mathbf u)} - \sqrt{\frac{s \beta}{2m^2_n(\mathbf u)}}\right) = \frac{1}{\|\boldsymbol\eta\|_\infty m_n(\mathbf u)} \left(\frac{s}{2} - \sqrt{\frac{s \beta}{2}}\right)
$$
\begin{equation}\label{e:x>=naj(s/pibeta-sqrt(s/pibeta))}
\geq \frac{n_{a,j}}{\beta} \left(\frac{s}{2} - \sqrt{\frac{s \beta}{2}}\right).
\end{equation}
In \eqref{e:x>=naj(s/pibeta-sqrt(s/pibeta))}, the third inequality follows from relation \eqref{e:eta_inf_bound}, and the last inequality holds since $s/2- \sqrt{ {s \beta}/{2}}>0$ due to \eqref{e:s>4pi(beta+epsilon)}. Recall that $m(a2^j)$ is given by \eqref{e:m(a(n)2j)}. From \eqref{e:P(u*WZu-m>s)=<exp(-x)} and \eqref{e:x>=naj(s/pibeta-sqrt(s/pibeta))}, we arrive at
\begin{equation}\label{e:lambda_p(W_z)_bound}
\bbP\Big((a2^j)^{-2d}\Big({\mathbf u}^* {\mathbf W}_{Z}(a2^j){\mathbf u} - m(a2^j) \Big)   > s\Big)
\leq \exp\left\{-  n_{a,j} \left(\frac{s}{2\beta } - \sqrt{\frac{s}{2\beta}}\right)\right\}.
\end{equation}
We now appeal to the same argument as in Lugosi \cite{lugosi:2017}, pp.\ 13--14. In fact, let $\mathcal{N}$ be a $1/4$-net of the unit sphere. Then, it can be shown that $\text{card}(\mathcal{N})\leq 9^p$. By an application of the Cauchy-Schwarz inequality, we obtain $\|{\mathbf W}_Z(a2^j)\|\leq 2 \max_{\mathbf u\in\mathcal{N}}{\mathbf u}^*{\mathbf W}_Z(a2^j){\mathbf u}$. Let $t$ be as in $\eqref{e:t>m(a(n)2j)}$. Thus, by the union bound,
$$
 \bbP\Big((a2^j)^{-2d}\big(\|\mathbf {W}_{Z}(a2^j)\|  -m(a2^j)\big)  > t \Big) \leq 9^p \max_{{\mathbf u} \in {\mathcal N}} \bbP\Big({\mathbf u}^*{\mathbf W}_{Z}(a2^j){\mathbf u} > \frac{(a2^j)^{2d}t+ m(a2^j)}{2}\Big)
$$
\begin{equation}\label{e:net_bound_W_Z}
=9^p \max_{{\mathbf u} \in {\mathcal N}} \bbP\Big((a2^j)^{-2d}\big({\mathbf u}^*{\mathbf W}_{Z}(a2^j){\mathbf u} - m(a2^j) \big)> \frac{t-(a2^j)^{-2d}m(a2^j)}{2}\Big).
\end{equation}
Recall expression \eqref{e:m(a(n)2j)}. Note that, by relation \eqref{e:norm_of_EW_bdd} of Lemma \ref{l:Dzq_bound}, since $\beta>\beta_0$, for all large $n$,
\begin{equation}\label{e:m(a2j)->gamma}
(a2^j)^{-2d}m(a2^j)<\beta.
\end{equation}
In view of \eqref{e:t>m(a(n)2j)}, the inequality $\frac{t - (a2^j)^{-2d}m(a2^j)}{2} > \frac{5\beta -\beta }{2} =2\beta$ holds for all large $n$; i.e., \eqref{e:s>4pi(beta+epsilon)} holds with  $s=\frac{t - (a2^j)^{-2d}m(a2^j)}{2}$. Thus, applying \eqref{e:lambda_p(W_z)_bound} to \eqref{e:net_bound_W_Z}, we get
$$
\bbP\Big((a2^j)^{-2d}(\|\mathbf {W}_{Z}(a2^j)\|  -m(a2^j))  > t \Big)
$$
$$
\leq9^p \exp\left\{-  n_{a,j} \left(\frac{t- (a2^j)^{-2d}m(a2^j)}{4\beta } - \sqrt{\frac{t- (a2^j)^{-2d}m(a2^j)}{4\beta}}\right)\right\}.
$$
\begin{equation}\label{e:P((a2^j)^(-2d)W-Z(a2^j)|-m(a2^j)>t)}
\leq 9^p \exp\left\{-  n_{a,j} \left(\frac{t- \beta}{4\beta } - \sqrt{\frac{t- \beta}{4\beta}}\right)\right\}.
\end{equation}
In \eqref{e:P((a2^j)^(-2d)W-Z(a2^j)|-m(a2^j)>t)}, we used that $v\mapsto v - \sqrt v$ is increasing for all $v>1/4$, and that $\frac{t- (a2^j)^{-2d}m(a2^j)}{4\beta }> \frac{2\beta}{2\beta}>1$. By combining this with \eqref{e:lambda_p(W_z)_bound} and \eqref{e:net_bound_W_Z}, we arrive at \eqref{e:W_Z_upr_bnd_finite_samp}.

To show the statement regarding \eqref{e:C_bound},  recast
\begin{equation}\label{e:def_b(n)}
\exp\left\{p(n)\log9- \frac{n}{a(n)2^j} \left(\frac{t- \beta}{4\beta } - \sqrt{\frac{t- \beta}{4\beta}}\right)\right\}%
 =: \exp\left\{\frac{n}{a(n)} \hspace{0.5mm}b(n)\right\}.
\end{equation}
By \eqref{e:p(n),a(n)_conditions} and \eqref{e:m(a2j)->gamma},
$$
b(n) = \frac{p(n)}{n/a(n)}\log9- \frac{1}{2^j}\left(\frac{t- \beta}{4\beta } - \sqrt{\frac{t- \beta}{4\beta}}\right)%
$$
$$\rightarrow c\log 9 - \frac{1}{2^j} \left(\frac{t- \beta}{4\beta } - \sqrt{\frac{t- \beta}{4\beta}}\right)%
=: L(t), \quad n \rightarrow \infty.
$$
Thus, if $t> t_*:=\beta \bigg(1 + \left(1+\sqrt{1 + 2^{j+2}c\log 9} \right)^2\bigg)$ , %
 then $L(t)<0$, implying $b(n)<0$ for all large $n$. So, for $C$ satisfying \eqref{e:C_bound}, we have
 $$
 C>\beta + \beta\bigg(1 + \left(1+\sqrt{1 + 2^{j+2}c\log 9}\right)^2\bigg)=\beta +t_*.
 $$
Thus, for some $t>t_*$,  relation \eqref{e:norm_of_EW_bdd} (see Lemma \ref{l:Dzq_bound}) implies that $C>\beta +t \geq (a2^j)^{-2d}m(a2^j)+t$ for all large $n$.  Also, for any fixed $t>t_*$,  $L(t)<-2\delta<0$ for some sufficiently small $\delta = \delta(t)>0$, giving $b(n)<-\delta$ for all large $n$. Therefore, by \eqref{e:W_Z_upr_bnd_finite_samp} and \eqref{e:def_b(n)},
$$
\bbP\Big((a2^j)^{-2d}\lambda_{p}\big( {\mathbf W}_{Z}(a(n)2^j) \big) > C \Big) \leq \exp\left\{\frac{n}{a(n)} \hspace{0.5mm}b(n)\right\}\leq\exp\left\{-\frac{n}{a(n)} \hspace{0.5mm}\delta\right\}  \to 0,
$$
as $n \rightarrow \infty$. Consequently, $\lambda_{p}\big( {\mathbf W}_{Z}(a(n)2^j) \big)$ is bounded above by any such constant $C$ with probability going to 1, as claimed. $\Box$\\
\end{proof}

The following lemma is used in the proof of Lemma \ref{l:assumptions_WZ=OP(1)_hold} by means of Lemma \ref{l:W_Z_above<exp}. In the lemma, we establish some properties of the wavelet coefficients of colored noise. To state the lemma, consider the expressions \eqref{e:WZ(a(n)2^j)} and \eqref{e:def_Vn_(u)}. Now let $\mathbf u_0$ be a unit vector such that $\mathbf u_0^*  \bbE {\mathbf W}_{Z}(a(n)2^j)\mathbf u_0 = \| \bbE {\mathbf W}_{Z}(a(n)2^j)\|$. Then, we can see that
\begin{equation}\label{e:||EW_Z||_appears_on_the_diag}
\| \bbE {\mathbf W}_{Z}(a(n)2^j)\| \textnormal{ appears along the diagonal of }\mathbf \Gamma_n(\mathbf u_0),
\end{equation}
since, for any $k = 1,\hdots,n_{a,j}$,
$$
\bbE D_Z(a2^j,k)^* \mathbf u_0 {\mathbf u}^*_0 D_Z(a2^j,k) = \mathbf u_0^* \bbE D_Z(a2^j,k)D_Z(a2^j,k)^*{\mathbf u}_0.
$$
\begin{lemma}\label{l:Dzq_bound}
Suppose the assumptions of Lemma \ref{l:assumptions_WZ=OP(1)_hold} hold. For a fixed $\mathbf u\in \mathcal S^{p-1}$, let $\mathbf \Gamma_n(\mathbf u)$ be as in \eqref{e:Gamma-n(u)}. Then, for some  $C_0>0$ that is independent of $p$, $n$ and $j$,
\begin{equation}\label{e:max_eig_cov_univariate_noise=<C}
\big(a(n)2^j\big)^{-2d}\sup_{\|\mathbf u\|=1}\| \mathbf \Gamma_n(\mathbf u)\|\leq C_0. %
\end{equation}
In particular,
\begin{equation}\label{e:norm_of_EW_bdd}
\big(a(n)2^j \big)^{-2d} \| \bbE {\mathbf W}_{Z}(a(n)2^j)\| \leq C_0.
\end{equation}
\end{lemma}
\begin{proof}
Recall that any diagonal entry of $\Gamma_n(\mathbf u)$ is bounded above by the spectral norm $\| \Gamma_n(\mathbf u)\|$. Thus, in view of \eqref{e:||EW_Z||_appears_on_the_diag}, it suffices to establish the statement \eqref{e:max_eig_cov_univariate_noise=<C}. For simplicity we consider $2^j= 1$ and write $n_{a,j}=n_a$; the full statement can be then obtained by simply replacing $a(n)$ with $a(n)2^j$.

First observe, by \eqref{e:EDZDZ*}, the $(k,k')$--th entry of $\boldsymbol \Gamma_{n}(\mathbf u)$ is given as
$$
\mathbf u ^*\bbE D_Z(a,k)D_Z(a,k')^*\mathbf u = \int^{\pi}_{-\pi} e^{\imag x (a (k - k'))} |{{\mathcal H}_{a}(x)}|^2
\big(\mathbf u^*\mathfrak{g}_p(x)\mathbf u \big)dx
$$
$$
= \frac{1}{a}\int^{\pi a}_{-\pi a} e^{\imag y(k - k')} |{{\mathcal H}_{a}(y/a)}|^2
\big(\mathbf u^*\mathfrak{g}_p(y/a)\mathbf u \big)dy,
$$
where we used the change-of-variable $x=y/a$ on the second line. Also, recall from relation (78) of Moulines et al.\ \cite{moulines:roueff:taqqu:2007:JTSA},
for some $C_1>0$,
$$
\big|{\mathcal H}_a(x)\big| \leq C_1a^{1/2}|a x|^{N_\psi}(1+a|x|)^{-\alpha-N_\psi},\quad x \in (-\pi,\pi),
$$
where $\alpha$ is given by \eqref{e:psihat_is_slower_than_a_power_function}. Thus, for all $|y/a| \leq \pi$, we have
$$
|{\mathcal H}_a(y/a)|^2 \big(\mathbf u^*  \mathfrak{g}_p(y/a)\mathbf u \big) \leq |{\mathcal H}_a(y/a)|^2
\hspace{0.5mm} \|\mathfrak{g}_p(y/a)\|
$$
$$
\leq   C_2 a| y|^{2N_\psi}(1 + |y|)^{-2\alpha-2N_\psi} |y/a|^{-2d} \big(|y/a|^{2d}\|\mathfrak{g}_p(y/a)\| \big)%
$$
\begin{equation}\label{e:bound_H_a*quadform}
\leq C_3 a^{(2 d+1)} |y|^{2N_\psi-2d}(1 + |y|)^{-2\alpha-2N_\psi}.%
\end{equation}
In \eqref{e:bound_H_a*quadform}, $C_3>0$ is independent of $p$ and the last inequality follows from \eqref{e:M(d)_1}. This gives, for any $\mathbf v =(v_1,\ldots, v_{n_{a}})\in \bbR^{n_{a}}$ with $\|\mathbf v\|=1$, and using again the change-of-variables $x=y/a$,
$$
0\leq \mathbf v^* \boldsymbol \Gamma_{n}(\mathbf u)\mathbf v =\int^{\pi}_{-\pi} \sum_{\ell=1}^{n_{a}}\sum_{\ell'=1}^{n_{a}}  v_\ell v_{\ell'}e^{\imag x a (\ell -\ell')} |{\mathcal H}_a(x)|^2
 \big(\mathbf u^*\mathfrak{g}_p(x)\mathbf u \big)dx
 $$
 $$
=\frac{1}{a}\int^{\pi a}_{-\pi a} \Big|\sum_{\ell=1}^{n_{a}}  v_\ell e^{\imag y \ell} \Big|^2 |{\mathcal H}_a(y/a)|^2
\hspace{0.5mm} \big(\mathbf u^*\mathfrak{g}_p(y/a)\mathbf u \big) \hspace{0.5mm}dy
$$
$$
\leq C_3 a^{2d} \int^{\pi a}_{-\pi a} \Big|\sum_{\ell=1}^{n_{a}}  v_\ell e^{\imag y \ell} \Big|^2 \frac{| y|^{2N_\psi-2d}}{(1 + |y|)^{2\alpha+2N_\psi}} dy
$$
\begin{equation}\label{e:h(r)_fourier_prebound}
=C_3a^{2d}  \sum_{\ell=1}^{n_{a}}\sum_{\ell'=1}^{n_{a}} v_\ell v_{\ell'}\int^{\pi a}_{-\pi a} e^{\imag y (\ell - \ell')} \frac{| y|^{2N_\psi-2d}}{(1 + |y|)^{2\alpha+2N_\psi}}dy.
\end{equation}

Starting from the argument of the integral in \eqref{e:h(r)_fourier_prebound}, let $g(y)=| y|^{2N_\psi-2d}(1 + |y|)^{-2(\alpha+N_\psi)}$. Since $2(N_\psi-d)>1$,  then $g''$ is  integrable. Hence, its Fourier transform satisfies $|\widehat g(\ell)|=o(\ell^{-2})$ as $|\ell|\to\infty$. Therefore, $\sum_{\ell\in\bbZ}|\widehat g(\ell)|<\infty$. Furthermore, consider the truncated function $g_a(y)= g(y){\mathbf 1}_{\{|y|\leq \pi a\}}$ as well as its Fourier transform $\widehat g_a$. Then, we can bound
$$
|\widehat g_a(\ell)-\widehat g(\ell)| \leq 2 \int_{\pi a}^\infty g(y) dy \leq C_4 \frac{1}{a^{(2(\alpha+d)-1)}}.
$$
Thus, after dividing through by $a^{2d}$, \eqref{e:h(r)_fourier_prebound} is bounded, in absolute value, by
$$
C_3 \bigg|\sum_{\ell=1}^{n_{a}}\sum_{\ell'=1}^{n_{a}} v_\ell v_{\ell'}\widehat g_a(\ell-\ell') \bigg|= C_3\bigg|\widehat g_a(0) + \sum_{m=1}^{n_{a}-1}  2\Re(\widehat g_a(m)) \sum_{\ell=1}^{n_a-m} v_{\ell}v_{\ell+m} \bigg|
$$
$$
\leq C_5\bigg(|\widehat g_a(0)| + 2\sum_{m=1}^{n_{a}-1}|\widehat g_a(m)| \bigg)\leq C_6\bigg( \sum_{m=0}^{n_{a}-1}|\widehat g_a(m)| \bigg)
\leq C_6 \bigg(  \sum_{m=0}^{n_{a}-1}|\widehat g_a(m)-\widehat g(m)|+ \sum_{m=0}^{n_{a}-1}|\widehat g(m)|\bigg)
$$
\begin{equation}\label{e:bound_double-sum_g-hat}
\leq C_7 \bigg( \frac{n}{a^{2(\alpha+d)}}+\sum_{m=0}^\infty|\widehat g(m)|\bigg).%
\end{equation}
In \eqref{e:bound_double-sum_g-hat}, the first equality and inequality follow from $\sum_{\ell=1}^{n_a} v_\ell^2 =1$ and the Cauchy-Schwarz inequality, respectively. Since $d\geq 0$ (cf.\ \eqref{e:M(d)_1}), using $n a^{-2(\alpha+d)} \leq n a^{-2\alpha} =O(1)$, we obtain \eqref{e:max_eig_cov_univariate_noise=<C}.  $\Box$\\%
\end{proof}

The following lemma is used in the proof of Lemma \ref{l:W_Z_above<exp}. It provides a concentration inequality for centered quadratic forms. It corresponds to Lemma 1 in Laurent and Massart \cite{laurent:massart:2000} (see also Birg\'{e} and Massart \cite{birge:massart:1998}, Lemma 8, and Boucheron et al.\ \cite{boucheron:lugosi:massart:2013}, p.\ 39).
\begin{lemma}\label{l:lemma_laurent_massart}
(Laurent and Massart \cite{laurent:massart:2000}) Let $Z_1,\hdots,Z_n \stackrel{\textnormal{i.i.d.}}\sim {\mathcal N}(0,1)$ and $\eta_1,\hdots,\eta_n \geq 0$, not all zero. Let $\|{\boldsymbol \eta}\|_2$ and $\|{\boldsymbol \eta}\|_{\infty}$ be the Euclidean square and sup norms of the vector ${\boldsymbol \eta} = (\eta_1,\hdots,\eta_n)^*$. Also, define the random variable
$X = \sum^{r}_{i=1} \eta_{i} (Z^2_i - 1)$.
Then, for every $x > 0$,
\begin{equation}\label{e:laurent_massart_bound1}
\bbP\Big(X \geq 2 \|{\boldsymbol \eta}\|_2 \hspace{1mm}\sqrt{x}+ 2 \|{\boldsymbol \eta}\|_{\infty} \hspace{1mm}x \Big) \leq \exp\{-x\},
\end{equation}
\begin{equation}\label{e:laurent_massart_bound2}
\bbP\Big(X \leq - 2 \|{\boldsymbol \eta}\|_2 \hspace{1mm}\sqrt{x} \Big) \leq \exp\{-x\}.
\end{equation}
\end{lemma}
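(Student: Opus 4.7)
My plan is to prove this via the standard Cramér--Chernoff method applied to the centered chi-squared-type variable $X = \sum_{i=1}^n \eta_i(Z_i^2 - 1)$. The computation reduces to the univariate Gaussian moment generating function, since the $Z_i$ are independent.

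First, I would compute the Laplace transform. For $Z \sim \mathcal{N}(0,1)$ and $s < 1/2$, one has $\mathbb{E}[e^{sZ^2}] = (1-2s)^{-1/2}$, so for all $s$ with $0 \leq s < 1/(2\|\boldsymbol{\eta}\|_\infty)$,
\begin{equation*}
\log \mathbb{E}[e^{sX}] = \sum_{i=1}^n \Bigl(-s\eta_i - \tfrac{1}{2}\log(1 - 2s\eta_i)\Bigr).
\end{equation*}
The key analytic input is the elementary inequality $-\tfrac{1}{2}\log(1-2u) - u \leq u^2/(1-2u)$ for $0 \leq u < 1/2$, which gives the bound
\begin{equation*}
\log \mathbb{E}[e^{sX}] \leq \sum_{i=1}^n \frac{s^2\eta_i^2}{1 - 2s\eta_i} \leq \frac{s^2 \|\boldsymbol{\eta}\|_2^2}{1 - 2s\|\boldsymbol{\eta}\|_\infty}.
\end{equation*}

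Next, I would apply Markov's inequality to $e^{sX}$ to obtain $\mathbb{P}(X \geq t) \leq \exp\bigl(-st + s^2\|\boldsymbol{\eta}\|_2^2/(1 - 2s\|\boldsymbol{\eta}\|_\infty)\bigr)$ for $0 \leq s < 1/(2\|\boldsymbol{\eta}\|_\infty)$. The choice $t = 2\|\boldsymbol{\eta}\|_2\sqrt{x} + 2\|\boldsymbol{\eta}\|_\infty x$ together with the specific value $s = \sqrt{x}/(\|\boldsymbol{\eta}\|_2 + 2\|\boldsymbol{\eta}\|_\infty \sqrt{x})$ (which lies in the admissible range) yields, after direct algebraic simplification, $\mathbb{P}(X \geq t) \leq e^{-x}$. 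This establishes \eqref{e:laurent_massart_bound1}.

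For the lower tail \eqref{e:laurent_massart_bound2}, I would apply the same method to $-X$. The MGF bound is cleaner in this direction since $-\tfrac{1}{2}\log(1+2u) + u \leq u^2$ for $u \geq 0$, so $\log \mathbb{E}[e^{-sX}] \leq s^2 \|\boldsymbol{\eta}\|_2^2$ for all $s \geq 0$, with no upper restriction on $s$. Optimizing $-st + s^2\|\boldsymbol{\eta}\|_2^2$ over $s$ with $t = 2\|\boldsymbol{\eta}\|_2\sqrt{x}$ gives the sub-Gaussian bound $e^{-x}$ immediately. The main subtlety --- and the only genuine calculation --- lies in verifying the two elementary MGF inequalities and carrying out the optimization in the upper-tail case so as to match the stated form of $t$ exactly; the combination of the quadratic and linear terms in $\sqrt{x}$ and $x$ is precisely what comes out of the nonsymmetric ``sub-gamma'' tail that governs centered chi-squared sums.
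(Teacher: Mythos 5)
The paper does not prove this lemma; it simply cites it as Lemma~1 of Laurent and Massart (2000) (also pointing to Birg\'e--Massart and Boucheron--Lugosi--Massart). Your proposal is a correct and complete proof, and it reproduces essentially the original Cram\'er--Chernoff argument from the cited reference: compute the cumulant generating function of the weighted centered chi-square, bound it by the sub-gamma form $s^2\|\boldsymbol\eta\|_2^2/(1-2s\|\boldsymbol\eta\|_\infty)$ via the elementary inequality $-\tfrac12\log(1-2u)-u\le u^2/(1-2u)$, then optimize over $s$. The algebra for the choice $s = \sqrt{x}/(\|\boldsymbol\eta\|_2 + 2\|\boldsymbol\eta\|_\infty\sqrt{x})$ checks out: one gets $1-2s\|\boldsymbol\eta\|_\infty = \|\boldsymbol\eta\|_2/(\|\boldsymbol\eta\|_2 + 2\|\boldsymbol\eta\|_\infty\sqrt{x})$, and $-st + s^2\|\boldsymbol\eta\|_2^2/(1-2s\|\boldsymbol\eta\|_\infty) = -x$ exactly. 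The lower-tail half via the sub-Gaussian bound $s\eta_i - \tfrac12\log(1+2s\eta_i)\le s^2\eta_i^2$ is also correct. The only (inconsequential) thing worth flagging is that the lemma as printed in the paper has a typo in the index of summation ($\sum_{i=1}^r$ rather than $\sum_{i=1}^n$), which you silently corrected.
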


\subsection{Section \ref{s:non-Gaussian}: auxiliary results for the case of non-Gaussian $X$}

The following proposition is mentioned in Section \ref{s:non-Gaussian}. In the statement of the proposition, we make use of the following univariate construct. Suppose the univariate stochastic process ${\mathcal X}_{\tilde d,k} = \{{\mathcal X}_{\tilde d,k}(t)\}_{t \in \bbZ}$ has memory parameter $\tilde d >0$ (Roueff and Taqqu \cite{roueff:taqqu:2009}, Definition 1). In other words, for any integer $k > \tilde d - 1/2$, the $k$-th order difference process $\Delta^k {\mathcal X}_{\tilde d,k}$ is weakly stationary with spectral density
\begin{equation}\label{e:f_DeltaX}
f_{\Delta {\mathcal X}_{\tilde d,k}}(x) = |1-e^{-\imag x}|^{2(k-\tilde d)}f_*(x), \quad x \in [-\pi,\pi)
\end{equation}
(cf.\ Definition \ref{def:fractional_process}). In \eqref{e:f_DeltaX}, $f_*(x) \geq 0$ is a symmetric function that is continuous and nonzero at the origin. Moreover, we assume that, for some
\begin{equation}\label{e:beta_in_[varpi,2]}
\beta\in (0,2],
\end{equation}
the function $f_*$ satisfies
$$
|f_*(x)-f_*(0)| = O(|x|^{\beta}),\quad x\to 0.
$$
In addition, suppose $\Delta^k {\mathcal X}_{\tilde d,k}(t)$ is a linear process of the form
\begin{equation}\label{e:linear_process}
\Delta^k {\mathcal X}_{\tilde d,k}(t) = \sum_{\ell \in \bbZ} a_{\tilde d}(t - \ell) \xi_\ell, \quad \sum_{\ell \in \bbZ} a^2_{\tilde d}(\ell) < \infty.
\end{equation}
In \eqref{e:linear_process}, $\{\xi_\ell\}_{\ell \in \bbZ}$ is a sequence of i.i.d.\ random variables with mean zero, unit variance, and finite fourth moment.
\begin{proposition}\label{p:non-Gaussian_linear}
Let $0 < \tilde d_1 \hdots \leq  \tilde d_r$. Fix $k_\ell \in \bbN \cup \{0\}$, $\ell = 1,\hdots,r$, and $\beta$ satisfying \eqref{e:beta_in_[varpi,2]}. Further fix pairwise distinct $j_1,\hdots,j_m \in \bbN \cup \{0\}$. Suppose the following conditions are in place.
\begin{itemize}
\item [$(i)$] $X = \{X(t)\}_{t \in \bbZ} = \{(X_1(t),\hdots,X_r(t) )^*\}_{t \in \bbZ}$ is a $r$-variate stochastic process, where the entry-wise processes are independent and $X_{\ell} \stackrel{\textnormal{f.d.d.}}= {\mathcal X}_{\tilde d_{\ell},k_\ell}$, $\ell = 1,\hdots,r$;
\item [$(ii)$] conditions $(W1-W3)$ hold with $(1+\beta)/2 - \alpha < \tilde d_\ell \leq N_{\psi}$ and $k_\ell \leq N_{\psi}$, $\ell = 1,\hdots,r$;
\item [$(iii)$] $\{a(n)\}_{n \in \bbN}$ is a dyadic sequence such that $n/a(n) \rightarrow \infty$ and $(n/a(n))^{1/2}a(n)^{1-2\alpha - 2 \tilde d_1} \rightarrow 0$, as $n \rightarrow \infty$.
\end{itemize}
Then, with ${\mathbf H}=\textnormal{diag}(\tilde d_1-\frac{1}{2},\ldots,\tilde d_r-\frac{1}{2})$ the weak limit \eqref{e:sqrt(K)(B^-B)->N(0,sigma^2)} holds for some diagonal matrix $\Sigma_{B}(j_1,\hdots,j_m)$. Moreover,
\begin{equation}\label{e:|bfB_a(2^j)-B(2^j)|=O(shrinking)_3}
\|{{\mathbf B}}_a(2^j) - \mathbf B(2^j) \| = O(a(n)^{-\beta}), \quad  n\to\infty, \quad j=j_1,\hdots,j_m.
\end{equation}
In particular, conditions \eqref{e:sqrt(K)(B^-B)->N(0,sigma^2)} and \eqref{e:|bfB_a(2^j)-B(2^j)|=O(shrinking)} in assumption (A3) hold.
\end{proposition}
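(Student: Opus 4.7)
\textbf{Proof proposal for Proposition \ref{p:non-Gaussian_linear}.}
The plan is to reduce the claim to the univariate wavelet scalogram framework of Roueff and Taqqu \cite{roueff:taqqu:2009} and exploit both the diagonal form of $H$ and the independence of the components of $X$. Since $H=\mathrm{diag}(\tilde d_1-\tfrac12,\ldots,\tilde d_r-\tfrac12)$ we may take $P_H=I$, so the $(\ell,\ell')$ entry of $\widehat{\mathbf B}_a(2^j)$ reads
\begin{equation*}
\widehat{\mathbf B}_a(2^j)_{\ell\ell'}
= \frac{1}{a(n)^{\tilde d_\ell+\tilde d_{\ell'}}\,n_{a,j}}\sum_{k=1}^{n_{a,j}} D_{X_\ell}(a(n)2^j,k)\,D_{X_{\ell'}}(a(n)2^j,k).
\end{equation*}
By independence of the components, $\E\widehat{\mathbf B}_a(2^j)_{\ell\ell'}=0$ for $\ell\neq\ell'$, and $\mathbf B_a(2^j)$ is a diagonal matrix whose $(\ell,\ell)$ entry is exactly the normalized univariate wavelet variance studied in Roueff and Taqqu \cite{roueff:taqqu:2009}. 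Under assumptions $(i)$--$(iii)$, their Proposition~2 (bias analysis for an $M(\tilde d)$ process satisfying \eqref{e:f_DeltaX}) yields
\begin{equation*}
\bigl|\mathbf B_a(2^j)_{\ell\ell}-\mathbf B(2^j)_{\ell\ell}\bigr|
= \bigl|a(n)^{-2\tilde d_\ell}\E D_{X_\ell}^2(a(n)2^j,k) - K_\ell(2^j)\bigr|
= O\bigl(a(n)^{-\beta}\bigr),
\end{equation*}
where $K_\ell(2^j)=f_{*,\ell}(0)\int_{\bbR}|\widehat\psi(\lambda)|^2|\lambda|^{-2\tilde d_\ell}d\lambda\,2^{j(2\tilde d_\ell)}>0$. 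Since $\beta\ge\varpi$ by \eqref{e:beta_in_[varpi,2]}, and the off-diagonal entries of $\mathbf B_a(2^j)$ and $\mathbf B(2^j)$ both vanish, \eqref{e:|bfB_a(2^j)-B(2^j)|=O(shrinking)_3} follows immediately.

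For the asymptotic normality statement, I would first establish a joint CLT across scales $j_1,\ldots,j_m$ and indices $\ell,\ell'$. The diagonal terms $\sqrt{n_{a,j}}(\widehat{\mathbf B}_a(2^j)_{\ell\ell}-\mathbf B_a(2^j)_{\ell\ell})$ are handled directly by Theorem~1 of Roueff and Taqqu \cite{roueff:taqqu:2009}, which gives a multi-scale univariate CLT for scalograms of a linear $M(\tilde d_\ell)$ process under the assumed conditions on $(N_\psi,\alpha,\tilde d_\ell,\beta)$ and the rate hypothesis in $(iii)$. For the off-diagonal terms ($\ell\neq\ell'$), I would use the independence of $X_\ell$ and $X_{\ell'}$ together with a martingale difference or standard linear-process CLT: writing each $\Delta^{k_\ell}X_\ell$ as a moving average in an i.i.d.\ sequence $\{\xi^{(\ell)}_s\}$, the sum $\sum_k D_{X_\ell}(a2^j,k)D_{X_{\ell'}}(a2^j,k)$ decomposes as a bilinear form in the two independent i.i.d.\ sequences whose variance is asymptotically linear in $n_{a,j}$, so scaling by $\sqrt{n_{a,j}}$ produces a Gaussian limit with a nonzero variance that can be computed in closed form from $f_{*,\ell}$, $f_{*,\ell'}$ and the wavelet filter.

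The joint convergence is then organized via the Cramér--Wold device, reducing everything to the asymptotic normality of a single real-valued quadratic/bilinear functional in the joint i.i.d.\ array $\{(\xi^{(1)}_s,\ldots,\xi^{(r)}_s)\}_{s\in\bbZ}$; this can be handled by extending the proof of Theorem~1 in Roueff and Taqqu \cite{roueff:taqqu:2009} almost verbatim (the method-of-moments/cumulants argument they use for a single component generalizes since cross-component cumulants factor through independence). A key bookkeeping point is to verify that the rate condition $(n/a)^{1/2}a^{1-2\alpha-2\tilde d_1}\to 0$ in $(iii)$ absorbs the approximation error between discrete- and continuous-time wavelet coefficients uniformly across scales $j_1,\ldots,j_m$ and indices $\ell=1,\ldots,r$; this is exactly the role played by the analogous rate hypothesis in Roueff--Taqqu. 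Because the limiting covariance splits into independent blocks for distinct index pairs (by the independence of the components), the limiting matrix $\Sigma_B(j_1,\ldots,j_m)$ is diagonal in the $\vecoper_{\mathcal S}$ ordering, with strictly positive entries coming from the non-degeneracy of each univariate scalogram CLT (Roueff--Taqqu, Theorem~1) and from the bilinear CLT for the cross terms under the finite fourth-moment assumption on $\{\xi_\ell\}$.

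The main obstacle will be the bilinear/off-diagonal CLT: while the univariate CLT is quoted from Roueff--Taqqu, the cross-component statistics $\sum_k D_{X_\ell}(a2^j,k)D_{X_{\ell'}}(a2^j,k)$ are not in their setup, and one must carefully control the long-range dependence coming from $\tilde d_\ell,\tilde d_{\ell'}$ jointly. I would handle this by expressing each product as a double sum against two independent i.i.d.\ arrays, truncating the MA representations, and applying a Lindeberg-type CLT for sums of martingale differences in one of the two i.i.d.\ filtrations conditionally on the other; the rate condition in $(iii)$ and the regularity $\beta\ge\varpi$ will be precisely what is needed to control the truncation and approximation errors uniformly. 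Positive definiteness of the diagonal blocks on the main diagonal follows from the non-degeneracy already contained in Theorem~1 of \cite{roueff:taqqu:2009}, together with $f_{*,\ell}(0)f_{*,\ell'}(0)>0$.
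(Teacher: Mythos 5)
Your reduction to the Roueff--Taqqu univariate scalogram framework is the same strategy the paper uses, and the bias bound is handled identically: the paper invokes expression (67) of Roueff and Taqqu to get $|a^{-2\tilde d_\ell}\E D_{X_\ell}^2(a2^j,0)-K_\ell(j)|=O(a^{-\beta})=O(a^{-\varpi})$ for each diagonal entry, which is exactly your ``Proposition 2'' step, and the off-diagonal means of ${\mathbf B}_a(2^j)$ vanish by independence, so \eqref{e:|bfB_a(2^j)-B(2^j)|=O(shrinking)_3} follows in both. Where you differ is in how much is shown for the weak limit. The paper disposes of \eqref{e:sqrt(K)(B^-B)->N(0,sigma^2)} in a single sentence, declaring it ``an immediate consequence of Theorem 2 in Roueff and Taqqu'' and saying nothing further. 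You are right to be suspicious of that: Roueff--Taqqu state their scalogram CLT for a \emph{single} univariate $M(d)$ process, so it covers the diagonal entries $\widehat{\mathbf B}_a(2^j)_{\ell\ell}$ (jointly over $j$, and jointly over $\ell$ by independence of the components), but the off-diagonal entries $\widehat{\mathbf B}_a(2^j)_{\ell\ell'}$, $\ell\neq\ell'$, are cross-scalograms $\tfrac{1}{n_{a,j}}\sum_k D_{X_\ell}D_{X_{\ell'}}$ built from two \emph{independent} processes, and these bilinear forms do not literally appear in their theorem. Your plan to handle them via a Cram\'er--Wold reduction followed by a conditional Lindeberg/martingale-difference argument driven by the two independent i.i.d.\ innovation arrays is a legitimate way to supply the missing piece; it is precisely the step the paper elides. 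So the route is essentially the paper's, but your sketch is the more complete and more honest of the two, and the remaining work in your version (the bilinear CLT) is work the paper has not actually exhibited either.
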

\begin{proof}
The limit \eqref{e:sqrt(K)(B^-B)->N(0,sigma^2)} is an immediate consequence of Theorem 2 in Roueff and Taqqu \cite{roueff:taqqu:2009}. For the limit \eqref{e:|bfB_a(2^j)-B(2^j)|=O(shrinking)_3}, observe that
$$
\E \W_X(2^j) = \textnormal{diag} \big( \bbE D_X(2^j,0)_1^2,\ldots, \bbE D_X(2^j,0)_r^2 \big).
$$
Moreover, in view of condition \eqref{e:beta_in_[varpi,2]}, under $(i)$, $(ii)$ and $(iii)$, expression (67) in Roueff and Taqqu \cite{roueff:taqqu:2009}  shows that, for some constants $K_\ell(j)$,
$$
\big|a(n)^{-2\tilde d_\ell}\bbE D_X(a(n) 2^j,0)_\ell^2- K_\ell(j)\big| = O( a(n)^{-\beta}), \qquad  \ell = 1,\ldots,r.
$$
Therefore, with ${\mathbf H}=\textnormal{diag}(\tilde d_1-\frac{1}{2},\ldots,\tilde d_r-\frac{1}{2})$,
$$
\|{{\mathbf B}}_a(2^j) - \mathbf B(2^j) \|
$$
$$
= \big\|\textnormal{diag} \big(  a(n)^{-2\tilde d_1}\bbE D_X(a(n) 2^j,0)_1^2- K_1(j) ,
$$
$$
\ldots, a(n)^{-2\tilde d_r}\bbE D_X(a(n) 2^j,0)_r^2- K_r(j)\big)\big\| = O(a(n)^{-\beta}),
$$
which shows \eqref{e:|bfB_a(2^j)-B(2^j)|=O(shrinking)_3}. $\Box$\\
\end{proof}

\bibliographystyle{agsm}

\bibliography{highdim}

\end{document}